\newcommand{\dashbk}{-}
\newcommand{\diagspace}{\mbox{\hspace{2em}}}
\newcommand{\bref}[1]{(\ref{#1})}
\newcommand{\ucontents}[2]{\addcontentsline{toc}{#1}{\numberline{}{#2}}}
\newcommand{\cat}[1]{\mathcal{#1}}
\newcommand{\fcat}[1]{\mathbf{#1}}
\newcommand{\ovln}[1]{\overline{#1}}
\newcommand{\twid}[1]{\widetilde{#1}}
\newcommand{\such}{\:|\:}
\newcommand{\without}{\setminus}
\newcommand{\blob}{{\raisebox{.3ex}{\ensuremath{\scriptscriptstyle{\bullet}}}}}
\newcommand{\epsln}{\varepsilon}
\newcommand{\op}{\mathrm{op}}
\newcommand{\id}{\mathrm{id}}
\newcommand{\Cat}{\fcat{Cat}}
\newcommand{\One}{\fcat{1}}
\newcommand{\Set}{\fcat{Set}}
\newcommand{\Top}{\fcat{Top}}
\newcommand{\ftrcat}[2]{[#1,#2]}
\newcommand{\diso}{\sim}
\newcommand{\go}{\rTo\linebreak[0]}
\newcommand{\goby}[1]{\rTo^{#1}\linebreak[0]}
\newcommand{\og}{\lTo}
\newcommand{\ogby}[1]{\lTo^{#1}}
\newcommand{\goesto}{\,\longmapsto\,}
\newcommand{\goiso}{\goby{\raisebox{-0.5ex}[0ex][0ex]{$\diso$}}}
\newcommand{\oppair}[2]{\pile{\rTo^{\scriptstyle #1}\\ 
\lTo_{\scriptstyle #2}}} 
\newcommand{\parpair}[2]{\pile{\rTo^{\scriptstyle #1}\\ 
\rTo_{\scriptstyle #2}}}
\newcommand{\parpairu}{\pile{\rTo\\ \rTo}}
\newcommand{\oppairu}{\pile{\rTo\\ \lTo}}
\newcommand{\upr}[1]{[#1]}		
\newcommand{\reals}{\mathbb{R}}
\newcommand{\minihead}[1]{\subsubsection*{#1}}	
\newcommand{\nent}{\rotatebox{45}{$\Rightarrow$}}
\newcommand{\demph}[1]{\textbf{\textup{#1}}}
\newcommand{\done}{\hfill\ensuremath{\Box}}
\newenvironment{prooflike}[1]{\begin{trivlist}\item\textbf{#1}\ }
{\end{trivlist}}
\newenvironment{proof}{\begin{prooflike}{Proof}}{\end{prooflike}}
\newcommand{\scat}[1]{\mathbb{#1}}
\newcommand{\iso}{\cong}
\newcommand{\nat}{\mathbb{N}}	
\newcommand{\url}[1]{#1}
\newcommand{\pshf}[1]{\ftrcat{#1^\op}{\Set}}
\newcommand{\elt}[1]{\scat{E}\left(#1\right)}
\newcommand{\gomod}{\rMod\linebreak[0]}
\newcommand{\gobymod}[1]{\rMod^{#1}\linebreak[0]}
\newcommand{\ndSet}[1]{\langle #1, \Set \rangle}
\newcommand{\ndTop}[1]{\langle #1, \Top \rangle}
\newcommand{\littleproduct}{(\blob\ \ \ \blob)}
\newcommand{\littleequalizer}{(\blob \parpairu \blob)}
\newcommand{\littlepullback}{(\blob \go \blob \og \blob)}
\newcommand{\cstyle}[1]{\textbf{\upshape{#1}}}
\newcommand{\So}{\cstyle{S}}
\newenvironment{condition}{\begin{description}\item[\ \ \ \ \ \,]}{\end{description}}
\newcommand{\Coalg}[2]{\fcat{Coalg}(#1, #2)}
\newcommand{\frc}[2]{\frac{#1}{#2}}
\newcommand{\half}{\frc{1}{2}}
\newcommand{\dfrc}[2]{\textstyle{\frac{#1}{#2}}}
\newcommand{\dhalf}{\dfrc{1}{2}}
\newcommand{\compt}[1]{{[\, #1\, ]}}
\newcommand{\bigcompt}[1]{{[\, #1 \,]}}
\newcommand{\ob}{\mathrm{ob}}		
\newcommand{\pr}{\mathrm{pr}}
\newcommand{\diam}{\mathrm{diam}}
\newcommand{\Dface}{\Delta_{\mathrm{inj}}}
\newcommand{\propersub}{\subset}
\newcommand{\nepower}{\mathcal{P}_{\neq\emptyset}}
\newcommand{\vtr}[1]{#1}
\newcommand{\oba}{\mathrm{ob}\,}
\newcommand{\blb}{{\scriptscriptstyle{\bullet}}}
\newcommand{\sblb}{\cdot}
\newcommand{\ldlabel}[1]{\makebox[0em]{\hspace*{4.2em}\ensuremath{\scriptstyle#1}}}
\newcommand{\rdlabel}[1]{\makebox[0em]{\hspace*{-4.5em}\ensuremath{\scriptstyle#1}}}
\newcommand{\of}{\,\raisebox{0.08ex}{\ensuremath{\scriptstyle\circ}}\,}
\newcommand{\Cx}{\mathbb{C}}
\newcommand{\sub}{\subseteq}
\newcommand{\catI}{\mathcal{I}}
\newcommand{\ladj}{\dashv}
\newenvironment{diagdiag}
	{\begin{diagram}[size=1.5em]}
	{\end{diagram}}
\newcommand{\cell}[4]{\put(#1,#2){\makebox(0,0)[#3]{\ensuremath{#4}}}}
\newcommand{\zmark}{\scriptstyle{\bullet}}
\definecolor{grey}{gray}{0.5}
\newcommand{\avdots}{\rotatebox{90}{$\cdots$}}
\newcommand{\Colt}[1]{{\displaystyle\lim_{\rightarrow #1}}\,}
\newcommand{\Lt}[1]{{\displaystyle\lim_{\leftarrow #1}}\,}
\newcommand{\colt}[1]{\displaystyle\lim_{\rightarrow #1}}
\newcommand{\posint}{{\mathbb{N}^+}}
\newcommand{\Reso}{\fcat{Reso}}
\newcommand{\Flat}{\fcat{Flat}}
\newcommand{\cov}[1]{\mathcal{#1}}
\newcommand{\natset}[1]{\mathcal{N}_{#1}}
\newtheorem{thm}{Theorem}[section]
\newtheorem{propn}[thm]{Proposition}
\newtheorem{lemma}[thm]{Lemma}
\newtheorem{cor}[thm]{Corollary}
\newtheorem{conj}[thm]{Conjecture}
\newtheorem{predefn}[thm]{Definition}
\newenvironment{defn}{\begin{predefn}\upshape}{\end{predefn}}
\newtheorem{preexample}[thm]{Example}
\newenvironment{example}{\begin{preexample}\upshape}{\end{preexample}}
\newenvironment{example*}[1]{\begin{preexample}[#1]\upshape}{\end{preexample}}
\newtheorem{prewarning}[thm]{Warning}
\newenvironment{warning}{\begin{prewarning}\upshape}{\end{prewarning}}
\begin{document}

\sloppy


\title{%
A general theory of self-similarity}
\author{Tom Leinster%
\thanks{School of Mathematics and Statistics, University of Glasgow, Glasgow
G12 8QW, UK; Tom.Leinster@glasgow.ac.uk.  Supported by a Nuffield Foundation
Award NUF-NAL 04 and an EPSRC Advanced Research Fellowship.}}  \date{}

\maketitle
\thispagestyle{empty}

\begin{center}
\textbf{Abstract}
\end{center}
A little-known and highly economical characterization of the real interval
$[0, 1]$, essentially due to Freyd, states that the interval is homeomorphic
to two copies of itself glued end to end, and, in a precise sense, is
universal as such.  Other familiar spaces have similar universal properties;
for example, the topological simplices $\Delta^n$ may be defined as the
universal family of spaces admitting barycentric subdivision.  We develop a
general theory of such universal characterizations.

This can also be regarded as a categorification of the theory of simultaneous
linear equations.  We study systems of equations in which the variables
represent spaces and each space is equated to a gluing-together of the
others.  One seeks the universal family of spaces satisfying the equations.
We answer all the basic questions about such systems, giving an explicit
condition equivalent to the existence of a universal solution, and an explicit
construction of it whenever it does exist.

\bigskip

Key words: recursion, self-similarity, final coalgebra, real interval,
barycentric subdivision, fractal, categorification, colimit, bimodule,
profunctor, flat functor

\tableofcontents


\section*{Introduction}
\ucontents{section}{Introduction}

Ask a mathematician for a definition of the topological space $[0, 1]$, and
they will probably define it as a subspace of the real line $\reals$.  Pushed
for a definition of $\reals$ itself, they might, with some reluctance, mention
its construction by Dedekind cuts or Cauchy sequences, or its characterization
as a complete ordered field.  The reluctance stems from the fact that in
everyday practice, most mathematicians do not think of real numbers as
Dedekind cuts or equivalence classes of Cauchy sequences; and while the
characterization as a complete ordered field is better from this point of
view, it involves far more structure than is relevant to the mere topology of
the interval.

There is, however, a simple characterization of the topological space $I = [0,
1]$ that reflects rather accurately how it is used in topology.  Roughly, it
says the following: if we define a \demph{path} in a space $S$ to be a
(continuous) map $I \go S$, then $I$ has exactly the structure needed in order
that paths can be composed, and it is universal as such.

Let us make this precise.  To speak of composition of paths, we first need to
know that every path has a starting point and a finishing point.  Whenever we
have a pair of paths, the first finishing where the second starts, we wish to
be able to compose them to form a new path.  These requirements correspond to
$I$ coming equipped with two basepoints, $0$ and $1$, and an
endpoint-preserving map to its `doubling'---the space obtained by taking two
copies of $I$ and gluing the second basepoint of the first to the first
basepoint of the second.  Moreover, the two basepoints are distinct and, as
singleton subsets, closed.

Let $\cat{D}$ be the category in which an object is a space equipped with two
distinct, closed basepoints and an endpoint-preserving map to its doubling;
then we have just observed that $I$, with some extra structure, is an object
of $\cat{D}$.  The characterization of $I$ is that it is, in fact, the
terminal object.  This is a topological version of a theorem of
Freyd~(\ref{thm:topologicalFreyd}).  It characterizes (an interval of) the
real numbers using only the extremely primitive concepts of continuity and
gluing.  

Other important spaces have similar characterizations.  For example, the
embodiment of the concept of convergent sequence is the space $\nat \cup
\{\infty\}$ (the one-point compactification of the discrete space $\nat$), in
the sense that a convergent sequence in an arbitrary space $S$ amounts to a
continuous map $\nat \cup \{\infty\} \go S$.  There is a precise sense in
which the pair $(X_1, X_2) = (\{\star\}, \nat \cup \{\infty\})$ is the
universal solution to the system of `equations'
\begin{eqnarray}
X_1     &\iso   &X_1      
\label{eq:univ-cgt-pt}  \\
X_2     &\iso   &X_1 + X_2.
\label{eq:univ-cgt-it}
\end{eqnarray}
(Here $\{\star\}$ is the one-point space and $+$ denotes coproduct or disjoint
union of spaces.)  Indeed, let $\cat{D}$ be the category in which an object
is a pair $(X_1, X_2)$ of topological spaces together with a pair of maps
$(X_1 \go X_1, X_2 \go X_1 + X_2)$; then the terminal object of $\cat{D}$ is
$(\{\star\}, \nat \cup \{\infty\})$.

Another example characterizes the standard topological simplices $\Delta^n$.
Let $\Dface$ be the category of totally ordered sets $\upr{n} = \{0, \ldots, n
- 1\}$ ($n \geq 0$) and order-preserving injections.  There is a functor $I:
\Dface \go \Top$ assigning to $\upr{n}$ the topological $n$-simplex
$\Delta^n$.  This functor $I$ is fundamental: by a stock categorical
construction it induces the adjunction $\Top \oppairu \Set^{\Dface^\op}$ on
which much of algebraic topology is built.  (The first functor here is the
singular semisimplicial set of a space, and the second is geometric
realization.)  And $I$ has a universal property similar in character to the
two already mentioned: it is the universal functor admitting the combinatorial
process of barycentric subdivision (Example~\ref{eg:rec-bary}).

The spaces mentioned so far are standard objects of classical algebraic
topology, but the same kind of universal characterization also captures some
non-classical spaces.  For example, there are similar characterizations of
certain fractals---spaces that seem to be the epitome of complexity, but turn
out to have simple universal properties.  Conjecturally~(\ref{conj:julia}),
this includes the Julia set of any complex rational function.

We use the term \emph{self-similarity} in a `global' sense.  The interval $[0,
1]$, for example, is called self-similar because it is homeomorphic to a
gluing-together of two copies of itself.  `Local' statements of
self-similarity say something like `almost any small pattern observed in one
part of the object can be observed throughout the object, at all scales'.
(See for instance Chapter~4 of Milnor~\cite{Mil}, where such statements are
made about Julia sets.)  Global statements say something like `the whole
object consists of several smaller copies of itself glued together'; more
generally, there may be a whole family of objects, each of which can be
described as several objects in the family glued together.  The purpose of
this paper is to develop a theory of global self-similarity.

Viewed from another angle, this is a theory of recursive decomposition.  Our
first example concerned a recursive decomposition of the real interval.  In
the second, the isomorphisms~(\ref{eq:univ-cgt-pt}) and~(\ref{eq:univ-cgt-it})
can be interpreted as a pair of mutually recursive type definitions, in the
sense of computer science.  Here we only study recursive characterizations of
sets and topological spaces.  It may be possible to extend the theory to
encompass other types of space, hence other types of recursive decomposition
or self-similarity: conformal, statistical, type-theoretic, and so on.

Another possibility is to develop the algebraic topology of self-similar
spaces, for which the usual homotopical and homological invariants are often
useless: in the case of a connected fractal subset of the plane, for example,
they only give us $\pi_1$, which is typically either infinite-dimensional or
trivial.  However, a characterization by a recursive system of equations is a
discrete description, and so might lead to useful invariants.

We set up the basic language in~\S\ref{sec:dsss} and~\S\ref{sec:sss}.  The
main aim there is to motivate the definitions of \emph{equational system} and
of \emph{universal solution} of an equational system.  Informally, an
equational system is a system of equations in which each variable,
representing a space, is equated to a colimit or gluing-together of the
others.  A universal solution of such a system is a solution with a particular
universal property.  For example, the result above states that the real
interval (equipped with some extra structure) is the universal solution of a
certain simple equational system.

With the language set up, the principal results of the rest of the paper can be
summarized~(\S\ref{sec:summary}).  These results completely answer all the
basic questions about existence, construction and recognition of the
universal solutions of equational systems.  

Category theory is essential here, for two reasons.  First, our spaces are to
be characterized by universal properties.  Second, the appropriate general
notion of `gluing' is the categorical notion of colimit.  Further categorical
concepts become needed, and are explained, as the theory develops.

\paragraph*{Related work}
Various other theories are related to this one.  Symbolic dynamics~\cite{LM}
seems most closely related to the case of \emph{discrete} equational
systems~(\S\ref{sec:dsss}).  Iterated function systems
\cite{Fal,Hut} are related, but differ crucially in that they take
place inside a fixed ambient space, whereas we are concerned with spaces in
the abstract; see Examples~\ref{eg:rec-Sierpinski} and~\ref{eg:rec-ifs}.

The motivating example for this work was the theorem of Freyd on the real
interval~\cite{Fre,FreMA}.  This in turn was inspired by a theorem of
Pavlovi\'c and Pratt~\cite{PP}.  Their results are part of a long line of work
on terminal coalgebras in computer science.  (See~\cite{FreACC}, for instance,
and~\cite{Ad} for a survey.) In that context, (co)recursively defined
data types occur as terminal coalgebras; they are a non-topological analogue
of our recursively decomposable spaces.  Freyd's Theorem stimulated other
related work, in particular that of Escard\'o and Simpson~\cite{ES}.
Escard\'o also obtained a topological version of Freyd's Theorem~\cite{Esc},
different from ours.

A paper of Barr has some obvious similarities to the present work~\cite{Barr}.
He discusses terminal coalgebras for an endofunctor and the metrics
associated with them.  However, the class of endofunctors that he considers
has little overlap with the class considered here.  The categories on which his
endofunctors act always have a terminal object, and his terminal coalgebras
can be constructed as limits; contrast~\ref{warning:fin} below.

Recent work of Karazeris, Matzaris and Velebil~\cite{KMV} builds on the work
here, giving new theorems, and new proofs of old theorems, in the general
theory of categorical coalgebras.

A short survey of the work contained in this paper is available~\cite{GSSO}.

\paragraph*{Notation and terminology}  
The sum (coproduct) of a family $(X_i)_{i \in I}$ of objects of a category
is written $\sum_i X_i$.  If $X_i = X$ for all $i$ then the sum is written
$I \times X$.  The sum of a finite family $X_1, \ldots, X_n$ of objects is
written as $X_1 + \cdots + X_n$, or as $0$ if $n = 0$.

Given categories $\cat{A}$ and $\cat{B}$, the category whose objects are
functors from $\cat{A}$ to $\cat{B}$ and whose morphisms are natural
transformations is written $\ftrcat{\cat{A}}{\cat{B}}$.  

$\Top$ is the category of \emph{all} topological spaces and continuous
maps.

A \demph{discrete} category is one in which the only maps are the
identities.  Small discrete categories are therefore just sets.  A
\demph{finite} category is one with only finitely many maps (hence only
finitely many objects).  A category is \demph{connected} if it is nonempty and
cannot be written as the coproduct of two nonempty categories.

The set $\nat$ of natural numbers is taken to include $0$.

The cardinality of a finite set $S$ is denoted $|S|$.

We will use extensively the language of modules (in the sense of category
theory), also called bimodules, profunctors or distributors \cite{Ben,Law}.
An introduction to modules can be found in Appendix~\ref{app:modules}; here we
just state the basic conventions.

Given categories $\scat{A}$ and $\scat{B}$, a \demph{module}
\[
M: \scat{B} \gomod \scat{A}
\]
is a functor $M: \scat{B}^\op \times \scat{A} \go \Set$.  (When $\scat{A}$ and
$\scat{B}$ are monoids construed as one-object categories, such a module $M$
is a set with a compatible left $\scat{A}$-action and right
$\scat{B}$-action.)  For objects $a \in \scat{A}$ and $b \in \scat{B}$, we
write
\[
m: b \gomod a
\]
to mean $m \in M(b, a)$.  Thus, a module $M: \scat{B} \gomod \scat{A}$ is an
indexed family $(M(b, a))_{b \in \scat{B}, a \in \scat{A}}$ of sets together
with actions:
\begin{eqnarray*}
b \gobymod{m} a \goby{f} a'	&
\ \textrm{gives}	&\ 
b \gobymod{fm} a',	\\
b' \goby{g} b \gobymod{m} a	&
\ \textrm{gives}	&\ 
b' \gobymod{mg} a.	
\end{eqnarray*}
These are required to satisfy axioms: $(f'f)m = f'(fm)$, $1m = m$, and dually;
and $(fm)g = f(mg)$. 

A functor $X: \scat{A} \go \Set$ can be viewed as a module $\One \gomod
\scat{A}$, where $\One$ denotes the category with one object and only the
identity arrow.  In this special case, the `$fm$' notation above becomes the
following: given an arrow $f: a \go a'$ in $\scat{A}$ and an element $x \in
X(a)$, we write $fx$ for the element $(Xf)(x) \in X(a')$.  Similar notation
(`$yg$') is used for contravariant functors $Y: \scat{B}^\op \go \Set$.  

We will also use commutative diagrams involving crossed arrows $\gomod$,
as explained in Appendix~\ref{app:modules}.

\paragraph*{Acknowledgements}

I am very grateful for the encouragement and help of Clemens Berger, Araceli
Bonifant, Edward Crane, Marcelo Fiore, Panagis Karazeris, Steve Lack,
Apostolos Matzaris, John Milnor, Mary Rees, Justin Sawon, Carlos Simpson, Ivan
Smith, Ji\v{r}\'\i\ Velebil, and most especially Andr\'e Joyal, who made many
insightful and detailed comments at several stages of this work, and very
kindly suggested some new results
(\ref{lemma:flat-rep-old}--\ref{eg:Freyd-fam-rep} and
\ref{thm:can-rep}--\ref{propn:cofilt-complexes}).

This research has relied crucially on the categories mailing list
(see~\cite{Fre}); I thank Bob Rosebrugh, who runs it.  I am also very grateful
to Jon Nimmo for creating Figure~\ref{fig:Julia}.  The commutative diagrams
were made using Paul Taylor's macros.

This work was supported by a Nuffield Foundation award NUF-NAL 04 and an EPSRC
Advanced Research Fellowship.


\section{Discrete equational systems}
\label{sec:dsss}

We work our way up to the concept of equational system by first considering an
important special case, discrete equational systems.  It illustrates many
aspects of the general case, but in a simpler setting.  

A discrete equational system can be thought of as a system of linear equations
such as
\begin{eqnarray}
x_1     &=      &2x_1 + 5x_2 + x_3      
\label{eq:des-arb-1}    \\
x_2     &=      &x_2                    
\label{eq:des-arb-2}    \\
x_3     &=      &4x_1 + x_2.            
\label{eq:des-arb-3}
\end{eqnarray}
Better, it can be thought of as a \emph{categorification} of such a system:
the variables $x_i$ represent spaces, addition is coproduct, and the
equalities are really isomorphisms.  General equational systems can also be
thought of as a categorification of such systems of equations---but a more
subtle one.

We introduce discrete equational systems using two examples.

\minihead{The Cantor set}

The \demph{Cantor set} is the topological space $2^\posint$, that is, the
product $2 \times 2 \times \cdots$ of countably infinitely many copies of the
discrete two-point space $2 = \{0, 1\}$.  (Here $\posint$ is the set $\{1, 2,
\ldots\}$ of positive integers.)  The Cantor set is often regarded as a subset
of the real interval $[0, 1]$ via the embedding
\[
(m_n)_{n \geq 1}
\goesto
\sum_{n \geq 1}
2m_n \cdot 3^{-n}
\]
($m_n \in \{0, 1\}$), but here we will only consider it as an abstract
topological space.

The Cantor set satisfies an `equation': $2^\posint = 2^\posint + 2^\posint$.
More precisely, there is a canonical isomorphism
\[
\iota: 2^\posint \goiso 2^\posint + 2^\posint,
\]
where $\iota(0, m_2, m_3, \ldots)$ is the element $(m_2, m_3, \ldots)$ of the
first copy of $2^\posint$, and $\iota(1, m_2, m_3, \ldots)$ is the element
$(m_2, m_3, \ldots)$ of the second copy of $2^\posint$.  The pair $(2^\posint,
\iota)$ has, moreover, a universal property: it is terminal among all pairs
$(X, \xi)$ where $X$ is a topological space and $\xi: X \go X + X$ is any
(continuous) map.  In other words, for any such pair $(X, \xi)$ there is a
unique map $\ovln{\xi}: X \go 2^\posint$ such that the square
\[
\begin{diagram}
X                       &\rTo^\xi       &X + X                          \\
\dTo<{\ovln{\xi}}       &               &\dTo>{\ovln{\xi} + \ovln{\xi}} \\
2^\posint               &\rTo_\iota     &2^\posint + 2^\posint          \\
\end{diagram}
\]
commutes.  This can easily be verified directly; it is also a very special
case (Example~\ref{eg:rec-terminal}) of the theory developed in this paper.  

Some terminology will allow us to express this universal property more
succinctly.  

\begin{defn}
Let $\cat{C}$ be a category and $G$ an endofunctor of $\cat{C}$ (that is, a
functor $\cat{C} \go \cat{C}$).  A \demph{$G$-coalgebra} is a pair $(X, \xi)$
where $X \in \cat{C}$ and $\xi: X \go G(X)$.  A \demph{map} $(X, \xi) \go (X',
\xi')$ of $G$-coalgebras is a map $X \go X'$ in $\cat{C}$ such that the
evident square commutes.
\end{defn}

\begin{example}
Let $\cat{C}$ be the category of modules over some commutative ring, and let
$G$ be the endofunctor defined by $G(X) = X \otimes X$.  Then a $G$-coalgebra
is a (not necessarily coassociative) coalgebra in the algebraists' sense. 
\end{example}

Now let $\cat{C}$ be the category $\Top$ of topological spaces, and let $G$ be
the endofunctor defined by $G(X) = X + X$.  A $G$-coalgebra is a space $X$
together with a map $\xi: X \go X + X$.  The universal property of the Cantor
set is that $(2^\posint, \iota)$ is the \demph{terminal coalgebra}, that is,
the terminal object in the category of coalgebras.

In our example, the structure map $\iota$ of the terminal coalgebra is an
isomorphism.  This is not coincidence, as the following elementary result
reveals.  

\begin{lemma}[Lambek~\cite{Lam}]
\label{lemma:Lambek}
Let $\cat{C}$ be a category and $G$ an endofunctor of $\cat{C}$.  If $(I,
\iota)$ is terminal in the category of $G$-coalgebras then $\iota: I \go
G(I)$ is an isomorphism.
\done
\end{lemma}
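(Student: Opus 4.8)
The plan is to exploit terminality twice. Since $(I, \iota)$ is a $G$-coalgebra, so is $(G(I), G(\iota))$: the map $G(\iota) \colon G(I) \go G(G(I))$ serves as structure map. By terminality of $(I, \iota)$, there is a unique coalgebra map $\theta \colon (G(I), G(\iota)) \go (I, \iota)$; that is, a map $\theta \colon G(I) \go I$ in $\cat{C}$ with $\iota \of \theta = G(\theta) \of G(\iota)$. I claim $\theta$ is a two-sided inverse for $\iota$.

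First I would check $\theta \of \iota = \id_I$. The key observation is that $\iota$ itself is a coalgebra map $(I, \iota) \go (G(I), G(\iota))$, essentially because $G(\iota) \of \iota = G(\iota) \of \iota$ trivially. Composing, $\theta \of \iota$ is a coalgebra map $(I, \iota) \go (I, \iota)$; but so is $\id_I$, and by the uniqueness half of terminality any two such agree, so $\theta \of \iota = \id_I$. For the other composite, I would use the defining equation of $\theta$ together with the identity just obtained: $\iota \of \theta = G(\theta) \of G(\iota) = G(\theta \of \iota) = G(\id_I) = \id_{G(I)}$, using functoriality of $G$. Hence $\iota$ and $\theta$ are mutually inverse, so $\iota$ is an isomorphism.

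There is no real obstacle here; the only thing to be careful about is keeping straight which map is playing the role of a coalgebra morphism at each stage, and remembering that terminality supplies both existence and uniqueness — the uniqueness clause is what forces $\theta \of \iota = \id_I$. The argument is entirely formal and uses nothing about $\cat{C}$ or $G$ beyond the definitions.
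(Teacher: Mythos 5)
Your proof is correct and is the standard argument for Lambek's Lemma; the paper omits the proof (marking the statement \done with a citation to Lambek), so there is nothing to compare against. Every step checks out: $(G(I), G(\iota))$ is a coalgebra, terminality gives $\theta$, the fact that $\iota$ is itself a coalgebra map $(I,\iota)\go(G(I),G(\iota))$ is the tautology $G(\iota)\of\iota = G(\iota)\of\iota$, uniqueness of coalgebra endomorphisms of the terminal coalgebra gives $\theta\of\iota = \id_I$, and functoriality of $G$ together with the defining square for $\theta$ yields $\iota\of\theta = \id_{G(I)}$.
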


A $G$-coalgebra $(X, \xi)$ in which $\xi: X \go G(X)$ is an isomorphism is
called a \demph{fixed point} of $G$.

\minihead{Spaces of walks}

We turn now to a different topological object with a different universal
property.  Consider walks on the natural numbers, of the following type:
\begin{itemize}
\item start at some position $n$
\item with each tick of the clock, take one step left or one step
right---unless at position $0$, in which case stay there
\item continue forever.
\end{itemize}
(One might consider imposing a different rule at $0$; see
Example~\ref{eg:walks-newrule}.)

Let $W_n$ be the set of all walks starting at position $n$.  Formally, $W_n$
is the set of elements $(a_0, a_1, \ldots) \in \nat^\nat$ such that $a_0 = n$
and for all $r \in \nat$, \emph{either} $a_r > 0$ and $a_{r + 1} \in \{a_r -
1, a_r + 1\}$, \emph{or} $a_r = a_{r + 1} = 0$.  There is a (profinite)
topology on $W_n$ generated by taking, for each $n, a_0, \ldots, a_n \in
\nat$, the set of all walks beginning $(a_0, \ldots, a_n)$ to be closed.  So
we have a family $(W_n)_{n \in \nat}$ of spaces, and this is the `topological
object' that we will characterize by a universal property.

First note that the spaces $W_n$ satisfy some `equations', or rather,
isomorphisms.  A walk starting at position $n > 0$ consists of either a step
left followed by a walk starting at $n - 1$, or a step right followed by a
walk starting at $n + 1$.  Thus, there is a canonical isomorphism
\[
\iota_n: W_n \goiso W_{n - 1} + W_{n + 1}
\]
for each $n > 0$.  Similarly, a walk starting at position $0$ consists of a
null step followed by another walk starting at $0$, so there is a canonical
isomorphism
\[
\iota_0: W_0 \goiso W_0.
\]
(In fact, $W_0$ is the one-point space, so $\iota_0$ is the identity.)

These isomorphisms can be expressed as follows.  The family $W = (W_n)_{n \in
\nat}$ is an object of the category $\cat{C} = \Top^\nat$ of sequences of
spaces.  There is an endofunctor $G$ of $\cat{C}$ defined by
\begin{equation}
\label{eq:walk-endofunctor}
(G(X))_n
=
\left\{
\begin{array}{ll}
X_{n - 1} + X_{n + 1}   &
\textrm{if } n > 0      \\
X_0                     &
\textrm{if } n = 0
\end{array}
\right.
\end{equation}
($X \in \cat{C}$, $n \in \nat$).  We have just observed that there is a
canonical isomorphism $\iota: W \goiso G(W)$; that is, $(W, \iota)$ is a fixed
point of $G$.  The universal property is that $(W, \iota)$ is the terminal
$G$-coalgebra.  Again, this can be proved directly and follows from later
theory.

(Of the many types of walk that could be considered, this one is of special
interest: in a certain sense, the sequence $(W_n)_{n \geq 1}$ has period $6$.
See~\cite{PSW} and compare~\cite{Blass} and~\cite{OCCN}.)

\minihead{Abstractions}

In both of our examples, we characterized a topological object as the terminal
coalgebra for an endofunctor.  But our two examples have further features in
common.  We now record those features and abstract, arriving at notions of
`discrete equational system' and `universal solution' of such a system.

In the Cantor set example, $\cat{C} = \Top$, and in the walks example,
$\cat{C} = \Top^\nat$.  In both, then, $\cat{C} = \Top^A$ for some set $A$.
We write objects of $\Top^A$ as indexed families $(X_a)_{a \in A}$.

In the Cantor set example, the functor $G: \cat{C} \go \cat{C}$ is defined by
$G(X) = X + X$, and in the walks example, $G$ is defined
by~(\ref{eq:walk-endofunctor}).  In both, $G$ has the following property: for
each $a \in A$, the space $(G(X))_a$ is a finite sum of spaces $X_b$ ($b \in
A$).  More precisely, there is a family $(M_{b, a})_{b, a \in A}$ of natural
numbers such that for all $X \in \Top^A$ and $a \in A$,
\[
(G(X))_a 
=
\sum_{b \in A}
M_{b, a} \times X_b.
\]
These are \emph{finite} sums, that is, $\sum_{b \in A} M_{b, a} < \infty$ for
all $a \in A$.  It makes no difference for now if we take $M_{b, a}$ to be a
finite set rather than a natural number, and for reasons of functoriality that
emerge later, it will be better if we do so.

Thus, in both examples the category $\cat{C}$ and the endofunctor $G$ are
determined by a set $A$ and a matrix of sets $M = (M_{b, a})_{b, a \in A}$.
This suggests the following definition.

\begin{defn}
\label{defn:des}
A \demph{discrete equational system} is a pair $(A, M)$ where $A$ is a set and
$M$ is a family $(M_{b, a})_{b, a \in A}$ of sets such that for each $a \in
A$, the disjoint union $\sum_{b \in A} M_{b, a}$ is finite.  
\end{defn}

Let $(A, M)$ be a discrete equational system and let $\cat{E}$ be a category
with finite sums.  Then there is an endofunctor $M \otimes \dashbk$ of
$\cat{E}^A$ defined by
\begin{equation}
\label{eq:des-tensor}
(M \otimes X)_a
=
\sum_{b \in A} M_{b, a} \times X_b
\in 
\cat{E}
\end{equation}
($X \in \cat{E}^A$, $a \in A$).  So far we have taken $\cat{E} \in \Top$; the
only other case with which we will be concerned is $\cat{E} = \Set$.

\begin{example*}{One-variable systems}
\label{eg:des-des-one}
A discrete equational system $(A, M)$ in which $A$ is a one-element set
amounts to just a finite set $M$.  If $M$ has $n$ elements then the induced
endofunctor $M \otimes \dashbk$ of $\Top$ is $X \goesto n \times X$.  In the
Cantor set example, $n = 2$.
\end{example*}

\begin{example*}{Walks}
\label{eg:des-des-walks}
The walks example corresponds to the discrete equational system $(A, M)$ in
which $A = \nat$ and 
\[
|M_{b, a}|
=
\left\{
\begin{array}{ll}
1       &\textrm{if } a > 0 \textrm{ and } b = a \pm 1  \\
1       &\textrm{if } a = b = 0 \\
0       &\textrm{otherwise}
\end{array}
\right.
\]
($b, a \in \nat$).  The induced endofunctor $M \otimes \dashbk$ is exactly the
functor $G$ defined earlier.
\end{example*}

In general, a discrete equational system can be viewed as a system of
simultaneous equations using only addition, such as
\begin{eqnarray*}
x_0     &=      &x_0                    \\
x_n     &=      &x_{n - 1} + x_{n + 1} 
\hspace*{4em} (n \in \posint)   
\end{eqnarray*}
(the walks example), or equations (\ref{eq:des-arb-1})--(\ref{eq:des-arb-3})
above.  Formally, equations (\ref{eq:des-arb-1})--(\ref{eq:des-arb-3})
correspond to the discrete equational system $(A, M)$ in which $A = \{1, 2,
3\}$ and $M$ is the transpose of the matrix of coefficients on the right-hand
side: $M_{1, 1} = 2$, $M_{2, 1} = 5$, and so on.

A discrete equational system $(A, M)$ can also be viewed as a graph.  Call an
element $m \in M_{b, a}$ a \demph{sector} of type $b$ in $a$, and write $m: b
\gomod a$.  Then there is one sector of type $b$ in $a$ for each copy of $X_b$
appearing in the expression~(\ref{eq:des-tensor}) for $(M \otimes X)_a$.  The
(directed) graph corresponding to $(A, M)$ has the elements of $A$ as its
vertices and the sectors as its edges (Figure~\ref{fig:graphs}).
\begin{figure}
\hfill
\includegraphics[width=4.7em]{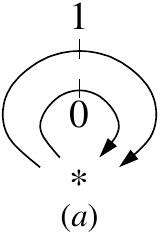}
\hfill
\includegraphics[width=22.3em]{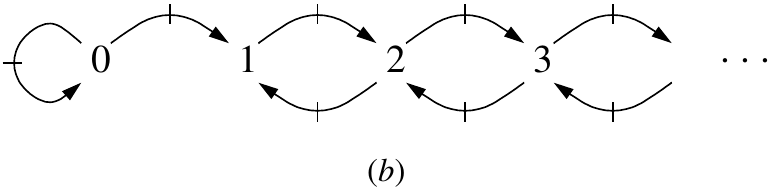}
\hfill
\caption{Graphs corresponding to the discrete equational systems for~(a) the
Cantor set (Example~\ref{eg:des-des-one}) and~(b) the spaces of walks
(Example~\ref{eg:des-des-walks}).  In~(b), one walks \emph{backwards} along
the arrows}  
\label{fig:graphs}
\end{figure}
The finiteness condition on $M$ is that each $a \in A$ contains only finitely
many sectors, or equivalently that each vertex is at the head of only finitely
many edges.

The universal properties of the Cantor set and the spaces of walks will be
expressed in the following terms.

\begin{defn}
Let $(A, M)$ be a discrete equational system and $\cat{E}$ a category with
finite sums.  An \demph{$M$-coalgebra} (in $\cat{E}$) is a coalgebra for the
endofunctor $M \otimes \dashbk$ of $\cat{E}^A$.  A \demph{universal solution}
of $(A, M)$ (in $\cat{E}$) is a terminal $M$-coalgebra. 
\end{defn}

\begin{example*}{Cantor set}
Take the discrete equational system $(A, M)$ of Example~\ref{eg:des-des-one},
with $n = 2$.  The universal solution is the Cantor set $2^\posint \in \Top$
together with the canonical isomorphism 
\[
\iota: 2^\posint \goiso 2 \times 2^\posint = M \otimes 2^\posint.
\]
\end{example*}

\begin{example*}{Walks}
Take the discrete equational system $(A, M)$ of
Example~\ref{eg:des-des-walks}.  The universal solution is $W = (W_n)_{n \in
\nat} \in \Top^\nat$ together with the canonical isomorphism $\iota: W \goiso
G(W) = M \otimes W$.
\end{example*}

Universal solutions are evidently unique (up to canonical isomorphism) when
they exist.  The word `solution' is justified by Lambek's
Lemma~(\ref{lemma:Lambek}): if $(I, \iota)$ is a universal solution then $I
\iso M \otimes I$.  The converse, however, fails: for any discrete equational
system $(A, M)$, the empty family $0 = (\emptyset)_{a \in A} \in \Top^A$
satisfies $0 \iso M \otimes 0$, but it is not usually the universal solution.  

When $\cat{E}$ is $\Set$ or $\Top$, or more generally if $\cat{E}$ has enough
limits, every discrete equational system has a universal solution.  This can
be constructed as follows.  

Let $(A, M)$ be a discrete equational system.  For each $a \in A$, let $I_a$
be the set of all infinite sequences
\[
\cdots \gobymod{m_3} a_2 \gobymod{m_2} a_1 \gobymod{m_1} a_0 = a
\]
of sectors.  Then $I \in \Set^A$.  For each $a \in A$ we have 
\begin{eqnarray*}
(M \otimes I)_a         &=      &
\sum_{b \in A} M_{b, a} \times I_b      \\
        &\iso   &
\{ 
\textrm{diagrams }
\cdots \gobymod{p_2} b_1 \gobymod{p_1} b_0 = b \gobymod{m} a
\},
\end{eqnarray*}
so there is a canonical isomorphism $\iota_a: I_a \goiso (M \otimes I)_a$.
This defines an $M$-coalgebra $(I, \iota)$.  It can be verified directly, and
follows from a more general result (Theorem~\ref{thm:universalsolutioninSet}),
that $(I, \iota)$ is the universal solution of $(A, M)$ in $\Set$.

Moreover, each set $I_a$ carries a natural topology, generated by declaring
that for each finite diagram
\begin{equation}
\label{eq:fin-complex}
a_n \gobymod{m_n} \ \cdots \ \gobymod{m_1} a_0 = a,
\end{equation}
the set of all elements of $I_a$ ending in~(\ref{eq:fin-complex}) is closed.
(Denoting by $(I_n)_a$ the set of all diagrams~(\ref{eq:fin-complex}), we have
$I_a = \Lt{n} (I_n)_a$, and this is the corresponding profinite topology.)
Thus, $(I, \iota)$ becomes a coalgebra in $\Top$.  Again, it can be shown
directly and follows from a later result
(Theorem~\ref{thm:universalsolutioninTop}) that this is the universal solution
in $\Top$.  

So, any discrete equational system specifies a family of spaces, its universal
solution.  But as a tool for specifying spaces, this has severe limitations:
for as the construction of the universal solution $(I, \iota)$ reveals, the
spaces $I_a$ are always totally disconnected.  This is a consequence of the
fact that $I_a$ is isomorphic to a \emph{disjoint} union of the other spaces
$I_b$:
\begin{equation}
\label{eq:discrete-I-eqn}
I_a 
\iso 
(M \otimes I)_a
=
\sum_{b \in A,\ m: b \gomod a}
I_b.
\end{equation}
To specify spaces that are not totally disconnected, we will need to use
non-disjoint unions, that is, glue the spaces $I_b$ together in some
nontrivial way.  This is the step up from discrete equational systems to
general equational systems.


\section{Equational systems}
\label{sec:sss}

A discrete equational system is a system of equations specifying each member
of a family of spaces as a disjoint union of some of the others.  A (general)
equational system is the same, except that we are no longer confined to
\emph{disjoint} unions: any kind of union, or gluing, is permitted.  Formally,
this is the generalization from coproducts to colimits.

However, the process of generalization is not totally straightforward.  In the
general setting there are subtleties that were invisible in the discrete case,
as we shall see.

The definitions are introduced by way of two examples.

\minihead{The real interval}

In 1999, Peter Freyd~\cite{Fre} found a new characterization of the real
interval $[0, 1]$.  The interval is isomorphic to two copies of itself joined
end to end, and Freyd's theorem says that it is universal as such.

The result of joining two copies of $[0, 1]$ end to end is naturally described
as the interval $[0, 2]$, and then multiplication by $2$ gives a bijection
$[0, 1] \go [0, 2]$, which may be written as
\begin{equation}
\label{eq:interval-iso-it}
\iota_1: 
\begin{array}{c}
\setlength{\unitlength}{1em}
\begin{picture}(3,0.5)(-1.5,-0.25)
\put(-1.3,0){\line(1,0){2.6}}
\cell{-1.3}{0}{c}{\zmark}
\cell{1.3}{0}{c}{\zmark}
\end{picture}
\end{array}
\goiso
\begin{array}{c}
\setlength{\unitlength}{1em}
\begin{picture}(5.6,0.5)(-2.8,-0.25)
\put(0,0){\line(1,0){2.6}}
\put(0,0){\line(-1,0){2.6}}
\cell{-2.6}{0}{c}{\zmark}
\cell{0}{0}{c}{\zmark}
\cell{2.6}{0}{c}{\zmark}
\end{picture}
\end{array}.
\end{equation}
The one-point space plays a role here, since that is what we are gluing
along.  For reasons that will become apparent, let us write
\begin{equation}
\label{eq:interval-iso-pt}
\iota_0: \ \ \zmark\  \goiso \ \zmark\ 
\end{equation}
for the identity on the one-point space.

Now let $\cat{C}$ be the category whose objects are diagrams $X_0
\parpair{u}{v} X_1$ where $X_0$ and $X_1$ are sets and $u$ and $v$ are
injections with disjoint images.  (For now we consider only sets; we consider
spaces later.)  An object $X = (X_0, X_1, u, v)$ of $\cat{C}$ can be drawn as
\[
\includegraphics[width=9.0em]{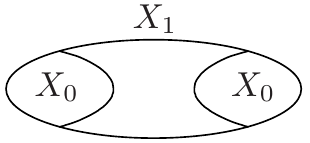}
\]
where the copies of $X_0$ on the left and the right are the
images of $u$ and $v$ respectively.  A map $X \go X'$ in
$\cat{C}$ consists of functions $X_0 \go X'_0$ and $X_1 \go X'_1$
making the evident two squares commute.  

Given $X \in \cat{C}$, we can form a new object $G(X)$ of $\cat{C}$ by gluing
two copies of $X$ end to end:
\begin{equation}
\label{eq:Freyd-gluing}
\begin{array}{c}
\includegraphics[width=14.3em]{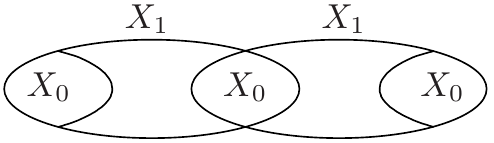}
\end{array}
.
\end{equation}
Formally, the endofunctor $G$ of $\cat{C}$ is defined by pushout:
\begin{equation}
\label{eq:Freyd-pushout}
\begin{diagdiag}
	&	&	&	&(G(X))_1&	&	&	&	\\
	&	&	&\ruTo	&	&\luTo	&	&	&	\\
	&	&X_1	&	&\textrm{pushout} &
						&X_1	&	&	\\
	&\ruTo<u&	&\luTo>v&	&\ruTo<u&	&\luTo>v&	\\
(G(X))_0 = X_0&	&	&	&X_0	&	&	&	&X_0.	\\
\end{diagdiag}
\end{equation}
For example, the unit interval with its endpoints distinguished forms an
object 
\[
I = \left( \{\star\} \parpair{0}{1} [0,1] \right)
\]
of $\cat{C}$, and
\[
G(I) 
= 
\left( \{\star\} \parpair{0}{2} [0,2] \right).
\]
So there is a coalgebra structure $\iota: I \goiso G(I)$ on $I$ given
by~(\ref{eq:interval-iso-it}) and~(\ref{eq:interval-iso-pt}).  

\begin{thm}[Freyd]	\label{thm:Freyd}
$(I, \iota)$ is the terminal $G$-coalgebra.
\end{thm}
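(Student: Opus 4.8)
The plan is to prove terminality directly, by realising the unique map into $(I, \iota)$ as the operation of reading off a binary expansion. As a preliminary, I would unpack the pushout \bref{eq:Freyd-pushout}: since it is a pushout in $\Set$ of the span $X_1 \xleftarrow{\,v\,} X_0 \xrightarrow{\,u\,} X_1$ formed from injections, the set $(G(X))_1$ splits canonically as a disjoint union of three pieces --- a ``left'' copy of $X_1 \without v(X_0)$, a shared copy of $X_0$ (the gluing locus, along which the left copy of $v(a)$ is identified with the right copy of $u(a)$; this is unambiguous precisely because $u$ and $v$ have disjoint images), and a ``right'' copy of $X_1 \without u(X_0)$. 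Under the two evident inclusions, $X_1$ sits in $(G(X))_1$ both as the left-and-middle pieces and as the middle-and-right pieces.

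Given a $G$-coalgebra $(X, \xi)$, the map into $I$ must have $\ovln{\xi}_0 : X_0 \to \{\star\}$ the unique such, so the content is to define $\ovln{\xi}_1 : X_1 \to [0, 1]$. I would do this by iteration: starting from $x \in X_1$, apply $\xi_1$ and see which of the three pieces the result lies in, recording the digit $0$ (left) or $1$ (right) and passing to the corresponding element of $X_1$ to continue, or halting if the middle piece is reached. This either runs forever, producing an infinite binary string $d_1 d_2 \cdots$, or halts after $k$ steps with a finite string $d_1 \cdots d_k$; accordingly set $\ovln{\xi}_1(x) = \sum_{i \geq 1} d_i 2^{-i}$, or $\ovln{\xi}_1(x) = 2^{-(k+1)} + \sum_{i = 1}^{k} d_i 2^{-i}$ (the centre of the length-$2^{-k}$ dyadic interval with address $d_1 \cdots d_k$). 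No choices enter, since at each stage $\xi_1$ of the current element lies in exactly one of the three pieces.

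Next I would check that $\ovln{\xi}$ is a morphism of $\cat{C}$, that is, $\ovln{\xi}_1 \circ u = 0$ and $\ovln{\xi}_1 \circ v = 1$ as constant functions: the coalgebra equation at the basepoint $u$ makes $\xi_1(u(a))$ the left-copy image of $u(\xi_0(a)) \in u(X_0)$, which by disjointness lies in the left piece, so the first recorded digit is $0$ and the iteration continues from another element of the form $u(\dashbk)$; by induction the string is $0 0 \cdots$ and $\ovln{\xi}_1(u(a)) = 0$, and dually for $v$. Then I would check that $\ovln{\xi}$ is a coalgebra map, $G(\ovln{\xi}) \circ \xi = \iota \circ \ovln{\xi}$: the $0$-component is automatic, and the $1$-component reduces, casewise on the piece containing $\xi_1(x)$ and recalling that $\iota_1$ is multiplication by $2$, to the identities $2 \ovln{\xi}_1(x) = \ovln{\xi}_1(\ell(x))$, or $2 \ovln{\xi}_1(x) = 1 + \ovln{\xi}_1(r(x))$, or $2 \ovln{\xi}_1(x) = 1$, where $\ell(x), r(x) \in X_1$ are the elements produced by the construction; each is immediate once one observes that the binary string of $\ell(x)$ (resp.\ $r(x)$) is the string of $x$ with $d_1$ deleted.

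For uniqueness, let $f : X \to I$ be any coalgebra map; then $f_0$ is forced, and the $1$-component of its coalgebra square forces $f_1(x)$ to equal $f_1(\ell(x))/2$, or $(1 + f_1(r(x)))/2$, or $1/2$, according to the piece containing $\xi_1(x)$. Iterating $n$ times confines $f_1(x)$ to the closed length-$2^{-n}$ dyadic interval with address $d_1 \cdots d_n$ (and pins it exactly in the halting case), and since these nested intervals shrink to the single point $\ovln{\xi}_1(x)$ we conclude $f_1 = \ovln{\xi}_1$. The part I expect to demand the most care is not any single computation but keeping the bookkeeping around the non-uniqueness of binary expansions coherent throughout, so that ``halt at the gluing locus and take the midpoint'' is exactly the convention reconciling the coalgebra-map equation with the uniqueness argument; and, relatedly, isolating the property of $[0, 1]$ actually used --- completeness, in the guise that every binary string names a point and every nested sequence of dyadic intervals has a common point --- which is what distinguishes $I$ from near-misses such as $[0,1] \cap \mathbb{Q}$, or from the Cantor set, which is what one gets when the gluing locus is empty and these coincidences vanish.
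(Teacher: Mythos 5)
Your proof is correct, and it is essentially the direct argument the paper itself sketches immediately after the theorem statement (the paper's ``official'' route for Theorem~\ref{thm:Freyd} is via the general machinery, ultimately the Precise Recognition Theorem in Example~\ref{eg:rec-Freyd}, but the sketch you have elaborated is explicitly offered as an alternative). One small improvement worth noting: the paper's sketch uses the two overlapping copies of $X_1$ inside $(G(X))_1$ and then says ``choose left or right arbitrarily'' at the intersection, whereas you replace this with the disjoint three-piece decomposition and a halt-at-midpoint rule, eliminating the arbitrariness at the source rather than arguing after the fact that the choice is immaterial; this also makes the well-definedness of $\ovln{\xi}_1$ immediate and makes the role of the disjoint-images hypothesis (forcing $\xi_1(u(a))$ into the left piece, not the middle) visible exactly where it is used.
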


This follows from a later result (Example~\ref{eg:rec-Freyd}).  A direct proof
is not hard either, and runs roughly as follows.  Take a $G$-coalgebra $(X,
\xi)$ and an element $x_0 \in X_1$.  Then $\xi(x_0) \in (G(X))_1$ is in either
the left-hand or the right-hand copy of $X_1$, so gives rise to a binary digit
$m_1 \in \{0, 1\}$ and a new element $x_1 \in X_1$.  (If $\xi(x_0)$ is in the
intersection of the two copies of $X_1$, choose left or right arbitrarily.)
Iterating gives a binary representation $0.m_1 m_2 \ldots$ of an element of
$[0, 1]$, and this is the image of $x_0$ under the unique coalgebra map $(X,
\xi) \go (I, \iota)$.

In the definition of $\cat{C}$, the condition that the maps $u, v: X_0 \go
X_1$ are injective with disjoint images is essential.  Without it, the theorem
would degenerate entirely: the terminal coalgebra would be $(\{\star\}
\parpairu \{\star\})$.  As we shall see, this condition is a form of flatness.
It is the source of most of the new subtleties in the non-discrete case.

There is also a topological version of Freyd's theorem.  Let $\cat{C}'$ be the
category whose objects are diagrams $X_0 \parpair{u}{v} X_1$ of topological
spaces and continuous closed injections with disjoint images, and whose maps
are pairs of continuous maps making the evident squares commute.  (A map of
topological spaces is \demph{closed} if the direct image of every closed
subset is closed.)  Define an endofunctor $G'$ of $\cat{C}'$ by the same
pushout diagram~(\ref{eq:Freyd-pushout}) as before.  Define a $G'$-coalgebra
$(I, \iota)$ as before, with the Euclidean topology on $[0, 1]$.

\begin{thm}[Topological Freyd]
\label{thm:topologicalFreyd}
$(I, \iota)$ is terminal in the category of $G'$-coalgebras.
\end{thm}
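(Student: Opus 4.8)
The plan is to mimic the proof sketch given for the set-theoretic version (Theorem~\ref{thm:Freyd}), but to upgrade every step so that it respects the topology and the closedness condition on the structure maps. First I would fix a $G'$-coalgebra $(X, \xi)$ in $\cat{C}'$ and define the candidate coalgebra map $\ovln{\xi}: X \go I$ on the underlying sets exactly as before: given $x_0 \in X_1$, the element $\xi(x_0) \in (G'(X))_1$ lies in the left or right copy of $X_1$ (and if it lies in the overlap we must check the two choices give the same real number, or else argue the overlap forces digit ambiguity $0.m0111\ldots = 0.(m+1)000\ldots$ consistently); iterating produces a binary expansion $0.m_1 m_2 \cdots$ and hence a point of $[0,1]$. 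On $X_0$ the map must be the constant map at $\star$, forced by the coalgebra square over the $X_0$-component. Uniqueness on the level of sets is already covered by Freyd's theorem, so the genuinely new content is (i) continuity of $\ovln{\xi}$, and (ii) well-definedness, i.e.\ showing the recursion actually lands in $[0,1]$ in a way compatible with the closed-injection hypotheses.

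For continuity, the natural approach is to present $[0,1]$ as an inverse limit of finite approximations, parallel to the profinite construction $I_a = \Lt{n}(I_n)_a$ in \S\ref{sec:dsss}, except that here the transition maps glue rather than disjointly sum, so the inverse limit is taken in $\Top$ along the maps that collapse the $(n+1)$st subdivision to the $n$th. Concretely, the $n$th subdivision of $I$ is a chain of $2^n$ intervals, and $[0,1]$ is the limit of the system $\cdots \go \compt{\textrm{3 intervals chained}} \go \compt{\textrm{2 intervals chained}} \go \compt{I}$ obtained by iterating $G'$ on the terminal object of the ambient one-point diagram. Then continuity of $\ovln{\xi}$ follows by checking continuity of each composite $X \go (G')^n(\mathrm{pt})$, which is a finite diagram-chase using only that $\xi$ and the $G'$-structure maps are continuous and that pushouts of closed injections along closed injections are again closed injections in $\Top$ — the point being that the topology on $(G')^n(\mathrm{pt})$ is the obvious quotient/gluing topology precisely because the gluing maps are closed embeddings. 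I would state the needed fact about pushouts of closed embeddings as a small lemma (or cite the earlier remark that this condition "is a form of flatness").

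The main obstacle I anticipate is exactly the interaction between the overlap ambiguity and the topology: when $\xi(x_0)$ lands in the image of the central inclusion $X_0 \hookrightarrow (G'(X))_1$, the digit $m_1$ is genuinely ambiguous, and one must verify that the two branches of the recursion converge to the same real number (the dyadic-rational identification), \emph{and} that making a consistent choice — say, always branch left — yields a function that is still continuous, not merely well-defined set-theoretically. The cleanest way around this is probably not to make an arbitrary choice at all, but to observe that the whole binary-expansion map factors through the coequalizer presentation of $[0,1]$ from the $G'$-coalgebra structure, so that the ambiguous points are identified by construction; continuity then comes for free from the inverse-limit description rather than being checked pointwise. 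A secondary, more routine obstacle is verifying that the Euclidean topology on $[0,1]$ genuinely coincides with the inverse-limit (profinite-style) topology coming from iterated subdivision; this is standard but must be said, since the whole continuity argument rests on it. Once these two points are in place, the coalgebra-square commutativity of $\ovln{\xi}$ and its uniqueness are inherited from Theorem~\ref{thm:Freyd} essentially verbatim, because a continuous map whose underlying function is the unique set-map is automatically the unique coalgebra map in $\cat{C}'$.
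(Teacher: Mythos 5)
Your plan takes a fundamentally different route from the paper, which derives Theorem~\ref{thm:topologicalFreyd} as an application of the Crude Recognition Theorem~(\ref{cor:cruderecognition}) in Example~\ref{eg:rec-Freyd}, after building up the general machinery (construction of $(I,\iota)$ as $\Pi_0$ of a category of complexes, K\"onig's Lemma, and the uniqueness-of-resolutions apparatus of \S\ref{sec:coalgs}--\S\ref{sec:Top-proofs}). A direct proof of the kind you sketch could in principle exist, so the change of route is not itself the problem.

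The problem is that the continuity step rests on a claim the paper explicitly refutes. You propose to present $[0,1]$ as an inverse limit of ``finite approximations'' obtained by iterating $G'$, and to check continuity level-by-level. But Warning~\ref{warning:fin} and Example~\ref{eg:construction-Freyd} address precisely this: for the Freyd system, the truncation categories $\catI_n(1)$ are all connected, so each finite-approximation stage $I_n(1)$ is a single point, and $I(1) = [0,1] \not\iso \Lt{n} I_n(1)$. The failure of the terminal-coalgebra-as-limit-of-finite-approximations picture is the central new difficulty of the non-discrete case, not a technicality. Your alternative reading, where the $n$th stage is ``a chain of $2^n$ intervals,'' does not rescue the argument either: those stages are each homeomorphic to $[0,1]$, so the projections are homeomorphisms and ``continuity of each composite $X \go (G')^n(\cdot)$'' is equivalent to the statement being proved, not a reduction of it. Relatedly, your treatment of the overlap ambiguity relies on ``continuity comes for free from the inverse-limit description,'' so it inherits the same gap; and Example~\ref{eg:Freyd-inverse-larger} shows the inverse image $\ovln{\xi}_1^{-1}V_{p_1,\ldots,p_n}$ can be strictly larger than the set of points resolvable along compatible complexes, a subtlety your set-theoretic digit-chasing does not see. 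What the paper actually does with the closedness hypothesis is different from what you propose: it is used (Lemma~\ref{lemma:resolutionsetsclosed}, via Corollary~\ref{cor:coprojections-closed}) to show the iterated-image sets $V^X_{m_1,\ldots,m_r}$ are closed in $X(a)$, and then Lemma~\ref{lemma:inv-image} expresses $\ovln{\xi}_a^{-1}V_{p_1,\ldots,p_n}$ as a countable intersection of finite unions of such sets, with K\"onig's Lemma controlling the interchange. That is the argument your plan would need to replace, and the inverse-limit shortcut does not supply it.
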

This is proved in Example~\ref{eg:rec-Freyd}.  The importance of the condition
that $u$ and $v$ are closed is that without it, the terminal coalgebra would
be given by the indiscrete topology on $[0, 1]$.

\minihead{A Julia set}

The second example concerns the Julia set of a certain rational function
(Figure~\ref{fig:Julia}(a)).
\begin{figure}
\centering
\includegraphics[width=34.3em]{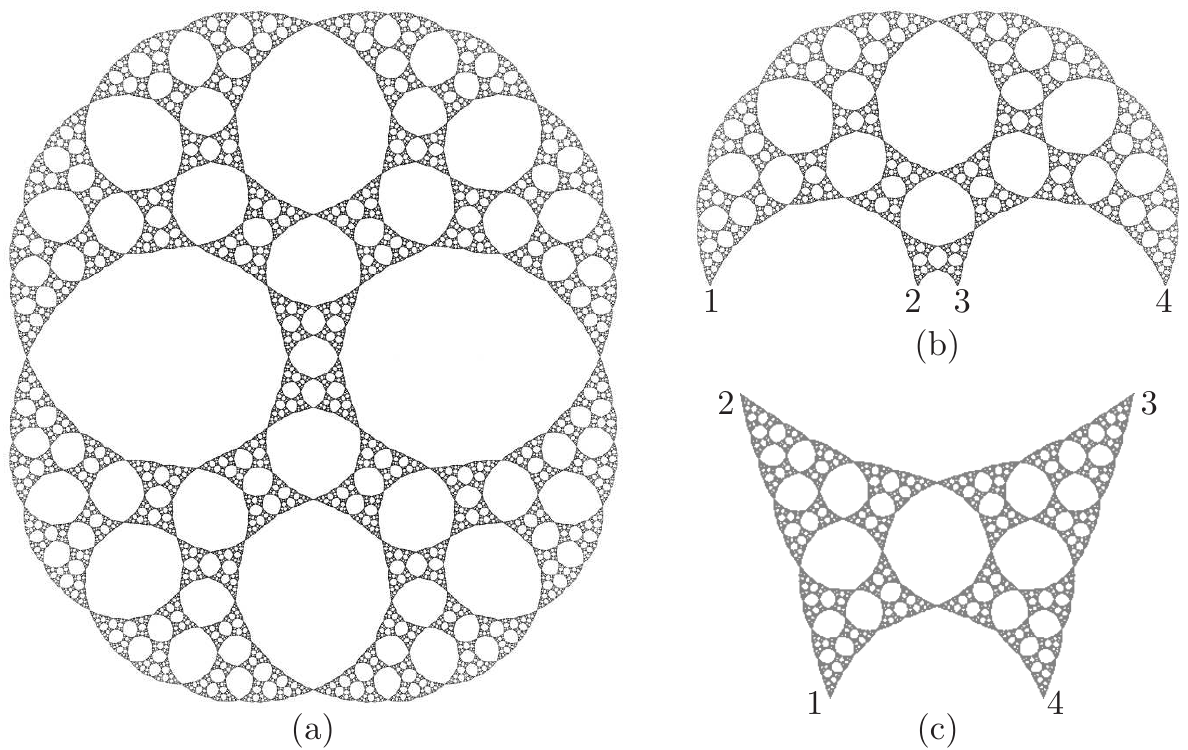}
\caption{(a) The Julia set of $(2z/(1+z^2))^2$; (b),~(c) two subsets,
  rescaled.  (Image by Jon Nimmo; see also~\cite[Fig.~2]{Mil}
  and~\cite[Fig.~53]{PR})}
\label{fig:Julia}
\end{figure}
Since the sole purpose of the example is to motivate the definitions, we will
not need the definition of Julia set, and we will proceed informally.  The
background is that every holomorphic map $f: S \go S$ on a Riemann surface $S$
has a Julia set $J(f) \sub S$; it is the part of $S$ on which $f$ behaves
unstably under iteration.  The best-explored case is where $S$ is the Riemann
sphere $\Cx \cup \{\infty\}$ and $f$ is a rational function with complex
coefficients.  In this case, $J(f)$ is a closed subset of $\Cx \cup
\{\infty\}$, and is almost always fractal in nature.

Figure~\ref{fig:Julia}(a) shows the Julia set of the function $z \goesto (2z /
(1 + z^2))^2$.  Write $I_1$ for this Julia set, regarded as an abstract
topological space.  Evidently $I_1$ has reflectional symmetry in a horizontal
axis, so may be written
\begin{equation}
\label{eq:Julia1}
I_1 \iso
\begin{array}{c}
\includegraphics[width=5.2em]{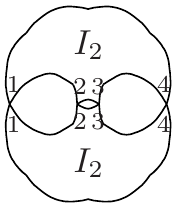}
\end{array}
\end{equation}
where $I_2$ is a certain space with 4 distinguished points, shown in
Figure~\ref{fig:Julia}(b).  In turn, $I_2$ may be regarded as a
gluing-together of subspaces:
\begin{equation}
\label{eq:Julia2}
I_2 \iso
\begin{array}{c}
\includegraphics[width=14.8em]{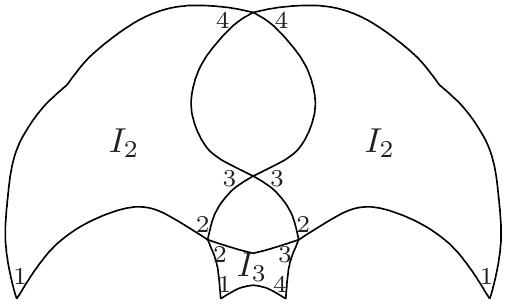}
\end{array}
\end{equation}
where $I_3$ is another space with 4 distinguished points
(Figure~\ref{fig:Julia}(c)).  Finally, $I_3$ is homeomorphic to two copies of
itself glued together:
\begin{equation}
\label{eq:Julia3}
I_3 \iso
\begin{array}{c}
\includegraphics[width=9.2em]{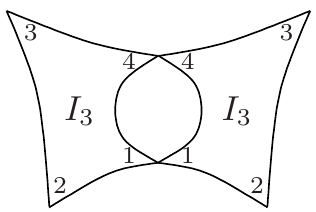}
\end{array}
\end{equation}
No new spaces appear at this stage, so the process ends.  However, the
one-point space has played a role (since we are gluing at single points),
so let us write $I_0$ for the one-point space and record the trivial
isomorphism
\begin{equation}
\label{eq:Julia0}
I_0 \iso I_0.
\end{equation}
Conjecturally, the spaces $I_n$ together with the isomorphisms
(\ref{eq:Julia1})--(\ref{eq:Julia0}) have the following universal property.

Let $\cat{C}$ be the category whose objects are diagrams
\[
\setlength{\unitlength}{1em}
\begin{picture}(7.5,7)(-1.5,-3.3)
\cell{-.5}{0}{r}{X_0}
\cell{4.5}{3}{l}{X_1}
\cell{4.5}{0}{l}{X_2}
\cell{4.5}{-3}{l}{X_3}
\put(0,.8){\vector(1,0){4}}
\put(0,.4){\vector(1,0){4}}
\put(0,0){\vector(1,0){4}}
\put(0,-.4){\vector(1,0){4}}
\put(0,.4){\vector(3,-2){4}}
\put(0,0){\vector(3,-2){4}}
\put(0,-.4){\vector(3,-2){4}}
\put(0,-.8){\vector(3,-2){4}}
\cell{2.5}{.8}{b}{\scriptstyle u_1}
\cell{2.5}{.4}{c}{\scriptstyle u_2}
\cell{2.5}{-.1}{c}{\scriptstyle u_3}
\cell{2.5}{-.5}{t}{\scriptstyle u_4}
\cell{3}{-1.5}{b}{\scriptstyle v_1}
\cell{3}{-2.0}{c}{\scriptstyle v_2}
\cell{3}{-2.5}{c}{\scriptstyle v_3}
\cell{3}{-3.0}{t}{\scriptstyle v_4}
\end{picture}
\]
of topological spaces and continuous closed injections such that $u_1$,
$u_2$, $u_3$ and $u_4$ have disjoint images, and similarly $v_1$, $v_2$, $v_3$
and $v_4$.  Let $G$ be the endofunctor of $\cat{C}$ corresponding to the
right-hand sides of (\ref{eq:Julia1})--(\ref{eq:Julia0}); for instance, 
\[
(G(X))_1	
=
\begin{array}{c}
\includegraphics[width=5.2em]{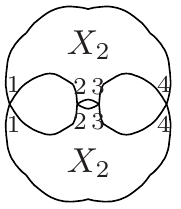}
\end{array}
=
(X_2 + X_2)/\sim
\]
for a certain equivalence relation $\sim$.  (The picture of $(G(X))_1$ is
drawn as if $X_0$ were a single point.)  Then, conjecturally,
(\ref{eq:Julia1})--(\ref{eq:Julia0}) give an isomorphism $\iota: I \goiso
G(I)$ and $(I, \iota)$ is the terminal $G$-coalgebra.
If true, this means that the simple diagrams
(\ref{eq:Julia1})--(\ref{eq:Julia0}) contain as much topological information
as the apparently very complex spaces in Figure~\ref{fig:Julia}: given the
system of equations, we recover these spaces as the universal solution.
(Caveat: we consider only the intrinsic, topological aspects of the spaces,
not how they are embedded into an ambient space or any metric or conformal
structure.)

\minihead{Abstractions}

We now set out the common features of these two examples, eventually arriving
at the general notion of equational system.

For both the real interval and the Julia set, the category $\cat{C}$ is not
$\Set^A$ or $\Top^A$ for any set $A$ (as it was for discrete systems); rather,
it is a full subcategory of $\ftrcat{\scat{A}}{\Set}$ or
$\ftrcat{\scat{A}}{\Top}$ for some small category $\scat{A}$.  In the case of
the interval,
\begin{equation}
\label{eq:interval-category}
\scat{A}
=
\Bigl(
0 \parpair{\sigma}{\tau} 1
\Bigr),
\end{equation}
and in the case of the Julia set,
\begin{equation}
\label{eq:Julia-category}
\scat{A}
=
\left(
\begin{array}{c}
\setlength{\unitlength}{1em}
\begin{picture}(7,7)(-1,-3.3)
\cell{-.5}{0}{r}{0}
\cell{4.5}{3}{l}{1}
\cell{4.5}{0}{l}{2}
\cell{4.5}{-3}{l}{3}
\put(0,.8){\vector(1,0){4}}
\put(0,.4){\vector(1,0){4}}
\put(0,0){\vector(1,0){4}}
\put(0,-.4){\vector(1,0){4}}
\put(0,.4){\vector(3,-2){4}}
\put(0,0){\vector(3,-2){4}}
\put(0,-.4){\vector(3,-2){4}}
\put(0,-.8){\vector(3,-2){4}}
\end{picture}
\end{array}
\right).
\end{equation}
In neither case is $\cat{C}$ the \emph{whole} functor category
$\ftrcat{\scat{A}}{\Set}$ or $\ftrcat{\scat{A}}{\Top}$, because of the
nondegeneracy conditions on the maps $u$ and $v$.  A fruitful generalization
of these conditions is as follows.  (For the tensor product notation, see
Appendix~\ref{app:modules}.)

\begin{defn}
\label{defn:nd-ftr}
Let $\scat{A}$ be a small category.  A functor $X: \scat{A} \go \Set$ is
\demph{nondegenerate} (or \demph{componentwise flat}) if the functor
\[
\dashbk \otimes X: \pshf{\scat{A}} \go \Set
\]
preserves finite connected limits.  The full subcategory of
$\ftrcat{\scat{A}}{\Set}$ formed by the nondegenerate functors is written
$\ndSet{\scat{A}}$.

Write $U: \Top \go \Set$ for the underlying set functor.  A functor $X:
\scat{A} \go \Top$ is \demph{nondegenerate} if $U \of X$ is nondegenerate and
for each map $f$ in $\scat{A}$, the map $Xf$ is closed.  The full subcategory
of $\ftrcat{\scat{A}}{\Top}$ formed by the nondegenerate functors is written
$\ndTop{\scat{A}}$.
\end{defn}

(In fact, it makes no difference to Definition~\ref{defn:nd-ftr} if we change
`finite connected limits' to `pullbacks', by Lemma~2.1 of~\cite{CJ}.  However,
the class of finite connected limits is in some sense better-behaved than the
class of pullbacks: see~\cite{ABLR}.)

It will be shown in~\S\ref{sec:nondegen} that when $\scat{A}$ is the
category~(\ref{eq:interval-category}), $\ndSet{\scat{A}}$ and
$\ndTop{\scat{A}}$ are the categories $\cat{C}$ and $\cat{C}'$ defined in the
real interval example.  Similar statements hold for~(\ref{eq:Julia-category})
and the Julia set example.

A discrete equational system $(A, M)$ consists of a set $A$ and a (suitably
finite) matrix $M$ of natural numbers, that is, a map $M: A \times A \go
\nat$.  The matrix $M$ encodes the right-hand sides of the `equations' that we
seek to solve, and induces an endofunctor $G = M \otimes \dashbk$ of $\Set^A$.
I claim that in our two non-discrete examples, the right-hand sides are
encoded by a module $M: \scat{A} \gomod \scat{A}$ (that is, a functor $M:
\scat{A}^\op \times \scat{A} \go \Set$), and that the induced endofunctor $M
\otimes \dashbk$ of $\ndSet{\scat{A}}$ or $\ndTop{\scat{A}}$ is the
endofunctor $G$ of our examples.

As in the discrete case, the idea is that
\begin{eqnarray*}
M(b, a)		&=	&
\{\textrm{copies of the }b\textrm{th space used in the gluing formula} \\
		&	&
\ \,\textrm{for the }a\textrm{th space} \}
\end{eqnarray*}
($b, a \in \scat{A}$), and elements $m \in M(b, a)$ are called \demph{sectors}
of type $b$ in $a$, written $m: b \gomod a$.  

\begin{example*}{Interval}
We have $\scat{A} = \Bigl( 0 \parpair{\sigma}{\tau} 1 \Bigr)$.  Since, for
instance, the formula~(\ref{eq:Freyd-pushout}) (or~(\ref{eq:Freyd-gluing}))
for $(G(X))_1$ contains $3$ copies of $X_0$, we should have $|M(0, 1)| = 3$.
Naming the elements of the sets $M(b, a)$ suggestively, 
\begin{align*}
M(0, 0)         &= \{\id\},             &
M(0, 1)         &= \{ 0, \dhalf, 1 \},  \\
M(1, 0)         &= \emptyset,           &
M(1, 1)         &= \{ [0, \dhalf], [\dhalf, 1] \}.
\end{align*}
The whole functor $M: \scat{A}^\op \times \scat{A} \go \Set$, including its
action on morphisms, is defined as follows:
\begin{equation}	\label{eq:Freyd-SSS}
\begin{tabular}[c]{c|l}
        &
\begin{diagram}[width=4em,height=3em,tight]
M(\dashbk, 0)							&
\pile{\rTo^{\sigma \cdot \dashbk}\\ 
	\rTo_{\tau \cdot \dashbk}}				&
M(\dashbk, 1)							\\
\end{diagram}
        \\[2ex]
\hline
\raisebox{-9ex}{%
\begin{diagram}[width=3em,height=3em,tight]
M(0, \dashbk)   \\
\uTo<{\dashbk \cdot \sigma} 
\uTo>{\dashbk \cdot \tau}       \\
M(1, \dashbk)   \\
\end{diagram}}
        &
\hspace*{1em}\raisebox{-9ex}{%
\begin{diagram}[width=4em,height=3em,tight]
\{ \id \}							&
\pile{\rTo^0\\ \rTo_1}                                          &
\{ 0, \dhalf, 1 \}                                              \\
\uTo \uTo                                                       &
								&
\uTo<\inf \uTo>\sup                                             \\
\emptyset							&
\pile{\rTo\\ \rTo}                                              &
\{ [0, \dhalf], [\dhalf, 1] \}.					\\
\end{diagram}}
\end{tabular}
\end{equation}
Now $M$ is a module $\scat{A} \gomod \scat{A}$, so induces an endofunctor $M
\otimes \dashbk$ of $\ftrcat{\scat{A}}{\Set}$.  (See
Appendix~\ref{app:modules} for a primer on categorical modules.)  Then, for
instance,
\begin{eqnarray*}
(M \otimes X)_1 &
=       &
(M(0, 1) \times X_0 + M(1, 1) \times X_1)/\sim  \\
        &=      &
(3 \times X_0 + 2 \times X_1)/\sim
\end{eqnarray*}
for some equivalence relation $\sim$.  (Compare~(\ref{eq:Freyd-gluing})
and~(\ref{eq:Freyd-pushout}).)  It follows from later theory that $M \otimes
\dashbk$ restricts to an endofunctor of $\cat{C} = \ndSet{\scat{A}}$, and this
restricted endofunctor is precisely $G$, the endofunctor defined previously.
Analogous statements hold in the topological case.
\end{example*}

\begin{example*}{Julia set}     \label{eg:Julia}
Here $\scat{A}$ is given by~(\ref{eq:Julia-category}).  In the gluing
formula~(\ref{eq:Julia2}) for $I_2$, the one-point space $I_0$ appears 8 times
(Figure~\ref{fig:module}),
\begin{figure}
\centering
\includegraphics[width=14.8em]{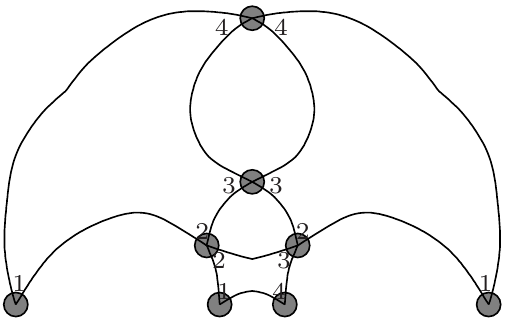}
\caption{The eight elements of $M(0, 2)$} 
\label{fig:module}
\end{figure}
$I_1$ does not appear at all, $I_2$ appears twice, and $I_3$ appears once,
so
\[
|M(0, 2)| = 8,
\diagspace
|M(1, 2)| = 0,
\diagspace
|M(2, 2)| = 2,
\diagspace
|M(3, 2)| = 1.
\]
So, for instance, if $X \in \ftrcat{\scat{A}}{\Top}$ then
\[
(M \otimes X)_2
=
(8 \times X_0 + 2 \times X_2 + X_3)/\sim
\]
where $\sim$ identifies the $8$ copies of $X_0$ with their images in $X_2$ and
$X_3$.  Again it can be shown that $M \otimes \dashbk$ restricts to an
endofunctor of $\cat{C} = \ndTop{\scat{A}}$ and that this is the endofunctor
$G$ described earlier.
\end{example*}

Here is an alternative way of seeing that a system of equations of this type
can be expressed as a module.  The right-hand sides of each of
(\ref{eq:interval-iso-it})--(\ref{eq:interval-iso-pt}) and
(\ref{eq:Julia1})--(\ref{eq:Julia0}) are formal gluings of objects of
$\scat{A}$.  `Gluings' are colimits, so if $\widehat{\scat{A}}$ is the
category obtained by taking $\scat{A}$ and freely adjoining all possible
colimits then the system of equations amounts to a functor from $\scat{A}$ to
$\widehat{\scat{A}}$.  But $\widehat{\scat{A}} = \pshf{\scat{A}}$
(by~\cite[I.5.4]{MM}), so the system is a functor $\scat{A} \go
\pshf{\scat{A}}$, that is, a module $\scat{A} \gomod \scat{A}$.

We confine ourselves to systems of equations in which the right-hand sides are
\emph{finite} gluings.  To formalize this, recall that any presheaf $Y:
\scat{B}^\op \go \Set$
on a small category $\scat{B}$ has a \demph{category of elements}%
\label{p:cat-elts}
$\elt{Y}$, whose objects are pairs $(b, y)$ with $b \in \scat{B}$ and $y
\in Y(b)$; maps $(b, y) \go (b', y')$ are maps $g: b \go b'$ in
$\scat{B}$ such that $y'g = y$.  Similarly, any covariant functor $X:
\scat{A} \go \Set$ has a \demph{category of elements} $\elt{X}$.  In each
case, there is a covariant projection functor from the category of elements
to $\scat{B}$ or $\scat{A}$.

\begin{defn}    \label{defn:finite}
A presheaf $Y: \scat{B}^\op \go \Set$ is \demph{finite} if its category of
elements is finite.  A module $M: \scat{B} \gomod \scat{A}$ is \demph{finite}
if for each $a \in \scat{A}$, the presheaf $M(\dashbk, a)$ is finite.
\end{defn}

Explicitly, $M$ is finite if for each $a \in \scat{A}$ there are only finitely
many diagrams of the form
\[
b' \goby{f} b \gobymod{m} a.
\]
Certainly this holds if, as in the interval example, the category $\scat{A}$
and the sets $M(b, a)$ are finite.

Since our endofunctors $M \otimes \dashbk$ are to act on the subcategory
$\ndSet{\scat{A}}$ of $\ftrcat{\scat{A}}{\Set}$ formed by the nondegenerate
functors, we need $M$ to satisfy a further condition.
Proposition~\ref{propn:preservationofnondegeneracy} shows that the following
condition is sufficient (and, in fact, necessary).
Proposition~\ref{propn:topologicalmoduleaction} shows that also, for such an
$M$, the endofunctor $M \otimes \dashbk$ of $\ftrcat{\scat{A}}{\Top}$
restricts to an endofunctor of $\ndTop{\scat{A}}$.

\begin{defn}    \label{defn:mod-nd}
Let $\scat{A}$ and $\scat{B}$ be small categories.  A module $M: \scat{B}
\gomod \scat{A}$ is \demph{nondegenerate} if $M(b, \dashbk): \scat{A} \go
\Set$ is nondegenerate for each $b \in \scat{B}$. 
\end{defn}

\begin{defn}
An \demph{equational system} is a small category $\scat{A}$ together
with a finite nondegenerate module $M: \scat{A} \gomod \scat{A}$.
\end{defn}

We might more precisely say `finite-colimit equational system'.  The discrete
equational systems are precisely the equational systems $(\scat{A}, M)$ in
which the category $\scat{A}$ is discrete (Example~\ref{eg:discrete-nd}). 

\begin{defn}
Let $(\scat{A}, M)$ be an equational system.  An \demph{$M$-coalgebra in
$\Set$} (respectively, \demph{$\Top$}) is a coalgebra for the endofunctor $M
\otimes \dashbk$ of $\ndSet{\scat{A}}$ (respectively, $\ndTop{\scat{A}}$).  

A \demph{universal solution} of $(\scat{A}, M)$, in $\Set$ or $\Top$, is a
terminal object in the category of $M$-coalgebras.
\end{defn}

Universal solutions are unique (up to isomorphism) when they exist; but just
as not every ordinary system of equations has a solution, not every equational
system has a universal solution.
Theorem~\ref{thm:existenceofuniversalsolution} gives necessary and sufficient
conditions. 

For example, Freyd's theorem~(\ref{thm:Freyd}) characterizes the set $[0, 1]$,
together with its endpoints and the map that multiplies by two, as the
universal solution in $\Set$ of a certain equational system.  The topological
Freyd theorem~(\ref{thm:topologicalFreyd}) characterizes the space $[0, 1]$,
with the same extra structure and the Euclidean topology, as the universal
solution in $\Top$.

Our other example seeks to characterize a certain Julia set as (part of) the
universal solution in $\Top$ of a certain equational system.  Heuristic
arguments and evidence from the theory of laminations~\cite{Thu,Kiwi} suggest
a more general phenomenon.  To discuss it, we need some further definitions.

\begin{defn}    \label{defn:realizable}
A topological space $S$ is \demph{realizable} if there exist an equational
system $(\scat{A}, M)$ with universal solution $(I, \iota)$, and an object $a
\in \scat{A}$, such that $S \iso I(a)$.  It is \demph{discretely realizable}
(respectively, \demph{finitely realizable}) if $\scat{A}$ can be taken to be
discrete (respectively, finite).
\end{defn}
(Instead of `realizable', we might more precisely say `corecursively
realizable by finite colimits'.)

\begin{conj}    \label{conj:julia}
The Julia set $J(f)$ of any complex rational function $f$ is finitely
realizable.  
\end{conj}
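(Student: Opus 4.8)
The plan is to realize $J(f)$ by extracting from the dynamics of $f$ an explicit finite equational system and then checking — via a metric recognition theorem — that $J(f)$ is a component of its universal solution. The raw material is a \emph{Markov decomposition}: a finite closed cover of $J(f)$ by pieces $J_1, \dots, J_k$ such that each $f|_{J_i}$ is injective with $f(J_i)$ a union $\bigcup_{j \in S_i} J_j$ of pieces, and such that distinct pieces meet only along "contact sets" $J_i \cap J_j$ (and iterated intersections), of which there are again only finitely many. From such a decomposition one reads off an equational system $(\scat{A}, M)$: the objects of $\scat{A}$ are the pieces $J_i$ (together with $J(f)$ itself) and the contact sets, the arrows are the inclusions, and the module $M$ encodes the relation $J(f) \iso \bigcup_i J_i$ together with the self-similarity relations $J_i \iso f(J_i) = \bigcup_{j\in S_i} J_j$ — valid because $f$ is a branched covering, so each $f|_{J_i}$ is a homeomorphism onto its image. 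Nondegeneracy of $M$ holds because the contact-set inclusions are closed injections, and finiteness holds because the whole decomposition is finite.

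First I would treat the hyperbolic case — then subhyperbolic and semihyperbolic — where Markov decompositions are classical (Yoccoz puzzles, or directly the uniform expansion of $f$ in a spherical metric near $J(f)$). With the equational system above in hand, the universal solution $I$ is built, as in the discrete prototype, as a limit over finite complexes $a_n \gobymod{m_n} \cdots \gobymod{m_1} a_0$, each indexing a subset of the relevant piece; expansion of $f$ forces the diameters of these subsets to tend to $0$. That shrinking is exactly the hypothesis of a recognition result of the form "a coalgebra whose complex-indexed diameters vanish is terminal": it both produces the unique coalgebra map from an arbitrary coalgebra into $I$ and identifies $I(a) \iso J_i$. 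So in the hyperbolic case one obtains not merely realizability but finite realizability. The postcritically finite case runs along the same lines but uses the lamination picture: by Thurston's theory \cite{Thu,Kiwi}, the Julia set of a postcritically finite polynomial is $S^1/\!\sim$ for a lamination generated by the orbits of finitely many critical leaves, yielding a finite Markov decomposition directly; Kiwi's extension handles rational maps, and the system is finite essentially because the postcritical set is.

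The hard part — and the reason the statement is only conjectural — is the general rational map, where no finite Markov decomposition exists: maps with Siegel discs whose boundaries may be non-locally-connected (even pseudo-circles), with Cremer points, or infinitely renormalizable maps with non-locally-connected Julia sets. Two routes suggest themselves. One is to realize the purely \emph{combinatorial} model (a lamination, or a Hubbard-tree-like object), which always carries a finite equational system, and argue that its universal solution is homeomorphic to $J(f)$ even when $J(f)$ is not locally connected; but this is false in the naive quotient-of-the-circle form, so one would need the equational-system machinery to record strictly more of the topology of $J(f)$ than the circle model does. The other is to allow the pieces of the decomposition to be themselves previously-constructed universal solutions — realizing, say, a Siegel-disc boundary first, then gluing — so that $J(f)$ is produced by a finite tower of equational systems, which is legitimate because a finite composite of the realizability construction is again an equational system. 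Either way, I expect the real obstacle to be neither the module algebra nor the recognition theorem but the \emph{dynamics}: producing, for an arbitrary rational $f$, a finite family of pieces closed under $f$ and glued only along lower-dimensional contact sets, in the presence of these pathologies. Without a general local-connectivity statement for Julia sets, this step is as hard as the outstanding problems of complex dynamics — which is presumably why Conjecture~\ref{conj:julia} is offered as a conjecture and not a theorem.
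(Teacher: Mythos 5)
This is Conjecture~\ref{conj:julia}: the paper explicitly offers it as a conjecture and supplies no proof, only the heuristic remark that $f$ restricts to a $\deg(f)$-to-one endomorphism of $J(f)$ and so ought to provide the recursive structure (with the sizes of $\scat{A}$ and $M$ bounded in terms of $\deg(f)$), plus a pointer to the theory of laminations~\cite{Thu,Kiwi}. You correctly recognize this at the end, so your text is not a proof but a strategic sketch, and should be judged as such.

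As a sketch it aligns well with the paper's hints and with the worked example in~\S\ref{sec:sss}: build a finite Markov-type decomposition from the dynamics, take the pieces and their contact sets as objects of $\scat{A}$ with inclusions as arrows, encode the covering relations and the $f$-induced homeomorphisms in $M$, and close with a metric recognition theorem using expansion of $f$. Two subsidiary claims overreach, however, even granting the hyperbolicity you assume for the first stage. The assertion that ``each $f|_{J_i}$ is a homeomorphism onto its image'' is a property you must arrange by choosing the partition carefully, not a free consequence of $f$ being a branched covering, since a Markov piece may meet the critical set and nothing in your setup prevents this. More seriously, the claim that ``a finite composite of the realizability construction is again an equational system'' is not a result in the paper and is not obviously meaningful: the theory has no mechanism for feeding a previously-computed universal solution back in as the value of a new system on a fresh object, since every equational system is solved globally and simultaneously. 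If you want to realize a Siegel-disc boundary first and then glue, you must instead write down one combined system and verify nondegeneracy and condition \So\ for it directly, which is precisely where the absence of a general local-connectivity theorem for Julia sets bites. Your final diagnosis, that the obstruction lies in the dynamics and not in the module algebra or recognition theory, is correct and is exactly why the statement remains a conjecture rather than a theorem.
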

This says that in the example, we could have taken any rational function $f$
and seen the same type of behaviour: after a finite number of decompositions,
no more new spaces $I_n$ appear.  Both $J(f)$ and its complement are invariant
under $f$, so $f$ restricts to an endomorphism of $J(f)$, which is, with
finitely many exceptions, a $\deg(f)$-to-one mapping.  This suggests that $f$
itself should provide the recursive structure of $J(f)$, and that if
$(\scat{A}, M)$ is the corresponding equational system then the sizes of
$\scat{A}$ and $M$ should be bounded in terms of $\deg(f)$.

\minihead{Products of equational systems}

We finish with some observations on products that will not be used
until~\S\ref{sec:examples}, and could be omitted on first reading.

Equational systems form a category.  A \demph{map} $(R, \rho): (\scat{A}, M)
\go (\scat{A}', M')$ consists of a functor $R: \scat{A} \go \scat{A}'$
together with a natural transformation
\[
\begin{diagram}
\scat{A}^\op \times \scat{A}    &               &\rTo^{R^\op \times R}  &
                &\scat{A}'^\op \times \scat{A}' \\
                                &\rdTo<M        &\nent\rho              &
\ldTo>{M'}      &                               \\
                                &               &\Set.                  &
                &                               \\
\end{diagram}
\]
This means that $\rho$ assigns to each sector $b \gobymod{m} a$ in $(\scat{A},
M)$ a sector $R(b) \gobymod{\rho(m)} R(a)$ in $(\scat{A}', M')$, in such a way
that the equation $\rho(fmg) = R(f)\rho(m)R(g)$ is satisfied.

\begin{lemma}[Functors on products] 
\label{lemma:functors-on-products}
Let $Z: \scat{B} \go \Set$ and $Z': \scat{B}' \go \Set$ be functors on
categories $\scat{B}, \scat{B}'$, and consider the functor
\[
\begin{array}{cccc}
Z \times Z':    &\scat{B} \times \scat{B}'      &\go            &
\Set            \\
                &(b, b')                        &\goesto        &
Z(b) \times Z'(b').   
\end{array}
\]
Then:
\begin{enumerate}
\item \label{item:prod-elt}
$\elt{Z \times Z'} \iso \elt{Z} \times \elt{Z'}$
\item \label{item:prod-fin}
if $Z$ and $Z'$ are finite then so is $Z \times Z'$
\item \label{item:prod-nd}
if $Z$ and $Z'$ are nondegenerate then so is $Z \times Z'$.
\end{enumerate}
\end{lemma}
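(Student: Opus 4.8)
The plan is to verify part~(\ref{item:prod-elt}) by direct inspection of the two categories, observe that part~(\ref{item:prod-fin}) then follows at once, and prove part~(\ref{item:prod-nd})---the only one with real content---by factoring the functor $\dashbk \otimes (Z \times Z')$ through $\pshf{\scat{B}}$.

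For part~(\ref{item:prod-elt}): an object of $\elt{Z \times Z'}$ is a pair $\bigl((b,b'),\, w\bigr)$ with $w \in Z(b) \times Z'(b')$, that is, a pair consisting of an object $(b,y)$ of $\elt{Z}$ and an object $(b',y')$ of $\elt{Z'}$; and a morphism $\bigl((b,b'),(y,y')\bigr) \go \bigl((c,c'),(w,w')\bigr)$ is a pair $(g,g')$ in $\scat{B} \times \scat{B}'$ with $gy = w$ and $g'y' = w'$, that is, a pair consisting of a morphism of $\elt{Z}$ and a morphism of $\elt{Z'}$. The evident reshuffling is therefore a (strictly invertible) isomorphism of categories, clearly commuting with the projections to $\scat{B} \times \scat{B}'$. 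Part~(\ref{item:prod-fin}) is then immediate: a product of two categories each with finitely many morphisms again has finitely many morphisms, so $\elt{Z} \times \elt{Z'}$, and hence $\elt{Z \times Z'}$, is finite whenever $\elt{Z}$ and $\elt{Z'}$ are.

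For part~(\ref{item:prod-nd}), given a presheaf $Y$ on $\scat{B} \times \scat{B}'$, a Fubini computation for coends---moving the constant factor $Z(b)$ outside the coend over $\scat{B}'$, which is legitimate since $Z(b) \times \dashbk$ preserves colimits in $\Set$---yields an isomorphism
\[
Y \otimes (Z \times Z') \;\iso\; \widetilde{Y} \otimes Z,
\qquad \text{where } \widetilde{Y}(b) := Y(b, \dashbk) \otimes Z',
\]
natural in $Y$. Here $\widetilde{Y} \in \pshf{\scat{B}}$, so $Y \mapsto \widetilde{Y}$ is a functor $T : \pshf{(\scat{B} \times \scat{B}')} \go \pshf{\scat{B}}$ and we obtain a natural isomorphism $\dashbk \otimes (Z \times Z') \iso (\dashbk \otimes Z) \of T$. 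Since $Z$ is nondegenerate, $\dashbk \otimes Z$ preserves finite connected limits, so it remains to show that $T$ does. But limits in $\pshf{\scat{B}}$ are computed pointwise, and for each $b \in \scat{B}$ the functor $Y \mapsto \widetilde{Y}(b)$ is the composite of $Y \mapsto Y(b, \dashbk)$---evaluation in the first variable at $b$, a functor $\pshf{(\scat{B} \times \scat{B}')} \go \pshf{\scat{B}'}$ which preserves all limits because limits of presheaves are pointwise---with $\dashbk \otimes Z'$, which preserves finite connected limits because $Z'$ is nondegenerate. Hence $T$, and with it $\dashbk \otimes (Z \times Z')$, preserves finite connected limits; that is, $Z \times Z'$ is nondegenerate.

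I expect the one delicate point to be in part~(\ref{item:prod-nd}): checking that the coend manipulations underlying $Y \otimes (Z \times Z') \iso \widetilde{Y} \otimes Z$ are natural in $Y$, so that they really do assemble into an isomorphism of functors $\pshf{(\scat{B} \times \scat{B}')} \go \Set$. (If one would rather avoid coends and a description of nondegeneracy in terms of the shape of $\elt{X}$ is available---every finite connected diagram admitting a cocone in $\elt{X}^\op$---then part~(\ref{item:prod-nd}) can instead be read off from part~(\ref{item:prod-elt}): the two projections carry such a diagram in $\elt{Z} \times \elt{Z'}$ to finite-connected-shaped diagrams in $\elt{Z}$ and in $\elt{Z'}$, whose cocones recombine to a cocone over the original. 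But the factorization argument above is more self-contained.)
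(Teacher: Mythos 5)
Your proof is correct. Parts~(a) and~(b) are handled just as the paper does (direct inspection, then immediate). For part~(c) you and the paper take genuinely different routes: the paper states that~(c) "will follow from~(a) once we have Theorem~\ref{thm:componentwiseflatness}," i.e.\ it uses the later characterization of nondegeneracy as every finite connected diagram in $\elt{X}$ admitting a cone, combined with the isomorphism $\elt{Z \times Z'} \iso \elt{Z} \times \elt{Z'}$ from~(a) and the easy observation that a cone in a product of categories is a pair of cones. You give the "direct calculation" the paper only alludes to: factoring $\dashbk \otimes (Z \times Z')$ as $(\dashbk \otimes Z) \of T$ via a coend Fubini argument, then showing $T$ preserves finite connected limits pointwise using nondegeneracy of $Z'$. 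Both are sound. Your direct argument has the virtue of being self-contained and not depending on a theorem proved later in the paper (the lemma appears in~\S\ref{sec:sss}, the theorem in~\S\ref{sec:nondegen}), so it avoids the forward reference; the paper's route is shorter once Theorem~\ref{thm:componentwiseflatness} is in hand and makes the combinatorial content (cones recombine) more transparent. The alternative argument you sketch at the end is essentially the paper's route, so you have in fact identified both.
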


\begin{proof}
Part~(\ref{item:prod-elt}) is straightforward, and~(\ref{item:prod-fin})
follows immediately.  Part~(\ref{item:prod-nd}) will follow
from~(\ref{item:prod-elt}) once we have
Theorem~\ref{thm:componentwiseflatness}; it can also be proved by a direct
calculation.  
\done
\end{proof}

Now let $(\scat{A}, M)$ and $(\scat{A}', M')$ be equational systems.  There is
a module
\[
M \times M': \scat{A} \times \scat{A}' \gomod \scat{A} \times \scat{A}'
\]
defined by
\[
(M \times M')((b, b'), (a, a'))
=
M(b, a) \times M'(b', a'),
\]
which by Lemma~\ref{lemma:functors-on-products} is finite and nondegenerate.
So $(\scat{A} \times \scat{A}', M \times M')$ is an equational system, and it
is straightforward to check that it is the product $(\scat{A}, M) \times
(\scat{A}', M')$ in the category of equational systems. 

Later we will use this construction to show that the product of two realizable
spaces is realizable.


\section{Summary of results}
\label{sec:summary}

Now that the language of equational systems has been explained, it is possible
to describe the main results of the rest of this paper.  These results will
give us three fundamental abilities: given an equational system
$(\scat{A}, M)$, we will be able to:
\begin{itemize}
\item determine whether there is a universal solution
\item construct the universal solution whenever it does exist
\item check easily whether a given coalgebra is the universal solution.
\end{itemize}

We begin~(\S\ref{sec:nondegen}) by examining the nondegeneracy condition.  We
give an equivalent formulation of nondegeneracy that is easy to verify in
examples, unlike the original definition
(\ref{defn:nd-ftr},~\ref{defn:mod-nd}).   

There is a well-developed general theory of coalgebras for endofunctors, but
for endofunctors $M \otimes \dashbk$ arising from equational systems, the
theory has a special flavour~(\S\ref{sec:coalgs}).  In a loose way it
resembles homological algebra; we use terms such as \emph{complex},
\emph{double complex} and \emph{resolution}.  We develop this theory and prove
that the endofunctor of $\ftrcat{\scat{A}}{\Set}$ restricts to an endofunctor
of $\ndSet{\scat{A}}$, and similarly for $\Top$, as was assumed in the
introductory sections.

The universal solution of an equational system is quite easily described, in
the case that it exists.  In~\S\ref{sec:construction} we give explicit
sufficient conditions for its existence, and construct it.  In
Appendix~\ref{app:solv} we prove that these conditions are also necessary.
Existence of a universal solution turns out to be unaffected by whether we
work over $\Set$ or $\Top$.

The proof that this really \emph{is} the universal solution is substantial:
\S\ref{sec:set} and \S\ref{sec:Top-proofs} contain the proofs over $\Set$ and
$\Top$, respectively.  The main tools are K\"onig's Lemma~(\ref{lemma:Koenig})
and the homological-like algebra of coalgebras for endofunctors $M \otimes
\dashbk$. 

The third ability is to recognize a universal solution when we see one.  We
prove theorems that allow us to take a coalgebra for some equational system
and decide whether it is the universal solution~(\S\ref{sec:recognition}).
This is much easier than checking directly whether it matches the explicit
construction. 

Using these theorems we can give many examples of equational systems and their
universal solutions~(\S\ref{sec:examples}).  They also let us settle the
question of which topological spaces are realizable, or discretely
realizable---that is, occur as one of the spaces $I(a)$ in the universal
solution of some (discrete) equational system (Appendix~\ref{app:admitting}).

The results of this paper completely answer the most basic questions about
equational systems and their universal solutions.  But an important unanswered
question is this: which topological spaces are \emph{finitely} realizable?
Arguably, the finite equational systems are the most interesting ones, and
come closer to intuitive notions of self-similarity.  But in this paper we do
not attempt a serious development of the more precise theory of finite
equational systems, making only the few remarks at the end
of~\S\ref{sec:nondegen} and the beginning of Appendix~\ref{app:admitting}.


\section{Nondegeneracy}
\label{sec:nondegen}

The main result of this section (Theorem~\ref{thm:componentwiseflatness}) is
that a functor $X: \scat{A} \go \Set$ is nondegenerate if and only if it
satisfies the following explicit conditions:
\begin{description}
\item[\cstyle{ND1}]
given
\[
\begin{diagdiag}
a	&		&	&		&a'	\\
	&\rdTo<f	&	&\ldTo>{f'}	&	\\
	&		&b	&		&	\\
\end{diagdiag}
\]
in $\scat{A}$ and $x \in X(a)$, $x' \in X(a')$ such that $fx = f'x'$, there
exist a commutative square
\[
\begin{diagdiag}
	&		&c	&		&	\\
	&\ldTo<g	&	&\rdTo>{g'}	&	\\
a	&		&	&		&a'	\\
	&\rdTo<f	&	&\ldTo>{f'}	&	\\
	&		&b	&		&	\\
\end{diagdiag}
\]
and $z \in X(c)$ such that $x = gz$ and $x' = g'z$
\item{\cstyle{ND2}} given $a \parpair{f}{f'} b$ in $\scat{A}$ and $x \in
X(a)$ such that $fx = f'x$, there exist a fork 
\begin{equation}	\label{eq:ND2-fork}
c \goby{g} a \parpair{f}{f'} b
\end{equation}
and $z \in X(c)$ such that $x = gz$.  
(A diagram~\bref{eq:ND2-fork} is a \demph{fork} if $fg = f'g$.)
\end{description}

Before developing the theory that leads up to this result, we give some
examples of nondegenerate functors.  They illustrate that nondegeneracy means
`no unforced equalities', in a sense to be explained.  After the main result,
we give explicit conditions for a \emph{module} to be nondegenerate, and we
look more closely at the case where $\scat{A}$ is finite.

\minihead{Examples of nondegenerate functors}

Let us work out what nondegeneracy says for various specific categories
$\scat{A}$, assuming for now that nondegeneracy is equivalent to conditions
\cstyle{ND1} and \cstyle{ND2}.

Note that \cstyle{ND1} holds automatically if either $f$ or $f'$ is an
isomorphism, and that \cstyle{ND2} holds automatically if $f = f'$.  Moreover,
if $f$ is monic then \cstyle{ND1} in the case $f = f'$ just says that $Xf$ is
injective.

\begin{example} \label{eg:nondegen-arrowcat}
Let $\scat{A} = \left( 0 \goby{\sigma} 1 \right)$.  Then $X: \scat{A} \go
\Set$ is nondegenerate if and only if the function $X\sigma: X(0) \go X(1)$ is
injective.

Intuitively, nondegeneracy of a functor $X$ says that no equation between
elements of $X$ holds unless it must.  In this example, nondegeneracy of $X$
says that the equation $\sigma x_0 = \sigma x'_0$ holds only when it must,
that is, only when $x_0 = x'_0$.  
\end{example}

\begin{example} \label{eg:nondegen-Freyd}
Let $\scat{A} = \left( 0 \parpair{\sigma}{\tau} 1 \right)$, so that a
functor $X: \scat{A} \go \Set$ is a pair $\Bigl( X_0
\parpair{X\sigma}{X\tau} X_1 \Bigr)$ of functions.  Then \cstyle{ND1} in
the case $f = f'$ says that $X\sigma$ and $X\tau$ are injective.  The only
other nontrivial case of \cstyle{ND1} is $f = \sigma$, $f' = \tau$, and
since the diagram
\[
\begin{diagdiag}
0	&		&	&		&0	\\
	&\rdTo<\sigma	&	&\ldTo>\tau	&	\\
	&		&1	&		&	\\
\end{diagdiag}
\]
cannot be completed to a commutative square, \cstyle{ND1} says that
$X\sigma$ and $X\tau$ have disjoint images.  The only nontrivial case of
\cstyle{ND2} is $f = \sigma$, $f' = \tau$, and since the diagram $\left( 0
\parpair{\sigma}{\tau} 1 \right)$ cannot be completed to a fork, this says
that $\sigma x_0 \neq \tau x_0$ for all $x_0 \in X_0$, which we
already know.  So a nondegenerate functor on $\scat{A}$ is a parallel pair
of injections with disjoint images, as claimed in~\S\ref{sec:sss}.
\end{example}

\begin{example} \label{eg:nondegen-cofork}
Let $\scat{A}$ be the category generated by objects and
arrows
\[
0 \parpair{\sigma}{\tau} 1 \goby{\rho} 2
\]
subject to $\rho\sigma = \rho\tau$, and consider a
functor $X: \scat{A} \go \Set$.  From \cstyle{ND1} and \cstyle{ND2} it follows
that $X$ is nondegenerate just when:
\begin{itemize}
\item $X\sigma$, $X\tau$, and $X(\rho\sigma)$ are injective
\item $X\sigma$ and $X\tau$ have disjoint images
\item if $\rho x_1 = \rho x'_1$ then $x_1 = x'_1$ or there exists $x_0$
  such that $\{ x_1, x'_1 \} = \{ \sigma x_0, \tau x_0 \}$.
\end{itemize}
The last clause corresponds again to the intuition: the equation $\rho x_1 =
\rho x'_1$ holds only when it must.

An example of a nondegenerate functor on $\scat{A}$ is the diagram
\[
\{ \star \} \parpair{0}{1} [0, 1] \go S^1
\]
exhibiting the circle as an interval with its endpoints identified. 
\end{example}

\begin{example} \label{eg:nondegen-coglobular}
Let $\scat{A}$ be the category generated by objects and arrows
\[
0 \parpair{\sigma_1}{\tau_1} 1 \parpair{\sigma_2}{\tau_2} \ \cdots
\]
subject to $\sigma_{k+1} \sigma_k = \tau_{k+1} \sigma_k$ and $\sigma_{k+1}
\tau_k = \tau_{k+1} \tau_k$ for all $k\geq 1$.  A functor $\scat{A}^\op \go
\Set$ is usually called a globular set or an $\omega$-graph.  It can be shown
that a coglobular set $X: \scat{A} \go \Set$ is nondegenerate precisely when:
\begin{itemize}
\item for all $k \geq 1$, $X\sigma_k$ and $X\tau_k$ are injective
\item for all $k\geq 1$ and $x, x' \in X_k$ satisfying $\sigma_{k+1} x = 
  \tau_{k+1} x'$, we have $x = x' \in \mathrm{image}(X\sigma_k) \cup
  \mathrm{image}(X\tau_k)$ 
\item the images of $X\sigma_1$ and $X\tau_1$ are disjoint.
\end{itemize}
For instance, the underlying coglobular set of any disk in the sense of
Joyal \cite{Joy,SDN} is nondegenerate.
\end{example}

\begin{example*}{Discrete systems} \label{eg:discrete-nd}
It is immediate from \cstyle{ND1} and \cstyle{ND2} that every $\Set$- or
$\Top$-valued functor on a discrete category is nondegenerate.  It follows
that a discrete equational system is the same thing as an equational system
$(\scat{A}, M)$ in which the category $\scat{A}$ is discrete.  The categories
of $M$-coalgebras defined in~\S\ref{sec:dsss} and~\S\ref{sec:sss} then
match up (working over either $\Set$ or $\Top$); hence, so do the notions of
universal solution.
\end{example*}

\minihead{Theory of nondegenerate functors}

The proof of the main theorem on nondegenerate
functors~(\ref{thm:componentwiseflatness}) uses some more sophisticated
category theory than the rest of the paper.  Readers who prefer to take it on
trust can jump straight to the statement of the theorem.
 
None of this theory is new: it goes back to Grothendieck and
Verdier~\cite{GV} and Gabriel and Ulmer~\cite{GU}, and was later developed
by Weberpals~\cite{Web}, Lair~\cite{Lai}, Ageron~\cite{Age}, and Ad\'amek,
Borceux, Lack, and Rosick\'y~\cite{ABLR}.  More general statements of much
of what follows can be found in~\cite{ABLR}.

Let us begin with ordinary flat functors.  A functor $X: \scat{A} \go \Set$
on a small category $\scat{A}$ is \demph{flat} if $\dashbk \otimes X:
\pshf{\scat{A}} \go \Set$ preserves finite limits.  For example, representable
functors are flat: if $X = \scat{A}(a, \dashbk)$ then $\dashbk \otimes X$ is
evaluation at $a$, which preserves all limits.  

\begin{thm}[Flatness]
\label{thm:flatness}
Let $\scat{A}$ be a small category.  The following conditions on a functor
$X: \scat{A} \go \Set$ are equivalent:
\begin{enumerate}
\item \label{item:flat-flat}
$X$ is flat
\item \label{item:flat-fin}
every finite diagram in $\elt{X}$ admits a cone
\item \label{item:flat-fin-bits}
each of the following holds:
\begin{itemize}
\item there exists $a \in \scat{A}$ for which $X(a) \neq \emptyset$
\item given $a, a' \in \scat{A}$, $x \in X(a)$, and $x' \in
  X(a')$, there exist a diagram $a \ogby{g} c \goby{g'} a'$ in $\scat{A}$ and
  $z \in X(c)$ such that $gz = x$ and $g'z = x'$
\item \cstyle{ND2}.
\end{itemize}
\end{enumerate}
\end{thm}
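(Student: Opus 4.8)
The plan is to establish this classical characterisation of flat functors --- in the tradition of Grothendieck--Verdier and Gabriel--Ulmer --- by routing everything through the category of elements $\elt{X}$. The organising remark is that condition~(\ref{item:flat-fin-bits}) is just cofilteredness of $\elt{X}$ spelled out in elementary terms, so that~(\ref{item:flat-fin-bits}) and~(\ref{item:flat-fin}) are equivalent by a standard lemma, while~(\ref{item:flat-flat}) is equivalent to either by the familiar interplay between flatness, filtered colimits of representables, and the exactness of filtered colimits in $\Set$. Concretely I would prove the cycle $(\ref{item:flat-flat}) \Rightarrow (\ref{item:flat-fin-bits}) \Rightarrow (\ref{item:flat-fin}) \Rightarrow (\ref{item:flat-flat})$.

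For $(\ref{item:flat-fin-bits}) \Rightarrow (\ref{item:flat-fin})$, I would first reinterpret the three bullets of~(\ref{item:flat-fin-bits}) inside $\elt{X}$: an object of $\elt{X}$ is a cone over the empty diagram, so the first bullet says $\elt{X}$ is nonempty; a cone in $\elt{X}$ over the discrete diagram on $(a, x)$ and $(a', x')$ is exactly a span $a \leftarrow c \rightarrow a'$ together with $z \in X(c)$ lying over $x$ and $x'$, so the second bullet is the binary-span axiom; and a cone in $\elt{X}$ over a parallel pair is precisely the data named in \cstyle{ND2} (the parallel pair existing in $\elt{X}$ being exactly the hypothesis $fx = f'x$ of \cstyle{ND2}). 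Thus~(\ref{item:flat-fin-bits}) asserts that $\elt{X}$ has cones over the empty diagram, over two-object discrete diagrams, and over parallel pairs; the standard inductive lemma on (co)filtered categories then upgrades this to cones over all finite diagrams --- build a cone over the underlying discrete diagram by iterating binary spans, then correct it one morphism at a time by iterating the parallel-pair axiom --- which is~(\ref{item:flat-fin}). (The converse $(\ref{item:flat-fin}) \Rightarrow (\ref{item:flat-fin-bits})$ is immediate and is in any case subsumed by the cycle.)

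For $(\ref{item:flat-fin}) \Rightarrow (\ref{item:flat-flat})$: if every finite diagram in $\elt{X}$ admits a cone then $\elt{X}$ is cofiltered, so $\elt{X}^\op$ is filtered, and the canonical-colimit (density) formula presents $X$ as the filtered colimit over $\elt{X}^\op$ of the representables $\scat{A}(a, \dashbk)$. Since the tensor is cocontinuous in each variable, $\dashbk \otimes X$ is the corresponding filtered colimit of the functors $\dashbk \otimes \scat{A}(a, \dashbk) \colon \pshf{\scat{A}} \go \Set$, each of which is an evaluation functor and so preserves all limits. A filtered colimit, formed pointwise in $\Set$, of finite-limit-preserving functors into $\Set$ again preserves finite limits, because finite limits commute with filtered colimits in $\Set$; hence $\dashbk \otimes X$ preserves finite limits, i.e.\ $X$ is flat.

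The remaining leg, $(\ref{item:flat-flat}) \Rightarrow (\ref{item:flat-fin-bits})$, is the one I expect to demand real care, and it proceeds by probing the finite-limit-preserving functor $\dashbk \otimes X$ with well-chosen finite limits in $\pshf{\scat{A}}$. Applying it to the terminal presheaf shows the colimit of $X$ is a one-element set, so $\elt{X}$ is nonempty; applying it to the product $\scat{A}(\dashbk, a) \times \scat{A}(\dashbk, a')$ identifies the value with $X(a) \times X(a')$, and chasing a given pair $(x, x')$ back through the coend $\int^{c} \scat{A}(c, a) \times \scat{A}(c, a') \times X(c)$ recovers a span $a \leftarrow c \rightarrow a'$ and an element $z \in X(c)$ lying over $x$ and $x'$; and applying it to the equalizer in $\pshf{\scat{A}}$ of a parallel pair $\scat{A}(\dashbk, a) \parpairu \scat{A}(\dashbk, b)$ induced by $f, f' \colon a \to b$ identifies the value with $\{ y \in X(a) : fy = f'y \}$, so chasing a fixed $x$ with $fx = f'x$ back through the corresponding coend produces a fork $c \to a \parpairu b$ and a $z \in X(c)$ over $x$ --- that is, \cstyle{ND2}. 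The genuine work here is to recognise the isomorphisms supplied by flatness as the canonical comparison maps from $\dashbk \otimes (\lim F)$ to $\lim (\dashbk \otimes F)$ and then to trace an element of a limit of sets backwards through a coend $\int^{c} P(c) \times X(c)$ to extract honest morphisms of $\scat{A}$ together with an element of some $X(c)$; the obstacle is bookkeeping --- keeping the variances and identifications straight --- rather than anything conceptual. One subsidiary point of vigilance, in $(\ref{item:flat-fin}) \Rightarrow (\ref{item:flat-flat})$, is that it is $\elt{X}^\op$, not $\elt{X}$, that must be filtered for $X$ to be a filtered colimit of representables.
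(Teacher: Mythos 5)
The paper gives no proof of Theorem~\ref{thm:flatness}, instead citing the standard references (Borceux \S6.3, Mac Lane--Moerdijk VII.6); your argument is correct and is essentially the standard one found there. The cycle $(\ref{item:flat-flat})\Rightarrow(\ref{item:flat-fin-bits})\Rightarrow(\ref{item:flat-fin})\Rightarrow(\ref{item:flat-flat})$, with the middle step recast as cofilteredness of $\elt{X}$ and the last leg using the canonical presentation of $X$ as a filtered colimit of representables over $\elt{X}^\op$ together with the commutation of finite limits and filtered colimits in $\Set$, is exactly the textbook route; your care over the variance ($\elt{X}^\op$, not $\elt{X}$, being filtered) and over chasing elements back through the coends in the $(\ref{item:flat-flat})\Rightarrow(\ref{item:flat-fin-bits})$ step are precisely where the bookkeeping matters, and you have it right.
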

\begin{proof}
See \cite[\S 6.3]{Bor1} or \cite[VII.6]{MM}, for instance.  
\done
\end{proof}

The following lemmas are often used to prove this theorem, and will also be
needed later.

\begin{lemma}[Existence of cones]
\label{lemma:existenceofcones}
Let $\scat{I}$ and $\scat{A}$ be small categories and let $X: \scat{A} \go
\Set$.  Suppose that $\dashbk \otimes X: \pshf{\scat{A}} \go \Set$ preserves
limits of shape $\scat{I}$. 
Then every diagram of shape $\scat{I}$ in $\elt{X}$ admits a cone.
\end{lemma}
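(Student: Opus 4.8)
The plan is to exploit the fact that $-\otimes X \colon \pshf{\scat A} \to \Set$ is the cocontinuous extension of $X$ along the Yoneda embedding, so that it satisfies the co-Yoneda isomorphism $\scat A(-,a) \otimes X \cong X(a)$, and to produce the required cone by extracting a single element from a suitable tensor product.

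First I would fix notation for the given diagram $D \colon \scat I \to \elt{X}$: write $D(i) = (a_i, x_i)$, so $x_i \in X(a_i)$, and $D(\phi) = d_\phi \colon a_i \to a_j$ for each $\phi \colon i \to j$ in $\scat I$, so that $d_\phi x_i = x_j$. Then I would form the functor $\scat I \to \pshf{\scat A}$ sending $i$ to $\scat A(-, a_i)$ and $\phi$ to $\scat A(-, d_\phi)$, and set $P = \lim_{i \in \scat I} \scat A(-, a_i)$, the limit computed in $\pshf{\scat A}$ (which exists, as $\scat I$ is small and presheaf categories are complete). Since limits of presheaves are pointwise, $P(a) = \lim_i \scat A(a, a_i)$ is the set of cones over $i \mapsto a_i$ with vertex $a$, and the $i$-th limit leg $P(a) \to \scat A(a, a_i)$ sends such a cone to its $i$-th component.

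Next I would apply $-\otimes X$. By hypothesis it preserves $\scat I$-limits, so $P \otimes X \cong \lim_i \bigl( \scat A(-, a_i) \otimes X \bigr) \cong \lim_i X(a_i)$ naturally in $i$, where the transition maps of the latter diagram are $y \mapsto d_\phi y$ and the limit legs $P \otimes X \to X(a_i)$ are $(-\otimes X)$ applied to the legs of $P$, post-composed with the co-Yoneda isomorphisms. An element of $\lim_i X(a_i)$ is a compatible family $(y_i)_i$ with $d_\phi y_i = y_j$; the family $(x_i)_i$ is one such, so it determines an element $w \in P \otimes X$. Now $P \otimes X = \int^{a \in \scat A} P(a) \times X(a)$, so $w$ is represented by a pair $(p, z)$ with $p \in P(c)$, $z \in X(c)$ for some $c \in \scat A$. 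Unwinding, $p \in P(c) = \lim_i \scat A(c, a_i)$ is a cone $(g_i \colon c \to a_i)_i$ over $i \mapsto a_i$, so $d_\phi g_i = g_j$ for all $\phi \colon i \to j$; and tracing $w$ through the $i$-th leg $P \otimes X \to X(a_i)$ shows, via the co-Yoneda isomorphism, that $g_i z = x_i$ for every $i$. Hence $(c, z) \in \elt{X}$, each $g_i$ is a morphism $(c, z) \to (a_i, x_i)$ in $\elt{X}$, and the equations $d_\phi g_i = g_j$ say precisely that these morphisms form a cone over $D$.

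The only genuinely non-formal point --- and hence the step I expect to need the most care --- is the bookkeeping with canonical isomorphisms in the last paragraph: one must check that the element of $P \otimes X$ produced from $(x_i)_i$ really is represented by a pair $(p,z)$ whose cone component $(g_i)_i$ satisfies $g_i z = x_i$ on the nose, rather than merely modulo the coend equivalence relation. This is fine because each co-Yoneda map $\scat A(-, a_i) \otimes X \to X(a_i)$, $[(f, x)] \mapsto fx$, is a bijection, so an equality of coend classes transports to an honest equality in $X(a_i)$. I would also remark that the degenerate case $\scat I = \emptyset$ is subsumed: then $P$ is the terminal presheaf, the hypothesis forces $P \otimes X$ to be a singleton, hence nonempty, and any representing $z \in X(c)$ makes $(c, z)$ the required (empty) cone.
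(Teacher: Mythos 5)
Your proposal is correct and follows essentially the same route as the paper: apply the hypothesis to the diagram of representables $i \mapsto \scat{A}(\dashbk, a_i)$, use the co-Yoneda isomorphism $\scat{A}(\dashbk,a_i)\otimes X \iso X(a_i)$ to identify the limit of the transported diagram with $\Lt{i} X(a_i)$, lift the compatible family $(x_i)_i$ to an element of $\bigl(\Lt{i}\scat{A}(\dashbk,a_i)\bigr)\otimes X$, and read off a representing pair (cone, element) giving the desired cone in $\elt{X}$. Your extra care about transporting coend-class equalities to honest equalities via bijectivity of the co-Yoneda maps is exactly the point the paper compresses into its explicit description of the canonical map, so nothing further is needed.
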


\begin{proof}
Let $D: \scat{I} \go \elt{X}$ be a diagram of shape $\scat{I}$, writing $D(i)
= (a_i, x_i)$ for each $i \in \scat{I}$.  Then there is a diagram 
$\scat{I} \go \pshf{\scat{A}}$ given by $i \goesto \scat{A}(\dashbk, a_i)$, so
by hypothesis the canonical map
\[
\left( \Lt{i} \scat{A}(\dashbk, a_i) \right) \otimes X
\go
\Lt{i} (\scat{A}(\dashbk, a_i) \otimes X)
\iso
\Lt{i} X(a_i)
\]
is a bijection, and in particular a surjection.  With the usual explicit
formula for limits in $\Set$, this map is 
\[
(a \goby{p_i} a_i)_{i \in \scat{I}} \otimes x
\goesto
(p_i x)_{i \in \scat{I}}
\]
where $a \in \scat{A}$, $x \in X(a)$ and
\[
(a \goby{p_i} a_i)_{i \in \scat{I}} 
\in
\{ \textrm{cones from } a \textrm{ to } (a_i)_{i \in \scat{I}} \}
=
\Lt{i} \scat{A}(a, a_i).
\]
Since $(x_i)_{i \in \scat{I}} \in \Lt{i} X(a_i)$, there exist $a \in \scat{A}$
and
\[
((p_i)_{i \in \scat{I}}, x) 
\in 
\left( \Lt{i} \scat{A}(a, a_i) \right) \times X(a)
\]
such that $p_i x = x_i$ for all $i$.  Then 
$
\left( 
(a, x) \goby{p_i} (a_i, x_i)
\right)_{i \in \scat{I}}
$
is a cone on $D$.
\done
\end{proof}

Let us say that a category $\cat{C}$ has the \demph{square-completion
property} if there exists a cone on every diagram of shape $\littlepullback$
in $\cat{C}$.

\begin{lemma}[Connectedness by spans]
\label{lemma:connectednessbyspans}
Two objects $c, c'$ of a category with the square-completion property are
in the same connected-component if and only if there exists a span
$
c \og c'' \go c'
$
connecting them.
\done
\end{lemma}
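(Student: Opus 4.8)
The plan is to deduce the theorem from the standard fact that the relation ``there exists a span $c \leftarrow c'' \to c'$'' is an equivalence relation on the objects of $\cat{C}$ which holds whenever there is a morphism $c \to c'$; since two objects lie in the same connected component precisely when they are joined by a finite zigzag of morphisms, the connected-component relation is the equivalence relation generated by the morphisms, and so the two relations must agree. The ``if'' half of the statement is immediate, since a span $c \leftarrow c'' \to c'$ is in particular a zigzag, so $c$ and $c'$ lie in the same connected component; thus it is only the ``only if'' half that requires work.

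For the ``only if'' half, write $c \approx c'$ to mean that a span $c \leftarrow c'' \to c'$ exists in $\cat{C}$. Reflexivity of $\approx$ is witnessed by $c \xleftarrow{\id} c \xrightarrow{\id} c$, and symmetry by reversing the legs of a span. The only step that uses the square-completion hypothesis is transitivity: given spans $c \xleftarrow{p} x \xrightarrow{q} c'$ and $c' \xleftarrow{r} y \xrightarrow{s} c''$, the cospan $x \xrightarrow{q} c' \xleftarrow{r} y$ is a diagram of shape $\littlepullback$, so by hypothesis it carries a cone; in particular there are an object $z$ and maps $z \to x$ and $z \to y$, and composing these with $p$ and $s$ respectively produces a span $c \leftarrow z \to c''$. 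Hence $c \approx c''$, and $\approx$ is an equivalence relation. Finally, any morphism $f \colon c \to c'$ gives the span $c \xleftarrow{\id} c \xrightarrow{f} c'$, so $\approx$ contains all morphisms.

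Putting these together: $\approx$ is an equivalence relation containing the morphisms of $\cat{C}$, so it contains the equivalence relation they generate, which is precisely the connected-component relation; that is, whenever $c$ and $c'$ lie in the same component we have $c \approx c'$. With the ``if'' half this gives the claimed equivalence. I do not expect a genuine obstacle: essentially all the content lies in the transitivity step, where square-completion is exactly what is needed to splice two spans into one through a single completed square. The only point worth a line of care is the identification of the connected-component relation with the equivalence relation generated by the morphisms of $\cat{C}$, but this is immediate from the description of connectedness in terms of zigzags.
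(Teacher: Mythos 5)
Your proof is correct, and it is the standard argument that the paper is implicitly invoking by marking the lemma with $\Box$ and giving no proof. The key point—that the square-completion property yields transitivity of the span relation by taking a cone on the middle cospan, and that the connected-component relation is the equivalence relation generated by the morphisms—is exactly right.
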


\begin{lemma}[Equality in a tensor product]
\label{lemma:equalityinatensorproduct}
Let $\scat{A}$ be a small category and 
\[
X: \scat{A} \go \Set,
\diagspace
Y: \scat{A}^\op \go \Set.
\]
Suppose that $\elt{X}$ has the square-completion property.  Let 
\[
a, a' \in \scat{A},
\diagspace
(y, x) \in Y(a) \times X(a),
\diagspace
(y', x') \in Y(a') \times X(a').
\]
Then $y \otimes x = y' \otimes x' \in Y \otimes X$ if and only if there exist
a span $ a \ogby{f} b \goby{f'} a' $ and an element $z \in X(b)$ such that $x
= fz$, $x' = f'z$, and $yf = y'f'$.
\end{lemma}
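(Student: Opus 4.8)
The plan is to identify the ``span'' relation of the statement with the equivalence relation defining the coend $Y \otimes X$. Recall (Appendix~\ref{app:modules}) that $Y \otimes X$ is the quotient of $\coprod_{a \in \scat{A}} Y(a) \times X(a)$ by the smallest equivalence relation $\approx$ identifying $(a, y'g, x)$ with $(a', y', gx)$ for every $g \colon a \go a'$ in $\scat{A}$, $y' \in Y(a')$ and $x \in X(a)$; thus $y \otimes x = y' \otimes x'$ means precisely $(a,y,x) \approx (a',y',x')$. Write $(a,y,x) \sim (a',y',x')$ to mean that there exist a span $a \ogby{f} b \goby{f'} a'$ and an element $z \in X(b)$ with $fz = x$, $f'z = x'$ and $yf = y'f'$. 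The lemma asserts that $\sim$ and $\approx$ coincide.

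The ``if'' direction is the inclusion ${\sim} \subseteq {\approx}$: given a witnessing span and $z$, the defining identifications of the coend give $y \otimes x = y \otimes fz = (yf) \otimes z = (y'f') \otimes z = y' \otimes f'z = y' \otimes x'$. For the converse I would show that $\sim$ is \emph{itself} an equivalence relation containing every generating pair $(a, y'g, x) \approx (a', y', gx)$; since $\approx$ is the smallest such, this forces ${\approx} \subseteq {\sim}$. Reflexivity (take $f = f' = \id$, $z = x$), symmetry (swap $f$ with $f'$), and the fact that each generator is an instance of $\sim$ (take $f = \id_a$, $f' = g$, $z = x$; the condition $yf = y'f'$ then reads $(y'g)\id = y'g$, which holds by the unit axiom) are all immediate.

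The substance of the proof, and the only place the hypothesis on $\elt{X}$ is used, is \emph{transitivity} of $\sim$. Suppose $(a,y,x) \sim (a',y',x')$ is witnessed by $a \ogby{f} b \goby{f'} a'$ and $z \in X(b)$, and $(a',y',x') \sim (a'',y'',x'')$ by $a' \ogby{h} c \goby{h'} a''$ and $w \in X(c)$. Then $f'$ and $h$ are morphisms $(b,z) \goby{f'} (a',x') \ogby{h} (c,w)$ in $\elt{X}$, a diagram of shape $\littlepullback$; by the square-completion property of $\elt{X}$ it admits a cone, i.e.\ an object $(d,t)$ and morphisms $p \colon (d,t) \to (b,z)$, $q \colon (d,t) \to (c,w)$ of $\elt{X}$ with $f'p = hq$ in $\scat{A}$. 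Unwinding, $pt = z$ and $qt = w$, and then the span $a \ogby{fp} d \goby{h'q} a''$ together with $t \in X(d)$ witnesses $(a,y,x) \sim (a'',y'',x'')$: indeed $(fp)t = fz = x$, $(h'q)t = h'w = x''$, and $y(fp) = (yf)p = (y'f')p = y'(hq) = (y'h)q = (y''h')q = y''(h'q)$, using in turn $yf = y'f'$, then $f'p = hq$, then $y'h = y''h'$. Hence $\sim$ is transitive, so ${\approx} \subseteq {\sim}$, and the two relations agree.

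I expect this cospan-completion step to be the only real obstacle; everything else is bookkeeping with the coend presentation. If one preferred, the transitivity argument could instead be routed through Lemma~\ref{lemma:connectednessbyspans}, but completing the cospan directly in $\elt{X}$ is the shortest route.
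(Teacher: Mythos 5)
Your proof is correct and complete. The paper itself gives no argument, deferring to the remarks after Theorem~VII.6.3 of Mac~Lane--Moerdijk, and the argument you supply is precisely the expected one: the ``if'' direction is the elementary coend calculation, and the ``only if'' direction works by verifying that the span relation $\sim$ is an equivalence relation containing the generating identifications of the coend quotient, with transitivity being the single place the square-completion hypothesis on $\elt{X}$ is invoked (to complete the cospan $(b,z)\to(a',x')\leftarrow(c,w)$ in $\elt{X}$). Your bookkeeping with the actions $yf$, $fz$, etc.\ is consistent with the paper's module conventions, and the final chain $y(fp)=(yf)p=(y'f')p=y'(hq)=(y'h)q=(y''h')q=y''(h'q)$ is exactly right.
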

\begin{proof}
See the remarks after the statement of Theorem VII.6.3 in~\cite{MM}.
\done
\end{proof}

We need a fact about connectedness.

\begin{lemma}[Components of a functor]
\label{lemma:componentsofafunctor}
Any functor $X: \scat{A} \go \Set$ on a small category $\scat{A}$ can be
written as a sum $X \iso \sum_{j \in J} X_j$, where $J$ is some set and
$\elt{X_j}$ is connected for each $j\in J$.
\end{lemma}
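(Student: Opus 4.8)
The plan is to read off the decomposition directly from the connected-component decomposition of the category of elements $\elt{X}$.

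First I would observe that $\elt{X}$ is a small category: its objects form the set $\sum_{a\in\scat{A}} X(a)$, and each hom-set $\elt{X}\bigl((a,x),(a',x')\bigr)$ is a subset of $\scat{A}(a,a')$. Any small category is the coproduct of its connected components, so we may write $\elt{X} = \sum_{j\in J}\scat{E}_j$, where $J = \pi_0(\elt{X})$ is a (small) set and each $\scat{E}_j$ is a connected full subcategory of $\elt{X}$; moreover there are no maps in $\elt{X}$ between objects lying in different summands $\scat{E}_j$.

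Next, for each $j\in J$ I would define $X_j: \scat{A} \go \Set$ by $X_j(a) = \{\, x\in X(a) \such (a,x)\in\scat{E}_j \,\}$, with $X_j$ acting on morphisms by restriction of the action of $X$. The only point to check is that this is well-defined, i.e.\ that $X_j$ is a subfunctor of $X$: if $x\in X_j(a)$ and $f: a\go a'$ in $\scat{A}$, then $f$ is a map $(a,x)\go(a',fx)$ in $\elt{X}$, so $(a',fx)$ lies in the same component $\scat{E}_j$, whence $fx\in X_j(a')$. Functoriality is then inherited from $X$. Since each $x\in X(a)$ determines an object $(a,x)$ of $\elt{X}$ belonging to exactly one component, $X(a) = \sum_{j\in J} X_j(a)$ naturally in $a$; as coproducts in $\ftrcat{\scat{A}}{\Set}$ are computed pointwise, this gives $X \iso \sum_{j\in J} X_j$. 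Finally, an object of $\elt{X_j}$ is a pair $(a,x)$ with $x\in X_j(a)$, i.e.\ with $(a,x)\in\scat{E}_j$, and the morphisms agree, so $\elt{X_j} = \scat{E}_j$, which is connected by construction.

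The argument is essentially bookkeeping, and I do not expect a serious obstacle; the one place meriting a moment's care is precisely the verification that each $X_j$ is a subfunctor of $X$, which amounts to the elementary fact that a morphism of $\elt{X}$ cannot cross between connected components. The smallness of $J$ likewise requires only that $\elt{X}$ be small, which follows from $\scat{A}$ being small and $X$ taking values in $\Set$.
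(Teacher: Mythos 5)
Your proof is correct and takes essentially the same approach as the paper: both decompose $\elt{X}$ into its connected components $\scat{E}_j$ and reassemble $X$ from the corresponding functors $X_j$. The paper phrases the passage from $\scat{E}_j$ back to a functor $X_j$ via the equivalence between $\Set$-valued functors and discrete opfibrations, whereas you unpack that equivalence into an explicit subfunctor construction and verify it by hand; the mathematical content is the same.
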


\begin{proof}
We use the equivalence between $\Set$-valued functors and discrete
opfibrations.  Write $\elt{X}$ as a sum $\sum_{j \in J} \scat{E}_j$ of
connected categories.  For each $j$, the restriction to $\scat{E}_j$ of the
projection $\elt{X} \go \scat{A}$ is still a discrete opfibration, so
corresponds to a functor $X_j: \scat{A} \go \Set$.  Then
\[
\elt{\sum X_j}
\iso 
\sum \elt{X_j}
\iso
\sum \scat{E}_j
\iso
\elt{X}
\]
compatibly with the projections, so $\sum X_j \iso X$.  \done
\end{proof}

Here is the main result.
\begin{thm}[Nondegenerate functors]
\label{thm:componentwiseflatness}
Let $\scat{A}$ be a small category.  The following conditions on a functor
$X: \scat{A} \go \Set$ are equivalent:
\begin{enumerate}
\item \label{item:cwflat-cwflat}
$X$ is nondegenerate
\item \label{item:cwflat-finconn}
every finite connected diagram in $\elt{X}$ admits a cone
\item \label{item:cwflat-finconn-bits}
$X$ satisfies \cstyle{ND1} and~\cstyle{ND2}
\item \label{item:cwflat-sum}
$X$ is a sum of flat functors.
\end{enumerate}
\end{thm}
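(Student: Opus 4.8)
The plan is to prove the cycle of implications $(\ref{item:cwflat-cwflat}) \Leftrightarrow (\ref{item:cwflat-finconn})$, $(\ref{item:cwflat-finconn}) \Leftrightarrow (\ref{item:cwflat-finconn-bits})$, and $(\ref{item:cwflat-finconn}) \Leftrightarrow (\ref{item:cwflat-sum})$, leaning on the already-available machinery for ordinary flat functors (Theorem~\ref{thm:flatness}, Lemmas~\ref{lemma:existenceofcones}--\ref{lemma:componentsofafunctor}). The equivalence $(\ref{item:cwflat-cwflat}) \Leftrightarrow (\ref{item:cwflat-finconn})$ should largely parallel the classical proof that flatness equals filteredness of the category of elements. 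The forward direction is immediate from Lemma~\ref{lemma:existenceofcones}: if $\dashbk \otimes X$ preserves finite connected limits, then every finite connected diagram in $\elt{X}$ admits a cone. For the converse, I would check that $\dashbk \otimes X$ preserves finite connected limits by the standard argument: express $\dashbk \otimes X$ as the left Kan extension (so it preserves colimits), reduce preservation of a finite connected limit to a statement about the limit cone being computed correctly, and use the cone-existence hypothesis on $\elt{X}$ together with Lemma~\ref{lemma:equalityinatensorproduct} to verify the relevant bijection. By the parenthetical remark after Definition~\ref{defn:nd-ftr} (Lemma~2.1 of~\cite{CJ}) it suffices to treat pullbacks, which simplifies the bookkeeping somewhat.

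For $(\ref{item:cwflat-finconn}) \Leftrightarrow (\ref{item:cwflat-finconn-bits})$ the point is that \cstyle{ND1} and \cstyle{ND2} are exactly the ``generators'' of finite connected diagrams. The implication $(\ref{item:cwflat-finconn}) \Rightarrow (\ref{item:cwflat-finconn-bits})$ is trivial, since the cospan of \cstyle{ND1} and the parallel pair of \cstyle{ND2}, sitting in $\elt{X}$, are finite connected diagrams whose cones unwind to precisely the asserted data. The substantive direction is $(\ref{item:cwflat-finconn-bits}) \Rightarrow (\ref{item:cwflat-finconn})$: given an arbitrary finite connected diagram $D$ in $\elt{X}$, build a cone by induction on the number of arrows (or vertices) of $D$. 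Since $D$ is connected and finite, one can attach vertices one at a time along a spanning tree, using \cstyle{ND1} to merge a newly-adjoined cospan (reconciling the new vertex's constraints against the partial cone already built) and \cstyle{ND2} to absorb each remaining non-tree edge (which, once both endpoints are under the cone, becomes a parallel-pair situation). This is the classical inductive reduction; connectedness is what lets us avoid needing a map out of the empty diagram or a span between unrelated components, so that only \cstyle{ND1}-type and \cstyle{ND2}-type steps arise.

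For $(\ref{item:cwflat-finconn}) \Leftrightarrow (\ref{item:cwflat-sum})$, I would use Lemma~\ref{lemma:componentsofafunctor} to write $X \iso \sum_{j \in J} X_j$ with each $\elt{X_j}$ connected. Then $\elt{X}$ has ``cones on all finite connected diagrams'' iff each $\elt{X_j}$ does (a finite connected diagram in $\elt{X}$ lands in a single $\elt{X_j}$, and conversely). For a functor $Y$ with $\elt{Y}$ connected, having cones on all finite connected diagrams is equivalent to having cones on all finite diagrams --- the empty diagram gets a cone because $\elt{Y}$ is connected hence nonempty, and a finite disconnected diagram can be connected up using Lemma~\ref{lemma:connectednessbyspans} (note $\elt{Y}$ has the square-completion property, being a special case of the cone hypothesis) and then handled by the connected case. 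By Theorem~\ref{thm:flatness}(\ref{item:flat-fin}), ``cones on all finite diagrams in $\elt{Y}$'' is exactly flatness of $Y$. Hence $X$ satisfies $(\ref{item:cwflat-finconn})$ iff each $X_j$ is flat, i.e.\ iff $X$ is a sum of flat functors.

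I expect the main obstacle to be the converse half of $(\ref{item:cwflat-cwflat}) \Leftrightarrow (\ref{item:cwflat-finconn})$: carefully verifying that cone-existence in $\elt{X}$ forces $\dashbk \otimes X$ to preserve finite connected limits, since this requires a genuine computation with tensor products and the explicit description of limits in $\Set$ rather than a formal manipulation. Lemma~\ref{lemma:equalityinatensorproduct} does most of the work for pullbacks, so restricting to pullbacks via~\cite{CJ} is the tactic that makes this manageable. The inductive argument in $(\ref{item:cwflat-finconn-bits}) \Rightarrow (\ref{item:cwflat-finconn})$ is standard but fiddly, and care is needed to phrase the induction so that connectedness is genuinely used --- this is where the parallel with the classical theorem (Theorem~\ref{thm:flatness}) is the guide.
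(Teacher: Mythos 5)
Your proposal is correct in substance but takes a genuinely different and substantially more laborious route than the paper. The paper closes a four-step cycle $(\ref{item:cwflat-cwflat})\Rightarrow(\ref{item:cwflat-finconn})\Rightarrow(\ref{item:cwflat-finconn-bits})\Rightarrow(\ref{item:cwflat-sum})\Rightarrow(\ref{item:cwflat-cwflat})$ in which every step is short: $(\ref{item:cwflat-cwflat})\Rightarrow(\ref{item:cwflat-finconn})$ is Lemma~\ref{lemma:existenceofcones}; $(\ref{item:cwflat-finconn})\Rightarrow(\ref{item:cwflat-finconn-bits})$ is trivial; $(\ref{item:cwflat-finconn-bits})\Rightarrow(\ref{item:cwflat-sum})$ decomposes $X \iso \sum X_j$ via Lemma~\ref{lemma:componentsofafunctor} and verifies, for each component, the three bullet points of Theorem~\ref{thm:flatness}\bref{item:flat-fin-bits} (the pullback- and equalizer-shaped cones come from the hypothesis, the empty cone from connectedness, and the span-between-two-objects cone from connectedness plus the square-completion property); and $(\ref{item:cwflat-sum})\Rightarrow(\ref{item:cwflat-cwflat})$ is a one-liner --- flat functors are nondegenerate because finite connected limits are finite limits, and sums of nondegenerate functors are nondegenerate because sums commute with connected limits in $\Set$. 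You instead organise the proof as a hub at $(\ref{item:cwflat-finconn})$ and prove $(\ref{item:cwflat-finconn})\Rightarrow(\ref{item:cwflat-cwflat})$ and $(\ref{item:cwflat-finconn-bits})\Rightarrow(\ref{item:cwflat-finconn})$ directly; both can be made to work, and your $(\ref{item:cwflat-finconn})\Leftrightarrow(\ref{item:cwflat-sum})$ argument (via Lemmas~\ref{lemma:componentsofafunctor} and~\ref{lemma:connectednessbyspans} plus Theorem~\ref{thm:flatness}\bref{item:flat-fin}) is essentially the paper's $(\ref{item:cwflat-finconn-bits})\Rightarrow(\ref{item:cwflat-sum})$ in slightly different clothing. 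But the two implications you flag as ``the main obstacle'' and ``fiddly'' --- the direct tensor-product computation for $(\ref{item:cwflat-finconn})\Rightarrow(\ref{item:cwflat-cwflat})$ and the spanning-tree induction for $(\ref{item:cwflat-finconn-bits})\Rightarrow(\ref{item:cwflat-finconn})$ --- are exactly the ones the cycle lets you skip entirely. The observation you are missing is that $(\ref{item:cwflat-sum})\Rightarrow(\ref{item:cwflat-cwflat})$ is nearly free (sums commute with connected limits), so once you have $(\ref{item:cwflat-finconn-bits})\Rightarrow(\ref{item:cwflat-sum})$ via the component decomposition you get everything else by composing implications; there is no need to reprove the classical filtered-colimit commutation or to run a delicate induction on diagram shapes.
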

\paragraph*{Remark} In Lemma~\ref{lemma:componentsofafunctor}, the functors
$X_j$ may be regarded as the connected-components of $X$.  A further
equivalent condition is that every connected-component of $X$ is flat:
hence the name `componentwise flat'.

\begin{proof}

\paragraph*{\bref{item:cwflat-cwflat}$\implies$\bref{item:cwflat-finconn}}
Follows from Lemma~\ref{lemma:existenceofcones}.

\paragraph*{\bref{item:cwflat-finconn}$\implies$\bref{item:cwflat-finconn-bits}}
\cstyle{ND1} says that every diagram of shape $\littlepullback$ in
$\elt{X}$ admits a cone, and similarly~\cstyle{ND2} for $\littleequalizer$.

\paragraph*{\bref{item:cwflat-finconn-bits}$\implies$\bref{item:cwflat-sum}}
Write $X \iso \sum_{j\in J} X_j$ as in
Lemma~\ref{lemma:componentsofafunctor}.  Then in each $\elt{X_j}$, there
exists a cone on every diagram of shape
\[
\littlepullback
\diagspace
\textrm{or}
\diagspace
\littleequalizer
\]
(since $\elt{X} \iso \sum_j \elt{X_j}$), of shape $\emptyset$ (since
$\elt{X_j}$ is connected and therefore nonempty), and of shape
$\littleproduct$ (since $\elt{X_j}$ is connected and has the
square-completion property).  So by
\bref{item:flat-fin-bits}$\implies$\bref{item:flat-flat} of
Theorem~\ref{thm:flatness}, each $X_j$ is flat.

\paragraph*{\bref{item:cwflat-sum}$\implies$\bref{item:cwflat-cwflat}}
Sums commute with connected limits in $\Set$, so any sum of nondegenerate
functors is nondegenerate. 
\done
\end{proof}

\begin{cor}[Componentwise filtered categories]
\label{cor:componentwisefilteredcategories}
The following conditions on a small category $\scat{B}$ are equivalent:
\begin{enumerate}
\item \label{item:cwfilt-comm}
finite connected limits commute with colimits of shape $\scat{B}$ in $\Set$
\item \label{item:cwfilt-finconn}
every finite connected diagram in $\scat{B}$ admits a cocone
\item \label{item:cwfilt-finconn-bits} 
every diagram $b_1 \og b_3 \go b_2$ in $\scat{B}$ can be completed to a
commutative square, and every parallel pair $b_1 \parpair{f}{f'} b_2$
of arrows in $\scat{B}$ can be extended to a cofork.
\end{enumerate}
\end{cor}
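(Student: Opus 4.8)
The plan is to deduce this corollary directly from Theorem~\ref{thm:componentwiseflatness} by dualizing and specializing to the terminal presheaf. Write $\Delta 1 : \scat{B}^\op \go \Set$ for the constant functor whose value is a fixed one-element set. The standard expression of a conical colimit as a tensor product gives, under the identification $\pshf{\scat{B}^\op} = \ftrcat{\scat{B}}{\Set}$, an isomorphism between the functor $\dashbk \otimes \Delta 1 : \pshf{\scat{B}^\op} \go \Set$ and the colimit functor $\ftrcat{\scat{B}}{\Set} \go \Set$ (immediate from the coequalizer formula for the coend defining $\otimes$). Hence: finite connected limits commute with colimits of shape $\scat{B}$ in $\Set$ if and only if $\dashbk \otimes \Delta 1$ preserves finite connected limits, that is, if and only if $\Delta 1$ is nondegenerate in the sense of Definition~\ref{defn:nd-ftr}. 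So condition~\bref{item:cwfilt-comm} of the corollary is exactly nondegeneracy of the functor $\Delta 1$ on the small category $\scat{B}^\op$.

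I would then apply Theorem~\ref{thm:componentwiseflatness} with $\scat{A} = \scat{B}^\op$ and $X = \Delta 1$, and read off the remaining two conditions. Since every value of $\Delta 1$ is a singleton, its category of elements $\elt{\Delta 1}$ is (isomorphic to) $\scat{B}^\op$ itself, with the projection being the identity; and a cone on a finite connected diagram in $\scat{B}^\op$ is the same as a cocone on the opposite (again finite connected) diagram in $\scat{B}$, so condition~\bref{item:cwflat-finconn} of the theorem becomes condition~\bref{item:cwfilt-finconn} of the corollary. For condition~\bref{item:cwflat-finconn-bits}, observe that when $X = \Delta 1$ all the element-level hypotheses and conclusions in \cstyle{ND1} and \cstyle{ND2} hold trivially, so \cstyle{ND1} for $\Delta 1$ on $\scat{B}^\op$ asserts exactly that every diagram of shape $\littlepullback$ in $\scat{B}^\op$ extends to a commutative square, and \cstyle{ND2} that every diagram of shape $\littleequalizer$ in $\scat{B}^\op$ extends to a fork. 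Dualizing, these say precisely that every span $b_1 \og b_3 \go b_2$ in $\scat{B}$ can be completed to a commutative square and that every parallel pair $b_1 \parpair{f}{f'} b_2$ in $\scat{B}$ can be extended to a cofork --- which is condition~\bref{item:cwfilt-finconn-bits}. (The fourth condition of Theorem~\ref{thm:componentwiseflatness}, that $\Delta 1$ is a sum of flat functors, is simply not among the equivalences we are asked to prove, so it plays no role here.)

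The only genuine obstacle is variance bookkeeping: one must check carefully that `colimits of shape $\scat{B}$' corresponds to tensoring with the terminal presheaf on $\scat{B}^\op$ rather than on $\scat{B}$, and that cones, forks and the cospan shape $\littlepullback$ in $\scat{B}^\op$ dualize to cocones, coforks and the span shape $b_1 \og b_3 \go b_2$ in $\scat{B}$ exactly as claimed (using that `finite connected' is self-dual). With that dictionary pinned down, the corollary is a term-by-term transcription of the equivalence of conditions~\bref{item:cwflat-cwflat}, \bref{item:cwflat-finconn} and~\bref{item:cwflat-finconn-bits} in Theorem~\ref{thm:componentwiseflatness}, and nothing further is required.
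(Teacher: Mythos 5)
Your proof is correct and is precisely the paper's argument: apply Theorem~\ref{thm:componentwiseflatness} with $\scat{A} = \scat{B}^\op$ and $X$ the constant functor at a singleton, using $\elt{X} \iso \scat{B}^\op$ and $\dashbk \otimes X \iso \colt{\scat{B}}$. The only difference is that you spell out the variance-dualization dictionary and the translation of \cstyle{ND1}/\cstyle{ND2} in full, whereas the paper leaves those routine checks to the reader.
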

\begin{proof}
In Theorem~\ref{thm:componentwiseflatness}, take $\scat{A} = \scat{B}^\op$
and $X$ to be the functor with constant value $1$.  Then $\elt{X} \iso
\scat{B}^\op$ and $\dashbk \otimes X \iso \colt{\scat{B}}$.
The result follows. 
\done
\end{proof}

A small category $\scat{B}$ satisfying the equivalent conditions of
Corollary~\ref{cor:componentwisefilteredcategories} is called
\demph{componentwise filtered}, since a further equivalent condition is that
every connected-component is filtered.  (Grothendieck and Verdier call such
categories `pseudo-filtrantes' \cite{GV}.)  So $X: \scat{A} \go \Set$ is
nondegenerate just when $\elt{X}$ is componentwise \emph{co}filtered.

\minihead{Nondegenerate modules}

We now give a diagrammatic formulation of nondegeneracy of
a \emph{module}.  This will be invaluable later.  By
Theorem~\ref{thm:componentwiseflatness}, a module $M: \scat{A} \gomod
\scat{A}$ is nondegenerate if and only if:
\begin{description}
\item[\cstyle{ND1}]
any commutative square of solid arrows
\[
\begin{diagram}[size=2em]
	&		&b	&		&	\\
	&\ldMod(2,4)<m	&\dModget>{\!\!\!\!\!p}
				&\rdMod(2,4)>{m'}&	\\
	&		&d	&		&	\\
	&\ldGet>g	&	&\rdGet<{g'}	&	\\
a	&		&	&		&a'	\\
	&\rdTo<f	&	&\ldTo>{f'}	&	\\
	&		&c,	&		&	\\
\end{diagram}
\]
can be filled in by dotted arrows to a commutative diagram as shown, and
\item[\cstyle{ND2}] any diagram $b \gobymod{m} a \parpair{f}{f'} c$
with $fm = f'm$ can be extended to a diagram
\[
\begin{diagram}
	&		&d			\\
	&\ruModget<p	&\dGet>e		\\
b	&\rMod_m	&a			\\
	&		&\dTo<f\dTo>{f'}	\\
	&		&c
\end{diagram}
\]
in which the triangle commutes and the right-hand column is a fork.
\end{description}

\minihead{Finite equational systems}

When the category $\scat{A}$ is finite, as is often the case in examples of
equational systems $(\scat{A}, M)$, some more precise results can be proved.
They will not be used in the main development of the theory, but nevertheless
shed light on the concept of nondegeneracy.  I thank Andr\'e Joyal for
bringing them to my attention.

We will use the categorical notion of Cauchy-completeness.  An
\demph{idempotent} in a category $\scat{B}$ is an endomorphism $e: b \go b$ in
$\scat{B}$ such that $e^2 = e$.  It \demph{splits} if there exist maps $a
\oppair{i}{p} b$ such that $pi = 1_a$ and $ip = e$.  A category $\scat{B}$ is
\demph{Cauchy-complete} (or \demph{Karoubi closed}) if every idempotent in
$\scat{B}$ splits.  The importance of this condition is explained
in~\cite{Law} and~\cite{Bor1}.  Every example of a category $\scat{A}$ in this
paper is Cauchy-complete.

\begin{lemma}
\label{lemma:flat-rep-old}
Let $\scat{B}$ be a Cauchy-complete category and $X: \scat{B} \go \Set$ a
finite functor (that is, a functor whose category of elements is finite).
Then $X$ is flat $\iff$ $X$ is representable. 
\end{lemma}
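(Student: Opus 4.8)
The plan is to prove the two implications separately, with the nontrivial direction being flat $\implies$ representable. The easy direction, representable $\implies$ flat, is already noted in the text (representable functors are flat because $\dashbk \otimes \scat{A}(a, \dashbk)$ is evaluation at $a$, which preserves all limits); so the work is entirely in the converse. Suppose then that $X: \scat{B} \go \Set$ is flat and finite. By Theorem~\ref{thm:flatness}, every finite diagram in $\elt{X}$ admits a cone; since $\elt{X}$ is a finite category, in particular the identity diagram on all of $\elt{X}$ admits a cone, i.e.\ $\elt{X}$ has a weakly terminal object — an object $(a, x)$ such that every object of $\elt{X}$ admits at least one map to it. (Apply flatness to the empty diagram to see $\elt{X}$ is nonempty, so this makes sense.)

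The key step is to upgrade this weakly terminal object to an actual terminal object using Cauchy-completeness of $\scat{B}$, hence of $\elt{X}$. First I would observe that $\elt{X}$ is Cauchy-complete: the projection $\elt{X} \go \scat{B}$ creates split idempotents — given an idempotent $e: (a,x) \go (a,x)$ in $\elt{X}$, i.e.\ an idempotent $e: a \go a$ in $\scat{B}$ with $ex = x$, split it as $a \oppair{i}{p} a_0$ in $\scat{B}$; then $p x \in X(a_0)$ lifts the splitting to $\elt{X}$, since $i(px) = ep\cdot$ wait — more carefully, $i(px) = (ip)x = ex = x$, so the maps $(a,x) \oppair{i}{p} (a_0, px)$ split $e$ in $\elt{X}$. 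Now a standard lemma says that a category with a weakly terminal object in which idempotents split has a terminal object: given weakly terminal $w$, form the idempotent $e$ on $w$ as follows — pick any map $f: w \go w$ (weak terminality with the object $w$ itself); among all endomorphisms of $w$ there is, by a finiteness/iteration argument (or directly, since any monoid element in a finite monoid has an idempotent power), an idempotent one $e$; split it as $w \oppair{i}{p} t$; then $t$ is terminal. Indeed for any object $c$, weak terminality gives some map $c \go w$, and composing with $p$ gives a map $c \go t$; for uniqueness, any two maps $g, h: c \rightrightarrows t$ satisfy $ig = ih$ would need checking — here is where I would use that $e = ip$ is, after splitting, essentially idempotent-free, and that in the full subcategory this forces $i$ to be monic and $t$ to have no nonidentity endomorphisms, giving uniqueness. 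I expect the bookkeeping of "weakly terminal $+$ Cauchy-complete $\implies$ terminal" to be the main obstacle, though it is a known categorical fact.

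Once $\elt{X}$ has a terminal object $(a_0, x_0)$, I claim $X \iso \scat{B}(a_0, \dashbk)$. The element $x_0 \in X(a_0)$ corresponds by Yoneda to a natural transformation $\theta: \scat{B}(a_0, \dashbk) \go X$, sending $g: a_0 \go b$ to $g x_0 \in X(b)$. For each $b \in \scat{B}$ and $x \in X(b)$, the object $(b, x)$ of $\elt{X}$ admits a unique map to $(a_0, x_0)$ — but maps $(b,x) \go (a_0, x_0)$ in $\elt{X}$ are exactly maps $g: a_0 \go b$ in $\scat{B}$ with $g x_0 = x$; wait, the direction: a map $(b,x) \go (a_0,x_0)$ is a map $h: b \go a_0$ with $x_0 h = x$, which is not quite what Yoneda gives. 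I would instead take $(a_0,x_0)$ to be \emph{initial} — but flatness gives cones, hence weakly \emph{terminal} objects, so I should arrange the variance correctly: a cone on the identity diagram of $\elt{X}$ is an object with a map \emph{from} it to every object, i.e.\ weakly initial. Re-reading Theorem~\ref{thm:flatness}(b), "admits a cone" means a cone over the diagram, giving a weakly \emph{initial} object of $\elt{X}$; dualizing the Cauchy-completeness argument (split idempotents are self-dual) yields an \emph{initial} object $(a_0, x_0)$. Then for each $(b,x)$ there is a unique $g: a_0 \go b$ with $gx_0 = x$, which says exactly that $\theta_b: \scat{B}(a_0, b) \go X(b)$ is a bijection. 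Hence $\theta$ is a natural isomorphism and $X$ is representable, completing the proof. \done
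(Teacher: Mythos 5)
Your overall strategy is sound, and the two ends of the argument are correct: the easy direction (representable $\implies$ flat) is immediate; an initial object $(a_0, x_0)$ of $\elt{X}$ is precisely a representation $\scat{B}(a_0, \dashbk) \iso X$; and your verification that $\elt{X}$ inherits split idempotents from $\scat{B}$ via the projection is right. (For what it's worth, the paper gives no in-text argument for this lemma, only a citation to Lemma~5.2 of~\cite{ECC}, so there is no internal proof to compare yours against.)

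The middle step, however, has a real gap. The ``standard lemma'' you invoke --- that a weakly initial object in a category with split idempotents yields an initial object --- is false, and the construction you sketch (take any endomorphism $f$ of $w$, pass to an idempotent power, split it) cannot work: nothing stops $f$ from being $1_w$, and splitting $1_w$ just returns $w$, which need not be initial. What flatness plus finiteness of $\elt{X}$ actually give you is strictly stronger than weak initiality: a cone on the \emph{whole identity diagram}, i.e.\ a natural transformation $\lambda$ from the constant functor at $w$ to the identity of $\elt{X}$. The naturality is essential. Applying it to the arrow $\lambda_w : w \go w$ itself gives $\lambda_w \circ \lambda_w = \lambda_w$, so $\lambda_w$ is a \emph{canonical} idempotent, and that is the one to split --- not an arbitrarily chosen one. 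Write $\lambda_w = i r$ with $r i = 1_t$. Then $t$ is initial: for each $c$ take the map $\lambda_c i : t \go c$, and given any $g : t \go c$, naturality of $\lambda$ at the arrow $gr : w \go c$ gives $(gr) \circ \lambda_w = \lambda_c$; since $(gr) \circ \lambda_w = g(ri)r = gr$, this says $gr = \lambda_c$, whence $g = g(ri) = (gr)i = \lambda_c i$. Replacing your hand-waved uniqueness step by this computation closes the gap and makes the proof complete.
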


\begin{proof}
See Lemma~5.2 of~\cite{ECC}, for instance.
\done
\end{proof}

\begin{lemma}
\label{lemma:fin-flat-fin}
Let $\scat{B}$ be a finite category and $X: \scat{B} \go \Set$ a flat
functor.  Then $X$ is finite.
\end{lemma}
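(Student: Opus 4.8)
The plan is to extract a uniform finite bound on the cardinalities of the sets $X(b)$ directly from flatness, using the characterisation in Theorem~\ref{thm:flatness}: since $X$ is flat, every finite diagram in the category of elements $\elt{X}$ admits a cone. I will only need this for finite \emph{discrete} diagrams, that is, for finite families of objects of $\elt{X}$.

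First I would fix $b \in \scat{B}$ and let $N$ be the number of morphisms of $\scat{B}$, which is finite by hypothesis. Given finitely many distinct elements $x_0, \dots, x_k \in X(b)$, the objects $(b, x_0), \dots, (b, x_k)$ of $\elt{X}$ form a finite diagram, so by flatness there is a cone on it: an object $(c, z) \in \elt{X}$ together with, for each $i$, a morphism $(c, z) \to (b, x_i)$ of $\elt{X}$, that is, a morphism $g_i \colon c \to b$ of $\scat{B}$ with $g_i z = x_i$. The key step is then a pigeonhole count: the $g_i$ all lie among the at most $N$ morphisms of $\scat{B}$ with codomain $b$, so if $k \geq N$ then $g_i = g_j$ for some $i \neq j$, whence $x_i = g_i z = g_j z = x_j$, contradicting distinctness. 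Hence $|X(b)| \leq N$ for every $b \in \scat{B}$.

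It then remains to conclude that $\elt{X}$ is finite. Since $\scat{B}$ has finitely many objects, $\elt{X}$ has finitely many objects, namely $\sum_b |X(b)|$; and a morphism of $\elt{X}$ amounts to a morphism $f \colon b \to b'$ of $\scat{B}$ together with an element $x \in X(b)$ (giving the morphism $(b, x) \to (b', fx)$), so $\elt{X}$ has at most $N \cdot \max_b |X(b)|$ morphisms, again finite. Thus $\elt{X}$ is finite, which by Definition~\ref{defn:finite} is precisely the assertion that $X$ is finite. I do not anticipate a serious obstacle; the only point to check with care is that a cone on a finite discrete diagram in $\elt{X}$ is exactly a single object equipped with a morphism to each listed object, so that Theorem~\ref{thm:flatness} genuinely applies. (One could instead pass to the Cauchy completion of $\scat{B}$, which is again finite, and appeal to Lemma~\ref{lemma:flat-rep-old}, but the direct argument above avoids this detour.)
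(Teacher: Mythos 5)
Your proof is correct and follows essentially the same strategy as the paper's: use flatness to produce, via a cone in $\elt{X}$ on a finite collection of objects, a single vertex whose outgoing arrows pigeonhole against the finitely many morphisms of $\scat{B}$. The paper applies this to an arbitrary finite set of objects of $\elt{X}$ to bound its object count by $N$ directly, whereas you bound each fibre $|X(b)|$ and sum — a cosmetic difference only.
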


\begin{proof}
Write $N$ for the number of arrows in $\scat{B}$: then every object in
$\elt{X}$ is the domain of at most $N$ arrows.  Let $S$ be any finite set of
objects of $\elt{X}$.  Since $X$ is flat, $\elt{X}$ is cofiltered, so there is
a cone on $S$ in $\elt{X}$.  Its vertex is the domain of at least $|S|$
arrows, so $|S| \leq N$.  Hence $\elt{X}$ has at most $N$ objects.  Finally,
the hom-sets of $\elt{X}$ are finite, since the same is true in $\scat{B}$; so
$\elt{X}$ is finite.  \done
\end{proof}

\begin{propn}[Flat functors on finite categories]
\label{propn:flat-on-fin}
Let $\scat{B}$ be a finite Cauchy-complete category and $X: \scat{B} \go \Set$
a functor.  Then:
\begin{enumerate}
\item \label{part:ff-flat}
$X$ is flat $\iff$ $X$ is representable
\item \label{part:ff-nd}
$X$ is nondegenerate $\iff$ $X$ is a sum of representables.
\end{enumerate}
\end{propn}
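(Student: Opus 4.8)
The plan is to deduce both parts by assembling results already in hand; the substantive work has all been done in Lemmas~\ref{lemma:fin-flat-fin} and~\ref{lemma:flat-rep-old} and in Theorem~\ref{thm:componentwiseflatness}, so what remains is bookkeeping.

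For part~(\ref{part:ff-flat}), I would first dispose of the easy implication: a representable functor $\scat{B}(b, \dashbk)$ is flat, since $\dashbk \otimes \scat{B}(b, \dashbk)$ is evaluation at $b$ and hence preserves all limits (as noted just before Theorem~\ref{thm:flatness}). For the converse, suppose $X$ is flat. Because $\scat{B}$ is finite, Lemma~\ref{lemma:fin-flat-fin} shows that $X$ is a finite functor; and because $\scat{B}$ is Cauchy-complete, Lemma~\ref{lemma:flat-rep-old} then yields that $X$ is representable. This gives the stated equivalence.

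For part~(\ref{part:ff-nd}), I would run the forward direction through Theorem~\ref{thm:componentwiseflatness}: if $X$ is nondegenerate then $X \iso \sum_{j \in J} X_j$ with each $X_j$ flat. Each $X_j$ is a flat functor on the finite Cauchy-complete category $\scat{B}$, so part~(\ref{part:ff-flat}) makes each $X_j$ representable, whence $X$ is a sum of representables. For the converse, suppose $X$ is a sum of representables. Each summand is flat by part~(\ref{part:ff-flat}), hence nondegenerate (preserving all finite limits is a fortiori preserving finite connected limits); and a sum of nondegenerate functors is nondegenerate, since sums commute with connected limits in $\Set$ (exactly as in the last step of the proof of Theorem~\ref{thm:componentwiseflatness}). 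Therefore $X$ is nondegenerate.

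The only point that requires a moment's attention is that the summands $X_j$ produced by Theorem~\ref{thm:componentwiseflatness} are not \emph{a priori} finite, so in part~(\ref{part:ff-nd}) one cannot invoke Lemma~\ref{lemma:flat-rep-old} directly; one must first use finiteness of $\scat{B}$ (via Lemma~\ref{lemma:fin-flat-fin}) to see that each flat $X_j$ is finite. But this is precisely the step already absorbed into part~(\ref{part:ff-flat}), so citing part~(\ref{part:ff-flat}) suffices and no genuine obstacle arises. Everything else is a direct concatenation of the results established above.
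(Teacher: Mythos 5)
Your proposal is correct and follows essentially the same route as the paper: part~(\ref{part:ff-flat}) by combining Lemmas~\ref{lemma:flat-rep-old} and~\ref{lemma:fin-flat-fin}, and part~(\ref{part:ff-nd}) by applying Theorem~\ref{thm:componentwiseflatness} together with part~(\ref{part:ff-flat}). Your closing remark correctly identifies why Lemma~\ref{lemma:fin-flat-fin} must be routed through part~(\ref{part:ff-flat}) before Lemma~\ref{lemma:flat-rep-old} can bite on the summands.
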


\begin{proof}
For~(\ref{part:ff-flat}), combine Lemmas~\ref{lemma:flat-rep-old}
and~\ref{lemma:fin-flat-fin}.  Part~(\ref{part:ff-nd}) follows, using
Theorem~\ref{thm:componentwiseflatness}.  
\done
\end{proof}

\begin{example} \label{eg:Freyd-fam-rep}
Consider the Freyd equational system $(\scat{A}, M)$~(\S\ref{sec:sss}).  The
category $\scat{A}$ is finite and Cauchy-complete, so
Proposition~\ref{propn:flat-on-fin} applies.  Hence the functors
\[
M(0, \dashbk),\  
M(1, \dashbk),\ 
I:
\scat{A} \go \Set
\]
are all sums of representables.  Indeed,
\begin{align*}
M(0, \dashbk)   &=       
(\{\id\} \parpairu \{0, \dhalf, 1\})    &&\iso    
\scat{A}(0, \dashbk) + \scat{A}(1, \dashbk),    \\
M(1, \dashbk)   &=       
(\emptyset \parpairu \{ [0, \dhalf], [\dhalf, 1] \} )   &&\iso
2 \scat{A}(1, \dashbk), \\
I               &=      
(\{\star\} \parpairu [0, 1] )   &&\iso
\scat{A}(0, \dashbk) + (0, 1) \times \scat{A}(1, \dashbk)
\end{align*}
where $(0, 1)$ is the open real interval.  
\end{example}


\section{Coalgebras}
\label{sec:coalgs}

The general theory of coalgebras for endofunctors has been studied
extensively: see~\cite{Ad}, for instance.  But it turns out that coalgebras
can be understood particularly well when the endofunctor is presented as
\[
M \otimes \dashbk: 
\ftrcat{\scat{A}}{\Set} \go \ftrcat{\scat{A}}{\Set}
\]
for some small category $\scat{A}$ and module $M: \scat{A} \gomod \scat{A}$.
(Every colimit-preserving endofunctor of $\ftrcat{\scat{A}}{\Set}$ has a
unique such presentation.)  We begin this section with some results about
coalgebras for such endofunctors.  These results will be used later, and have
also been used in the pure theory of coalgebras~\cite{KMV}.

We then restrict to the situation where $(\scat{A}, M)$ is an equational
system, and discharge our obligation to prove that $M \otimes \dashbk$ defines
an endofunctor on the categories $\ndSet{\scat{A}}$ and $\ndTop{\scat{A}}$ of
nondegenerate functors.

The key concept throughout is \emph{resolution}.

\minihead{Resolutions}

A coalgebra can be thought of as a kind of iterative system~\cite{Ad}.  To see
this in our context, let $\scat{A}$ be any small category, $M: \scat{A} \gomod
\scat{A}$ any module, and $(X, \xi)$ a coalgebra for the endofunctor $M
\otimes \dashbk$ of $\ftrcat{\scat{A}}{\Set}$.  Let $a_0 \in \scat{A}$ and
$x_0 \in X(a_0)$.  The map
\[
\xi_{a_0}:
X(a_0)
\go
(M \otimes X)(a_0) 
= 
\biggl( \sum_{a_1} M(a_1, a_0) \times X(a_1) \biggr) / \sim
\]
sends $x_0$ to 
\[
\xi_{a_0}(x_0) = (a_1 \gobymod{m_1} a_0) \otimes x_1
\]
for some $a_1 \in \scat{A}$, $m_1 \in M(a_1, a_0)$ and $x_1 \in X(a_1)$.  (To
represent $\xi_{a_0}(x_0)$ as $m_1 \otimes x_1$ requires a choice; there are
in general many such representations.)  Similarly, we may write 
\[
\xi_{a_1}(x_1) = (a_2 \gobymod{m_2} a_1) \otimes x_2.
\]
Continuing in this way, we obtain a diagram
\begin{equation}
\label{eq:complex}
\cdots \ \gobymod{m_{n + 1}} a_n \gobymod{m_n} 
\ \cdots \ \gobymod{m_2} a_1 \gobymod{m_1} a_0
\end{equation}
and a sequence $x_\blb = (x_n)_{n \in \nat}$ with $x_n \in X(a_n)$ and 
\[
\xi_{a_n} (x_n) = m_{n + 1} \otimes x_{n + 1}
\]
for all $n \in \nat$.  The diagram~(\ref{eq:complex}) together with the
sequence $x_\blb$ will be called a \demph{resolution} $(a_\blb, m_\blb,
x_\blb)$ of $x_0$.  I will also call $x_\blb$ a resolution of $x_0$
\demph{along} the diagram~(\ref{eq:complex}).

Clearly every element $x$ of a coalgebra has at least one resolution.  But to
what extent are resolutions unique?  We cannot expect there to be, literally,
a unique resolution of $x$, since at each step there is some choice in how to
represent $\xi_{a_n}(x_n)$.  However, we might hope that the various
resolutions of $x$ are related in some way.  This is indeed the case, as we
shall see, when the functor $X$ is nondegenerate.

We begin by describing how much choice is involved in each individual step.

\begin{lemma}[Equality in $M\otimes X$]
\label{lemma:equalityinMtensorX}
Let $\scat{A}$ be a small category, let $M: \scat{A} \gomod \scat{A}$, and
let $X \in \ndSet{\scat{A}}$.  Take module elements
\[
\begin{diagdiag}
b	&		&	&		&b'	\\
	&\rdMod<m	&	&\ldMod>{m'}	&	\\
	&		&a	&		&	\\
\end{diagdiag}
\]
and $x \in X(b)$, $x' \in X(b')$.  Then $m \otimes x = m' \otimes x' \in (M
\otimes X)(a)$ if and only if there exist a commutative square
\[
\begin{diagdiag}
	&		&c	&		&	\\
	&\ldTo<f	&	&\rdTo>{f'}	&	\\
b	&		&	&		&b'	\\
	&\rdMod<m	&	&\ldMod>{m'}	&	\\
	&		&a	&		&	\\
\end{diagdiag}
\]
and an element $z \in X(c)$ such that $fz = x$ and $f'z = x'$.
\end{lemma}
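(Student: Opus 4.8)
The statement to prove is Lemma \ref{lemma:equalityinMtensorX}: for $X \in \ndSet{\scat{A}}$, two elements $m \otimes x$ and $m' \otimes x'$ of $(M \otimes X)(a)$ are equal if and only if they are connected by a common source, i.e.\ a commutative square over the span $b \gobymod{m} a \ogmod_{m'} b'$ together with $z \in X(c)$ mapping to $x$ and $x'$. The natural approach is to recognize that $(M \otimes X)(a)$ is itself a tensor product of functors — namely $(M \otimes X)(a) = M(\dashbk, a) \otimes X$, where $M(\dashbk, a) \colon \scat{A}^\op \go \Set$ and $X \colon \scat{A} \go \Set$ — and then apply Lemma \ref{lemma:equalityinatensorproduct} (``Equality in a tensor product'') with $Y = M(\dashbk, a)$.

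The ``if'' direction is the easy half and follows immediately from the definition of the tensor product (the coend relation): given the commutative square and $z \in X(c)$, we have $m \otimes x = m \otimes fz = (mf) \otimes z = (m'f') \otimes z = m' \otimes f'z = m' \otimes x'$, using that $mf = m'f'$ by commutativity of the square. For the ``only if'' direction, the plan is: since $X$ is nondegenerate, $\elt{X}$ is componentwise cofiltered (Theorem \ref{thm:componentwiseflatness} and the discussion following Corollary \ref{cor:componentwisefilteredcategories}), so in particular $\elt{X}$ has the square-completion property. Hence Lemma \ref{lemma:equalityinatensorproduct} applies: from $m \otimes x = m' \otimes x'$ we obtain a span $b \ogby{f} c \goby{f'} b'$ in $\scat{A}$ and $z \in X(c)$ with $fz = x$, $f'z = x'$, and $mf = m'f'$ as elements of $M(\dashbk, a)$, i.e.\ $mf = m'f' \colon c \gomod a$. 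That last equation is exactly the statement that the square with sides $f, f', m, m'$ commutes, so we are done.

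The main obstacle — really the only thing requiring care — is verifying the hypothesis of Lemma \ref{lemma:equalityinatensorproduct}, namely that $\elt{X}$ (not $\elt{Y}$) has the square-completion property; this is precisely where nondegeneracy of $X$ is used, and it is exactly condition \cstyle{ND1} (the pullback case of ``every finite connected diagram in $\elt{X}$ admits a cone''). One should also note that no condition on $M$ is needed here: the asymmetry of Lemma \ref{lemma:equalityinatensorproduct} means only the covariant factor $X$ must be well-behaved, which fits the setup since $X$ is assumed nondegenerate but $M$ is an arbitrary module. A small bookkeeping point is to make the identification $(M \otimes X)(a) \iso M(\dashbk, a) \otimes X$ explicit and to check that the module element $m f$ appearing in Lemma \ref{lemma:equalityinatensorproduct} (written there as $yf$ for $y \in Y(a)$) translates under this identification to the composite sector $mf \colon c \gomod a$, so that the equation $mf = m'f'$ genuinely encodes commutativity of the square in the combined diagram. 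Everything else is a direct transcription.
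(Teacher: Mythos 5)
Your proof is correct and follows exactly the paper's approach: use Theorem~\ref{thm:componentwiseflatness} to get the square-completion property of $\elt{X}$, then apply Lemma~\ref{lemma:equalityinatensorproduct} with $Y = M(\dashbk, a)$. The extra remarks you make (the easy ``if'' direction, the observation that no hypothesis on $M$ is needed, the identification $(M\otimes X)(a) \iso M(\dashbk,a)\otimes X$) are all sound and just spell out what the paper leaves implicit.
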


\begin{proof}
By Theorem~\ref{thm:componentwiseflatness}, $\elt{X}$ has the
square-completion property.  Now apply
Lemma~\ref{lemma:equalityinatensorproduct} with $Y = M(\dashbk, a)$.  
\done
\end{proof}

We will need some terminology.  Let $\scat{A}$ be a small category and $M:
\scat{A} \gomod \scat{A}$ a module.  

A \demph{complex} in $(\scat{A}, M)$ is a diagram~(\ref{eq:complex}),
abbreviated as $(a_\blb, m_\blb)$.  A \demph{map} $(a_\blb, m_\blb)
\go (a'_\blb, m'_\blb)$ of complexes is a sequence $f_\blb = (f_n)_{n \in
\nat}$ of maps in $\scat{A}$ such that the diagram
\[
\begin{diagram}
\cdots  &\      &\rMod^{m_{n+1}}        &a_n    &\rMod^{m_n}    &\      &
\cdots  &\      &\rMod^{m_2}            &a_1    &\rMod^{m_1}    &a_0    \\
        &       &                       &\dTo>{f_n}&            &       &
        &       &                       &\dTo>{f_1}&            &\dTo>{f_0}\\
\cdots  &\      &\rMod_{m'_{n+1}}       &a'_n   &\rMod_{m'_n}   &\      &
\cdots  &\      &\rMod_{m'_2}           &a'_1   &\rMod_{m'_1}   &a'_0   \\
\end{diagram}
\]
commutes.  For each $a \in \scat{A}$ there is a category $\catI(a)$ whose
objects are the complexes $(a_\blb, m_\blb)$ satisfying $a_0 = a$, and whose
maps $f_\blb$ are those satisfying $f_0 = 1_a$.  

Now let $(X, \xi)$ be a coalgebra, $a \in \scat{A}$, and $x \in X(a)$.  There
is a category $\Reso(x)$ whose objects are resolutions $(a_\blb, m_\blb,
x_\blb)$ of $x$, and whose maps 
\[
(a_\blb, m_\blb, x_\blb) 
\go 
(a'_\blb, m'_\blb, x'_\blb)
\]
are the maps $f_\blb: (a_\blb, m_\blb) \go (a'_\blb, m'_\blb)$ in $\catI(a)$
such that $f_n x_n = x'_n$ for all $n\in\nat$.

\begin{propn}[Essential uniqueness of resolutions]
\label{propn:uniqueresos}
Let $\scat{A}$ be a small category, $M: \scat{A} \gomod \scat{A}$ a module,
and $(X, \xi)$ a coalgebra for the endofunctor $M \otimes \dashbk$ of
$\ftrcat{\scat{A}}{\Set}$, with $X$ nondegenerate.  Let $a \in \scat{A}$ and $x
\in X(a)$.  Then the category $\Reso(x)$ is connected.
\end{propn}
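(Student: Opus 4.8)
The plan is to prove the stronger statement that any two resolutions of $x$ are connected by a \emph{span} in $\Reso(x)$. Since every element of a coalgebra has at least one resolution, $\Reso(x)$ is then nonempty, and connectedness follows immediately. So fix resolutions $(a_\blb, m_\blb, x_\blb)$ and $(a'_\blb, m'_\blb, x'_\blb)$ of $x$; I will construct a common refinement, that is, a resolution $(c_\blb, \tilde m_\blb, z_\blb)$ of $x$ together with maps $f_\blb\colon (c_\blb,\tilde m_\blb, z_\blb)\to(a_\blb, m_\blb, x_\blb)$ and $f'_\blb\colon (c_\blb,\tilde m_\blb, z_\blb)\to(a'_\blb, m'_\blb, x'_\blb)$ in $\Reso(x)$ (so having $f_0 = f'_0 = 1_a$ and satisfying $f_n z_n = x_n$, $f'_n z_n = x'_n$ for all $n$). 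This is precisely a span in $\Reso(x)$.

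The construction is recursive, the invariant at stage $n$ being: an object $c_n\in\scat{A}$, an element $z_n\in X(c_n)$, and maps $f_n\colon c_n\to a_n$, $f'_n\colon c_n\to a'_n$ in $\scat{A}$ with $f_n z_n = x_n$ and $f'_n z_n = x'_n$. For $n=0$ take $c_0 = a$, $z_0 = x$, $f_0 = f'_0 = 1_a$; the passage from stage $0$ to stage $1$ is then the single application of Lemma~\ref{lemma:equalityinMtensorX} (valid since $X$ is nondegenerate) to the equation $m_1\otimes x_1 = \xi_a(x) = m'_1\otimes x'_1$ in $(M\otimes X)(a)$. For $n\ge 1$, choose any representation $\xi_{c_n}(z_n) = q\otimes y$ with $q\in M(b,c_n)$, $y\in X(b)$. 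Naturality of $\xi$ along $f_n$ turns this into $(f_n q)\otimes y = \xi_{a_n}(x_n) = m_{n+1}\otimes x_{n+1}$ in $(M\otimes X)(a_n)$, and Lemma~\ref{lemma:equalityinMtensorX} yields $d\in\scat{A}$, maps $h\colon d\to b$, $k\colon d\to a_{n+1}$ and $w\in X(d)$ with $hw = y$, $kw = x_{n+1}$, $(f_n q)h = m_{n+1}k$. Next, naturality of $\xi$ along $f'_n$ gives $(f'_n q)\otimes y = m'_{n+1}\otimes x'_{n+1}$, which rewrites, using $y = hw$ and the coend relation, as $(f'_n q h)\otimes w = m'_{n+1}\otimes x'_{n+1}$; a second application of the Lemma produces $c_{n+1}$, maps $l\colon c_{n+1}\to d$, $k'\colon c_{n+1}\to a'_{n+1}$ and $z_{n+1}\in X(c_{n+1})$ with $l z_{n+1} = w$, $k' z_{n+1} = x'_{n+1}$, $(f'_n q h)l = m'_{n+1}k'$. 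One then sets $\tilde m_{n+1} := q h l$, $f_{n+1} := kl$, $f'_{n+1} := k'$ and checks three families of identities: $\xi_{c_n}(z_n) = \tilde m_{n+1}\otimes z_{n+1}$ (so the $c$-data really is a resolution of $x$); $f_n\tilde m_{n+1} = m_{n+1}f_{n+1}$ and $f'_n\tilde m_{n+1} = m'_{n+1}f'_{n+1}$ (so $f_\blb$, $f'_\blb$ are maps of complexes); and $f_{n+1}z_{n+1} = x_{n+1}$, $f'_{n+1}z_{n+1} = x'_{n+1}$ (so they are maps of resolutions). Each is a one-line manipulation using functoriality of $M$ and the defining relation $(mg)\otimes y = m\otimes(gy)$ of the coend $M\otimes X$.

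The only nontrivial ingredient is Lemma~\ref{lemma:equalityinMtensorX}, and it is exactly there that nondegeneracy of $X$ enters (via the square-completion property of $\elt{X}$, Theorem~\ref{thm:componentwiseflatness}): it is what converts each accidental coincidence $m\otimes x = m'\otimes x'$ met along the way into an honest span of elements of $X$, which is what makes the simultaneous refinement of the two resolutions possible. Without nondegeneracy the resolutions of a single element could genuinely split into several components, so no softer argument can succeed. I expect the only fiddly point in writing this up to be keeping track of the two variances of $M\colon\scat{A}^\op\times\scat{A}\to\Set$ when verifying the compatibility identities above; there is no conceptual obstacle beyond that. (The recursion makes countably many choices, which is harmless and in keeping with the rest of the paper.)
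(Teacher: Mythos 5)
Your proof is correct and takes essentially the same route as the paper's: both inductively build a common refinement, producing a span in $\Reso(x)$, using nondegeneracy of $X$ via Lemma~\ref{lemma:equalityinMtensorX}. The only difference lies in the diagram chase inside the inductive step: the paper applies Lemma~\ref{lemma:equalityinMtensorX} twice in parallel (once for each given resolution, landing on the same $z$) and then invokes \cstyle{ND1} a third time to merge the two resulting elements, whereas you rewrite the second equation using $y=hw$ before the second application of the Lemma, so that the merge happens automatically and no separate appeal to \cstyle{ND1} is needed — a mild streamlining of the same argument.
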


\paragraph*{Remark} In fact, $\Reso(x)$ is cofiltered, as can be proved by
an easy extension of the argument below.  We will not need this sharper
result.

\begin{proof}
Certainly $\Reso(x)$ is nonempty.  Now take resolutions $(a_\blb, m_\blb,
x_\blb)$ and $(a'_\blb, m'_\blb, x'_\blb)$ of $x$.  We will construct a span
\[
(a_\blb, m_\blb, x_\blb) 
\og
(b_\blb, p_\blb, y_\blb)
\go
(a'_\blb, m'_\blb, x'_\blb)
\]
in $\Reso(x)$.  Such a span consists of a commutative diagram
\[
\begin{diagram}
\cdots		&\rMod^{m_3}	&a_2		&\rMod^{m_2}	&
a_1		&\rMod^{m_1}	&a_0 = a	\\
		&		&\uTo>{f_2}	&		&
\uTo>{f_1}	&		&\uTo>{f_0 = 1_a}\\
\cdots		&\rMod^{p_3}	&b_2		&\rMod^{p_2}	&
b_1		&\rMod^{p_1}	&b_0 = a	\\
		&		&\dTo>{f'_2}	&		&
\dTo>{f'_1}	&		&\dTo>{f'_0 = 1_a}\\
\cdots		&\rMod_{m'_3}	&a'_2		&\rMod_{m'_2}	&
a'_1		&\rMod_{m'_1}	&a'_0 = a	\\
\end{diagram}
\]
and a sequence $(y_n)_{n \in \nat}$ with $y_n \in X(b_n)$, such that $y_0 =
x$ and 
\[
\xi_{b_n}(y_n) = p_{n + 1} \otimes y_{n + 1},
\quad
f_n y_n = x_n,
\quad
f'_n y_n = x'_n
\]
for each $n \in \nat$.  

Suppose inductively that $n \in \nat$ and $b_r$, $p_r$, $y_r$, $f_r$ and
$f'_r$ have been constructed for all $r \leq n$.  We may write
\[
\xi(y_n) = (c \gobymod{q} b_n) \otimes z
\]
for some $c \in \scat{A}$ and $z \in X(c)$.  Then 
\[
\xi(x_n)
=	
\xi(f_n y_n)
=
f_n \xi(y_n)
=
f_n(q \otimes z)
=
(f_n q) \otimes z,  
\]
but also $\xi(x_n) = m_{n+1} \otimes x_{n+1}$, so by nondegeneracy of $X$ and
Lemma~\ref{lemma:equalityinMtensorX}, there exist a commutative diagram as
labelled~(a) below and an element $w \in X(d)$ such that $gw =
x_{n+1}$ and $hw = z$:
\[
\begin{diagram}[size=1.5em]
	&	&a_{n+1}&	&\rMod^{m_{n+1}}&&a_n	\\
	&	&\uTo<g	&	&	&	&	\\
	&	&d	&	&\textrm{(a)}&	&\uTo>{f_n}\\
	&\ruTo<k&	&\rdTo>h&	&	&	\\
b_{n+1}	&	&\textrm{(b)}&	&c	&\rMod^q&b_n	\\
	&\rdTo<{k'}&	&\ruTo>{h'}&	&	&	\\
	&	&d'	&	&\textrm{(a$'$)}&&\dTo>{f'_n}\\
	&	&\dTo<{g'}&	&	&	&	\\
	&	&a'_{n+1}&	&\rMod_{m'_{n+1}}&&a'_n.\\
\end{diagram}
\]
Similarly, there exist a commutative diagram~(a$'$) and $w' \in X(d')$ such
that $g'w' = x'_{n+1}$ and $h'w' = z$.  So by nondegeneracy of $X$
(condition \cstyle{ND1}), there exist a commutative square~(b) and $y_{n+1}
\in X(b_{n+1})$ such that $k y_{n+1} = w$ and $k' y_{n+1} = w'$.
Put $p_{n + 1} = qhk$, $f_{n+1} = gk$,  and $f'_{n+1} = g'k'$: then
\[
\xi_{b_n}(y_n) 
=
q \otimes z
=
q \otimes hw
=
q \otimes hky_{n + 1}
=
qhk \otimes y_{n + 1}
=
p_{n + 1} \otimes y_n,
\]
and the inductive construction is complete.
\done
\end{proof}

\begin{cor}[Resolving complex]  \label{cor:resolving-complex}
Take $(\scat{A}, M)$, $(X, \xi)$, $a \in \scat{A}$ and $x \in X(a)$ as in
Proposition~\ref{propn:uniqueresos}.  Then any two complexes along which $x$
can be resolved lie in the same connected-component of $\catI(a)$.
\end{cor}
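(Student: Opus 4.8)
The plan is to deduce this directly from Proposition~\ref{propn:uniqueresos} by means of a forgetful functor. First I would observe that there is an evident functor
\[
U: \Reso(x) \go \catI(a)
\]
sending a resolution $(a_\blb, m_\blb, x_\blb)$ of $x$ to its underlying complex $(a_\blb, m_\blb)$, and a map $f_\blb$ of resolutions to the same sequence $f_\blb$ regarded merely as a map of complexes. This is well-defined: the underlying complex of a resolution of $x \in X(a)$ satisfies $a_0 = a$, so is an object of $\catI(a)$; and a map of resolutions satisfies $f_0 = 1_a$, which is exactly the condition for a map of complexes to lie in $\catI(a)$.

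Next I would invoke the elementary fact that any functor carries a connected category into a single connected-component of its codomain: if $\cat{C}$ is connected, then any two of its objects are joined by a finite zigzag of morphisms, and applying the functor turns this zigzag into one in the codomain. By Proposition~\ref{propn:uniqueresos}, the category $\Reso(x)$ is connected, so $U$ sends the whole of $\Reso(x)$ into one connected-component of $\catI(a)$.

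Finally, I would note that, by definition, a complex along which $x$ can be resolved is precisely the underlying complex of some resolution of $x$, i.e.\ an object in the image of $U$. Hence any two such complexes lie in the image of $U$, and therefore in the same connected-component of $\catI(a)$, as required. There is essentially no obstacle here: all the work is in Proposition~\ref{propn:uniqueresos}, and the corollary merely records that the forgetful functor $U$ transports connectedness from $\Reso(x)$ to $\catI(a)$.
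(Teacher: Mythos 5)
Your proof is correct and is essentially the same as the paper's: the paper's one-line proof ("The complexes along which $x$ can be resolved are the objects of $\catI(a)$ in the image of the forgetful functor $\Reso(x) \go \catI(a)$") is exactly your argument, with the facts that $\Reso(x)$ is connected and that functors map connected categories into a single connected-component left implicit.
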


\begin{proof}
The complexes along which $x$ can be resolved are the objects of $\catI(a)$ in
the image of the forgetful functor $\Reso(x) \go \catI(a)$.
\done
\end{proof}

Hence, assuming that the functor $X$ is nondegenerate (and with no assumptions
on $\scat{A}$ and $M$), each element $x \in X(a)$ gives rise canonically to a
connected-component of complexes ending at $a$.

From the perspective of computer science, a complex along which $x$ can be
resolved may be thought of as the observed behaviour of $x$ under iterated
application of $\xi$.  The corollary states that any two observed behaviours
are equivalent.

\minihead{Coalgebras for nondegenerate modules}

We still have to prove that for any equational system $(\scat{A}, M)$, the
endofunctor $M \otimes \dashbk$ of $\ftrcat{\scat{A}}{\Set}$ restricts to an
endofunctor of $\ndSet{\scat{A}}$, and similarly with $\Top$ in place of
$\Set$.  The set-theoretic case is straightforward.  

\begin{propn}[Set-theoretic endofunctor]
\label{propn:preservationofnondegeneracy}
Let $\scat{A}$ be a small category and $M: \scat{A} \gomod \scat{A}$ a
nondegenerate module.  Then the endofunctor $M \otimes \dashbk$ of
$\ftrcat{\scat{A}}{\Set}$ restricts to an endofunctor of $\ndSet{\scat{A}}$.
\end{propn}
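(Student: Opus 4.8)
The plan is to show that if $X: \scat{A} \go \Set$ is nondegenerate then so is $M \otimes X$, using the characterization of nondegeneracy provided by Theorem~\ref{thm:componentwiseflatness}. Concretely, I would verify conditions \cstyle{ND1} and \cstyle{ND2} for the functor $M \otimes X: \scat{A} \go \Set$, given that $X$ satisfies them and that $M(b, \dashbk)$ is nondegenerate for each $b$ (Definition~\ref{defn:mod-nd}). The starting observation is that an element of $(M \otimes X)(a)$ is represented as $m \otimes x$ for some sector $m: b \gomod a$ and some $x \in X(b)$, and — crucially — Lemma~\ref{lemma:equalityinMtensorX} tells us exactly when two such representatives are equal: there is a commutative square over the cospan of sectors together with a common lift in $X$. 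This lemma is the workhorse throughout.

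For \cstyle{ND1}, I would start with a cospan $a \goby{\phi} e \ogby{\phi'} a'$ in $\scat{A}$ and elements $m \otimes x \in (M \otimes X)(a)$, $m' \otimes x' \in (M \otimes X)(a')$ with $\phi \cdot (m \otimes x) = \phi' \cdot (m' \otimes x')$ in $(M \otimes X)(e)$. The left-hand equation unwinds, via the module action, to $(\phi m) \otimes x = (\phi' m') \otimes x'$, so by Lemma~\ref{lemma:equalityinMtensorX} there is a commutative square of arrows in $\scat{A}$ filling the cospan $b \gobymod{\phi m} e \ogbymod{\phi' m'} b'$ — that is, arrows $f: c \go b$, $f': c \go b'$ with $(\phi m)f = (\phi' m')f'$ as sectors $c \gomod e$ — and an element $z \in X(c)$ with $fz = x$, $f'z = x'$. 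Now $mf$ and $m'f'$ are sectors $c \gomod a$ and $c \gomod a'$, and the equation $(\phi m)f = (\phi' m')f'$ says precisely that $\phi \cdot (mf) = \phi' \cdot (m'f')$ as elements of $M(c, \dashbk)$ evaluated along $\phi, \phi'$. Here I invoke nondegeneracy of the module, i.e. that $M(c, \dashbk): \scat{A} \go \Set$ is nondegenerate, hence satisfies \cstyle{ND1}: applied to the cospan $a \goby{\phi} e \ogby{\phi'} a'$ and the elements $mf \in M(c, a)$, $m'f' \in M(c, a')$, it produces a commutative square completing that cospan — arrows $g: a'' \go a$, $g': a'' \go a'$ — and a sector $n: c \gomod a''$ with $gn = mf$, $g'n = m'f'$. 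Then $n \otimes z \in (M \otimes X)(a'')$ satisfies $g \cdot (n \otimes z) = (gn) \otimes z = (mf) \otimes z = m \otimes (fz) = m \otimes x$, and symmetrically $g' \cdot (n \otimes z) = m' \otimes x'$, which is exactly \cstyle{ND1} for $M \otimes X$.

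For \cstyle{ND2}, I would argue analogously: given a parallel pair $a \parpair{\phi}{\phi'} e$ and $m \otimes x \in (M \otimes X)(a)$ with $\phi \cdot (m \otimes x) = \phi' \cdot (m \otimes x)$, unwind to $(\phi m) \otimes x = (\phi' m) \otimes x$ and apply Lemma~\ref{lemma:equalityinMtensorX} to get a common lift $f: c \go b$ with $z \in X(c)$, $fz = x$, and $(\phi m)f = (\phi' m)f$ — so the sectors $mf: c \gomod a$ satisfy $\phi \cdot (mf) = \phi' \cdot (mf)$. Now \cstyle{ND2} for the nondegenerate module $M(c, \dashbk)$ yields a fork $a'' \goby{g} a \parpair{\phi}{\phi'} e$ and a sector $n: c \gomod a''$ with $gn = mf$; then $n \otimes z$ resolves the problem as before. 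I should also check the two "existence" bullets that sit inside flatness but not nondegeneracy are not needed — they aren't, since \cstyle{ND1} and \cstyle{ND2} alone characterize nondegeneracy by Theorem~\ref{thm:componentwiseflatness}.

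The main obstacle I anticipate is purely bookkeeping: keeping straight the two distinct uses of the module action (sectors $m$ post-composed with arrows of $\scat{A}$ on the right versus arrows on the left, as in the $fm$/$mg$ conventions) and making sure the instances of \cstyle{ND1}/\cstyle{ND2} are invoked for the right functor — once for $X$ via Lemma~\ref{lemma:equalityinMtensorX}, once for $M(c, \dashbk)$ via Definition~\ref{defn:mod-nd} — and then assembling the resulting square (or fork) together with the element $z$ into a witness in $M \otimes X$. There is no serious conceptual difficulty: the nondegeneracy of $M$ is exactly the extra hypothesis needed to "push the square down" from the $b$-level to the $a$-level, and Lemma~\ref{lemma:equalityinMtensorX} is exactly the tool for decoding equality in the tensor product. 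A clean way to organize the writeup is to treat \cstyle{ND1} and \cstyle{ND2} in parallel, perhaps even noting that both follow from the single statement that $\dashbk \otimes (M \otimes X) \iso (\dashbk \otimes M) \otimes X$ preserves finite connected limits, since $\dashbk \otimes M: \pshf{\scat{A}} \go \pshf{\scat{A}}$ preserves them (as $M$ is nondegenerate in each variable) and $\dashbk \otimes X$ does too (as $X$ is nondegenerate) — but the diagrammatic \cstyle{ND1}/\cstyle{ND2} verification is more elementary and self-contained, so I would present that.
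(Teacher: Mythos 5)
Your route is more elementary and diagrammatic than the paper's.  The paper proves the statement in one line from the abstract characterization of nondegeneracy (preservation of finite connected limits by $\dashbk \otimes X$) together with associativity of $\otimes$: for any finite connected limit $\Lt{i} Y_i$ in $\pshf{\scat{A}}$,
\[
\Lt{i} (Y_i \otimes M \otimes X)
\iso
\left( \Lt{i} (Y_i \otimes M) \right) \otimes X
\iso
\left( \Lt{i} Y_i \right) \otimes M \otimes X,
\]
the first isomorphism by nondegeneracy of $X$ and the second by nondegeneracy of $M$.  You sketch exactly this in your final paragraph; the diagrammatic \cstyle{ND1}/\cstyle{ND2} verification is a legitimate alternative and has the virtue of showing where each hypothesis enters.

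Your \cstyle{ND1} argument is correct, but the \cstyle{ND2} case has a gap.  Lemma~\ref{lemma:equalityinMtensorX} applied to $(\phi m) \otimes x = (\phi' m) \otimes x$ in $(M \otimes X)(e)$ gives a \emph{span} $b \ogby{f} c \goby{f'} b$ with two possibly distinct arrows, together with $z \in X(c)$ satisfying $fz = x$, $f'z = x$, and $(\phi m)f = (\phi' m)f'$.  It does not give a single ``common lift'' $f$ with $(\phi m)f = (\phi' m)f$, as you write.  So at that point you have $\phi(mf) = \phi'(mf')$ --- an \cstyle{ND1}-shaped equation with two distinct sectors $mf, mf': c \gomod a$ --- rather than a single sector satisfying $\phi(mf) = \phi'(mf)$, which is what \cstyle{ND2} for $M(c, \dashbk)$ needs.

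The fix is one further appeal to \cstyle{ND2} for $X$.  Since $fz = x = f'z$, nondegeneracy of $X$ gives a fork $c' \goby{h} c \parpair{f}{f'} b$ and $z' \in X(c')$ with $hz' = z$.  Put $k = fh = f'h$.  Then $(\phi m)k = (\phi m)fh = (\phi' m)f'h = (\phi' m)k$, so the single sector $mk: c' \gomod a$ satisfies $\phi(mk) = \phi'(mk)$, and $(mk) \otimes z' = m \otimes (kz') = m \otimes (fz) = m \otimes x$.  Now \cstyle{ND2} for the nondegenerate functor $M(c', \dashbk)$ applies as you intended, giving a fork $a'' \goby{g} a \parpair{\phi}{\phi'} e$ and a sector $n: c' \gomod a''$ with $gn = mk$; then $n \otimes z'$ is the required preimage, since $g(n \otimes z') = (gn) \otimes z' = (mk) \otimes z' = m \otimes x$.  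With this repair your argument closes.
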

Nondegeneracy of $M$ is also a \emph{necessary} condition, since for each $b
\in \scat{A}$ the representable $\scat{A}(b, \dashbk)$ is nondegenerate, and
$M \otimes \scat{A}(b, \dashbk) = M(b, \dashbk)$.

\begin{proof}
Let $X: \scat{A} \go \Set$ be nondegenerate.  Then for any finite connected
limit $\Lt{i} Y_i$ in $\pshf{\scat{A}}$,
\[
\Lt{i} (Y_i \otimes M \otimes X)
\iso
\left(
\Lt{i} (Y_i \otimes M)
\right)
\otimes X
\iso
\left(
\Lt{i} Y_i
\right)
\otimes M \otimes X,
\]
the first isomorphism by nondegeneracy of $X$ and the second by
nondegeneracy of $M$.  So $M \otimes X$ is nondegenerate.
\done
\end{proof}

We now begin the topological case.  

\begin{lemma}[Closed quotient map]
\label{lemma:closedquotientmap}
Let $\scat{A}$ be a small category, $X: \scat{A} \go \Top$ a nondegenerate
functor, and $Y: \scat{A}^\op \go \Set$ a finite functor.  Then the quotient
map 
\[
q: 
\sum_a Y(a) \times X(a)
\go
Y \otimes X
\]
is closed. 
\end{lemma}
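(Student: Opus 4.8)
The plan is to show that the image under $q$ of a closed set $C \subseteq \sum_a Y(a) \times X(a)$ is closed in $Y \otimes X$. Since the topology on $Y \otimes X$ is the quotient topology, this is equivalent to showing that $q^{-1}(q(C))$ is closed in $\sum_a Y(a) \times X(a)$, i.e.\ that the saturation of $C$ is closed. Now each summand $Y(a) \times X(a)$ is a coproduct of $|Y(a)|$ copies of the space $X(a)$ (recall $Y$ is $\Set$-valued), so the whole space $\sum_a Y(a) \times X(a)$ is a coproduct, indexed by pairs $(a, y)$ with $y \in Y(a)$, of copies of the spaces $X(a)$. Write $C_{(a,y)}$ for the closed subset of $X(a)$ corresponding to the $(a,y)$-component of $C$. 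The saturation of $C$ meets the $(a',y')$-component in the union, over all $(a,y)$ and all module elements identifying $(y,x)$ with $(y',x')$, of the images of the $C_{(a,y)}$ under the relevant maps. The key to controlling this is Lemma~\ref{lemma:equalityinatensorproduct}: two elements $(y,x)$ and $(y',x')$ satisfy $y \otimes x = y' \otimes x'$ if and only if there exist a span $a \ogby{f} b \goby{f'} a'$ and $z \in X(b)$ with $x = fz$, $x' = f'z$, and $yf = y'f'$ (this applies since $\elt{X}$ has the square-completion property, by Theorem~\ref{thm:componentwiseflatness}).

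Concretely, then, I would fix a component $(a', y')$ and compute
\[
q^{-1}(q(C)) \cap (X(a') \textrm{ in component } (a',y'))
=
\bigcup
(Xf')\bigl((Xf)^{-1}(C_{(a,y)})\bigr),
\]
where the union is over all $a, y \in Y(a)$, all $b \in \scat{A}$, and all spans $a \ogby{f} b \goby{f'} a'$ with $yf = y'f'$. Each set $(Xf)^{-1}(C_{(a,y)})$ is closed since $Xf$ is continuous, and its image under $Xf'$ is closed since $Xf'$ is a \emph{closed} map --- here is where nondegeneracy of $X$ in the topological sense is used (by Definition~\ref{defn:nd-ftr}, $Xf$ is closed for every map $f$ in $\scat{A}$). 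So each term of the union is closed. The remaining point is that the union is \emph{finite}, so that a finite union of closed sets is closed. This is exactly where finiteness of $Y$ enters: the set of triples $(b \goby{f} a, \ y \in Y(a))$ with $yf = y'f'$ for some fixed $(a', y')$ and $f' : b \to a'$ --- that is, the set of objects and structure needed to index the union --- is governed by the category of elements $\elt{Y}$, which is finite by hypothesis; more precisely, there are only finitely many pairs $(a, y)$ with $y \in Y(a)$, and for each the relevant maps $f$ form a subset of the (finite, if $\scat{A}$ were finite — but in general not) hom-set. I should be a little careful here.

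The main obstacle is precisely this finiteness count: $\scat{A}$ itself need not be finite, so $\scat{A}(b, a)$ and $\scat{A}(b, a')$ can be infinite, and a priori there could be infinitely many spans $a \ogby{f} b \goby{f'} a'$ to sum over. The resolution is that what matters is not the span itself but the pair $(a, y \cdot f) \in \elt{Y}$ together with enough data to recover the image $(Xf')((Xf)^{-1}(C_{(a,y)}))$; one reorganizes the union so that it is indexed by morphisms in $\elt{Y}$ out of the finitely many objects over $a'$, or — cleaner — one uses that $q^{-1}(q(C))$ in the $(a', y')$-component equals $\bigcup_{(f: b \to a') } (Xf')\bigl( q_b^{-1}(q(C)) \bigr)$ where $q_b$ is restriction to the fibre over $b$, and Lemma~\ref{lemma:equalityinatensorproduct} lets one replace this with a union indexed by the finite set $\elt{Y}$. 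I would work out the indexing carefully so that each closed set in the union is pinned to a distinct object of $\elt{Y}$ (of which there are finitely many), push forward along a single closed map per object, and then take the finite union. Once the bookkeeping is arranged so the union is manifestly finite, closedness is immediate and the proof is done.
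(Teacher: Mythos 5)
Your approach is the same as the paper's, and your first two-thirds is essentially the published proof: reduce to showing the saturation $q^{-1}(q(C))$ is closed, decompose the domain into components indexed by $\elt{Y}$, and use Lemma~\ref{lemma:equalityinatensorproduct} (applicable by nondegeneracy of $X$, via Theorem~\ref{thm:componentwiseflatness}) to express the saturation's intersection with any one component as a union of sets of the form $(Xf')\bigl((Xf)^{-1}C_{(a,y)}\bigr)$, each closed because $Xf$ is continuous and $Xf'$ is closed.

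The gap is exactly where you flag it: you identify that the union must be finite, and you correctly sense that $\elt{Y}$ finite is what saves the argument, but you do not actually close the argument. The clean observation you need is this: a span $a \ogby{f} b \goby{f'} a'$ in $\scat{A}$ satisfying $yf = y'f'$ is \emph{precisely} a span $(a, y) \ogby{f} (b, w) \goby{f'} (a', y')$ in $\elt{Y}$, where $w := yf = y'f' \in Y(b)$ (by the definition of maps in $\elt{Y}$). Since $Y$ is a finite presheaf, $\elt{Y}$ is a finite category --- finitely many \emph{arrows}, not just objects --- so there are only finitely many spans in $\elt{Y}$ between any two objects. That makes the union manifestly finite and completes the proof.

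Your proposed fix at the end is slightly off: you suggest arranging things so that ``each closed set in the union is pinned to a distinct object of $\elt{Y}$\ldots\ push forward along a single closed map per object.'' But there can be several distinct spans through the same middle object $(b, w)$, each contributing a genuinely different set; you cannot reduce to one set per object. The correct indexing is by \emph{spans} in $\elt{Y}$, and finiteness comes from $\elt{Y}$ having finitely many morphisms, not merely finitely many objects. With that substitution the argument is complete, and it coincides with the paper's.
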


\begin{proof}
A subset of $Y \otimes X$ is closed just when its inverse image under $q$ is
closed, so we must show that if $V$ is a closed subset of $\sum Y(a) \times
X(a)$ then its saturation $q^{-1}qV$ is also closed.  Given $a \in \scat{A}$
and $y \in Y(a)$, write $V_{a, y} \sub X(a)$ for the intersection of $V$ with
the $(a, y)$-summand $X(a)$ of
\[
\sum_{(a, y) \in \elt{Y}} X(a) 
\iso
\sum_{a \in \scat{A}} Y(a) \times X(a).
\]
Then $q^{-1}qV = \bigcup_{(a, y) \in \elt{Y}} q^{-1}q V_{a, y}$, so by
finiteness of $Y$ it suffices to show that each set $q^{-1} q V_{a, y}$
is closed.

Fix $(a, y) \in \elt{Y}$.  By definition,
\[
q^{-1}q V_{a, y}
=
\{
(a', y', x') 
\in
\sum_{a' \in \scat{A}}
Y(a') \times X(a')
\such
y' \otimes x' = y \otimes x \textrm{ for some } x \in V_{a, y}
\}.
\]
So by nondegeneracy of $X$ and Lemma~\ref{lemma:equalityinMtensorX}, $(a',
y', x') \in q^{-1}q V_{a, y}$ if and only if:
\begin{condition}
there exist a span
\[
\begin{diagdiag}
	&	&b	&	&	\\
	&\ldTo<f&	&\rdTo>{f'}&	\\
a	&	&	&	&a'	\\
\end{diagdiag}
\]
in $\scat{A}$ and $z \in X(b)$ such that $fz \in V_{a, y}$, $f'z = x'$, and
$yf = y'f'$
\end{condition}
or equivalently:
\begin{condition}
there exist a span
\begin{equation}	\label{eq:spanforclosedness}
\begin{diagdiag}
	&	&(b, w)	&	&	\\
	&\ldTo<f&	&\rdTo>{f'}&	\\
(a, y)	&	&	&	&(a', y')\\
\end{diagdiag}
\end{equation}
in $\elt{Y}$ and $z \in X(b)$ such that $fz \in V_{a, y}$ and $f'z = x'$.
\end{condition}
So
\[
q^{-1}q V_{a, y}
=
\bigcup_{\mathrm{spans\ }\bref{eq:spanforclosedness}}
\{
(a', y', x') 
\such
x' \in (Xf')(Xf)^{-1} V_{a, y}
\}.
\]
But $V_{a, y}$ is closed in $X(a)$, each $Xf$ is continuous, and each $Xf'$ is
closed, so each of the sets $\{ \ldots \}$ in this union is a closed subset of
the $(a', y')$-summand $X(a')$.  Moreover, finiteness of $Y$ guarantees
that the union is finite.  Hence $q^{-1}q V_{a, y}$ is closed, as required. 
\done
\end{proof}

Part of the definition of nondegeneracy of a functor $X: \scat{A} \go \Top$ is
that for each map $f$ in $\scat{A}$, the map $Xf$ is closed.  In later
theory we will never use this condition directly; we will only use the
property described in the lemma.

\begin{cor}[Coprojections closed]
\label{cor:coprojections-closed}
Let $\scat{A}$ be a small category, $M: \scat{A} \gomod \scat{A}$ a finite
module, and $X: \scat{A} \go \Top$ a nondegenerate functor.  Then for each $m:
b \gomod a$ in $M$, the coprojection
\[
m \otimes \dashbk:
X(b) \go (M \otimes X)(a)
\]
is closed.
\end{cor}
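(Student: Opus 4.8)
The plan is to derive this at once from Lemma~\ref{lemma:closedquotientmap}. Fix a sector $m: b \gomod a$ in $M$. Since $M$ is finite, the presheaf $Y = M(\dashbk, a): \scat{A}^\op \go \Set$ is finite, so Lemma~\ref{lemma:closedquotientmap} applies with this choice of $Y$ and tells us that the quotient map
\[
q: \sum_{b' \in \scat{A}} M(b', a) \times X(b') \go (M \otimes X)(a)
\]
is closed. The coprojection $m \otimes \dashbk: X(b) \go (M \otimes X)(a)$ is, by construction of the tensor product $(M \otimes X)(a)$ as a quotient of $\sum_{b'} M(b', a) \times X(b')$, precisely the composite of $q$ with the inclusion $j$ of the summand $X(b)$ indexed by $(b, m)$ into that coproduct.

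It therefore suffices to observe that $j$ is a closed map, since a composite of closed maps is closed. But in any topological coproduct $\sum_i Z_i$, each summand $Z_i$ is a clopen subspace, so a subset of $Z_i$ is closed in $\sum_i Z_i$ exactly when it is closed in $Z_i$; hence the inclusion of any summand is a closed embedding. Applying this to $j$, and composing with $q$, shows that $m \otimes \dashbk$ is closed.

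I do not expect any real obstacle here: the corollary is a routine consequence of Lemma~\ref{lemma:closedquotientmap}, with finiteness of $M$ supplying exactly the finiteness of $Y$ that the lemma requires, and the only auxiliary input being the elementary fact that summand inclusions into a topological coproduct are closed. The one point worth spelling out carefully is the identification of $m \otimes \dashbk$ with $q \of j$, which is immediate from the explicit description of $(M \otimes X)(a)$.
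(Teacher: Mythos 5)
Your proof is correct and is essentially identical to the paper's: both factor $m \otimes \dashbk$ as the summand inclusion $X(b) \hookrightarrow \sum_{b'} M(b', a) \times X(b')$ followed by the quotient map to $(M \otimes X)(a)$, note that the first is a closed coproduct-coprojection, and apply Lemma~\ref{lemma:closedquotientmap} (using finiteness of $M(\dashbk, a)$) to the second.
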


\begin{proof}
The map $m \otimes \dashbk$ is the composite
\[
X(b)
\goby{(m, \dashbk)}
\sum_{b'} M(b', a) \times X(b')
\rQt^{\mathrm{quotient\ map}}
(M \otimes X)(a).
\]
The first map is closed since it is a coproduct-coprojection, and the
second is closed by Lemma~\ref{lemma:closedquotientmap}.  
\done
\end{proof}

\begin{lemma}[Change of category]
\label{lemma:changeofcategory}
Let $\scat{A}$ be a small category and $M: \scat{A} \gomod \scat{A}$ a finite
module.  Let $\cat{E}$ and $\cat{E}'$ be categories with finite colimits, and
$F: \cat{E} \go \cat{E}'$ a functor preserving finite colimits.  Then the
square
\[
\begin{diagram}
\ftrcat{\scat{A}}{\cat{E}}	&\rTo^{M \otimes \dashbk}	&
\ftrcat{\scat{A}}{\cat{E}}	\\
\dTo<{F \of \dashbk}		&				&
\dTo>{F \of \dashbk}		\\
\ftrcat{\scat{A}}{\cat{E}'}	&\rTo_{M \otimes \dashbk}	&
\ftrcat{\scat{A}}{\cat{E}'},	\\
\end{diagram}
\]
commutes up to canonical isomorphism. 
\end{lemma}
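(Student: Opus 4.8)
The statement is essentially a naturality/coherence fact: the endofunctor $M \otimes \dashbk$ is built out of finite colimits in the target category, and $F \of \dashbk$ is applied pointwise, so a finite-colimit-preserving $F$ should commute with $M \otimes \dashbk$ up to canonical isomorphism. The plan is to unwind the definition of $M \otimes \dashbk$ on objects, exhibit the canonical comparison map, and then verify it is an isomorphism using the hypothesis on $F$. Concretely, for $X \in \ftrcat{\scat{A}}{\cat{E}}$ and $a \in \scat{A}$, the object $(M \otimes X)(a)$ is the colimit, over the category of elements $\elt{M(\dashbk, a)}$, of the diagram $(b, m) \goesto X(b)$; equivalently, it is a coend $\int^{b} M(b, a) \cdot X(b)$, where $S \cdot e$ denotes the $S$-fold copower. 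Since $M$ is finite, this coend is a \emph{finite} colimit in $\cat{E}$.

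The key steps, in order, are: (i) write $(M \otimes X)(a)$ explicitly as the finite colimit $\mathrm{colim}_{(b,m) \in \elt{M(\dashbk, a)}} X(b)$ in $\cat{E}$, noting that finiteness of $M$ makes $\elt{M(\dashbk, a)}$ a finite category; (ii) apply $F$ and use that $F$ preserves this finite colimit to obtain a canonical isomorphism
\[
F\bigl( (M \otimes X)(a) \bigr)
\;=\;
F\Bigl( \mathrm{colim}_{(b,m)} X(b) \Bigr)
\;\iso\;
\mathrm{colim}_{(b,m)} F(X(b));
\]
(iii) recognize the right-hand side as $(M \otimes (F \of X))(a)$, by the same explicit description of $M \otimes \dashbk$ applied in $\cat{E}'$; (iv) check that these component isomorphisms, indexed by $a \in \scat{A}$, are natural in $a$ — that is, that they assemble into an isomorphism of functors $\scat{A} \go \cat{E}'$ — and that the whole family is natural in $X$, giving a natural isomorphism of functors $\ftrcat{\scat{A}}{\cat{E}} \go \ftrcat{\scat{A}}{\cat{E}'}$. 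Steps (i)–(iii) are essentially bookkeeping once one has the coend/copower presentation; the naturality checks in (iv) are the routine diagram-chases that the phrase ``commutes up to canonical isomorphism'' is shorthand for.

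The main obstacle, such as it is, is purely one of organization rather than mathematics: making sure the copower functors $S \cdot \dashbk$ (for a finite set $S$) are themselves finite colimits — they are $S$-fold coproducts — so that ``preserves finite colimits'' genuinely covers every ingredient of $M \otimes \dashbk$, including the action of $M$ on morphisms of $\scat{A}$ (which enters via the maps in $\elt{M(\dashbk, a)}$ and via the functoriality in $a$). One must also be slightly careful that $\cat{E}$ and $\cat{E}'$ are only assumed to have \emph{finite} colimits, so the whole argument must stay within finite colimits throughout — but this is exactly what finiteness of $M$ guarantees. Given all this, the proof is short: it amounts to the observation that $M \otimes \dashbk$ is, objectwise in $\scat{A}$, a finite colimit computed pointwise in the target, together with the defining property of $F$.
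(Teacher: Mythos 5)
Your argument is correct and is precisely the ``straightforward'' reasoning the paper has in mind (the paper's own proof consists of the single word \emph{Straightforward}): $(M\otimes X)(a)$ is by definition the colimit of $X$ over the finite category $\elt{M(\dashbk,a)}$, a finite colimit in $\cat{E}$ because $M$ is finite, so a finite-colimit-preserving $F$ applied pointwise carries it to the corresponding colimit in $\cat{E}'$, which is $(M\otimes(F\of X))(a)$, and naturality in $a$ and $X$ is a routine check. You are also right to flag that the copowers $M(b,a)\cdot\dashbk$ are finite coproducts, so every ingredient of $M\otimes\dashbk$ stays within finite colimits; that observation is what makes the hypothesis on $F$ exactly the right one.
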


\begin{proof}
Straightforward.
\done
\end{proof}

\begin{propn}[Topological endofunctor]
\label{propn:topologicalmoduleaction}
Let $(\scat{A}, M)$ be an equational system.  Then the endofunctor $M \otimes
\dashbk$ of $\ftrcat{\scat{A}}{\Top}$ restricts to an endofunctor of
$\ndTop{\scat{A}}$.  
\end{propn}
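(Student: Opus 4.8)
The plan is to check the two clauses of Definition~\ref{defn:nd-ftr} for $M \otimes X$, given a nondegenerate $X: \scat{A} \go \Top$: first that $U \of (M \otimes X)$ is a nondegenerate $\Set$-valued functor, and second that $(M \otimes X)(f)$ is a closed map for every arrow $f$ of $\scat{A}$.

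The first clause is formal. I would apply Lemma~\ref{lemma:changeofcategory} with $F = U: \Top \go \Set$: since $U$ has a right adjoint (the indiscrete-space functor), it preserves finite colimits, so the lemma yields $U \of (M \otimes X) \iso M \otimes (U \of X)$. By hypothesis $U \of X$ is nondegenerate, and $M$ is nondegenerate, so $M \otimes (U \of X)$ is nondegenerate by Proposition~\ref{propn:preservationofnondegeneracy}.

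For the second clause, fix $f: a \go a'$ in $\scat{A}$. Let $q_a: \sum_b M(b, a) \times X(b) \go (M \otimes X)(a)$ and $q_{a'}$ be the defining quotient maps, and let $\twid{f}: \sum_b M(b, a) \times X(b) \go \sum_b M(b, a') \times X(b)$ be the relabelling map carrying the summand $X(b)$ indexed by $m \in M(b, a)$ identically onto the summand indexed by $fm \in M(b, a')$; then $q_{a'} \of \twid{f} = (M \otimes X)(f) \of q_a$. Here $q_a$ is a continuous surjection, $q_{a'}$ is closed by Lemma~\ref{lemma:closedquotientmap} (the presheaf $M(\dashbk, a')$ is finite since $M$ is a finite module, and $X$ is nondegenerate), and $\twid{f}$ is closed because finiteness of $M$ makes every fibre of $m \mapsto fm: M(b, a) \go M(b, a')$ finite, so the image under $\twid{f}$ of a closed set meets each summand of the codomain in a finite union of closed sets. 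A short diagram chase then finishes it: if $C \sub (M \otimes X)(a)$ is closed then $q_a^{-1}(C)$ is closed, hence $\twid{f}(q_a^{-1}(C))$ is closed, hence $q_{a'}(\twid{f}(q_a^{-1}(C))) = (M \otimes X)(f)(C)$ is closed, the last equality using surjectivity of $q_a$.

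The only delicate points are the closedness of $\twid{f}$ and of $q_{a'}$, both of which rest on the finiteness of the module $M$ (and the latter on Lemma~\ref{lemma:closedquotientmap}, which is the genuinely topological input); everything else is bookkeeping.
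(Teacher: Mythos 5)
Your proof is correct and follows essentially the same route as the paper: you reduce the set-theoretic clause to Lemma~\ref{lemma:changeofcategory} plus Proposition~\ref{propn:preservationofnondegeneracy}, and you establish closedness of $(M\otimes X)f$ by factoring through the same commutative square involving the quotient maps $q_a, q_{a'}$ and the relabelling map (the paper's $\sum f_* \times 1$), invoking Lemma~\ref{lemma:closedquotientmap} and finiteness of $M$ exactly as the paper does. The only difference is that you spell out in more detail why the relabelling map and the final diagram chase work, which the paper leaves terse.
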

\begin{proof}
Let $X \in \ndTop{\scat{A}}$.  The functor $U: \Top \go \Set$ preserves
colimits (being left adjoint to the indiscrete space functor), so $M
\otimes (U \of X) \iso U \of (M \otimes X)$ by
Lemma~\ref{lemma:changeofcategory}.  But $M \otimes (U \of X)$ is
nondegenerate by Proposition~\ref{propn:preservationofnondegeneracy}, so $U
\of (M \otimes X)$ is nondegenerate.

Now let $a \goby{f} a'$ be a map in $\scat{A}$, and consider the
commutative square
\[
\begin{diagram}[height=6ex]
\sum_b M(b, a) \times X(b)		&
\rTo^{\sum f_* \times 1}		&
\sum_b M(b, a') \times X(b)		\\
\dQt<{q_a}				&
					&
\dQt>{q_{a'}}				\\
(M \otimes X)(a)			&
\rTo_{(M \otimes X) f}			&
(M \otimes X)(a').			\\
\end{diagram}
\]
The map $\sum f_* \times 1$ is closed because each set $M(b, a)$ is finite.
The map $q_{a'}$ is closed by Lemma~\ref{lemma:closedquotientmap} and
finiteness of $M$.  So $((M \otimes X) f) \of q_a$ is closed; but $q_a$ is
a continuous surjection, so $(M \otimes X) f$ is closed.  \done
\end{proof}

We have now shown that for an equational system $(\scat{A}, M)$, there are
induced endofunctors $M \otimes \dashbk$ of both $\ndSet{\scat{A}}$ and
$\ndTop{\scat{A}}$.  We will study the categories of coalgebras of these
endofunctors, denoted $\Coalg{M}{\Set}$ and $\Coalg{M}{\Top}$.

The forgetful functor $U: \Top \go \Set$ induces a functor
\[
U_*: 
\Coalg{M}{\Top}
\go
\Coalg{M}{\Set}.
\]
Indeed, if $(X, \xi)$ is an $M$-coalgebra in $\Top$ then $U \of X: \scat{A}
\go \Set$ is nondegenerate, and by Lemma~\ref{lemma:changeofcategory} there is
a natural transformation
\[
U \xi: 
U \of X 
\go
U \of (M \otimes X)
\iso
M \otimes (U \of X).
\]

\begin{propn}[$\Top$ vs $\Set$]
\label{propn:coalgebrasinTopandSet}
Let $(\scat{A}, M)$ be an equational system.  The forgetful functor
\[
U_*: 
\Coalg{M}{\Top}
\go
\Coalg{M}{\Set}
\]
has a left adjoint, and if $(I, \iota)$ is a universal solution in $\Top$
then $U_*(I, \iota)$ is a universal solution in $\Set$. 
\end{propn}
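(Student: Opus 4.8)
The plan is to construct the left adjoint to $U_*$ explicitly, using the indiscrete-space functor $V : \Set \go \Top$, and then deduce the statement about universal solutions by a standard adjoint argument. Recall that $V$ is right adjoint to $U$, so $U$ preserves colimits; but we want a \emph{left} adjoint to $U_*$, which suggests instead using the \emph{discrete}-space functor $D : \Set \go \Top$, which is left adjoint to $U$. The trouble is that $D$ need not preserve the finite connected limits used to define nondegeneracy, nor need it interact with $M \otimes \dashbk$ as cleanly. So the first thing I would check carefully is which of $D$ or $V$ actually does the job here. In fact the right choice is $V$: although $V$ is a \emph{right} adjoint between $\Set$ and $\Top$, one checks that $V$ preserves finite colimits as well (it is easy to see directly that $V$ preserves coproducts and coequalizers, since the indiscrete topology on a quotient or coproduct is forced), hence by Lemma~\ref{lemma:changeofcategory} there is an induced functor $V_* : \Coalg{M}{\Set} \go \Coalg{M}{\Top}$ sending $(X, \xi)$ to $(V \of X, V\xi)$, provided $V \of X$ is nondegenerate when $X$ is. Nondegeneracy of $V \of X$ amounts to: $U \of V \of X \iso X$ is nondegenerate (true), and each map $(V \of X)f$ is closed (automatic, since every map into an indiscrete space is closed). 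So $V_*$ is well-defined.

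Next I would verify that $V_* \ladj U_*$. For this it suffices to produce a natural bijection between coalgebra maps $(V \of X, V\xi) \go (Y, \upsilon)$ in $\Top$ and coalgebra maps $(X, \xi) \go (U_* Y, U\upsilon)$ in $\Set$. A coalgebra map of the first kind is a natural transformation $\phi : V \of X \go Y$ commuting with structure maps; since $V \ladj U$ as ordinary functors, $\phi$ corresponds to a natural transformation $\psi : X \go U \of Y$. The only thing to check is that the coalgebra-compatibility square for $\phi$ translates, under this correspondence, into the coalgebra-compatibility square for $\psi$; this is a routine diagram chase using the naturality of the adjunction $V \ladj U$ and the fact (from Lemma~\ref{lemma:changeofcategory}) that the isomorphisms $V \of (M \otimes X) \iso M \otimes (V \of X)$ and $U \of (M \otimes Y) \iso M \otimes (U \of Y)$ are natural and compatible with the unit/counit of $V \ladj U$. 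So $U_*$ has a left adjoint $V_*$.

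Finally, the statement about universal solutions. A universal solution in $\Top$ is a terminal object of $\Coalg{M}{\Top}$, and we want: if $(I,\iota)$ is terminal there, then $U_*(I,\iota)$ is terminal in $\Coalg{M}{\Set}$. This is \emph{not} a formal consequence of $U_*$ being a right adjoint in general --- right adjoints preserve \emph{limits}, and a terminal object is a limit, so in fact it \emph{is} formal: $U_*$, being a right adjoint, preserves the terminal object. More directly: for any $M$-coalgebra $(X,\xi)$ in $\Set$, the adjunction gives $\Coalg{M}{\Set}\bigl((X,\xi),\ U_*(I,\iota)\bigr) \iso \Coalg{M}{\Top}\bigl(V_*(X,\xi),\ (I,\iota)\bigr)$, and the right-hand side has exactly one element since $(I,\iota)$ is terminal; hence $U_*(I,\iota)$ is terminal in $\Coalg{M}{\Set}$, i.e.\ a universal solution in $\Set$.

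\textbf{Main obstacle.} The only genuinely non-formal point is the verification that $V$ preserves finite colimits (so that Lemma~\ref{lemma:changeofcategory} applies to give the horizontal compatibility $V \of (M \otimes \dashbk) \iso (M \otimes \dashbk) \of (V \of \dashbk)$) together with the bookkeeping that the two adjunction isomorphisms --- the one for $V \ladj U$ and the one from Lemma~\ref{lemma:changeofcategory} --- are mutually compatible, so that the coalgebra squares correspond. Everything else is either automatic (closedness of maps into indiscrete spaces, $U \of V \iso \id$) or a formal consequence of having an adjunction $V_* \ladj U_*$. I would be slightly careful to confirm that it is indeed the indiscrete functor $V$, not the discrete functor $D$, that works; the discrete functor is left adjoint to $U$ and would naively seem more relevant, but it fails to preserve the relevant connected limits and would give the wrong variance.
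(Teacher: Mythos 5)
Your proposal goes wrong at the decisive point: you reject the discrete-space functor $D$ in favour of the indiscrete-space functor $V$, but $V$ is the wrong choice and $D$ is the right one, and in fact $D$ is exactly what the paper uses.

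Three errors in the $V$-route. First, $V$ is \emph{right} adjoint to $U$ (you yourself note $U \ladj V$ at the start), so even if $V_*$ were well-defined it would satisfy $U_* \ladj V_*$, giving a right adjoint to $U_*$, not the left adjoint the proposition asks for; your later assertion ``since $V \ladj U$'' contradicts your own opening paragraph. Second, $V$ does \emph{not} preserve finite colimits: the coproduct in $\Top$ of two nonempty indiscrete spaces $V(A) + V(B)$ has open sets $\emptyset, A, B, A \sqcup B$, which is strictly finer than the indiscrete topology on $A \sqcup B$, so $V(A+B) \not\iso V(A) + V(B)$; hence Lemma~\ref{lemma:changeofcategory} does not apply to $V$. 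Third, ``every map into an indiscrete space is closed'' is false: if $Y$ is indiscrete and $g: Y \go Z$ is a map into an indiscrete $Z$, then $g$ is closed iff $g(Y) \in \{\emptyset, Z\}$, i.e.\ iff $g$ is surjective or $Y$ is empty.

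Meanwhile your worry about $D$ is unfounded. You never need $D$ to preserve finite connected limits: what's needed is that $D$ sends nondegenerate $\Set$-valued functors to nondegenerate $\Top$-valued functors, and this is automatic since (i) $U \of D \of X = X$ is nondegenerate by hypothesis, and (ii) every function between discrete spaces is closed (every subset of a discrete space is closed). Also $D$, being a left adjoint, preserves colimits, so Lemma~\ref{lemma:changeofcategory} does apply and $D_*: \Coalg{M}{\Set} \go \Coalg{M}{\Top}$ is well-defined. The adjunction $D \ladj U$ then lifts for purely formal reasons to $D_* \ladj U_*$, which is the correct variance. The final step --- a right adjoint preserves terminal objects --- is fine as you state it. If you replace $V$ by $D$ throughout and drop the worries about connected limits, the argument becomes correct and coincides with the paper's proof.
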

Conversely, we will see later that any universal solution in $\Set$
carries a natural topology, and is then the universal solution in $\Top$.

\begin{proof}
Let $D$ be the left adjoint to $U: \Top \go \Set$, assigning to each set
the corresponding discrete space.  Then composition with $D$ induces a functor
$\ndSet{\scat{A}} \go \ndTop{\scat{A}}$.  Moreover, $D$ preserves colimits, so
commutes with $M \otimes \dashbk$ (Lemma~\ref{lemma:changeofcategory}); hence
$D$ also induces a functor 
\[
D_*: 
\Coalg{M}{\Set} 
\go 
\Coalg{M}{\Top}.
\]
For purely formal reasons, the adjunction $D \ladj U$ induces an adjunction
$D_* \ladj U_*$.  The statement on universal solutions follows from the
fact that right adjoints preserve terminal objects.
\done 
\end{proof}

\begin{example*}{Discrete systems}
When $\scat{A}$ is discrete, most of the results of this section become
trivial.  Every $\Set$- or $\Top$-valued functor on a discrete category is
nondegenerate, so $\ndSet{\scat{A}} = \ftrcat{\scat{A}}{\Set}$ and
$\ndTop{\scat{A}} = \ftrcat{\scat{A}}{\Top}$.

Let $M: \scat{A} \gomod \scat{A}$ be a module and $(X, \xi)$ an $M$-coalgebra
in $\Set$.  Then every element $x \in X(a)$ ($a \in \scat{A}$) has a unique
resolution, and $\Reso(x)$ is the terminal category $\One$.  As we saw in
\S\ref{sec:dsss}, every discrete equational system $(\scat{A}, M)$ has a
universal solution in both $\Top$ and $\Set$; and in accordance with
Proposition~\ref{propn:coalgebrasinTopandSet}, the universal solution in
$\Top$ is the universal solution in $\Set$, suitably topologized.
\end{example*}


\section{Construction of the universal solution}
\label{sec:construction}

In this section we construct the universal solutions in $\Set$ and in $\Top$
of any given equational system, assuming that the system satisfies a certain
solvability condition \So.  In Appendix~\ref{app:solv}, this sufficient
condition is shown to be necessary: \So\ holds if and only if there is a
universal solution in $\Set$, if and only if there is a universal solution in
$\Top$.  The construction therefore gives the universal solution whenever one
exists.  This is very unusual in the theory of coalgebras: in many contexts,
sufficient conditions are known for the existence of a terminal coalgebra, but
few are known to be necessary.  Compare also~\cite{KMV}.

Condition \So\ on an equational system $(\scat{A}, M)$ is:
\begin{description}
\item[\cstyle{S1}]
for every commutative diagram
\[
\begin{diagram}
\cdots		&\rMod^{m_3}	&a_2		&\rMod^{m_2}	&
a_1		&\rMod^{m_1}	&a_0		\\
		&		&\dTo>{f_2}	&		&
\dTo>{f_1}	&		&\dTo>{f_0}	\\
\cdots		&\rMod^{p_3}	&b_2		&\rMod^{p_2}	&
b_1		&\rMod^{p_1}	&b_0		\\
		&		&\uTo>{f'_2}	&		&
\uTo>{f'_1}	&		&\uTo>{f'_0}	\\
\cdots		&\rMod_{m'_3}	&a'_2		&\rMod_{m'_2}	&
a'_1		&\rMod_{m'_1}	&a'_0,		\\
\end{diagram}
\]
there exists a commutative square
\[
\begin{diagdiag}
	&	&a_0	&		&	\\
	&\ruTo	&	&\rdTo>{f_0}	&	\\
\cdot	&	&	&		&b_0	\\
	&\rdTo	&	&\ruTo>{f'_0}	&	\\
	&	&a'_0	&		&	\\
\end{diagdiag}
\]
in $\scat{A}$, and
\item[\cstyle{S2}] for every serially commutative diagram
\[
\begin{diagram}
\cdots		&\rMod^{m_3}	&a_2			&\rMod^{m_2}	&
a_1			&\rMod^{m_1}	&a_0			\\
		&		&\dTo<{f_2} \dTo>{f'_2}	&		&
\dTo<{f_1} \dTo>{f'_1} 	&		&\dTo<{f_0} \dTo>{f'_0}	\\
\cdots		&\rMod_{p_3}	&b_2			&\rMod_{p_2}	&
b_1			&\rMod_{p_1}	&b_0,		\\
\end{diagram}
\]
there exists a fork $\cdot \go a_0 \parpair{f_0}{f'_0} b_0$ in
$\scat{A}$. 
\end{description}

In \cstyle{S2}, `serially commutative' means that $f_{n - 1} m_n = p_n f_n$
and $f'_{n - 1} m_n = p_n f'_n$ for all $n \geq 1$.

\begin{example}
For any small category $\scat{A}$ there is a module $M: \scat{A} \gomod
\scat{A}$ defined by $M(b, a) = \scat{A}(b, a)$, and $(\scat{A}, M)$ is an
equational system as long as $\sum_b \scat{A}(b, a)$ is finite for
each $a \in \scat{A}$.  Condition \So\ says that $\scat{A}$ is
componentwise cofiltered; so, for instance, the equational system
obtained by taking $\scat{A} = ( 0 \parpairu 1 )$ has no universal
solution.  If $\scat{A}$ \emph{is} componentwise cofiltered then the
universal solution is the functor $\scat{A} \go \Top$ constant at the
one-point space, with its unique coalgebra structure.
\end{example}

We now construct the universal solutions in $\Set$ and in $\Top$ of any
equational system satisfying \So.  The proofs that they are indeed universal
solutions are in~\S\ref{sec:set} and~\S\ref{sec:Top-proofs}, respectively.

\minihead{The universal solution in $\Set$}

Let $(\scat{A}, M)$ be an equational system.  For each $a \in \scat{A}$, we
have the category $\catI(a)$ of complexes ending at $a$ (\S\ref{sec:coalgs}).
Each map $f: a \go a'$ in $\scat{A}$ induces a functor $\catI f: \catI(a) \go
\catI(a')$, sending a complex 
\[
(a_\blb, m_\blb)
=
\left(
\cdots \gobymod{m_3} a_2 \gobymod{m_2} a_1 \gobymod{m_1} a_0 = a
\right)
\]
to the complex
\[
\cdots \gobymod{m_3} a_2 \gobymod{m_2} a_1 \gobymod{f m_1} a'.
\]
This defines a functor $\catI : \scat{A} \go \Cat$.  

Write $\Pi_0: \Cat \go \Set$ for the functor sending a small category to its
set of connected-components, and put $I = \Pi_0 \of \catI : \scat{A} \go
\Set$.  We write $[a_\blb, m_\blb] \in I(a)$ for the connected-component of a
complex $(a_\blb, m_\blb) \in \catI(a)$.  In~\S\ref{sec:set} we will show that
if $(\scat{A}, M)$ satisfies~\So\ then $I$ is nondegenerate.

Later we will analyze in detail the relation of connectedness in $\catI(a)$,
that is, equality in $I(a)$.  For now, let us just note the following: for any
diagram
\[
\cdots 
\gobymod{m'_{n+2}} a'_{n+1} \gobymod{m'_{n+1}} a'_n 
\goby{f}
a_n \gobymod{m_n} \cdots \gobymod{m_1} a_0 = a,
\]
there is a map 
\begin{equation}
\label{eq:finitary}
\begin{diagram}
\cdots                  &\              &
\rMod^{m'_{n+2}}        &a'_{n+1}       &
\rMod^{m'_{n+1}}        &a'_n           &
\rMod^{m_n f}           &a_{n-1}        &
\rMod^{m_{n-1}}         &\              &
\cdots                  &\              &
\rMod^{m_1}             &a_0 = a        \\
                        &               &
                        &\dTo>1         &
                        &\dTo>f         &
                        &\dTo>1         &
                        &               &
                        &               &
                        &\dTo>1         \\
\cdots                  &\              &
\rMod_{m'_{n+2}}        &a'_{n+1}       &
\rMod_{f m'_{n+1}}      &a_n            &
\rMod_{m_n}             &a_{n-1}        &
\rMod_{m_{n-1}}         &\              &
\cdots                  &\              &
\rMod_{m_1}             &a_0 = a        \\
\end{diagram}
\end{equation}
in $\catI(a)$, so the complexes in the top and bottom rows represent the same
element of $I(a)$.

\begin{warning} \label{warning:fin}
The set $I(a)$ is \emph{not} in general the limit of finite approximations.
That is, let $\catI_n(a)$ be the category whose objects are diagrams of the
form
\begin{equation} \label{eq:finitesequence}
a_n \gobymod{m_n} \cdots \gobymod{m_1} a_0 = a
\end{equation}
and whose arrows are commutative diagrams; $\catI_n(a)$ is finite, since $M$
is.  Let $I_n(a) = \Pi_0 \catI_n(a)$.  Then $\catI(a)$ is the limit of the
categories $\catI_n(a)$, but $I(a)$ is typically not the limit of the sets
$I_n(a)$.  (An exception is when $\scat{A}$ is discrete.)  More precisely, the
canonical map $I(a) \go \Lt{n} I_n(a)$ need not be injective, since there may
be two complexes in different connected-components of $\catI(a)$ whose images
in each $\catI_n(a)$ are, nevertheless, always in the same component.  An
example is given at the end of~\ref{eg:construction-Freyd}, which also shows
that the sequential limit of connected categories need not be connected.

The point can be clarified using the notion of `distance' in a category.  For
objects $A$ and $A'$ of a category $\cat{C}$, the \demph{distance}
$d_{\cat{C}}(A, A')$ is the smallest number $n \in \nat$ for which there
exists a diagram
\[
\begin{diagram}[size=1.5em]
        &       &B_1    &       &       &       &B_2    &
        &       &       &       &       &B_n    &       &       \\
        &\ldTo  &       &\rdTo  &       &\ldTo  &       &
\rdTo   &       &       &       &\ldTo  &       &\rdTo  &       \\
A = A_0 &       &       &       &A_1    &       &       &
        &\      &\cdots &\      &       &       &       &A_n = A'\\
\end{diagram}
\]
in $\cat{C}$, or $\infty$ if no such diagram exists.  Thus, $A$ and $A'$ are
in the same connected-component if and only if $d_{\cat{C}}(A, A') < \infty$.
Any functor $F: \cat{C} \go \cat{D}$ induces a distance-decreasing map:
$d_{\cat{D}}(F(A), F(A')) \leq d_{\cat{C}}(A, A')$. 

Now take $a \in \scat{A}$ and two complexes $\alpha, \alpha' \in
\catI(a)$.  Writing $\pr_n: \catI(a) \go \catI_n(a)$ for projection, we have
\[
d_{\catI_n(a)}(\pr_n(\alpha), \pr_n(\alpha'))
\leq
d_{\catI(a)} (\alpha, \alpha')
\]
for all $n$.  So if $\alpha$ and $\alpha'$ are in the same connected-component
of $\catI(a)$ then not only are the distances $d_{\catI_n(a)}(\pr_n(\alpha),
\pr_n(\alpha'))$ finite individually, but also there is an overall bound:
\begin{equation}
\label{eq:distancebound}
\sup_{n \geq 1} 
\Bigl( d_{\catI_n(a)} \bigl( \pr_n(\alpha), \pr_n(\alpha') \bigr) \Bigr)
< 
\infty.
\end{equation}
Hence, the condition that $\pr_n(\alpha)$ and $\pr_n(\alpha')$ always
represent the same element of $I_n(a)$ is not enough to guarantee that
$\alpha$ and $\alpha'$ represent the same element of $I(a)$:
(\ref{eq:distancebound}) must also hold.  (In fact,~(\ref{eq:distancebound})
is also a \emph{sufficient} condition: Proposition~\ref{propn:equalityinIa}).  
\end{warning}

We need to define a coalgebra structure on $I$, that is, a natural
transformation $I \go M \otimes I$.  In order to do so, we first define one on
$\oba\catI$, the composite of $\catI: \scat{A} \go \Cat$ with the objects
functor $\ob: \Cat \go \Set$.  The functor $\oba\catI$ is nondegenerate
(whether or not \So\ holds), since
\begin{equation}        \label{eq:obcatI-nondegen}
\oba\catI  
\iso
\sum_b 
\oba\catI(b) \times M(b, \dashbk)
\end{equation}
and the class of nondegenerate functors is closed under sums
(Theorem~\ref{thm:componentwiseflatness}).  The coalgebra structure $\iota:
\oba\catI \go M \otimes \oba\catI$ is defined by
\[
\iota_a
( \cdots \gobymod{m_3} a_2 \gobymod{m_2} a_1 \gobymod{m_1} a )
\ 
=
\ 
(a_1 \gobymod{m_1} a)
\otimes
(\cdots \gobymod{m_3} a_2 \gobymod{m_2} a_1)
\]
($a \in \scat{A}$), or equivalently by taking $\iota_a$ to be the composite  
\begin{equation}	\label{eq:iota-as-comp}
\oba\catI(a)
\goiso
\sum_b M(b, a) \times \oba\catI(b)
\goby{\mathrm{quotient\ map}}
(M \otimes \oba\catI)(a).
\end{equation}
We also have a quotient map $\pi: \oba\catI \go I$, mapping a complex
$(a_\blb, m_\blb) \in \oba\catI(a)$ to its connected-component $[a_\blb,
m_\blb] \in I(a)$.  It is easy to show that the coalgebra structure on
$\oba\catI$ induces a coalgebra structure on $I$, unique such that $\pi$ is a
map of coalgebras.  We call this coalgebra structure $\iota$, too; it is
characterized by
\[
\iota_a
\Bigl(
\bigcompt{
\cdots \gobymod{m_3} a_2 \gobymod{m_2} a_1 \gobymod{m_1} a
}
\Bigr)
=
(
a_1 \gobymod{m_1} a
)
\otimes
\bigcompt{
\cdots \gobymod{m_3} a_2 \gobymod{m_2} a_1
}.
\]

\minihead{The universal solution in $\Top$}

Next we equip $I$ with a topology.  For each $a \in \scat{A}$, $n \in \nat$,
and truncated complex~\bref{eq:finitesequence}, there is a subset $V_{m_1,
\ldots, m_n}$ of $I(a)$ consisting of all those $t \in I(a)$ such that
\[
t = 
\bigcompt{ 
\cdots 
\gobymod{m_{n+2}} a_{n+1} \gobymod{m_{n+1}} a_n \gobymod{m_n}
\cdots
\gobymod{m_1} a_0 = a
}
\]
for some $m_{n+1}, a_{n+1}, m_{n+2}, \ldots$.  Equivalently, each sector $m: b
\gomod a$ induces a function $\phi_m: I(b) \go I(a)$, with 
\[
\phi_m 
\Bigl(
\compt{ \cdots \gobymod{p_2} b_1 \gobymod{p_1} b }
\Bigr)
=
\compt{ \cdots \gobymod{p_2} b_1 \gobymod{p_1} b \gobymod{m} a },
\]
and then
\[
V_{m_1, \ldots, m_n}
=
\phi_{m_1} \phi_{m_2} \cdots \phi_{m_n}(I(a_n)).
\]
Generate a topology on $I(a)$ by taking each such subset to be closed. 

In order for $(I, \iota)$ to be a coalgebra in $\Top$, the maps $\iota_a$ must
be continuous (for every $a \in \scat{A}$) and the maps $If$ must be
continuous and closed (for every map $f$ in $\scat{A}$).  We will prove
in~\S\ref{sec:Top-proofs} that these statements are true if \So\ holds.  In
fact, it will follow from Lemma~\ref{lemma:fixedpointcomponents} that we have
just given the sets $I(a)$ the coarsest possible topology for which $(I,
\iota)$ is a coalgebra in $\Top$.  

\begin{example*}{Discrete systems}      \label{eg:construction-discrete}
Let $(\scat{A}, M)$ be a discrete equational system.  Condition \So\ holds
trivially.  For each $a \in \scat{A}$, the category $\catI(a)$ is discrete, so
$I(a)$ is simply $\oba\catI(a)$, the set of complexes ending at $a$.  The
topology on $I(a)$ is generated by declaring that for each
diagram~(\ref{eq:finitesequence}), the set of all complexes ending
in~(\ref{eq:finitesequence}) is closed in $I(a)$.  This is the profinite
topology on $I(a)$ defined at the end of~\S\ref{sec:dsss}.
\end{example*}

\begin{example*}{Interval}      \label{eg:construction-Freyd}
We run through the constructions of this section in the case of the Freyd
equational system $(\scat{A}, M)$~(\S\ref{sec:sss}).

Condition~\So\ is easily verified.  Theorem~\ref{thm:Freyd} states---although
we have yet to prove it---that the universal solution is the coalgebra $(I,
\iota)$ defined in~\S\ref{sec:sss}; in particular, $I(1) = [0, 1]$.  So an
element of $[0, 1]$ should be an equivalence class of complexes
\[
\cdots \gobymod{m_3} a_2 \gobymod{m_2} a_1 \gobymod{m_1} 1.
\]
If each $a_n$ is $1$ then each $m_n$ is either $[0, \half]$ or $[\half, 1]$
and the complex is essentially a binary expansion; for instance, the diagram
\[
\cdots \gobymod{[0, \half]} 1 \gobymod{[\half, 1]} 1 
\gobymod{[0, \half]} 1 \gobymod{[\half, 1]} 1 
\gobymod{[0, \half]} 1 \gobymod{[\half, 1]} 1
\]
corresponds to $0.101010\ldots$, representing $2/3 \in [0, 1]$.
Otherwise, the diagram is of the form
\[
\cdots 
\gobymod{\id} 0 \gobymod{\id} 0
\gobymod{m_{n+1}} 1 \gobymod{m_n}
\cdots
\gobymod{m_1} 1
\]
where $m_1, \ldots, m_n \in \{ [0, \half], [\half, 1] \}$ and $m_{n+1} \in
\{ 0, \half, 1 \}$.  Take, for instance,
\[
\cdots 
\gobymod{\id} 0 \gobymod{\id} 0
\gobymod{\half} 1 \gobymod{[\half, 1]} 1
\gobymod{[\half, 1]} 1 \gobymod{[0, \half]} 1.
\]
To see which element $t$ of $[0, 1]$ this represents, we can reason as
follows.  The $[0, \half]$ says that $t \in [0, 1/2]$.  The right-hand
instance of $[\half, 1]$ says that $t$ is in the upper half of $[0, 1/2]$,
that is, in $[1/4, 1/2]$.  The left-hand instance of $[\half, 1]$ says that
$t$ is in the upper half of $[1/4, 1/2]$, that is, in $[3/8, 1/2]$.  The
$\half$ says that $t$ is the midpoint of $[3/8, 1/2]$; that is, $t = 7/16$.

An element of $[0, 1]$ has at most two binary expansions, but may have
infinitely many representations in $\catI(1)$.  For instance, the
representations of $1/2$ are
\begin{eqnarray}
\cdots \gobymod{\id} 0 \gobymod{\id} 0 \gobymod{\half} 1,
\label{eq:half-1}	\\
\cdots \gobymod{[0, \half]} 1 \gobymod{[0, \half]} 1 
\gobymod{[\half, 1]} 1, 
\label{eq:half-2}	\\
\cdots \gobymod{[\half, 1]} 1 \gobymod{[\half, 1]} 1 
\gobymod{[0, \half]} 1,
\label{eq:half-3}  
\end{eqnarray}
and for any $n\in\nat$,
\begin{eqnarray}
\cdots \gobymod{\id} 0 \gobymod{1} 1
\gobymod{[\half, 1]} \cdots \gobymod{[\half, 1]} 1
\gobymod{[0, \half]} 1,
\label{eq:half-4}	\\
\cdots \gobymod{\id} 0 \gobymod{0} 1
\gobymod{[0, \half]} \cdots \gobymod{[0, \half]} 1
\gobymod{[\half, 1]} 1\ 
\label{eq:half-5}	
\end{eqnarray}
with $n$ copies of $[\half, 1]$ and $[0, \half]$ respectively.
(Example~\ref{eg:can-interval} shows that, in a sense, (\ref{eq:half-1}) is
the canonical representation.)

The construction says that two objects of $\catI(1)$ represent the same element
of $[0, 1]$ if and only if they are in the same connected-component.  So, for
instance, each of \bref{eq:half-1}--\bref{eq:half-5} are in the same
component.  The connected diagram
\[
\begin{diagram}
\cdots	&\rMod^{[\half, 1]}	&1		&
\rMod^{[\half, 1]}	&1		&
\rMod^{[0, \half]}	&1	\\
	&			&\uTo>\tau	&
			&\uTo>\tau	&
			&\uTo>1	\\
\cdots	&\rMod^{\id}		&0		&
\rMod^{\id}		&0		&
\rMod^{\half}		&1	\\
	&			&\dTo>\sigma	&
			&\dTo>\sigma	&
			&\dTo>1	\\
\cdots	&\rMod_{[0, \half]}	&1		&
\rMod_{[0, \half]}	&1		&
\rMod_{[\half, 1]}	&1	\\
\end{diagram}
\]
shows that \bref{eq:half-1}--\bref{eq:half-3} are; the others are left
to the reader.  

The constructed topology on $[0, 1]$ is generated by taking as closed
all subsets of the form $[k/2^n, l/2^n]$ where $k, l, n \in \nat$ and $l
\in \{k, k + 1\}$.  This is exactly the Euclidean topology.

Finally, this example shows that $I(a)$ need not be the limit of the finite
approximations $I_n(a)$ (Warning~\ref{warning:fin}).  It is not hard to show
that for each $n$, the category $\catI_n(1)$ is connected, and so each
$I_n(1)$ is a one-element set.  But $I(1) = [0, 1]$, so $I(1) \not\iso \Lt{n}
I_n(1)$.

\end{example*}


\section{Set-theoretic proofs}
\label{sec:set}

The main result of this section is that, for an equational system satisfying
the solvability condition~\So, the construction above really does give the
universal solution in $\Set$.

We do not even know yet that this construction gives a coalgebra.  Given an
equational system $(\scat{A}, M)$, we do have a functor $I: \scat{A} \go \Set$
and a natural transformation $\iota: I \go M \otimes I$, but in order for $(I,
\iota)$ to be called an $M$-coalgebra, it must, by definition, be
nondegenerate.  A large part of this section is devoted to proving that.  (The
proof requires condition~\So.)  It then follows quite quickly that $(I,
\iota)$ is the universal solution.

An element of one of the sets $I(a)$ is an equivalence class of complexes.  We
finish the section by showing that under very mild conditions on $\scat{A}$,
each such element has a canonical complex representing it.

\minihead{Connectedness in $\catI(a)$}

The functor $I: \scat{A} \go \Set$ was constructed by a two-step process:
first we defined $\catI: \scat{A} \go \Cat$, then we took $I(a)$ to be the set
of connected-components of $\catI(a)$.  To understand $I$ we therefore need to
understand the relation of connectedness in the category $\catI(a)$.  We now
begin to analyze this relation.  This analysis is what gives the theory much
of its substance, and we will return to it later too
(\ref{cor:complex-span},~\ref{propn:relationsdetermineequality}).

Notation: if $\Gamma$ is the limit of a diagram
\begin{equation}        \label{eq:sequence-for-limit}
\cdots \go \Gamma_3 \go \Gamma_2 \go \Gamma_1
\end{equation}
in some category, $\pr_n$ will denote both the projection $\Gamma \go
\Gamma_n$ and the given map $\Gamma_m \go \Gamma_n$ for any $m \geq n$.

\begin{lemma}[K\"onig~\cite{Koen}]
\label{lemma:Koenig}
The limit in $\Set$ of a diagram~(\ref{eq:sequence-for-limit}) of finite
nonempty sets is nonempty.  More precisely, for any sequence $(x_n)_{n \geq
1}$ with $x_n \in \Gamma_n$, there exists an element $y \in \Lt{n} \Gamma_n$
such that
\[
\forall r \geq 1, 
\ 
\exists n \geq r:
\ 
\pr_r (x_n) = \pr_r (y).
\]
\end{lemma}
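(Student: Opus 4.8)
The plan is to prove the ``more precise'' statement, since the bare nonemptiness of $\Lt{n}\Gamma_n$ follows by applying it to any choice of sequence $(x_n)$. So fix a diagram $\cdots \go \Gamma_3 \go \Gamma_2 \go \Gamma_1$ of finite nonempty sets, with transition maps $\pr$, and fix a sequence $(x_n)_{n\geq 1}$ with $x_n\in\Gamma_n$. I want to build $y\in\Lt{n}\Gamma_n$, i.e.\ a compatible family $(y_r)_{r\geq 1}$ with $\pr_r(y_s)=y_r$ for all $s\geq r$, such that for every $r$ there is some $n\geq r$ with $\pr_r(x_n)=y_r$.

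First I would set up the usual tree argument. For each $r\geq 1$ and each $c\in\Gamma_r$, say that $c$ is \emph{good} if the set $\{\,n\geq r \such \pr_r(x_n)=c\,\}$ is infinite. The key combinatorial observation is a pigeonhole step: for each fixed $r$, the elements $x_r, x_{r+1}, x_{r+2},\ldots$ project down to $\Gamma_r$ (via $\pr_r$), and since $\Gamma_r$ is finite, some fibre must receive infinitely many of them; that is, $\Gamma_r$ has at least one good element. I would then prove two things about good elements: (i) if $c\in\Gamma_r$ is good and $r'<r$, then $\pr_{r'}(c)\in\Gamma_{r'}$ is good (immediate, since the infinite set of $n$'s witnessing goodness of $c$ also witnesses goodness of $\pr_{r'}(c)$ — bearing in mind $\pr_{r'}\of\pr_r=\pr_{r'}$); and (ii) more delicately, if $c\in\Gamma_r$ is good then there exists a good $c'\in\Gamma_{r+1}$ with $\pr_r(c')=c$. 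For (ii): the infinitely many $n\geq r+1$ with $\pr_r(x_n)=c$ have their $\pr_{r+1}(x_n)$ lying in the finite fibre $\pr_r^{-1}(c)\sub\Gamma_{r+1}$, so by pigeonhole again some $c'$ in that fibre is hit infinitely often, and such a $c'$ is good.

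With (i) and (ii) in hand, the construction is a straightforward recursion: pick any good $y_1\in\Gamma_1$ (exists by the pigeonhole observation), and given a good $y_r$, use (ii) to pick a good $y_{r+1}\in\Gamma_{r+1}$ with $\pr_r(y_{r+1})=y_r$. By construction the family $(y_r)$ is compatible, so it defines an element $y\in\Lt{n}\Gamma_n$; and since each $y_r$ is good, the set of $n\geq r$ with $\pr_r(x_n)=y_r$ is infinite, in particular nonempty, which gives exactly the required property $\forall r\,\exists n\geq r:\pr_r(x_n)=\pr_r(y)$.

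I expect the only real subtlety — the ``main obstacle'', though it is a mild one — to be getting the bookkeeping in step (ii) exactly right, in particular being careful that goodness is phrased relative to the correct index ($n\geq r$, using $\pr_r$) and that the transition maps compose as $\pr_{r'}\of\pr_r=\pr_{r'}$ so that projecting a good element downward preserves goodness. Everything else is the classical finite-branching König argument, with the finiteness of each $\Gamma_r$ supplying the pigeonhole step and the nonemptiness ensuring that each $\Gamma_r$ has at least one element to start the infinite pigeonhole off. No choice beyond countable dependent choice is needed, and in fact the recursion can be made deterministic by fixing, once and for all, a well-ordering of each finite set $\Gamma_r$ and always taking the least good element with the required projection.
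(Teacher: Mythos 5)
Your proof is correct, and it is essentially the paper's proof expressed in slightly different bookkeeping. The paper maintains an explicit decreasing chain of infinite index sets $\natset{1} \supseteq \natset{2} \supseteq \cdots$, whereas you encode the same information in the ``good element'' predicate; the pigeonhole step in your observation~(ii) is precisely the paper's inductive step, so the two arguments coincide. (Your observation~(i) is correct but never actually used in the construction; only~(ii) is needed.)
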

 
\begin{proof}
Take a sequence $(x_n)_{n \geq 1}$ with $x_n \in \Gamma_n$.  We define, for
each $r \geq 1$, an infinite subset $\natset{r}$ of $\posint$ and an element
$y_r \in \Gamma_r$ such that
\begin{itemize}
\item for all $r \geq 1$, $\natset{r} \sub \natset{r-1} \cap \{ r, r+1, \ldots
\}$ (writing $\natset{0} = \posint$)
\item for all $r \geq 1$ and $n \in \natset{r}$, $\pr_r (x_n) = y_r$.
\end{itemize}
Suppose inductively that $r \geq 1$ and $\natset{r-1}$ is defined.  As $n$
runs through the infinite set $\natset{r-1} \cap \{ r, r+1, \ldots \}$,
$\pr_r (x_n)$ takes values in the finite set $\Gamma_r$, so takes some
value $y_r \in \Gamma_r$ infinitely often.  Putting
\[
\natset{r} 
= 
\{ 
n \in \natset{r-1} \cap \{ r, r+1, \ldots \}
\such
\pr_r (x_n) = y_r 
\}
\]
completes the induction.

For each $r \geq 1$ we have $y_r = \pr_r (y_{r+1})$, since we may choose $n
\in \natset{r+1}$, and then
\[
\pr_r (y_{r+1})
=
\pr_r (\pr_{r+1} (x_n))
=
\pr_r (x_n)
=
y_r.
\]
So there is a unique element $y \in \Lt{n} \Gamma_n$ such that $\pr_r (y) =
y_r$ for all $r \geq 1$.  Given $r \geq 1$, we may choose $n \in \natset{r}$,
and then $n \geq r$ and $\pr_r (x_n) = y_r = \pr_r (y)$, as required.  \done
\end{proof}

We now use the notion of distance in a category, introduced in
Warning~\ref{warning:fin}.

\begin{lemma}[Distance in a limit]
\label{lemma:distanceinalimit}
Let
\begin{equation}        \label{eq:cat-seq}
\cdots \go \scat{L}_3 \go \scat{L}_2 \go \scat{L}_1
\end{equation}
be a diagram of finite categories, and let $A, A'$ be objects of $\scat{L} =
\Lt{n} \scat{L}_n$.  Then
\[
d_{\scat{L}} (A, A')
=
\sup_{n \geq 1} 
\Bigl( d_{\scat{L}_n} \bigl( \pr_n(A), \pr_n(A') \bigr) \Bigr).
\]
\end{lemma}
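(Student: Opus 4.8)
The plan is to prove the two inequalities separately. The inequality
\[
d_{\scat{L}}(A, A')
\geq
\sup_{n \geq 1}
\Bigl( d_{\scat{L}_n}\bigl(\pr_n(A), \pr_n(A')\bigr)\Bigr)
\]
is immediate: each projection functor $\pr_n: \scat{L} \go \scat{L}_n$ is distance-decreasing, as noted in Warning~\ref{warning:fin}, so $d_{\scat{L}_n}(\pr_n(A), \pr_n(A')) \leq d_{\scat{L}}(A, A')$ for every $n$, and hence the supremum is bounded by $d_{\scat{L}}(A, A')$.

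For the reverse inequality, write $N$ for the supremum on the right; if $N = \infty$ there is nothing to prove, so assume $N < \infty$. I need to produce a zigzag of length $N$ in $\scat{L}$ from $A$ to $A'$. For each $n$, since $d_{\scat{L}_n}(\pr_n(A), \pr_n(A')) \leq N$, there is a zigzag
\[
\pr_n(A) = C^n_0 \og C^n_1 \go C^n_2 \og \cdots \go C^n_{2N-1} \og C^n_{2N} = \pr_n(A')
\]
of length $N$ in $\scat{L}_n$ (if a shorter zigzag exists, pad it with identities so that every zigzag has exactly the same shape $\scat{Z}$, the diagram-shape of a length-$N$ zigzag). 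A zigzag of shape $\scat{Z}$ in $\scat{L}_n$ from $\pr_n(A)$ to $\pr_n(A')$ is precisely an object of a certain finite category $\scat{Z}_n$ — namely the category of $\scat{Z}$-shaped diagrams in $\scat{L}_n$ whose two endpoints are pinned to $\pr_n(A)$ and $\pr_n(A')$; this is finite because $\scat{L}_n$ is. The transition functors $\scat{L}_{n+1} \go \scat{L}_n$ induce functors $\scat{Z}_{n+1} \go \scat{Z}_n$, and each $\scat{Z}_n$ is nonempty by the choice of zigzag. By K\"onig's Lemma~(\ref{lemma:Koenig}), applied to the sequence $\cdots \go \ob \scat{Z}_3 \go \ob \scat{Z}_2 \go \ob \scat{Z}_1$ of finite nonempty sets, the limit $\Lt{n} \ob \scat{Z}_n$ is nonempty. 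An element of this limit is a compatible family of length-$N$ zigzags, one in each $\scat{L}_n$, with matching endpoints; since $\scat{L} = \Lt{n} \scat{L}_n$ and limits commute with the formation of $\scat{Z}$-shaped diagrams, such a family \emph{is} a length-$N$ zigzag in $\scat{L}$ from $A$ to $A'$. Hence $d_{\scat{L}}(A, A') \leq N$, completing the proof.

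The only subtle point — and the step I would be most careful about — is the bookkeeping that turns "zigzag of length at most $N$" into "object of a finite category of fixed-shape diagrams". One must fix once and for all the shape $\scat{Z}$ of a length-$N$ zigzag, allow identity morphisms to absorb shorter zigzags, and check that the functors $\scat{L}_{n+1} \go \scat{L}_n$ really do send pinned $\scat{Z}$-diagrams to pinned $\scat{Z}$-diagrams (this uses that the projections are compatible, i.e.\ $\pr_n = \pr_n \of \pr_{n+1}$, so the endpoints match up). Everything else is formal: finiteness of each $\scat{Z}_n$ is inherited from finiteness of $\scat{L}_n$, nonemptiness is the defining property of $N$, and the identification of $\Lt{n}$ of the diagram categories with the diagram category of $\Lt{n} \scat{L}_n$ is the standard commutation of limits with limits.
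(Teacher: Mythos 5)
Your proof is correct and follows essentially the same approach as the paper: one inequality from distance-decreasingness of the projections, the other by applying K\"onig's Lemma to the sequence of finite nonempty sets of length-$s$ zigzags (your $\ob\scat{Z}_n$ is the paper's $\Gamma_n$). The extra remarks about padding with identities and endpoint compatibility spell out details the paper leaves implicit, but the argument is the same.
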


\begin{proof}
Write $s = \sup_{n \geq 1} d_{\scat{L}_n} (\pr_n(A), \pr_n(A'))$.  Certainly
$d_{\scat{L}}(A, A') \geq s$, since functors are distance-decreasing
(Warning~\ref{warning:fin}).  Now let us show that $d_{\scat{L}}(A, A') \leq
s$.  Certainly this is true if $s = \infty$; assume that $s < \infty$.  For
each $n \in \nat$, let $\Gamma_n$ be the set of diagrams
\[
\begin{diagram}[size=1.5em]
        &       &\beta_1&       &       &       &\beta_2&
        &       &       &       &       &\beta_s&       &       \\
        &\ldTo  &       &\rdTo  &       &\ldTo  &       &
\rdTo   &       &       &       &\ldTo  &       &\rdTo  &       \\
\pr_n(A) = \alpha_0     
        &       &       &       &\alpha_1&      &       &
        &\      &\cdots &\      &       &       &       &
\alpha_s = \pr_n(A')                                            \\
\end{diagram}
\]
in $\scat{L}_n$.  Then $\Gamma_n$ is finite since $\scat{L}_n$ is, and
nonempty by hypothesis.  So by K\"onig's Lemma, $\Lt{n} \Gamma_n$ is
nonempty; that is, $d_{\scat{L}}(A, A') \leq s$.
\done
\end{proof}

\begin{propn}[Equality and distance]
\label{propn:equalityinIa}
Let $\scat{A}$ be a small category and $M: \scat{A} \gomod \scat{A}$ a finite
module.  Let $a \in \scat{A}$ and $(a_\blb, m_\blb), (a'_\blb, m'_\blb) \in
\catI(a)$.  Then
\begin{align*}
        &
[a_\blb, m_\blb] = [a'_\blb, m'_\blb]   \\
\iff    &
\sup_{n \geq 1}
\Bigl(
d_{\catI_n(a)}
\bigl( a_n \gobymod{m_n} \cdots \gobymod{m_1} a_0, \ 
a'_n \gobymod{m'_n} \cdots \gobymod{m'_1} a'_0 \bigr)
\Bigr)
< \infty.
\end{align*}
\end{propn}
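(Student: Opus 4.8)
The plan is to derive this proposition directly from Lemma~\ref{lemma:distanceinalimit}, which expresses distance in a sequential limit of finite categories as a supremum of distances in the approximating categories. The first step is to check that the hypotheses of that lemma apply here. Each category $\catI_n(a)$ is finite, since $M$ is finite (Warning~\ref{warning:fin}), and $\catI(a)$ is the strict sequential limit $\Lt{n}\catI_n(a)$ in $\Cat$: an object of $\Lt{n}\catI_n(a)$ is a compatible family of truncations $a_n \gobymod{m_n}\cdots\gobymod{m_1}a_0 = a$, which is the same thing as an infinite complex ending at $a$, and likewise for morphisms. Under this identification the projection $\pr_n: \catI(a) \go \catI_n(a)$ is precisely $n$-step truncation, so $\pr_n(a_\blb, m_\blb)$ is the object $a_n \gobymod{m_n}\cdots\gobymod{m_1}a_0$ appearing in the statement. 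All of this is already recorded in Warning~\ref{warning:fin}; I would simply make it explicit.

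With this in place, Lemma~\ref{lemma:distanceinalimit} applied to $\scat{L}_n = \catI_n(a)$ gives
\[
d_{\catI(a)}\bigl( (a_\blb, m_\blb), (a'_\blb, m'_\blb) \bigr)
=
\sup_{n \geq 1}
d_{\catI_n(a)}\bigl( \pr_n(a_\blb, m_\blb),\, \pr_n(a'_\blb, m'_\blb) \bigr),
\]
whose right-hand side is exactly the supremum in the statement. To finish, I would invoke the elementary fact (used already in Warning~\ref{warning:fin}) that two objects of any category lie in the same connected-component if and only if the distance between them is finite, together with the definition $I = \Pi_0 \of \catI$: the equality $[a_\blb, m_\blb] = [a'_\blb, m'_\blb]$ in $I(a)$ means by definition that the two complexes lie in the same connected-component of $\catI(a)$, i.e.\ that $d_{\catI(a)}\bigl((a_\blb, m_\blb), (a'_\blb, m'_\blb)\bigr) < \infty$. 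Combining this with the displayed formula gives the claimed equivalence.

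I do not expect a genuine obstacle in this argument: all the real content---in particular the use of K\"onig's Lemma to manufacture a single infinite zigzag from a uniformly bounded family of finite ones---has been absorbed into Lemma~\ref{lemma:distanceinalimit} (and, upstream of that, Lemma~\ref{lemma:Koenig}). The only point that deserves a sentence of care is the first step: verifying that $\catI(a)$ is the \emph{strict} limit of the $\catI_n(a)$, with the projections being the truncation functors, so that Lemma~\ref{lemma:distanceinalimit} applies verbatim and the supremum it produces is the one written in the statement, term by term.
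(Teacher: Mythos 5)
Your proposal is correct and follows exactly the paper's route: note that finiteness of $M$ makes each $\catI_n(a)$ finite, then apply Lemma~\ref{lemma:distanceinalimit} with $\scat{L}_n = \catI_n(a)$, finishing with the observation that connectedness in a category is the same as finite distance. The extra care you take to verify that $\catI(a) = \Lt{n}\catI_n(a)$ as a strict limit with the truncation projections is exactly the point the paper delegates to Warning~\ref{warning:fin}, so you have supplied a slightly more explicit version of the same argument.
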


\begin{proof}
Since $M$ is finite, each category $\catI_n(a)$ is finite.  Now apply
Lemma~\ref{lemma:distanceinalimit} with $\scat{L}_n = \catI_n(a)$. 
\done
\end{proof}

This result gives a criterion for connectedness in the category $\catI(a)$ of
complexes, purely in terms of the categories $\catI_n(a)$ of \emph{truncated}
complexes.

\minihead{$I$ is nondegenerate}

We begin with a standard categorical construction.  Any functor $\cat{X}:
\scat{B} \go \Cat$ has a \demph{category of elements} $\elt{\cat{X}}$.  An
object of $\elt{\cat{X}}$ is a pair $(b, x)$ with $b \in \scat{B}$ and $x \in
\cat{X}(b)$, and an arrow $(b, x) \go (b', x')$ is a pair $(g, \xi)$ with $g:
b \go b'$ in $\scat{B}$ and $\xi: (\cat{X}g)(x) \go x'$ in $\cat{X}(b')$.
This is related to the notion of the category of elements of a $\Set$-valued
functor $X: \scat{B} \go \Set$ (defined before~\ref{defn:finite}) by the
isomorphism $\elt{X} \iso \elt{D \of X}$, where $D: \Set \go \Cat$ is the
functor assigning to each set the corresponding discrete category.

As remarked after Corollary~\ref{cor:componentwisefilteredcategories}, a
$\Set$-valued functor $X$ is nondegenerate if and only if $\elt{X}$ is
componentwise cofiltered.  We now show that the $\Cat$-valued
functor $\catI$ has a kind of nondegeneracy property: $\elt{\catI}$ is
componentwise cofiltered.  

For the rest of this section, let $(\scat{A}, M)$ be an equational system
satisfying the solvability condition \So.  

The category of elements $\elt{\catI}$ of $\catI: \scat{A} \go \Cat$ is the
category of complexes.  For each $n\in\nat$ we have a functor $\catI_n:
\scat{A} \go \Cat$ (as in Warning~\ref{warning:fin}); its category of elements
is the category of complexes of length $n$.  Then $\elt{\catI}$ is the limit
in $\Cat$ of 
\[
\cdots \go \elt{\catI_2} \go \elt{\catI_1}.
\]

\begin{propn}
\label{propn:catInondegenerate}
$\elt{\catI}$ is componentwise cofiltered.
\end{propn}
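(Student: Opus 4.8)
The plan is to verify, for each connected-component of $\elt{\catI}$, the two conditions characterising cofilteredness: that every span can be completed to a commutative square, and that every parallel pair can be equalised by some arrow (cf.\ the dual of Corollary~\ref{cor:componentwisefilteredcategories}, condition~\bref{item:cwfilt-finconn-bits}). Concretely, an object of $\elt{\catI}$ is a pair $(a, \alpha)$ with $\alpha = (a_\blb, m_\blb) \in \catI(a)$ a complex, and an arrow $(a, \alpha) \go (a', \alpha')$ is a pair $(f, \phi)$ with $f: a \go a'$ in $\scat{A}$ and $\phi$ an arrow $\catI f(\alpha) \go \alpha'$ in $\catI(a')$. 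So a span in $\elt{\catI}$ amounts to a pair of complexes mapping, via arrows $f, g$ out of a common source, to two target complexes; completing it means finding a common refinement. The key tool is that $\elt{\catI}$ is the limit of the finite categories $\elt{\catI_n}$, so I can work one truncation level at a time and then splice the levels together.

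First I would set up the truncated picture: for each $n$, the category $\elt{\catI_n}$ has as objects the length-$n$ truncated complexes~\bref{eq:finitesequence}, and these categories are finite since $M$ is finite. Given a span in $\elt{\catI}$, project it to a span in each $\elt{\catI_n}$. The heart of the argument is to show that condition~\cstyle{S1} exactly provides, for the length-$0$ data (which is just an object of $\scat{A}$ plus the span arrows $f, f'$ in $\scat{A}$), a completing square in $\scat{A}$; and then an inductive/limit argument over $n$, combined with the finiteness of the $\elt{\catI_n}$ and K\"onig's Lemma~(\ref{lemma:Koenig}) in the form already packaged as Lemma~\ref{lemma:distanceinalimit} and Proposition~\ref{propn:equalityinIa}, lets me assemble compatible completing squares at all levels into a single completing square in the limit $\elt{\catI}$. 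Dually, \cstyle{S2} handles the parallel-pair (equaliser) condition: a parallel pair in $\elt{\catI}$ projects to a serially commutative diagram of the shape appearing in \cstyle{S2}, and \cstyle{S2} yields a fork in $\scat{A}$ at the base, which I then propagate down the complex to equalise the whole pair.

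The main obstacle I anticipate is the bookkeeping in passing from level-by-level completions to an honest completing object/arrow in the limit category $\elt{\catI}$: the completing squares chosen at level $n$ need not be the projections of those chosen at level $n+1$, so a naive choice gives an incoherent family. The way around this is to apply K\"onig's Lemma to the (finite, nonempty by \cstyle{S1}/\cstyle{S2} at each level) sets of completing squares, exactly as in the proof of Lemma~\ref{lemma:distanceinalimit}; this yields a coherent choice whose projections stabilise, hence a genuine element of the limit. A secondary subtlety is checking that the resulting completion really lives over a single connected-component — but this is automatic, since the completing object is connected by a span to the two given objects, so it lies in their common component. Once both conditions are verified, cofilteredness of each component of $\elt{\catI}$ follows, and with it the proposition. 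I expect the whole argument to run closely parallel to the proof of Proposition~\ref{propn:uniqueresos}, with \cstyle{S1} and \cstyle{S2} playing the roles that nondegeneracy of $X$ (conditions \cstyle{ND1}, \cstyle{ND2}) played there.
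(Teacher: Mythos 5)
Your overall strategy---view $\elt{\catI}$ as the limit of the truncated categories $\elt{\catI_n}$, verify the square-completion and fork conditions level by level using \cstyle{S1}/\cstyle{S2}, and splice by K\"onig's Lemma---is the strategy the paper takes. But there is a genuine gap in your finiteness claim. You assert that the categories $\elt{\catI_n}$ are finite because $M$ is. This is false: $\elt{\catI_n}$ has an object for every $a \in \scat{A}$ and every truncated complex over $a$, so it is infinite whenever $\scat{A}$ is, and its hom-sets involve arbitrary arrows of $\scat{A}$ and can be infinite even between two fixed objects. Only the individual fibres $\catI_n(a)$ are finite. The consequence is serious: a cone on the given cospan in $\elt{\catI_n}$ includes a choice of arrow at position~$0$, ranging over a possibly infinite hom-set of $\scat{A}$, so the set of such cones need not be finite and K\"onig's Lemma does not apply as you describe. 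The paper circumvents this by defining $\Gamma_n$ to be the set of completing diagrams indexed \emph{from position $1$}, with no arrow chosen at position~$0$; finiteness of $M$ then makes each $\Gamma_n$ finite, because each arrow $g_i$ in the diagram is constrained to be an arrow in the finite category $\elt{M(\dashbk, a_{i-1})}$. Only after K\"onig's Lemma has produced a compatible element of $\Lt{n}\Gamma_n$ is the position-$0$ arrow supplied, by one further application of \cstyle{S1}. Without this truncation device your K\"onig's Lemma step does not go through.

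A second point to tighten: you suggest that \cstyle{S1} and \cstyle{S2} play the role that \cstyle{ND1} and \cstyle{ND2} of $X$ played in Proposition~\ref{propn:uniqueresos}. In fact both are needed, in complementary roles. To show $\Gamma_n \neq \emptyset$, the paper applies \cstyle{S1} once to produce the left-hand column $(c_n, g_n, g'_n)$ of the chunk, and then applies nondegeneracy of the module $M$---condition \cstyle{ND1} in the diagrammatic form given at the end of \S\ref{sec:nondegen}---a further $n-1$ times to propagate. The fork case uses \cstyle{S2} together with \cstyle{ND2} of $M$ in the same way. Your sketch reaches only for the solvability conditions; nondegeneracy of $M$ must be invoked alongside them.
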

\begin{proof}
We have to prove that every diagram $\cdot \go \cdot \og \cdot$ in
$\elt{\catI}$ can be completed to a commutative square, and that every
parallel pair $\cdot \parpairu \cdot$ can be completed to a fork.  The two
cases are very similar, so I will just do the first.

Take a diagram
\begin{equation} \label{eq:corner-in-I}
\begin{diagram}
\cdots		&\rMod^{m_3}	&a_2		&\rMod^{m_2}	&
a_1		&\rMod^{m_1}	&a_0		\\
		&		&\dTo>{f_2}	&		&
\dTo>{f_1}	&		&\dTo>{f_0}	\\
\cdots		&\rMod^{p_3}	&b_2		&\rMod^{p_2}	&
b_1		&\rMod^{p_1}	&b_0		\\
		&		&\uTo>{f'_2}	&		&
\uTo>{f'_1}	&		&\uTo>{f'_0}	\\
\cdots		&\rMod_{m'_3}	&a'_2		&\rMod_{m'_2}	&
a'_1		&\rMod_{m'_1}	&a'_0		\\
\end{diagram}
\end{equation}
of shape $\littlepullback$ in $\elt{\catI}$.  For $n \geq 1$, let $\Gamma_n$
be the set of diagrams
\begin{equation} \label{eq:factorization-chunk}
\begin{diagram}
a_n		&\rMod^{m_n}	&a_{n-1}	&\rMod^{m_{n-1}}&
\	&\cdots	&\		&\rMod^{m_2}	&a_1		\\
\uTo<{g_n}	&		&\uTo<{g_{n-1}}	&		&
	&	&		&		&\uTo>{g_1}	\\
c_n		&\rMod^{q_n}	&c_{n-1}	&\rMod^{q_{n-1}}&
\	&\cdots	&\ 		&\rMod^{q_2}	&c_1		\\
\dTo<{g'_n}	&		&\dTo<{g'_{n-1}}&		&
	&	&		&		&\dTo>{g'_1}	\\
a'_n		&\rMod_{m'_n}	&a'_{n-1}	&\rMod_{m'_{n-1}}&
\	&\cdots	&\ 		&\rMod_{m'_2}	&a'_1		\\
\end{diagram}
\end{equation}
satisfying $f_1 g_1 = f'_1 g'_1$, \ldots, $f_n g_n = f'_n g'_n$.  There are
evident projections $\Gamma_{n + 1} \go \Gamma_n$.  We will apply K\"onig's
Lemma.  

Each set $\Gamma_n$ is finite, by finiteness of $M$ and the fact that the
indexing in~\bref{eq:factorization-chunk} starts at $1$, not $0$.  I claim
that $\Gamma_n$ is also nonempty.  Indeed, \cstyle{S1} implies that
there exist $c_n$, $g_n$, and $g'_n$ making
\[
\begin{diagram}
	&		&a_n	&		&\rMod^{m_n}	&
	&a_{n-1}	&			&	\\
	&\ruTo<{g_n}	&	&\rdTo>{f_n}	&		&
\	&		&\rdTo>{f_{n-1}}	&	\\
c_n	&		&	&		&b_n		&
	&\rMod^{p_n}	&			&b_{n-1}\\
	&\rdTo<{g'_n}	&	&\ruTo>{f'_n}	&		&
	&		&\ruTo>{f'_{n-1}}	&	\\
	&		&a'_n	&		&\rMod_{m'_n}	&
	&a'_{n-1}	&			&	\\
\end{diagram}
\]
commute, and then nondegeneracy of $M$ (condition~\cstyle{ND1} at the end
of~\S\ref{sec:nondegen}) implies that the outside of this diagram can also
be filled in as
\[
\begin{diagram}
	&		&a_n	&		&\rMod^{m_n}	&
	&a_{n-1}	&			&	\\
	&\ruTo<{g_n}	&	&\ 		&		&
\ruTo<{g_{n-1}}&	&\rdTo>{f_{n-1}}	&	\\
c_n	&		&\rMod^{q_n}&		&c_{n-1}	&
	&		&			&b_{n-1}\\
	&\rdTo<{g'_n}	&	&		&		&
\rdTo<{g'_{n-1}}&	&\ruTo>{f'_{n-1}}	&	\\
	&		&a'_n	&		&\rMod_{m'_n}	&
	&a'_{n-1}.	&			&	\\
\end{diagram}
\]
Repeating this argument $(n - 2)$ times gives an element of $\Gamma_n$, as
required.

By K\"onig's Lemma, $\Lt{n} \Gamma_n$ is nonempty; that is,
diagram~\bref{eq:corner-in-I} with its rightmost column removed can be
completed to a commutative square in $\elt{\catI}$.  Using the diagram-filling
argument one more time shows that~\bref{eq:corner-in-I} can be, too.  
\done
\end{proof}

In the next few results we see that for general reasons, $\elt{\catI}$ being
componentwise cofiltered has two consequences: each category $\catI(a)$ is
also componentwise cofiltered, and $I: \scat{A} \go \Set$ is nondegenerate.

\begin{lemma}
\label{lemma:individualcategoriescomponentwisecofiltered}
Let $\cat{J}: \scat{B} \go \Cat$ be a functor on a small category
$\scat{B}$.  If $\elt{\cat{J}}$ is componentwise cofiltered then
$\cat{J}(a)$ is componentwise cofiltered for each $a \in \scat{B}$.
\end{lemma}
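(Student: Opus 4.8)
The plan is to deduce componentwise cofilteredness of each fibre $\cat{J}(a)$ from that of the total category $\elt{\cat{J}}$ by exploiting the fact that $\cat{J}(a)$ sits inside $\elt{\cat{J}}$ as the ``fibre over $a$'': the subcategory whose objects are pairs $(a, x)$ with the fixed first coordinate $a$, and whose morphisms are those $(g, \xi)$ with $g = 1_a$. Concretely, given a diagram $(a, x_1) \leftarrow (a, x_0) \to (a, x_2)$ in $\cat{J}(a)$ — or a parallel pair — I would view it as a diagram of the same shape in $\elt{\cat{J}}$, use componentwise cofilteredness there to get a completing square (or fork) with some new vertex $(b, w)$ and structure maps $(h, \eta)$, $(h', \eta')$ down to $(a, x_1)$ and $(a, x_2)$, and then ``push'' that completion back into the fibre over $a$ by composing with the structure maps. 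The key point is that since $(a,x_1)$ and $(a,x_2)$ already lie over $a$, and the two legs from the new vertex into them have underlying $\scat{B}$-maps $h, h': b \to a$, one can apply $\cat{J}(h)$ (or use naturality) to transport $w$ and the coherence data into $\cat{J}(a)$, obtaining an object of $\cat{J}(a)$ through which both legs of the original diagram factor by morphisms of $\cat{J}(a)$ (i.e.\ with identity $\scat{B}$-component).

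In more detail: for the span case, start with objects $x_0, x_1, x_2 \in \cat{J}(a)$ and morphisms $\phi_1: x_0 \to x_1$, $\phi_2: x_0 \to x_2$ in $\cat{J}(a)$, giving a span $(1_a,\phi_1), (1_a,\phi_2)$ in $\elt{\cat{J}}$ out of $(a,x_0)$. Completing to a commutative square in $\elt{\cat{J}}$ yields $(b,w)$ with $(h,\eta_1): (b,w) \to (a,x_1)$ and $(h,\eta_2): (b,w) \to (a,x_2)$ — note the two underlying $\scat{B}$-maps are forced equal (call it $h: b \to a$) by commutativity with the span at the $\scat{B}$-level, since both composites equal $1_a \circ h$; so actually $h$ need not be $1_a$ yet. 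Now set $w' = (\cat{J}h)(w) \in \cat{J}(a)$; the structure maps $\eta_i: (\cat{J}h)(w) \to x_i$ are exactly morphisms $w' \to x_i$ in $\cat{J}(a)$, and one checks (using that $(1_a,\phi_i)(h,\text{appropriate data}) = (h,\eta_i)$, i.e.\ the square commutes in $\elt{\cat{J}}$) that these $\eta_i$ make the square over $x_0$ commute in $\cat{J}(a)$ — here I would need to chase through the composition rule in $\elt{\cat{J}}$ to see that the coherence cell out of $(b,w)$ composed with $1_a$-maps reduces to a genuine equation in $\cat{J}(a)$. The parallel-pair/fork case is formally identical, with ``square'' replaced by ``fork''. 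Since $\cat{J}(a)$ is a fibre it is nonempty iff $\cat{J}(a)$ has an object, and it is connected componentwise essentially because $\elt{\cat{J}}$ restricted to the fibre splits into components; but in fact componentwise cofilteredness only requires the span-completion and fork-completion properties, so I need not separately argue connectedness.

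The main obstacle I anticipate is bookkeeping with the composition law in $\elt{\cat{J}}$ — specifically, verifying that the completion obtained ``upstairs'' genuinely descends to a completion ``in the fibre'', i.e.\ that after applying $\cat{J}(h)$ the relevant triangles/squares still commute \emph{in $\cat{J}(a)$} and not merely up to the extra $\scat{B}$-twist. This amounts to unwinding the definition $(g,\xi) \circ (g_0,\xi_0) = (g g_0, \xi \circ (\cat{J}g)(\xi_0))$ and checking that when $g = 1_a$ the vertical cells compose correctly; it is routine but is the one place where a careless argument could go wrong, so I would do it carefully. Everything else — recognizing $\cat{J}(a)$ as the $a$-fibre, and that span/fork completion is what componentwise cofiltered means (as recorded after Corollary~\ref{cor:componentwisefilteredcategories}) — is immediate.
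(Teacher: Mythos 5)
Your approach is exactly the paper's: view $\cat{J}(a)$ as the fibre of $\elt{\cat{J}}$ over $a$, lift the diagram, complete it in $\elt{\cat{J}}$, observe that commutativity at the $\scat{B}$-level forces the two underlying $\scat{B}$-maps of the completing cone to coincide (say to $h: b \go a$), and then read the completing cone off in $\cat{J}(a)$ by passing to the $\cat{J}(a)$-components, whose common source is $(\cat{J}h)(w)$. The bookkeeping step you flagged as the potential danger point is indeed routine and works out exactly as you anticipate.

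There is, however, a direction error in your write-up which leaves the argument not quite type-checking as stated. You take the diagram to be completed to be a \emph{span} $x_1 \og x_0 \go x_2$ (explicitly, $\phi_1: x_0 \go x_1$ and $\phi_2: x_0 \go x_2$). But the square-completion property that characterizes (componentwise) \emph{cofilteredness} --- condition $\cstyle{ND1}$, equivalently the existence of a cone on every diagram of shape $\littlepullback$ --- is a statement about \emph{cospans} $x_1 \go x_0 \og x_2$, not spans. With the span as you wrote it, there is no square, hence no commutativity condition and no composites $1_a \circ h$ and $1_a \circ h'$ to compare, so your claim that ``the two underlying $\scat{B}$-maps are forced equal'' has nothing to latch onto; indeed your own later phrasing (``both composites equal $1_a \circ h$'') quietly presupposes the cospan orientation. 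Reversing $\phi_1$ and $\phi_2$ to point into $x_0$ repairs this: then the cone $(h,\eta_1),(h',\eta_2)$ from $(b,w)$ does close a commutative square, the $\scat{B}$-level equation $1_a h = 1_a h'$ gives $h = h'$, and the rest of your argument is precisely the paper's proof.
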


\begin{proof}
We have to prove that every diagram $\cdot \go \cdot \og \cdot$ in
$\cat{J}(a)$ can be completed to a commutative square, and that every parallel
pair $\cdot \parpairu \cdot$ can be completed to a fork.  Again I will just do
the first case; the second is similar.

Take a diagram
\[
\begin{diagdiag}
\omega	&		&	&		&\omega'	\\
	&\rdTo<\phi	&	&\ldTo>{\phi'}	&		\\
	&		&\chi	&		&		\\
\end{diagdiag}
\]
in $\cat{J}(a)$.  Then there is a commutative square
\[
\begin{diagdiag}
	&		&(b, \zeta)&		&		\\
	&\ldTo<{(g, \gamma)}&	&\rdTo>{(g', \gamma')}&		\\
(a, \omega)&		&	&		&(a, \omega')	\\
	&\rdTo<{(1, \phi)}&	&\ldTo>{(1, \phi')}&		\\
	&		&(a, \chi)&		&		\\
\end{diagdiag}
\]
in $\elt{\cat{J}}$.  Commutativity says that $g = g'$ and that the square 
\[
\begin{diagdiag}
	&		&(\cat{J}g)(\zeta)	&		&	\\
	&\ldTo<\gamma	&			&\rdTo>{\gamma'}&	\\
\omega	&		&			&		&\omega'\\
	&\rdTo<\phi	&			&\ldTo>{\phi'}	&	\\
	&		&\chi			&		&	\\
\end{diagdiag}
\]
in $\cat{J}(a)$ commutes, as required.
\done
\end{proof}

\begin{propn}
\label{propn:Iacomponentwisecofiltered}
$\catI(a)$ is componentwise cofiltered for each $a \in \scat{A}$.  
\done
\end{propn}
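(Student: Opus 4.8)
The plan is to deduce Proposition~\ref{propn:Iacomponentwisecofiltered} immediately from the two preceding results. Proposition~\ref{propn:catInondegenerate} tells us that the category of elements $\elt{\catI}$ of the functor $\catI : \scat{A} \go \Cat$ is componentwise cofiltered. Lemma~\ref{lemma:individualcategoriescomponentwisecofiltered} tells us that whenever a functor $\cat{J} : \scat{B} \go \Cat$ on a small category $\scat{B}$ has $\elt{\cat{J}}$ componentwise cofiltered, each fibre $\cat{J}(a)$ is componentwise cofiltered. Applying the lemma with $\scat{B} = \scat{A}$ and $\cat{J} = \catI$ gives exactly the assertion. So the proof is a one-line composition of these two facts.

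The only thing to be careful about is that we are working under the running hypothesis fixed earlier in the section, namely that $(\scat{A}, M)$ is an equational system satisfying the solvability condition~\So; this is what licences the invocation of Proposition~\ref{propn:catInondegenerate} (whose proof used~\So\ via condition~\cstyle{S1}), whereas Lemma~\ref{lemma:individualcategoriescomponentwisecofiltered} is purely formal and needs no such assumption. There is no real obstacle here: since both ingredients are already proved, the proof is genuinely immediate, which is presumably why the statement is given with a \done in place of a proof in the excerpt. If one wanted to spell it out:

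\begin{proof}
By Proposition~\ref{propn:catInondegenerate}, $\elt{\catI}$ is componentwise
cofiltered.  Apply Lemma~\ref{lemma:individualcategoriescomponentwisecofiltered}
with $\scat{B} = \scat{A}$ and $\cat{J} = \catI$.
\done
\end{proof}

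The main conceptual point worth stressing, were one to expand the remark around this proposition, is the two-stage structure: first one establishes the nondegeneracy-type property at the level of the total category $\elt{\catI}$ (the hard work, done in Proposition~\ref{propn:catInondegenerate} via K\"onig's Lemma and the solvability condition), and then one transfers it fibrewise by the elementary categorical Lemma~\ref{lemma:individualcategoriescomponentwisecofiltered}. This fibrewise property of the categories $\catI(a)$ is precisely what will next be used to conclude that $I = \Pi_0 \of \catI : \scat{A} \go \Set$ is nondegenerate, and hence that $(I, \iota)$ genuinely is an $M$-coalgebra.
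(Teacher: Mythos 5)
Your proof is exactly right and coincides with the paper's: the result is an immediate application of Lemma~\ref{lemma:individualcategoriescomponentwisecofiltered} to Proposition~\ref{propn:catInondegenerate}, which is why the paper states it with a bare \done. Your remark that the solvability condition~\So\ enters only through Proposition~\ref{propn:catInondegenerate} and not through the purely formal Lemma~\ref{lemma:individualcategoriescomponentwisecofiltered} is an accurate reading of where the real work lies.
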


We will repeatedly use the following corollary.

\begin{cor}[Equality and spans]
\label{cor:complex-span}
Let $a \in \scat{A}$, and let $(a_\blb, m_\blb), (a'_\blb,
m'_\blb) \in \catI(a)$.  Then $[a_\blb, m_\blb] = [a'_\blb, m'_\blb]$ if and
only if there exists a span
\[
(a_\blb, m_\blb) \og \cdot \go (a'_\blb, m'_\blb)
\]
in $\catI(a)$.
\end{cor}

\begin{proof}
By Proposition~\ref{propn:Iacomponentwisecofiltered}, $\catI(a)$ has the
square-completion property; then use Lemma~\ref{lemma:connectednessbyspans}.
\done
\end{proof}

\begin{lemma}
\label{lemma:connectedcomponents}
Let $\cat{J}: \scat{B} \go \Cat$ be a functor on a small category
$\scat{B}$.  If $\elt{\cat{J}}$ is componentwise cofiltered then so is
$\elt{\Pi_0 \of \cat{J}}$.
\end{lemma}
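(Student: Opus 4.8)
The plan is to unwind componentwise cofilteredness into its concrete form and then transport cones from $\elt{\cat{J}}$ down to $\elt{\Pi_0 \of \cat{J}}$. By (the dual of) Corollary~\ref{cor:componentwisefilteredcategories}, a category is componentwise cofiltered exactly when every cospan in it completes to a commutative square and every parallel pair extends to a fork, so I will verify these two properties for $\elt{\Pi_0 \of \cat{J}}$. Recall that an object of $\elt{\cat{J}}$ is a pair $(b, x)$ with $b \in \scat{B}$, $x \in \cat{J}(b)$; an arrow $(b, x) \go (b', x')$ is a pair $(g, \gamma)$ with $g : b \go b'$ in $\scat{B}$ and $\gamma : (\cat{J}g)(x) \go x'$ in $\cat{J}(b')$; and composition is $(g', \gamma') \of (g, \gamma) = (g'g,\ \gamma' \of (\cat{J}g')(\gamma))$. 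There is a functor $P : \elt{\cat{J}} \go \elt{\Pi_0 \of \cat{J}}$ sending $(b, x) \mapsto (b, [x])$ and $(g, \gamma) \mapsto g$; it is surjective on objects, and every arrow $g : (b, [x]) \go (b', [x'])$ of $\elt{\Pi_0 \of \cat{J}}$ lifts along $P$, for instance as $(g, \id) : (b, x) \go (b', (\cat{J}g)(x))$, since $[(\cat{J}g)(x)] = [x']$ by definition of the arrows of $\elt{\Pi_0 \of \cat{J}}$.

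The only real point is that $P$ does not lift a cospan to a cospan: the lifts of its two legs end at objects $(d, (\cat{J}g)(x))$ and $(d, (\cat{J}g')(x'))$ of $\elt{\cat{J}}$ that are merely \emph{connected} in $\cat{J}(d)$, not equal. The remedy is to pull the connecting zigzag into the diagram. Given a cospan $(b, [x]) \go (d, [z]) \og (b', [x'])$ in $\elt{\Pi_0 \of \cat{J}}$, with underlying $\scat{B}$-arrows $g, g'$, concatenate a zigzag in $\cat{J}(d)$ from $(\cat{J}g)(x)$ to $z$ with one from $(\cat{J}g')(x')$ to $z$ to obtain a zigzag $\Xi$ from $(\cat{J}g)(x)$ to $(\cat{J}g')(x')$; lifting each arrow of $\Xi$ with $\scat{B}$-component $\id_d$ realises $\Xi$ as a zigzag inside the fibre of $\elt{\cat{J}}$ over $d$. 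Let $D$ be the finite connected diagram in $\elt{\cat{J}}$ formed by this lifted $\Xi$ together with the arrows $(g, \id) : (b, x) \go (d, (\cat{J}g)(x))$ and $(g', \id) : (b', x') \go (d, (\cat{J}g')(x'))$ onto its two ends. Since $\elt{\cat{J}}$ is componentwise cofiltered, $D$ admits a cone; let $(c, w)$ be its apex.

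Now read off the $\scat{B}$-components of the cone legs. Each arrow of the lifted $\Xi$ has $\scat{B}$-component $\id_d$, so commutativity of the cone with those arrows forces all the cone legs onto objects of $\Xi$ to share a single $\scat{B}$-component $t : c \go d$. Writing $(h, \eta) : (c, w) \go (b, x)$ and $(h', \eta') : (c, w) \go (b', x')$ for the two remaining legs, commutativity with $(g, \id)$ and with $(g', \id)$ gives $gh = t$ and $g'h' = t$, hence $gh = g'h'$ in $\scat{B}$; moreover $[(\cat{J}h)(w)] = [x]$ and $[(\cat{J}h')(w)] = [x']$ since $\eta$ and $\eta'$ exist. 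Applying $P$, the object $(c, [w])$ together with $h$ to $(b, [x])$ and $h'$ to $(b', [x'])$ completes the original cospan to a commutative square in $\elt{\Pi_0 \of \cat{J}}$. The parallel-pair case is the same recipe with a single source: given parallel arrows $g, g' : (b, [x]) \go (b', [x'])$, take a zigzag $\Xi$ in $\cat{J}(b')$ from $(\cat{J}g)(x)$ to $(\cat{J}g')(x)$, form the finite connected diagram consisting of $\Xi$ lifted over $b'$ together with $(g, \id)$ and $(g', \id)$ out of $(b, x)$ onto its ends, take a cone $(c, w)$, and the same bookkeeping produces a leg $h : (c, [w]) \go (b, [x])$ with $gh = g'h$, that is, a fork.

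The main obstacle is precisely this lifting issue and its fix: one has to notice that cospans (and parallel pairs) downstairs do not lift to diagrams of the same shape upstairs, enlarge the diagram by the connecting fibre-zigzag, and then observe that the identity $\scat{B}$-components carried by the zigzag arrows are exactly what coerce the equalities $gh = g'h'$ (respectively $gh = g'h$) in $\scat{B}$ that commutativity in $\elt{\Pi_0 \of \cat{J}}$ demands; everything else is routine bookkeeping with the Grothendieck-construction composition law. Combined with Proposition~\ref{propn:catInondegenerate} and Theorem~\ref{thm:componentwiseflatness}, this yields that $I = \Pi_0 \of \catI$ is nondegenerate.
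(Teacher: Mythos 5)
Your proof is correct. It follows the same overall strategy as the paper --- lift the cospan or parallel pair to a finite connected diagram in $\elt{\cat{J}}$, apply componentwise cofilteredness there, and read the $\scat{B}$-components of the cone legs to get the desired square or fork in $\elt{\Pi_0 \of \cat{J}}$ --- but with one small variation worth noting. Where you join $(\cat{J}g)(x)$ to $(\cat{J}g')(x')$ by an arbitrary finite zigzag $\Xi$ in $\cat{J}(d)$ and lift all of it into the fibre, the paper first invokes Lemma~\ref{lemma:individualcategoriescomponentwisecofiltered} (each $\cat{J}(d)$ is componentwise cofiltered because $\elt{\cat{J}}$ is) together with Lemma~\ref{lemma:connectednessbyspans} to replace that zigzag by a single \emph{span} in $\cat{J}(d)$. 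That keeps the lifted diagram small and of a fixed shape, at the cost of citing two earlier lemmas; your version tolerates a diagram of unbounded length but is more self-contained, relying only on the definition of connectedness. Both routes hinge on the same key observation you make explicit, namely that the zigzag arrows carry identity $\scat{B}$-components, which is what forces $gh = g'h'$ (resp.\ $gh = g'h$) downstairs.
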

\begin{proof}
Once again the proof splits into two similar cases.  For variety I will do
the second: that every diagram $ (a, \compt{\omega}) \parpair{f}{f'} (b,
\compt{\chi}) $ in $\elt{\Pi_0 \of \cat{J}}$ extends to a fork.

Since $\compt{(\cat{J}f)(\omega)} = \compt{\chi} = \compt{(\cat{J}f')
(\omega)}$, Lemmas~\ref{lemma:connectednessbyspans}
and~\ref{lemma:individualcategoriescomponentwisecofiltered} imply that
there exists a span
\[
\begin{diagdiag}
		&	&\xi	&	&		\\
		&\ldTo<\delta&	&\rdTo>{\delta'}&	\\
(\cat{J}f)(\omega)&	&	&	&(\cat{J}f')(\omega)\\
\end{diagdiag}
\]
in $\cat{J}(b)$.  We therefore have a finite connected diagram (solid arrows)
\[
\begin{diagdiag}
			&			&
(c, \zeta)		&
			&			\\
			&\ldGet<{(g, \gamma)}	&
			&
\rdGet>{(fg = f'g, \theta)}&			\\
(a, \omega)		&			&
			&
			&(b, \xi)		\\
			&\rdTo(4,4)>{\rdlabel{(f', 1)}}	&
			&
\ldTo(4,4)<{\ldlabel{(1, \delta)}}&		\\
\dTo<{(f, 1)}		&			&
			&
			&\dTo>{(1, \delta')}	\\
			&			&
			&
			&			\\
(b, (\cat{J}f)(\omega))	&			&
			&
			&(b, (\cat{J}f')(\omega))\\
\end{diagdiag}
\]
in $\elt{\cat{J}}$, so by hypothesis there exists a dotted commutative
diagram, giving a fork
\[
(c, \compt{\zeta})
\goby{g}
(a, \compt{\omega})
\parpair{f}{f'}
(b, \compt{\chi})
\]
in $\elt{\Pi_0 \of \cat{J}}$.  
\done
\end{proof}

\begin{propn}[Nondegeneracy]
\label{propn:setInondegenerate}
$I: \scat{A} \go \Set$ is nondegenerate.  
\end{propn}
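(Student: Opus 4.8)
The plan is to read this off directly from the results already in place; all the genuine content has been discharged in Proposition~\ref{propn:catInondegenerate}. Recall from the remark following Corollary~\ref{cor:componentwisefilteredcategories} that a functor $X: \scat{A} \go \Set$ is nondegenerate exactly when its category of elements $\elt{X}$ is componentwise cofiltered. Since $I = \Pi_0 \of \catI$ by construction, it therefore suffices to prove that $\elt{\Pi_0 \of \catI}$ is componentwise cofiltered.

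For this I would simply chain together two facts. First, Proposition~\ref{propn:catInondegenerate} asserts that $\elt{\catI}$ is componentwise cofiltered (and it is inside the proof of that proposition that the solvability condition \So\ and König's Lemma do their work). Second, Lemma~\ref{lemma:connectedcomponents}, applied with $\cat{J} = \catI: \scat{A} \go \Cat$, says that whenever $\elt{\cat{J}}$ is componentwise cofiltered so is $\elt{\Pi_0 \of \cat{J}}$. Putting these together gives that $\elt{\Pi_0 \of \catI} = \elt{I}$ is componentwise cofiltered, and hence that $I$ is nondegenerate. One could equally route the argument through Proposition~\ref{propn:Iacomponentwisecofiltered} and Lemmas~\ref{lemma:individualcategoriescomponentwisecofiltered}--\ref{lemma:connectedcomponents}, but the direct appeal to $\elt{\catI}$ is the cleanest.

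There is really no obstacle left at this stage: the substantive step — that a solvable equational system forces both the square-completion and the fork-completion properties in the category of complexes, by an extraction of König's Lemma — lives entirely inside Proposition~\ref{propn:catInondegenerate}, which we are free to assume. The only care needed here is bookkeeping: confirming that the \emph{componentwise cofiltered} condition appearing in Proposition~\ref{propn:catInondegenerate} and Lemma~\ref{lemma:connectedcomponents} is verbatim the one characterizing nondegeneracy via Corollary~\ref{cor:componentwisefilteredcategories}, so that the three statements compose without friction. That verification is immediate.
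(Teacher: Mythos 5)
Your proposal is correct and follows exactly the route the paper takes: Proposition~\ref{propn:catInondegenerate} gives that $\elt{\catI}$ is componentwise cofiltered, Lemma~\ref{lemma:connectedcomponents} transfers this to $\elt{I} = \elt{\Pi_0 \of \catI}$, and the remark after Corollary~\ref{cor:componentwisefilteredcategories} identifies that property with nondegeneracy of $I$. Nothing to add.
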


\begin{proof}
By Proposition~\ref{propn:catInondegenerate} and
Lemma~\ref{lemma:connectedcomponents}, $\elt{I}$ is componentwise cofiltered.
By the remark after Corollary~\ref{cor:componentwisefilteredcategories}, this
is equivalent to $I$ being nondegenerate.
\done
\end{proof}

Hence $(I, \iota)$ is an $M$-coalgebra.  By Lambek's Lemma, a necessary
condition for it to be the universal solution is that $\iota$ is an
isomorphism.  We can prove this fact directly---and we need to, since it will
be used in the proof that $(I, \iota)$ is the universal solution.

\begin{cor}[Fixed point]
\label{cor:Iisafixedpoint}
$\iota: I \go M \otimes I$ is an isomorphism.
\end{cor}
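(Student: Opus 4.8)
The goal is to show that each component $\iota_a \colon I(a) \to (M\otimes I)(a)$ is a bijection; the one substantial ingredient is that $I$ is nondegenerate (Proposition~\ref{propn:setInondegenerate}), which lets Lemma~\ref{lemma:equalityinMtensorX} be applied with $X = I$.  Throughout I use that $\iota$ is characterised by
\[
\iota_a \bigcompt{\cdots \gobymod{m_2} a_1 \gobymod{m_1} a}
=
(a_1 \gobymod{m_1} a) \otimes \bigcompt{\cdots \gobymod{m_2} a_1},
\]
and that each sector $m \colon b \gomod a$ induces $\phi_m \colon I(b) \to I(a)$, $\phi_m \bigcompt{\cdots \gobymod{p_1} b} = \bigcompt{\cdots \gobymod{p_1} b \gobymod{m} a}$, so that $\iota_a \of \phi_m$ is exactly the coprojection $m \otimes \dashbk \colon I(b) \to (M\otimes I)(a)$.

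Surjectivity is then immediate: every element of $(M\otimes I)(a)$ has the form $m \otimes x$ with $m \colon b \gomod a$ and $x \in I(b)$, and $\iota_a(\phi_m(x)) = m \otimes x$.  For injectivity I would first record the compatibility
\[
\phi_m(fy) = \phi_{mf}(y)
\qquad
(f \colon b' \to b \textrm{ in } \scat{A},\ m \colon b \gomod a,\ y \in I(b')).
\]
On a complex representative of $y$, the two sides are represented by complexes in $\catI(a)$ that agree in all degrees $n \geq 2$ and differ only in degrees $0,1$; a morphism of complexes between them is the sequence of maps that is the identity at $a$, then $f$, then identities, and its only nontrivial squares commute tautologically, by the definition of the reindexed sectors $mf$ and $fp_1$.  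Hence those complexes are connected, so equal in $I(a)$.  Now if $\iota_a(t) = \iota_a(t')$, choose complex representatives with first sectors $m_1 \colon a_1 \gomod a$, $m'_1 \colon a'_1 \gomod a$ and shifted tails $s \in I(a_1)$, $s' \in I(a'_1)$, so that $t = \phi_{m_1}(s)$, $t' = \phi_{m'_1}(s')$ and the hypothesis reads $m_1 \otimes s = m'_1 \otimes s'$.  By Lemma~\ref{lemma:equalityinMtensorX} there are a span $a_1 \ogby{f} c \goby{f'} a'_1$ in $\scat{A}$ with $m_1 f = m'_1 f'$ and an element $z \in I(c)$ with $fz = s$ and $f'z = s'$; applying the compatibility twice,
\[
t = \phi_{m_1}(fz) = \phi_{m_1 f}(z) = \phi_{m'_1 f'}(z) = \phi_{m'_1}(f'z) = t'.
\]

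So $\iota_a$ is bijective for every $a$, and $\iota$ is an isomorphism.  There is no real obstacle here --- given that $I$ is already known to be nondegenerate, the argument is short.  The only thing to watch is the index bookkeeping in the compatibility $\phi_m(fy) = \phi_{mf}(y)$: one must identify the correct morphism of complexes, after which commutativity is automatic.  In particular no new appeal to K\"onig's Lemma or to condition \So\ is needed beyond what already went into establishing the nondegeneracy of $I$.
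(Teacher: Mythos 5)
Your proof is correct and follows essentially the same route as the paper's: both rest on nondegeneracy of $I$ (Proposition~\ref{propn:setInondegenerate}) feeding into Lemma~\ref{lemma:equalityinMtensorX}, together with the reindexing observation at~(\ref{eq:finitary}), which you have packaged as the compatibility $\phi_m(fy) = \phi_{mf}(y)$. The reformulation via the $\phi$-maps tidies the bookkeeping but does not change the argument.
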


\begin{proof}
It is enough to show that $\iota_a: I(a) \go (M \otimes I)(a)$ is bijective for
each $a \in \scat{A}$.  Certainly $\iota_a$ is surjective.  For
injectivity, suppose that
\[
\iota_a
\Bigl(
\bigcompt{
\cdots \gobymod{m_2} a_1 \gobymod{m_1} a
}
\Bigr)
=
\iota_a
\Bigl(
\bigcompt{
\cdots \gobymod{m'_2} a'_1 \gobymod{m'_1} a
}
\Bigr),
\]
that is,
\[
m_1
\otimes
\bigcompt{
\cdots \gobymod{m_2} a_1
}
=
m'_1
\otimes
\bigcompt{
\cdots \gobymod{m'_2} a'_1
}.
\]
By nondegeneracy of $I$ and Lemma~\ref{lemma:equalityinMtensorX},
there exist a commutative square
\[
\begin{diagdiag}
	&		&b	&		&	\\
	&\ldTo<f	&	&\rdTo>{f'}	&	\\
a_1	&		&	&		&a'_1	\\
	&\rdMod<{m_1}	&	&\ldMod>{m'_1}	&	\\
	&		&a	&		&	\\
\end{diagdiag}
\]
and an element $\compt{\cdots \gobymod{p_2} b_1 \gobymod{p_1} b} \in I(b)$
such that
\begin{eqnarray*}
\bigcompt{
\cdots \gobymod{p_2} b_1 \gobymod{f p_1} a_1
}	&
=	&
\bigcompt{
\cdots \gobymod{m_3} a_2 \gobymod{m_2} a_1
},	\\
\bigcompt{
\cdots \gobymod{p_2} b_1 \gobymod{f' p_1} a'_1
}	&
=	&
\bigcompt{
\cdots \gobymod{m'_3} a'_2 \gobymod{m'_2} a'_1
}.
\nonumber	
\end{eqnarray*}
Then 
\begin{eqnarray*}
\bigcompt{
\cdots \gobymod{m_3} a_2 \gobymod{m_2} a_1 \gobymod{m_1} a
}	&
=	&
\bigcompt{
\cdots \gobymod{p_2} b_1 \gobymod{f p_1} a_1 \gobymod{m_1} a
}	\\
	&
=	&
\bigcompt{
\cdots \gobymod{p_2} b_1 \gobymod{p_1} b \gobymod{m_1 f} a
},
\end{eqnarray*}
using the observation at~(\ref{eq:finitary}) (\S\ref{sec:construction}).
But $m_1 f = m'_1 f'$, so by symmetry of argument,
\[
\bigcompt{
\cdots \gobymod{m_2} a_1 \gobymod{m_1} a
}
=
\bigcompt{
\cdots \gobymod{m'_2} a'_1 \gobymod{m'_1} a
},
\]
as required.
\done
\end{proof}

\minihead{$I$ is the universal solution}

Consider resolutions in the coalgebra $(I, \iota)$.  Given $a \in \scat{A}$
and a complex $(a_\blb, m_\blb) \in \catI(a)$, there is a resolution of
$\compt{a_\blb, m_\blb} \in I(a)$ consisting of $(a_\blb, m_\blb)$ itself
together with, for each $n \in \nat$, the element $ \compt{ \cdots
\gobymod{m_{n+2}} a_{n+1} \gobymod{m_{n+1}} a_n } $
of $I(a_n)$.  We call this the \demph{canonical%
\label{p:tautological}
resolution} of the complex $(a_\blb, m_\blb)$.  

\begin{propn}[Double complex]
\label{propn:doublecomplex}
Let
\[
\begin{diagram}
\ddots	&		&\vdots	&		&
	&		&\vdots		\\
	&		&	&		&
	&		&\dMod>{m_3}	\\
\cdots	&\rMod^{m_2^3}	&a_2^2	&\rMod^{m_2^2}	&
a_2^1	&\rMod^{m_2^1}	&a_2^0		\\
	&		&	&		&
	&		&\dMod>{m_2}	\\
\cdots	&\rMod^{m_1^3}	&a_1^2	&\rMod^{m_1^2}	&
a_1^1	&\rMod^{m_1^1}	&a_1^0		\\
	&		&	&		&
	&		&\dMod>{m_1}	\\
\cdots	&\rMod^{m_0^3}	&a_0^2	&\rMod^{m_0^2}	&
a_0^1	&\rMod^{m_0^1}	&a_0^0		\\
\end{diagram}
\]
be a diagram satisfying
\[
\bigcompt{
\cdots 
\gobymod{m_n^3} a_n^2
\gobymod{m_n^2} a_n^1
\gobymod{m_n^1} a_n^0
}
=
\bigcompt{
\cdots 
\gobymod{m_{n+1}^2} a_{n+1}^1
\gobymod{m_{n+1}^1} a_{n+1}^0
\gobymod{m_{n+1}} a_n^0
}
\]
for all $n\in\nat$.  Then
\begin{equation}
\label{eq:dbl-cx}
\bigcompt{
\cdots \gobymod{m_0^3} a_0^2 \gobymod{m_0^2} a_0^1 \gobymod{m_0^1} a_0^0
}
=
\bigcompt{
\cdots \gobymod{m_3} a_2^0 \gobymod{m_2} a_1^0 \gobymod{m_1} a_0^0
}.
\end{equation}
\end{propn}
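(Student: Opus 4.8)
The plan is to recognise the rows of the double complex as the successive terms of a resolution in the coalgebra $(I,\iota)$, and then to invoke the essential uniqueness of resolutions. Write $a=a_0^0$, and for each $n\in\nat$ put
\[
R_n=\bigcompt{\cdots\gobymod{m_n^3}a_n^2\gobymod{m_n^2}a_n^1\gobymod{m_n^1}a_n^0}\in I(a_n^0),
\]
the element of $I(a_n^0)$ represented by the $n$th row, and write
\[
\kappa=\left(\cdots\gobymod{m_3}a_2^0\gobymod{m_2}a_1^0\gobymod{m_1}a_0^0\right)\in\catI(a)
\]
for the complex running down the rightmost column. The first step is to apply $\iota_{a_n^0}$ to the hypothesised identity
\[
R_n=\bigcompt{\cdots\gobymod{m_{n+1}^2}a_{n+1}^1\gobymod{m_{n+1}^1}a_{n+1}^0\gobymod{m_{n+1}}a_n^0}
\]
and use the defining formula for $\iota$, which strips the leading sector off a complex: the right-hand side becomes $m_{n+1}\otimes R_{n+1}$. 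Hence $\iota_{a_n^0}(R_n)=m_{n+1}\otimes R_{n+1}$ for every $n$; that is, the sequence $(R_n)_{n\in\nat}$ is a resolution of $R_0$ along the complex $\kappa$.

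On the other hand, the canonical resolution of the complex $\cdots\gobymod{m_0^3}a_0^2\gobymod{m_0^2}a_0^1\gobymod{m_0^1}a_0^0$ (in the sense defined just before the statement) is a resolution of the \emph{same} element $R_0\in I(a)$, this time along the $0$th row. Thus $R_0$ can be resolved along two complexes ending at $a$: the $0$th row and $\kappa$. Since $I$ is nondegenerate (Proposition~\ref{propn:setInondegenerate}), the coalgebra $(I,\iota)$ satisfies the hypotheses of Proposition~\ref{propn:uniqueresos}, so Corollary~\ref{cor:resolving-complex} applies and these two complexes lie in the same connected-component of $\catI(a)$. Since $I=\Pi_0\of\catI$, this says precisely that
\[
\bigcompt{\cdots\gobymod{m_0^3}a_0^2\gobymod{m_0^2}a_0^1\gobymod{m_0^1}a_0^0}=\bigcompt{\cdots\gobymod{m_3}a_2^0\gobymod{m_2}a_1^0\gobymod{m_1}a_0^0}
\]
in $I(a)$, which is equation~(\ref{eq:dbl-cx}).

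The only point needing care is the first step: one must be sure that the leading sector of the right-hand complex in the hypothesis is exactly the column map $m_{n+1}$, so that the $R_n$ assemble into a resolution \emph{along $\kappa$} and not along the $n$th row or some hybrid complex. Beyond this index-bookkeeping I do not anticipate a serious obstacle: all the substantive work — in particular the K\"onig's-Lemma argument — is already packaged inside Proposition~\ref{propn:uniqueresos}, so neither the finiteness of $\scat{A}$ and $M$ nor the distance criterion of Proposition~\ref{propn:equalityinIa} is needed directly here.
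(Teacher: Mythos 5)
Your proof is correct and follows essentially the same route as the paper's: identify the rows of the double complex as a resolution of the top-left element along the right-hand column $\kappa$, note that the canonical resolution of the top row resolves the same element along the top row, and invoke Corollary~\ref{cor:resolving-complex}. The index-bookkeeping you flag as the delicate point is handled correctly.
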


\begin{proof}
The left-hand side of~\bref{eq:dbl-cx} can be resolved canonically in $(I,
\iota)$ along
\[
\cdots \gobymod{m_0^2} a_0^1 \gobymod{m_0^1} a_0^0.
\]
It also has a resolution $(x_n)_{n\in\nat}$ in $(I, \iota)$ along
\[
\cdots \gobymod{m_2} a_1^0 \gobymod{m_1} a_0^0,
\]
where
\[
x_n 
=
\bigcompt{
\cdots \gobymod{m_n^2} a_n^1 \gobymod{m_n^1} a_n^0
}
\in 
I(a_n^0),
\]
since by hypothesis
\[
\iota_{a_n^0}(x_n)	
=		
\iota_{a_n^0}
\Bigl(
\bigcompt{
\cdots \gobymod{m_{n+1}^1} a_{n+1}^0 \gobymod{m_{n+1}} a_n^0
}	
\Bigr)
=		
m_{n+1} \otimes x_{n+1}.
\]
The result follows from nondegeneracy of $I$ and
Corollary~\ref{cor:resolving-complex}.
\done
\end{proof}

\begin{thm}[Universal solution in $\Set$]
\label{thm:universalsolutioninSet}
$(I, \iota)$ is the universal solution of $(\scat{A}, M)$ in $\Set$.
\end{thm}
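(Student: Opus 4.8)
The plan is to prove that $(I,\iota)$ is terminal in the category of $M$-coalgebras in $\Set$. Since Proposition~\ref{propn:setInondegenerate} already tells us $(I,\iota)$ is such a coalgebra, it will suffice to show that an arbitrary $M$-coalgebra $(X,\xi)$ admits exactly one coalgebra morphism $\bar\xi\colon(X,\xi)\to(I,\iota)$. The morphism will be obtained by reading off resolutions, the essential uniqueness of resolutions (Proposition~\ref{propn:uniqueresos}, via Corollary~\ref{cor:resolving-complex}) making this well-defined; the same corollary, applied this time to $(I,\iota)$ itself, will force uniqueness.

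Concretely, I would first define $\bar\xi$ as follows: given $a\in\scat{A}$ and $x\in X(a)$, choose any resolution $(a_\blb,m_\blb,x_\blb)$ of $x$ in $(X,\xi)$ and set $\bar\xi_a(x)=\compt{a_\blb,m_\blb}\in I(a)$. Because $X$ is nondegenerate, Corollary~\ref{cor:resolving-complex} says any two complexes along which $x$ resolves lie in the same connected-component of $\catI(a)$, so this is independent of the chosen resolution. Next I would check naturality: if $f\colon a\to a'$ and $(a_\blb,m_\blb,x_\blb)$ resolves $x$, then since $\xi_{a'}(fx)=f\xi_a(x)=(fm_1)\otimes x_1$, replacing the first sector $m_1$ by $fm_1$ gives a resolution of $fx$ along $\cdots\gobymod{m_2}a_1\gobymod{fm_1}a'$, and this complex is precisely $(\catI f)(a_\blb,m_\blb)$, so $\bar\xi_{a'}(fx)=(If)(\bar\xi_a(x))$. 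Finally I would verify that $\bar\xi$ is a coalgebra morphism by evaluating at $x$ using the same resolution: $\xi_a(x)=m_1\otimes x_1$, the tail $(x_n)_{n\ge1}$ is a resolution of $x_1$ along $\cdots\gobymod{m_2}a_1$, and so both $\iota_a(\bar\xi_a(x))$ and $(M\otimes\bar\xi)_a(\xi_a(x))$ work out to $m_1\otimes\compt{\cdots\gobymod{m_2}a_1}$ by the defining formula for $\iota$ on $I$.

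For uniqueness, suppose $g\colon(X,\xi)\to(I,\iota)$ is any coalgebra morphism; I would fix $a\in\scat{A}$, $x\in X(a)$ and a resolution $(a_\blb,m_\blb,x_\blb)$ of $x$, and argue that $g_a(x)=\bar\xi_a(x)$. Because $g$ commutes with the coalgebra structures, $\iota_{a_n}(g_{a_n}(x_n))=m_{n+1}\otimes g_{a_{n+1}}(x_{n+1})$ for all $n$, so $(g_{a_n}(x_n))_{n\in\nat}$ is a resolution of $g_a(x)$ in the coalgebra $(I,\iota)$ along the complex $(a_\blb,m_\blb)$. But writing $g_a(x)=\compt{c_\blb,p_\blb}$, the element $g_a(x)$ also has its canonical resolution along $(c_\blb,p_\blb)$. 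Since $I$ is nondegenerate, Corollary~\ref{cor:resolving-complex} applied to $(I,\iota)$ forces $(a_\blb,m_\blb)$ and $(c_\blb,p_\blb)$ into the same component of $\catI(a)$, hence $g_a(x)=\compt{c_\blb,p_\blb}=\compt{a_\blb,m_\blb}=\bar\xi_a(x)$.

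I do not expect a serious obstacle: the hard work — that $I$ is nondegenerate (which used~\So\ and K\"onig's Lemma) and the essential uniqueness of resolutions — is already done, and what is left is bookkeeping. The only points I would be careful about are the index-shifting involved in passing between a resolution of $x$ and the induced resolutions of $x_1$ and of $fx$, and the observation that in the uniqueness step Corollary~\ref{cor:resolving-complex} must be invoked for the coalgebra $(I,\iota)$ rather than for $(X,\xi)$. (Corollary~\ref{cor:Iisafixedpoint}, that $\iota$ is an isomorphism, is also at hand and allows one, if one prefers, to describe resolutions in $(I,\iota)$ by iterating $\iota^{-1}$.)
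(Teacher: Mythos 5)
Your proof is correct, and the existence half agrees with the paper's essentially line for line. The uniqueness half differs in presentation, though not in substance: the paper's route is to write each $\twid{\xi}_{a_n}(x_n)$ out as an equivalence class of complexes, use the coalgebra-map square plus injectivity of $\iota_{a_n}$ (Corollary~\ref{cor:Iisafixedpoint}, i.e.\ Lambek) to match up consecutive rows, and then feed the resulting ``staircase'' into a standalone lemma (Proposition~\ref{propn:doublecomplex}, the double-complex proposition) whose proof itself resolves both sides in $(I,\iota)$ and cites Corollary~\ref{cor:resolving-complex}. You short-circuit that stepping stone: you observe directly that a coalgebra map $g$ carries a resolution $(a_\blb, m_\blb, x_\blb)$ of $x$ in $(X,\xi)$ to a resolution $(a_\blb, m_\blb, (g_{a_n}(x_n))_n)$ of $g_a(x)$ in $(I,\iota)$ --- an immediate consequence of the commuting square, no cancellation of $\iota$ required --- and compare it with the canonical resolution of $g_a(x)$, so that Corollary~\ref{cor:resolving-complex} applied to $(I,\iota)$ finishes the job in one stroke. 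The payoffs of your version are that it makes no use of Corollary~\ref{cor:Iisafixedpoint} and eliminates a lemma; the payoff of the paper's is that the double-complex statement is isolated as a reusable fact about $(I,\iota)$. Both hinge on exactly the same two pillars: nondegeneracy of $I$ (Proposition~\ref{propn:setInondegenerate}, where \So\ and K\"onig are spent) and essential uniqueness of resolutions in $(I,\iota)$, so the core of the argument is the same.
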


\begin{proof}
Let $(X, \xi)$ be an $M$-coalgebra.  We show that there is a unique map $(X,
\xi) \go (I, \iota)$.

\paragraph*{Existence} Given $a \in \scat{A}$ and $x \in X(a)$, we may
choose a resolution $(a_\blb, m_\blb, x_\blb)$
of $x$ and put
\[
\ovln{\xi}_a(x) 
=
\compt{a_\blb, m_\blb}
\in 
I(a).
\]
This defines for each $a$ a function $\ovln{\xi}_a: X(a) \go I(a)$, which by
Corollary~\ref{cor:resolving-complex} is independent of choice of resolution.
The maps $(\ovln{\xi}_a)_{a \in \scat{A}}$ define a natural transformation
$\ovln{\xi}: X \go I$; that is, if $a \goby{f} a'$ is a map in $\scat{A}$ and
$x \in X(a)$ then $\ovln{\xi}_{a'} (fx) = f \ovln{\xi}_a (x)$.  For choose a
resolution $(a_\blb, m_\blb, x_\blb)$ of $x$: then
\[
( 
( \cdots \gobymod{m_2} a_1 \gobymod{f m_1} a' ),
( fx, x_1, x_2, \ldots )
)  
\]
is a resolution of $fx$, so
\[
\ovln{\xi}_{a'} (fx)	
=	
\bigcompt{ \cdots \gobymod{m_2} a_1 \gobymod{f m_1} a' }
=	
f \bigcompt{ \cdots \gobymod{m_2} a_1 \gobymod{m_1} a }
=	
f \ovln{\xi}_a (x).
\]
Moreover, $\ovln{\xi}$ is a map of coalgebras; that is, if $a \in \scat{A}$
and $x \in X(a)$ then
\[
(M \otimes \ovln{\xi})_a \xi_a (x) 
= 
\iota_a \ovln{\xi}_a (x).
\]
For choose a resolution $(a_\blb, m_\blb, x_\blb)$ of $x$: then
\[
( 
( \cdots \gobymod{m_3} a_2 \gobymod{m_2} a_1 ),
( x_1, x_2, x_3, \ldots )
)  
\]
is a resolution of $x_1$, so
\begin{eqnarray*}
(M \otimes \ovln{\xi})_a \xi_a (x)	&
=	&
(M \otimes \ovln{\xi})_a (m_1 \otimes x_1)			
=
m_1 \otimes \ovln{\xi}_{a_1} (x_1)				\\
	&
=	&
m_1 \otimes \bigcompt{ \cdots \gobymod{m_2} a_1 }
=
\iota_a \ovln{\xi}_a (x).
\end{eqnarray*}

\paragraph*{Uniqueness} Let $\twid{\xi}: (X, \xi) \go (I, \iota)$ be a map of
coalgebras, $a \in \scat{A}$, and $x \in X(a)$.  We show that $\twid{\xi}_a
(x) = \ovln{\xi}_a (x)$.

Choose a resolution $(a_\blb, m_\blb, x_\blb)$ of $x$, and for each $n \in
\nat$, write
\[
\twid{\xi}_{a_n} (x_n)
=
\bigcompt{
\cdots \gobymod{m_n^2} a_n^1 \gobymod{m_n^1} a_n^0 = a_n
}.
\]
For each $n\in\nat$, we have
\begin{eqnarray*}
(M \otimes \twid{\xi})_{a_n} \xi_{a_n} (x_n)	&
=	&
(M \otimes \twid{\xi})_{a_n} (m_{n+1} \otimes x_{n+1})	\\
	&
=	&
m_{n+1} \otimes \twid{\xi}_{a_{n+1}} (x_{n+1})		\\
	&
=	&
m_{n+1} \otimes 
\bigcompt{ 
\cdots 
\gobymod{m_{n+1}^2} a_{n+1}^1 \gobymod{m_{n+1}^1} a_{n+1}^0 = a_{n+1}
}	\\
	&
=	&
\iota_{a_n}
\Bigl(
\bigcompt{
\cdots 
\gobymod{m_{n+1}^1} a_{n+1}^0 = a_{n+1} \gobymod{m_{n+1}} a_n
}\Bigr).
\end{eqnarray*}
On the other hand, 
\[
(M \otimes \twid{\xi})_{a_n} \xi_{a_n} (x_n)
=
\iota_{a_n} \twid{\xi}_{a_n} (x_n)
\]
since $\twid{\xi}$ is a map of coalgebras.  Since $\iota_{a_n}$ is injective
(Corollary~\ref{cor:Iisafixedpoint}),
\begin{eqnarray*}
\bigcompt{
\cdots 
\gobymod{m_{n+1}^1} a_{n+1}^0 = a_{n+1} \gobymod{m_{n+1}} a_n
}	&
=	&
\twid{\xi}_{a_n} (x_n)	\\
	&
=	&
\bigcompt{
\cdots \gobymod{m_n^2} a_n^1 \gobymod{m_n^1} a_n^0 = a_n
}
\end{eqnarray*}
for each $n\in\nat$.  So Proposition~\ref{propn:doublecomplex} applies, and
\[
\bigcompt{
\cdots \gobymod{m_0^2} a_0^1 \gobymod{m_0^1} a_0^0 
}
=
\bigcompt{
\cdots \gobymod{m_2} a_1 \gobymod{m_1} a_0
}
\in
I(a),
\]
that is, $\twid{\xi}_a(x) = \ovln{\xi}_a (x)$, as required.
\done
\end{proof}

\minihead{The canonical representation of an element of the universal
solution} 

An element of the universal solution is an equivalence class of complexes.
One might not expect every element to have a \emph{canonical} complex
representing it, since, for example, not every real number has a
\emph{canonical} decimal expansion.  So it is perhaps a surprise that under
very mild conditions on $\scat{A}$, satisfied in every example of an
equational system in this paper, every element of the universal solution does
indeed have a canonical representing complex.

This result was suggested to me by Andr\'e Joyal, who has kindly allowed me to
include it here.  No later results depend on it.  

The main theorem is:

\begin{thm}[Canonical representation]
\label{thm:can-rep}
Suppose that $\scat{A}$ is Cauchy-complete.  Let $a \in \scat{A}$.  Then each
connected-component of $\catI(a)$ has an initial object.
\end{thm}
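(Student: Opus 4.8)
The plan is to show that every connected-component $\scat{C}$ of $\catI(a)$ has an initial object, realising it as a ``point at infinity'' assembled, via K\"onig's Lemma~(\ref{lemma:Koenig}), from compatible choices in the finite truncations $\catI_n(a)$.

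First I would record two facts about $\scat{C}$. (1) It is cofiltered: $\catI(a)$ is componentwise cofiltered by Proposition~\ref{propn:Iacomponentwisecofiltered}, and a single connected-component of a componentwise cofiltered category is cofiltered. (2) It is Cauchy-complete --- and this is where the hypothesis on $\scat{A}$ enters. An idempotent endomorphism of a complex $(a_\blb,m_\blb)$ is an idempotent $e_n$ of $\scat{A}$ at each level $n$, with $e_0 = 1_a$; splitting each $e_n$ in $\scat{A}$ as $e_n = i_n p_n$ through an object $d_n$ (with $d_0 = a$), the sectors $p_{n-1}m_n i_n\colon d_n\gomod d_{n-1}$ assemble into a complex, and the relation $e_{n-1}m_n = m_n e_n$ shows that $i_\blb$ and $p_\blb$ are maps of complexes splitting the given idempotent. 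Hence $\catI(a)$, and so $\scat{C}$, is Cauchy-complete.

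Now fix $\alpha\in\scat{C}$; for each $n$ let $\scat{C}_n$ be the component of $\pr_n(\alpha)$ in the finite category $\catI_n(a)$, so that $\pr_n(\gamma)\in\scat{C}_n$ for all $\gamma\in\scat{C}$. The construction chooses, compatibly in $n$, an object $\beta_n\in\scat{C}_n$ together with (i) a morphism $\epsilon_n\colon\beta_n\go\pr_n(\alpha)$, (ii) a morphism from $\beta_n$ to every object of $\scat{C}_n$, and (iii) ``retract-minimality'' of $\beta_n$, in a sense ensuring that $\scat{C}_n(\beta_n,\beta_n)$ is as small as possible. Truncation-compatibility of such data is obtained by applying K\"onig's Lemma to the inverse system whose $n$-th term is the finite, nonempty set of admissible choices at level $n$. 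The chosen family then assembles to a complex $\beta$ equipped with a map $\epsilon\colon\beta\go\alpha$ (so $\beta\in\scat{C}$) and with $\pr_n(\beta)=\beta_n$. To see that $\beta$ is initial: for any $\gamma\in\scat{C}$ the hom-sets $\catI_n(a)(\beta_n,\pr_n\gamma)$ are finite, nonempty by~(ii), and truncation-compatible, so K\"onig gives a map $\beta\go\gamma$; and any two such maps $f,g$ agree, because by~(iii) we have $\scat{C}(\beta,\beta)=\{1_\beta\}$, whence cofilteredness supplies $h\colon\delta\go\beta$ with $fh=gh$, weak initiality supplies $k\colon\beta\go\delta$, and $hk=1_\beta$ forces $f=g$.

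The step I expect to be the main obstacle is making the word ``compatibly'' in the previous paragraph work: a weakly-initial, retract-minimal object of $\scat{C}_{n+1}$ need not truncate to one of $\scat{C}_n$, so one cannot simply choose level by level. The fix is to run the retract-minimisation inside the inverse limit, using the finiteness of each $\catI_n(a)$ --- which bounds the length of retract-descent at level $n$, uniformly in $n$ after a K\"onig argument --- in place of any finiteness assumption on $\scat{A}$ itself, and then a final K\"onig argument to realise the minimised data as an honest complex. Reconciling this with Warning~\ref{warning:fin} --- that a limit of connected categories need not be connected, so membership of $\beta$ in $\scat{C}$ must be witnessed by the actual morphism $\epsilon$, not merely by the levelwise data --- is the delicate point; for the Freyd system the construction outputs the canonical representative exhibited in Example~\ref{eg:can-interval}.
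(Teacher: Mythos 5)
Your plan has the right ingredients at a high level --- cofilteredness of the component, K\"onig's Lemma, and the caution about Warning~\ref{warning:fin} --- but it departs substantially from the paper's argument, and the departure hits a genuine obstruction.

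The critical gap is item~(ii) in your construction: you want, at each level $n$, an object $\beta_n$ of $\scat{C}_n$ (the connected-component of $\pr_n(\alpha)$ in $\catI_n(a)$) admitting a morphism to \emph{every} object of $\scat{C}_n$. No such object need exist. Take the Freyd system and $a = 1$: the finite category $\catI_1(1)$ has the five objects $(0,0), (0,\dhalf), (0,1), (1,[0,\dhalf]), (1,[\dhalf,1])$ and, besides identities, only the four maps $(0,0)\to(1,[0,\dhalf])$, $(0,\dhalf)\to(1,[0,\dhalf])$, $(0,\dhalf)\to(1,[\dhalf,1])$, $(0,1)\to(1,[\dhalf,1])$. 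It is connected, but no object maps to all the others --- for instance $(0,\dhalf)$ reaches both $(1,*)$ objects but neither $(0,0)$ nor $(0,1)$, and $(0,0)$ reaches only $(1,[0,\dhalf])$. So there is nothing for K\"onig to select, and the proposed construction does not get off the ground. The object you should be targeting at level $n$ is not weakly initial in the component $\scat{C}_n$ of $\catI_n(a)$, but rather the limit (in $\catI_n(a)$) of the cofiltered diagram $\pr_n \circ (\scat{C}\hookrightarrow\catI(a))$, and the latter is typically a proper subdiagram of $\scat{C}_n$. (In the example it is the three-object zig-zag through $(0,\dhalf)$, whose limit is indeed $(0,\dhalf)$, matching $\pr_1$ of the canonical complex~\bref{eq:half-1}.)

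There is a second, related problem: your use of the Cauchy-completeness hypothesis is not strong enough. You derive, correctly and by a clean hands-on argument, that $\catI(a)$ and hence $\scat{C}$ is Cauchy-complete. But a connected, cofiltered, Cauchy-complete category need not have an initial object --- the poset $\nat^\op$ is such a category with no bottom element --- so Cauchy-completeness of $\scat{C}$ plus cofilteredness cannot by themselves yield Theorem~\ref{thm:can-rep}. The paper uses Cauchy-completeness of $\scat{A}$ in a different place: via Lemma~\ref{lemma:pres-by-nd} (nondegenerate functors preserve finite connected limits) and Lemma~\ref{lemma:conn-trunc}, it shows each \emph{finite} truncation $\catI_n(a)$ is Cauchy-complete, hence (Lemma~\ref{lemma:finite-CC}) has \emph{absolute} cofiltered limits. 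Absoluteness is the crucial extra: it lets the projection functors $\pr_n$ automatically preserve these limits, and Lemma~\ref{lemma:lim-lim} then glues the level-$n$ limit cones (using an isofibration argument to make the choices compatible, not K\"onig) into a limit of the inclusion $\scat{C}\hookrightarrow\catI(a)$. A limit of that inclusion is the same as a limit of $\id_{\scat{K}}$, which is an initial object. Your instinct that compatibility across levels is the crux is exactly right, but the mechanism the paper uses for it is absoluteness plus isofibration lifting, not a K\"onig selection among weakly initial representatives.
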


(Recall our standing assumption for this section that $(\scat{A}, M)$ is an
equational system satisfying \So.)

\begin{example*}{Interval}      \label{eg:can-interval}
In Example~\ref{eg:construction-Freyd} we considered the Freyd system
$(\scat{A}, M)$ and the various representations of real numbers in $[0, 1]$.
We saw that $1/2 \in [0, 1]$ is represented by infinitely many complexes
((\ref{eq:half-1})--(\ref{eq:half-5})); that is, the connected-component of
$\catI(1)$ corresponding to $1/2 \in I(1)$ has infinitely many objects.
Its initial object is the complex~(\ref{eq:half-1}), which can therefore be
regarded as the \emph{canonical} representation of $1/2$.
\end{example*}

We now prepare to prove Theorem~\ref{thm:can-rep}.  

\begin{lemma}   \label{lemma:pres-by-nd}
Nondegenerate functors preserve finite connected limits.
\end{lemma}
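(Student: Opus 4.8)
The plan is to derive the lemma directly from the definition of nondegeneracy, by factoring a nondegenerate functor $X\colon \scat{A} \go \Set$ through the presheaf category $\pshf{\scat{A}}$. First recall what the statement asks: whenever a finite connected diagram $D\colon \scat{I} \go \scat{A}$ admits a limit $L$ in $\scat{A}$, with limiting cone $(\lambda_i\colon L \go D(i))_{i\in\scat{I}}$, the cone $(X\lambda_i)_i$ should be a limit cone in $\Set$. So I would fix such a $D$, $L$ and $(\lambda_i)_i$ and aim to show exactly that.

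The two ingredients I would assemble are: (i) the co-Yoneda isomorphism $\scat{A}(\dashbk, a) \otimes X \iso X(a)$, natural in $a \in \scat{A}$ --- equivalently, writing $y\colon \scat{A} \go \pshf{\scat{A}}$ for the Yoneda embedding, a natural isomorphism $(\dashbk \otimes X) \of y \iso X$; and (ii) the fact that $y$ preserves all limits that exist in $\scat{A}$, since limits in $\pshf{\scat{A}}$ are computed pointwise and each $\scat{A}(b, \dashbk)$ preserves limits. Fact (i) is the same coend computation that, just before Theorem~\ref{thm:flatness}, identifies $\dashbk \otimes \scat{A}(a, \dashbk)$ with evaluation at $a$ (here with the hom-set taken in the other variable). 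By (ii), $(y\lambda_i\colon yL \go yD(i))_i$ is a limit cone in $\pshf{\scat{A}}$ of the diagram $y \of D$, which has the same finite connected shape $\scat{I}$.

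Then the conclusion is a one-step composition. By nondegeneracy of $X$, the functor $\dashbk \otimes X\colon \pshf{\scat{A}} \go \Set$ preserves finite connected limits, so it carries $(y\lambda_i)_i$ to a limit cone $\bigl( yL \otimes X \go yD(i) \otimes X \bigr)_i$ in $\Set$. Transporting this cone across the natural isomorphism of (i) yields a limit cone on $X(L)$ over $\bigl( X D(i) \bigr)_i$, and naturality of that isomorphism (applied to each $\lambda_i$) identifies the legs of this cone with the maps $X\lambda_i$. Hence $(X\lambda_i)_i$ is a limit cone, i.e.\ $X$ preserves the limit.

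I do not anticipate a genuine obstacle here: every ingredient is a standard fact about functor categories and coends, already used in spirit in~\S\ref{sec:nondegen}. The only point demanding a little care is the last step --- checking that the limit cone produced on $X(L)$ really is the image under $X$ of the original cone, not merely some abstractly isomorphic cone --- and that is a routine naturality chase through the isomorphism $\scat{A}(\dashbk, a) \otimes X \iso X(a)$.
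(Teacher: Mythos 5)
Your proof is correct and is exactly the argument the paper gives: factor $X$ as $(\dashbk\otimes X)\of y$ via co-Yoneda, observe that $y$ preserves limits and $\dashbk\otimes X$ preserves finite connected limits by nondegeneracy, and compose. The paper states this in two lines without the naturality check; your more careful write-up fills in the same outline with no change of route.
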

\begin{proof}
Let $\scat{B}$ be a small category and $X: \scat{B} \go \Set$ a
nondegenerate functor.  We have
\[
X 
\iso
\left(
\scat{B} \goby{\mathrm{Yoneda}} 
\pshf{\scat{B}} \goby{\dashbk \otimes X} 
\Set
\right)
\]
and the Yoneda embedding preserves limits.
\done
\end{proof}

\begin{lemma}[Connected limits of truncated complexes]
\label{lemma:conn-trunc}
Let $\scat{K}$ be a finite connected category.  If $\scat{A}$ has limits of
shape $\scat{K}$ then so does $\catI_n(a)$, for every $a \in \scat{A}$ and $n
\in \nat$.
\end{lemma}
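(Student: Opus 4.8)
The plan is to construct the limit of an arbitrary diagram $D\colon\scat{K}\to\catI_n(a)$ explicitly, working level by level along the complexes. Write $D(k)$ as the complex $a^k_n\gobymod{m^k_n}\cdots\gobymod{m^k_1}a^k_0=a$, and for a map $\alpha\colon k\to k'$ in $\scat{K}$ write $D(\alpha)=(f^\alpha_0,\dots,f^\alpha_n)$, so that $f^\alpha_0=1_a$ and $f^\alpha_{j-1}m^k_j=m^{k'}_jf^\alpha_j$ for $1\le j\le n$. Since composition in $\catI_n(a)$ is computed componentwise, for each fixed $j\in\{0,\dots,n\}$ the data $k\mapsto a^k_j$, $\alpha\mapsto f^\alpha_j$ assemble into a functor $\scat{K}\to\scat{A}$; by hypothesis I set $\bar a_j=\Lt{k}a^k_j$, with limit projections $\bar\lambda^k_j\colon\bar a_j\to a^k_j$. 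Because $\scat{K}$ is connected the level-$0$ diagram is constant at $a$, so $\bar a_0=a$ and $\bar\lambda^k_0=1_a$.

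Next I would supply the sectors of the limiting complex. Fix $j$ with $1\le j\le n$. A short computation with the module axioms, the relations $f^\alpha_{j-1}m^k_j=m^{k'}_jf^\alpha_j$, and the cone equations $f^\alpha_j\bar\lambda^k_j=\bar\lambda^{k'}_j$ shows that the family $(m^k_j\bar\lambda^k_j)_k$, with $m^k_j\bar\lambda^k_j\in M(\bar a_j,a^k_{j-1})$, is compatible, i.e.\ defines an element of $\Lt{k}M(\bar a_j,a^k_{j-1})$. Now nondegeneracy enters: $M$ is a nondegenerate module, so $M(\bar a_j,\dashbk)$ is nondegenerate, hence by Lemma~\ref{lemma:pres-by-nd} it preserves the finite connected limit $\bar a_{j-1}=\Lt{k}a^k_{j-1}$; thus the canonical map $M(\bar a_j,\bar a_{j-1})\to\Lt{k}M(\bar a_j,a^k_{j-1})$ is bijective, and there is a unique $\bar m_j\in M(\bar a_j,\bar a_{j-1})$ with $\bar\lambda^k_{j-1}\bar m_j=m^k_j\bar\lambda^k_j$ for all $k$. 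This produces a complex $\bar A=(\bar a_n\gobymod{\bar m_n}\cdots\gobymod{\bar m_1}\bar a_0=a)$ in $\catI_n(a)$, and the identities just recorded say exactly that each $\bar\lambda^k=(\bar\lambda^k_0,\dots,\bar\lambda^k_n)$ is a map $\bar A\to D(k)$ in $\catI_n(a)$.

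It remains to verify that $(\bar A,(\bar\lambda^k)_k)$ is a limit cone. That it is a cone over $D$ is immediate from the level-wise cone equations. For universality, take any cone $(E,(\mu^k)_k)$ over $D$, with $E=(e_n\gobymod{p_n}\cdots\gobymod{p_1}e_0=a)$; at each level $j$ the maps $\mu^k_j\colon e_j\to a^k_j$ form a cone in $\scat{A}$, yielding a unique factorization $\mu_j\colon e_j\to\bar a_j$ (with $\mu_0=1_a$). I would then check that $(\mu_j)_j$ is a map $E\to\bar A$ in $\catI_n(a)$, i.e.\ that $\mu_{j-1}p_j=\bar m_j\mu_j$, by comparing the two sides after applying each $\bar\lambda^k_{j-1}$ — again using that $M(e_j,\dashbk)$ preserves $\bar a_{j-1}=\Lt{k}a^k_{j-1}$ — whereupon the equality reduces to the facts that $\mu^k$ is a map of complexes and that $\bar m_j$ was defined by $\bar\lambda^k_{j-1}\bar m_j=m^k_j\bar\lambda^k_j$. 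Uniqueness of the induced map is inherited from uniqueness of each $\mu_j$.

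The only place needing genuine thought — and the sole use of connectedness and of nondegeneracy — is the construction of the sectors $\bar m_j$: one must see that $(m^k_j\bar\lambda^k_j)_k$ really is a compatible family and that it descends uniquely through the limit $\bar a_{j-1}$, which is precisely Lemma~\ref{lemma:pres-by-nd} applied to the nondegenerate functor $M(\bar a_j,\dashbk)$ (note that $\scat{K}$ being finite and connected is what makes $\Lt{k}a^k_{j-1}$ a finite connected limit, and what forces $\bar a_0=a$). Everything else is routine bookkeeping with the module action, and neither the solvability condition \So\ nor finiteness of $M$ is used.
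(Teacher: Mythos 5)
Your proof is correct and follows essentially the same route as the paper's: levelwise limits in $\scat{A}$, the observation that connectedness forces the level-$0$ limit to be $a$ with identity projections, and then Lemma~\ref{lemma:pres-by-nd} applied to $M(\bar a_j,\dashbk)$ to produce and characterize each sector $\bar m_j$ uniquely. The paper's verification of universality is stated more tersely ("straightforward to check, using Lemma~\ref{lemma:pres-by-nd} again") but is the same argument you spell out.
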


\paragraph*{Remark} The same proof shows that under the same hypotheses,
$\catI(a)$ has limits of shape $\scat{K}$.  The projections $\catI(a) \go
\catI_n(a)$ and $\catI_n(a) \go \scat{A}$ all preserve those limits.

\begin{proof}
Suppose that $\scat{A}$ has limits of shape $\scat{K}$, let $a \in \scat{A}$,
and let $n \in \nat$.  Let $D: \scat{K} \go \catI_n(a)$ be a diagram, and
write its value at an object $k \in \scat{K}$ as
\[
D(k) 
=
(a^k_n \gobymod{m^k_n} \cdots \gobymod{m^k_1} a^k_0 = a).
\]
We construct a limit cone on $D$.

For each $r \in \{ 1, \ldots, n \}$, the diagram
\[
\begin{array}{cccc}
D_r:    &\scat{K}       &\go            &\scat{A},      \\
        &k              &\goesto        &a^k_r
\end{array}
\]
has a limit cone $(a_r \goby{p^k_r} a^k_r)_{k \in \scat{K}}$.  There is also a
trivial limit cone $(a_0 \goby{p^k_0} a^k_0)_{k \in \scat{K}}$ with $a_0 = a$
and $p^k_0 = 1_a$.  By Lemma~\ref{lemma:pres-by-nd}, the functor $M(a_r,
\dashbk)$ preserves limits of shape $\scat{K}$ for each $r$; hence
\[
\Bigl( 
M(a_r, a_{r - 1}) 
\goby{p^k_{r - 1} \cdot \dashbk} 
M(a_r, a^k_{r - 1})
\Bigr)_{k \in \scat{K}}
\]
is a limit cone.  It follows that, for each $r$, there is a unique sector
$m_r: a_r \gomod a_{r - 1}$ such that $p^k_{r - 1} m_r = m^k_r p^k_r$ for all
$k \in \scat{K}$.  This gives a cone
\[
\left(
\begin{diagram}
a_n     &\rMod^{m_n}    &a_{n-1}        &\rMod^{m_{n-1}}&
\       &\cdots         &         
\       &\rMod^{m_1}    &a_0 = a        \\
\dTo<{p^k_n}&           &\dTo>{p^k_{n-1}}&              &
        &               &
        &               &\dTo>{p^k_0 = 1_a}\\
a^k_n   &\rMod_{m^k_n}  &a^k_{n-1}      &\rMod_{m^k_{n-1}}&
\       &\cdots         &         
\       &\rMod_{m^k_1}  &a^k_0 = a      \\
\end{diagram}
\right)_{k \in \scat{K}}
\]
on $D$, and it straightforward to check, using Lemma~\ref{lemma:pres-by-nd}
again, that it is a limit cone.  
\done
\end{proof}

Let $\scat{H}$ be the two-element monoid consisting of the identity and an
idempotent.  A diagram of shape $\scat{H}$ in a category $\cat{C}$ is an
idempotent in $\cat{C}$, and a limit---or equally, a colimit---of such a
diagram is a splitting of the idempotent.  So Lemma~\ref{lemma:conn-trunc}
implies: 

\begin{cor}
\label{cor:CC-trunc}
Suppose that $\scat{A}$ is Cauchy-complete.  Then $\catI_n(a)$ is
Cauchy-complete for every $a \in \scat{A}$ and $n \in \nat$.  
\done
\end{cor}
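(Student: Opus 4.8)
The plan is to reduce Cauchy-completeness of $\catI_n(a)$ to an instance of Lemma~\ref{lemma:conn-trunc}. Recall that a category $\cat{C}$ is Cauchy-complete exactly when every idempotent in it splits, and that an idempotent $e: c \go c$ in $\cat{C}$ is the same thing as a functor $\scat{H} \go \cat{C}$, where $\scat{H}$ is the two-element monoid $\{1, e\}$ (with $e^2 = e$) regarded as a one-object category and the functor sends the non-identity element to $e$. A splitting of $e$, when one exists, is precisely a limit --- equivalently, a colimit --- of this $\scat{H}$-shaped diagram: a cone over it from an object $d$ is a map $p: d \go c$ satisfying $ep = p$, and such a cone is universal exactly when it constitutes a splitting of $e$. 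Hence ``$\cat{C}$ is Cauchy-complete'' is synonymous with ``$\cat{C}$ has limits of shape $\scat{H}$''.

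Next I would check that $\scat{H}$ is a legitimate shape for Lemma~\ref{lemma:conn-trunc}: as a category it is finite (it has exactly two morphisms) and connected (it has a single object, so it cannot be written as a coproduct of two nonempty categories). By hypothesis $\scat{A}$ is Cauchy-complete, hence has limits of shape $\scat{H}$; so Lemma~\ref{lemma:conn-trunc}, applied with $\scat{K} = \scat{H}$, yields that $\catI_n(a)$ has limits of shape $\scat{H}$ for every $a \in \scat{A}$ and $n \in \nat$. Unwinding the first paragraph, this says exactly that every idempotent in $\catI_n(a)$ splits, i.e.\ that $\catI_n(a)$ is Cauchy-complete.

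There is no genuine obstacle here. The only things to verify are the two standard facts packaged into the first paragraph --- the identification of idempotent-splittings with $\scat{H}$-limits, and that $\scat{H}$ is a finite connected category --- both of which are immediate; all the substance of the statement resides in Lemma~\ref{lemma:conn-trunc}, which is already proved.
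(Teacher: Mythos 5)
Your proof is correct and is essentially identical to the paper's: the paper likewise introduces the two-element monoid $\scat{H}$, identifies $\scat{H}$-shaped diagrams with idempotents and their limits with splittings, and then applies Lemma~\ref{lemma:conn-trunc} with $\scat{K} = \scat{H}$.
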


We will use the fact that the filtered cocompletion of a small category
$\scat{B}$ is $\Flat(\scat{B}^\op, \Set)$, the full subcategory of
$\pshf{\scat{B}}$ consisting of the flat functors (\S6.3, 6.5 of~\cite{Bor1}).
In particular, $\Flat(\scat{B}^\op, \Set)$ has filtered colimits, and any
filtered colimit preserved by the Yoneda embedding $\mathbf{y}: \scat{B} \go
\Flat(\scat{B}^\op, \Set)$ is \demph{absolute}, that is, preserved by every
functor out of $\scat{B}$.

\begin{lemma}[Finite Cauchy-complete categories]
\label{lemma:finite-CC}
Let $\scat{B}$ be a finite category.  Then
\[
\scat{B} \textrm{ is Cauchy-complete}
\iff
\scat{B} \textrm{ has filtered colimits}
\iff
\scat{B} \textrm{ has cofiltered limits}.
\]
In that case, filtered colimits and cofiltered limits in $\scat{B}$ are
absolute. 
\end{lemma}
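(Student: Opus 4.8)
The plan is to prove the two non-trivial implications of the triple equivalence separately, and then extract the absoluteness clause from the construction used. Two observations valid in \emph{any} category underlie the first half. First, the two-element monoid $\scat{H}$ introduced above is simultaneously filtered and cofiltered: it has a single object, so cones and cocones on finite discrete diagrams are automatic; its only nontrivial parallel pair $\id, e$ is coequalized by $e$ (since $e \of e = e \of \id = e$); and $\scat{H}$ is commutative, so $\scat{H}^\op \cong \scat{H}$, giving cofilteredness as well. Second, as noted just before the statement, a limit --- equivalently a colimit --- of an $\scat{H}$-shaped diagram is exactly a splitting of the corresponding idempotent. Combining these: if $\scat{B}$ has all filtered colimits, then it has all $\scat{H}$-shaped colimits, so every idempotent in $\scat{B}$ splits and $\scat{B}$ is Cauchy-complete; the same argument with ``cofiltered limits'' gives the other implication into Cauchy-completeness. (Finiteness is not needed for either.)

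For the converse I would first record that Cauchy-completeness is self-dual: an idempotent $e \colon b \go b$ of $\scat{B}$ is also an idempotent of $\scat{B}^\op$, and a splitting $a \oppair{i}{p} b$ with $pi = 1_a$ and $ip = e$ in $\scat{B}$ is, with the roles of $i$ and $p$ interchanged, a splitting in $\scat{B}^\op$. Hence when $\scat{B}$ is finite and Cauchy-complete, so is $\scat{B}^\op$. Now apply part~\bref{part:ff-flat} of Proposition~\ref{propn:flat-on-fin} to the finite Cauchy-complete category $\scat{B}^\op$: every flat functor $\scat{B}^\op \go \Set$ is representable. So the Yoneda embedding $\mathbf{y} \colon \scat{B} \go \Flat(\scat{B}^\op, \Set)$ into the filtered cocompletion of $\scat{B}$ is essentially surjective, and, being fully faithful, it is an equivalence. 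Since $\Flat(\scat{B}^\op, \Set)$ has all filtered colimits, so does $\scat{B}$; applying this to $\scat{B}^\op$ (also finite and Cauchy-complete) shows $\scat{B}^\op$ has all filtered colimits, that is, $\scat{B}$ has all cofiltered limits. This closes the cycle.

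For the last sentence: the equivalence $\mathbf{y}$ preserves every colimit, in particular every filtered colimit that exists in $\scat{B}$; so by the fact quoted before the lemma --- any filtered colimit preserved by $\mathbf{y}$ is absolute --- every filtered colimit in $\scat{B}$ is absolute. Dualizing: a cofiltered limit in $\scat{B}$ is, under $(-)^\op$, a filtered colimit in $\scat{B}^\op$, and a functor out of $\scat{B}$ preserves the former iff its opposite (a functor out of $\scat{B}^\op$) preserves the latter; since the filtered colimit in $\scat{B}^\op$ is absolute by the case already done, the cofiltered limit in $\scat{B}$ is absolute.

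I do not expect a serious obstacle here. The one substantive input is Proposition~\ref{propn:flat-on-fin}; the only points needing a little care are the verification that $\scat{H}$ is cofiltered as well as filtered (so that ``cofiltered limits $\Rightarrow$ Cauchy-complete'' genuinely follows from the $\scat{H}$-shaped case) and the bookkeeping that transfers the finite-plus-Cauchy-complete hypothesis to $\scat{B}^\op$ before invoking Proposition~\ref{propn:flat-on-fin}.
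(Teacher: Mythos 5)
Your proof is correct and takes essentially the same route as the paper: reduce via $\scat{H}$-shaped colimits (idempotent splittings) to show that having filtered colimits implies Cauchy-completeness, and conversely use Proposition~\ref{propn:flat-on-fin}\bref{part:ff-flat} to make $\mathbf{y}\colon\scat{B}\to\Flat(\scat{B}^\op,\Set)$ an equivalence, from which both the existence and the absoluteness of filtered colimits follow, and then dualize. You merely make explicit two small points the paper subsumes under ``by duality''---that $\scat{H}$ is cofiltered as well as filtered, and that Cauchy-completeness is self-dual so that the proposition may be applied to $\scat{B}^\op$.
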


\begin{proof}
By duality, we need only consider filtered colimits.  Since the category
$\scat{H}$ is filtered, a category with filtered colimits is always
Cauchy-complete.  Conversely, suppose that $\scat{B}$ is Cauchy-complete.  By
finiteness and Proposition~\ref{propn:flat-on-fin}(\ref{part:ff-flat}), the
functor $\mathbf{y}: \scat{B} \go \Flat(\scat{B}^\op, \Set)$ is an
equivalence; so by the remarks above, $\scat{B}$ has filtered colimits and
they are absolute.  \done
\end{proof}

\begin{propn}[Cofiltered limits of truncated complexes]
\label{propn:cofilt-trunc}
Suppose that $\scat{A}$ is Cauchy-complete.  Then for each $n \in \nat$ and $a
\in \scat{A}$, the category $\catI_n(a)$ has filtered colimits and cofiltered
limits, and they are absolute.  
\end{propn}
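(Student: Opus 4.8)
The plan is to combine the two results immediately preceding the statement. The key observation is that each category $\catI_n(a)$ is \emph{finite}: its objects are the truncated complexes $a_n \gobymod{m_n} \cdots \gobymod{m_1} a_0 = a$, and since the module $M$ is finite there are only finitely many such diagrams (as already noted in Warning~\ref{warning:fin}); moreover its hom-sets are finite, being assembled from the hom-sets of $\scat{A}$ subject to finitely many constraints. So $\catI_n(a)$ is a finite category.

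Next I would invoke Corollary~\ref{cor:CC-trunc}: since $\scat{A}$ is assumed Cauchy-complete, so is $\catI_n(a)$, for every $a \in \scat{A}$ and $n \in \nat$. Finally, apply Lemma~\ref{lemma:finite-CC} with $\scat{B} = \catI_n(a)$: a finite Cauchy-complete category has filtered colimits and cofiltered limits, and these are absolute. This is precisely the assertion of the proposition, so the proof is complete.

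There is no genuine obstacle here; the substantive work has already been discharged. The nontrivial input is Corollary~\ref{cor:CC-trunc} (which rests on Lemma~\ref{lemma:conn-trunc}, showing that $\catI_n(a)$ inherits finite connected limits, hence idempotent splittings, from $\scat{A}$), together with the abstract characterization in Lemma~\ref{lemma:finite-CC} of when a finite category has, and has absolute, filtered colimits. The present proposition merely packages these, the only remaining point being the (routine, and previously recorded) finiteness of $\catI_n(a)$.
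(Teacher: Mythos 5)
Your proposal is correct and matches the paper's proof exactly: the paper's argument is the same one-line combination of Corollary~\ref{cor:CC-trunc}, Lemma~\ref{lemma:finite-CC}, and finiteness of $\catI_n(a)$. You have simply spelled out the steps in slightly more detail.
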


\begin{proof}
Follows from Corollary~\ref{cor:CC-trunc}, Lemma~\ref{lemma:finite-CC}, and
finiteness of $\catI_n(a)$.
\done
\end{proof}

We now use these results about truncated complexes to deduce results about
ordinary, non-truncated, complexes.

\begin{lemma}
\label{lemma:lim-lim}
Let $\scat{K}$ be a category and let $a \in \scat{A}$.  Suppose that for all
$n \in \nat$, the category $\catI_n(a)$ has limits of shape $\scat{K}$ and the
projection functor $\pr_n: \catI_{n+1}(a) \go \catI_n(a)$ preserves them.
Then $\catI(a)$ has limits of shape $\scat{K}$.
\end{lemma}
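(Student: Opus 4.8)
The plan is to show that $\scat{K}$-limits in $\catI(a)$ are computed ``levelwise'' along the tower $\catI(a) = \Lt{n}\catI_n(a)$, using the standard fact that limits commute with limits. The one point requiring care is that $\catI(a)$ is the \emph{strict} limit of the tower in $\Cat$: its objects are sequences of truncated complexes that agree on the nose under truncation, so to produce an object of $\catI(a)$ one must choose the levelwise limit cones in a strictly compatible way, not merely up to isomorphism. The tool for doing so is the observation that each truncation functor $\pr_n\colon \catI_{n+1}(a) \to \catI_n(a)$ is an isofibration; this is immediate from the explicit form of isomorphisms in $\catI_n(a)$ (maps of truncated complexes whose components are all invertible in $\scat{A}$), since given an isomorphism $g_\blb\colon \alpha \to \pr_n\beta$ in $\catI_n(a)$, with $g_n\colon a_n \goiso b_n$ its top component and $b_{n+1} \gobymod{m'_{n+1}} b_n$ the last sector of $\beta$, one obtains a lift by extending $\alpha$ with the sector $g_n^{-1} m'_{n+1}\colon b_{n+1} \gomod a_n$ and declaring the new top component to be $\id_{b_{n+1}}$.

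Now let $D\colon \scat{K} \to \catI(a)$ be a diagram. Put $D^{(n)} = \pr_n \of D\colon \scat{K} \to \catI_n(a)$, so that $\pr_n \of D^{(n+1)} = D^{(n)}$. I would construct limit cones $(L^{(n)}, \lambda^{(n)})$ on $D^{(n)}$ by induction on $n$, arranging that $\pr_n$ carries $(L^{(n+1)}, \lambda^{(n+1)})$ to $(L^{(n)}, \lambda^{(n)})$ strictly. Take $(L^{(1)}, \lambda^{(1)})$ to be any limit cone on $D^{(1)}$. For the inductive step, choose any limit cone $(L', \lambda')$ on $D^{(n+1)}$; since $\pr_n$ preserves $\scat{K}$-limits, $\pr_n(L', \lambda')$ is a limit cone on $D^{(n)}$, so there is a unique isomorphism of cones $\theta\colon (L^{(n)}, \lambda^{(n)}) \to \pr_n(L', \lambda')$. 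Lift the isomorphism $\theta\colon L^{(n)} \to \pr_n L'$ through the isofibration $\pr_n$ to an isomorphism $\tilde\theta\colon L^{(n+1)} \to L'$ with $\pr_n L^{(n+1)} = L^{(n)}$ and $\pr_n\tilde\theta = \theta$, and transport $\lambda'$ along $\tilde\theta$ to define $\lambda^{(n+1)}$; then $(L^{(n+1)}, \lambda^{(n+1)})$ is a limit cone on $D^{(n+1)}$ truncating strictly to $(L^{(n)}, \lambda^{(n)})$. The resulting strictly compatible family is an object $L \in \catI(a)$, and the cones $\lambda^{(n)}$ assemble to a cone $\lambda\colon L \to D$ in $\catI(a)$.

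It remains to check that $(L, \lambda)$ is a limit cone. A cone on $D$ with vertex $N$ amounts precisely to a compatible family of cones on the $D^{(n)}$ with vertices $\pr_n N$; given such a cone $(N, \mu)$, universality of each $(L^{(n)}, \lambda^{(n)})$ gives a unique $\phi_n\colon \pr_n N \to L^{(n)}$ with $\lambda^{(n)} \of \phi_n = \mu^{(n)}$. Because $\pr_n$ takes the limit cone at level $n+1$ to the one at level $n$, the map $\pr_n\phi_{n+1}$ factors $\mu^{(n)}$ through $\lambda^{(n)}$, so $\pr_n\phi_{n+1} = \phi_n$ by uniqueness; hence the $\phi_n$ assemble to a map $\phi\colon N \to L$ in $\catI(a)$ with $\lambda \of \phi = \mu$, and any competitor $\phi'$ satisfies $\pr_n\phi' = \phi_n$ for every $n$, so $\phi' = \phi$. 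I expect the isofibration bookkeeping needed to obtain \emph{strict} compatibility of the levelwise limits to be the only nonroutine ingredient; the rest is just ``limits commute with limits'' transported to a countable tower.
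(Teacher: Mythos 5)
Your proposal is correct and takes the same approach as the paper's proof: observe that each $\pr_n: \catI_{n+1}(a) \go \catI_n(a)$ is an isofibration, use this to choose a strictly compatible tower of limit cones on the truncated diagrams $D^{(n)}$, and assemble them into a cone on $D$ in $\catI(a)$, which is then checked to be a limit cone. You supply the details the paper leaves implicit (the explicit isofibration lift and the final universal-property verification), but the argument is the same.
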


\paragraph*{Remark}
This almost follows from the fact that $\catI(a) = \Lt{n} \catI_n(a)$.
However, this is a \emph{strict} (1-categorical) limit, whereas the functors
$\pr_n$ only preserve limits in the usual sense that a certain canonical map is
an \emph{isomorphism}.  One can, for instance, write down a
sequence~(\ref{eq:cat-seq}) of categories and functors in which each of
the categories has a terminal object and each of the functors preserves them,
but the limit does not have a terminal object.  Something extra is therefore
needed in order to build limits in $\catI(a)$.

\begin{proof}
First observe that each functor $\pr_n$ is an
\demph{isofibration}~\cite{JT,LackHTA}: given an object $\alpha \in
\catI_{n+1}(a)$ and an isomorphism $j: \pr_n(\alpha) \go \beta$ in
$\catI_n(a)$, there exists an isomorphism $i: \alpha \go \alpha'$ such that
$\pr_n(\alpha') = \beta$ and $\pr_n(i) = j$.

Now take a diagram $D: \scat{K} \go \catI(a)$.  Write $D_n: \scat{K} \go
\catI_n(a)$ for the composite of $D$ with the projection $\catI(a) \go
\catI_n(a)$.  We may choose a limit cone on $D_1$; then, using the
isofibration property, a limit cone on $D_2$ whose image in $\catI_1(a)$ is
the chosen cone on $D_1$; and so on.  This compatible sequence of cones
defines a cone on $D$ itself, which is a limit cone.  \done
\end{proof}

Proposition~\ref{propn:cofilt-trunc} and Lemma~\ref{lemma:lim-lim} together
imply: 
\begin{propn}[Cofiltered limits of complexes]
\label{propn:cofilt-complexes}
Suppose that $\scat{A}$ is Cauchy-complete.  Then for all $a \in \scat{A}$,
the category $\catI(a)$ has cofiltered limits.
\done
\end{propn}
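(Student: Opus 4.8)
The plan is to obtain this as a direct combination of Proposition~\ref{propn:cofilt-trunc} and Lemma~\ref{lemma:lim-lim}. Recall that a category has cofiltered limits exactly when it has limits of shape $\scat{K}$ for every small cofiltered category $\scat{K}$. So I would fix such a $\scat{K}$ and an object $a \in \scat{A}$, and aim to verify the two hypotheses of Lemma~\ref{lemma:lim-lim}, namely that each $\catI_n(a)$ has limits of shape $\scat{K}$ and that each projection $\pr_n: \catI_{n+1}(a) \go \catI_n(a)$ preserves them.

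For the first hypothesis, Proposition~\ref{propn:cofilt-trunc} already gives, for every $n \in \nat$, that $\catI_n(a)$ has cofiltered limits; in particular it has limits of shape $\scat{K}$. For the second, the same proposition says these cofiltered limits in $\catI_{n+1}(a)$ are \emph{absolute}, hence preserved by every functor out of $\catI_{n+1}(a)$, and in particular by the projection $\pr_n$. Thus both hypotheses of Lemma~\ref{lemma:lim-lim} hold for this $\scat{K}$, and the lemma yields that $\catI(a)$ has limits of shape $\scat{K}$. Since $\scat{K}$ was an arbitrary small cofiltered category, $\catI(a)$ has cofiltered limits, as required.

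I do not anticipate any real obstacle here: the substantive work has already been carried out in showing that the cofiltered limits in each truncated category $\catI_n(a)$ are absolute (which rests on the finiteness of $\catI_n(a)$ together with Lemma~\ref{lemma:finite-CC} and Corollary~\ref{cor:CC-trunc}). The one point that warrants a moment's care is exactly the subtlety flagged in the Remark following Lemma~\ref{lemma:lim-lim}: one cannot simply read off limits in $\catI(a)$ from the strict identification $\catI(a) = \Lt{n} \catI_n(a)$, since the projection functors preserve limits only up to canonical isomorphism; but Lemma~\ref{lemma:lim-lim} is precisely designed to bridge this gap via the isofibration property of the $\pr_n$, so invoking it is all that is needed.
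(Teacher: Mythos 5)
Your proof is correct and follows the paper's own approach exactly: the paper simply states that Proposition~\ref{propn:cofilt-trunc} and Lemma~\ref{lemma:lim-lim} ``together imply'' the result, and you have spelled out precisely the intended verification, namely that absoluteness of the cofiltered limits in each $\catI_n(a)$ guarantees the projection functors preserve them, so that Lemma~\ref{lemma:lim-lim} applies shape by shape.
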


We can now prove that every element of $I(a)$ has a canonical complex
representing it.

\begin{prooflike}{Proof of Theorem~\ref{thm:can-rep}}
Let $a \in \scat{A}$ and let $\scat{K}$ be a connected-component of
$\catI(a)$.  Condition \So\ implies that $\catI(a)$ is componentwise
cofiltered (Proposition~\ref{propn:Iacomponentwisecofiltered}), so $\scat{K}$
is cofiltered.  Then by Proposition~\ref{propn:cofilt-complexes}, the
inclusion $\scat{K} \rIncl \catI(a)$ has a limit.  But since $\scat{K}$ is a
connected-component, a limit cone on this inclusion functor amounts to a limit
cone on the identity functor $\scat{K} \go \scat{K}$, which by Lemma~X.1
of~\cite{CWM} amounts to an initial object of $\scat{K}$.
\done
\end{prooflike}


\section{Topological proofs}
\label{sec:Top-proofs}

Fix an equational system $(\scat{A}, M)$ satisfying the solvability condition
\So.  In this section we show that $(I, \iota)$, with the topology defined
in~\S\ref{sec:construction}, is an $M$-coalgebra in $\Top$, and indeed the
universal solution in $\Top$.  Along the way we prove that each space $I(a)$
is compact Hausdorff.

\minihead{$I(a)$ is Hausdorff}

We begin with the Hausdorff property.  Recall the sets $V_{m_1, \ldots, m_n}$
defined in~\S\ref{sec:construction}.  Define, for each $n\in\nat$ and
$a\in\scat{A}$, a binary relation $R_n^a$ on $I(a)$ by
\begin{align*}
R_n^a   &
=
\bigcup
\{
V_{p_1, \ldots, p_n} \times V_{p_1, \ldots, p_n}
\such
(b_n \gobymod{p_n} \cdots \gobymod{p_1} b_0)
\in 
\catI_n(a)
\}      \\
        &
\sub
I(a) \times I(a).
\end{align*}
Equivalently, $(t, t') \in R_n^a$ when there exists $(b_n \gobymod{p_n} \cdots
\gobymod{p_1} b_0) \in \catI_n(a)$ such that $t$ and $t'$ can both be written
in the form
\[
\bigcompt{
\cdots \gomod \cdot \gomod 
b_n \gobymod{p_n} \cdots \gobymod{p_1} b_0
}.
\]
As a subset of $I(a) \times I(a)$, $R_n^a$ is closed, by finiteness of
$\catI_n(a)$.  As a relation, $R_n^a$ is reflexive and symmetric, but not
in general transitive: for example, in the Freyd system,
$
R_1^1 
=
[0, \half]^2 \cup [\half, 1]^2
\sub
[0, 1]^2.
$

Given a set $S$, write $\Delta_S$ for the diagonal $\{ (s, s) \such s \in S\}
\sub S \times S$.

\begin{propn}[Relations determine equality]
\label{propn:relationsdetermineequality}
For each $a \in \scat{A}$, we have $\bigcap_{n\in\nat} R_n^a =
\Delta_{I(a)}$. 
\end{propn}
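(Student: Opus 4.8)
The plan is to establish the two inclusions separately: $\Delta_{I(a)} \subseteq \bigcap_n R_n^a$ is essentially immediate, while the reverse inclusion uses the distance machinery of \S\ref{sec:set}.

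First I would dispose of $\Delta_{I(a)} \subseteq \bigcap_n R_n^a$. Each $R_n^a$ is reflexive: given $t = \bigcompt{\cdots \gobymod{p_2} b_1 \gobymod{p_1} b_0 = a} \in I(a)$, truncating the chosen complex at length $n$ produces a truncated complex $\gamma \in \catI_n(a)$ with $t \in V_{p_1, \ldots, p_n}$, so $(t, t) \in V_{p_1, \ldots, p_n} \times V_{p_1, \ldots, p_n} \subseteq R_n^a$. Hence every $(t,t)$ lies in every $R_n^a$.

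For the reverse inclusion, suppose $(t, t') \in R_n^a$ for all $n$; I want $t = t'$. The key is to obtain an $n$-independent bound on the distances between the $n$-truncations of fixed representing complexes, and then quote Proposition~\ref{propn:equalityinIa}. Fix complexes $\alpha, \alpha' \in \catI(a)$ with $[\alpha] = t$ and $[\alpha'] = t'$, and for each $n$ pick a truncated complex $\gamma_n \in \catI_n(a)$ witnessing $(t, t') \in R_n^a$, i.e.\ with $t, t' \in V_{\gamma_n}$ (abbreviating the relevant $V_{p_1,\ldots,p_n}$). Unwinding the definition of $V_{\gamma_n}$, the element $t$ has a representative $\beta_n \in \catI(a)$ whose $n$-truncation equals $\gamma_n$, and likewise $t'$ has a representative $\beta'_n$ with $n$-truncation $\gamma_n$. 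Since \So\ is in force in this section, $\catI(a)$ is componentwise cofiltered (Proposition~\ref{propn:Iacomponentwisecofiltered}), so Corollary~\ref{cor:complex-span} applies: from $[\alpha] = t = [\beta_n]$ it yields a span $\alpha \og \cdot \go \beta_n$ in $\catI(a)$, whence $d_{\catI(a)}(\alpha, \beta_n) \leq 1$. As $\pr_n: \catI(a) \go \catI_n(a)$ is a functor, it is distance-decreasing (Warning~\ref{warning:fin}), so $d_{\catI_n(a)}(\pr_n(\alpha), \gamma_n) \leq 1$; symmetrically $d_{\catI_n(a)}(\pr_n(\alpha'), \gamma_n) \leq 1$. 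The triangle inequality for $d$ then gives $d_{\catI_n(a)}(\pr_n(\alpha), \pr_n(\alpha')) \leq 2$ for every $n$, so $\sup_n d_{\catI_n(a)}(\pr_n(\alpha), \pr_n(\alpha')) < \infty$, and Proposition~\ref{propn:equalityinIa} forces $[\alpha] = [\alpha']$, that is, $t = t'$.

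The place to be careful — and where a first attempt is likely to get bogged down — is in resisting the temptation to build a single ``limiting'' complex out of the $\gamma_n$ (via K\"onig's Lemma) and to argue that $t$ and $t'$ each coincide with it; controlling the relevant distances from that direction is awkward, because the obvious bound on $d_{\catI(a)}(\alpha, \beta_n)$ can grow with $n$. What makes the argument short is precisely Corollary~\ref{cor:complex-span}: ``same connected-component of $\catI(a)$'' is equivalent to ``joined by a span'', which bounds the distance in $\catI(a)$ by $1$ and hence survives projection to each $\catI_n(a)$ with an $n$-independent constant. After that, only the triangle inequality and the cited results of \S\ref{sec:set} are needed. \done
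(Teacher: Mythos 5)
Your proof is correct and follows essentially the same route as the paper's: both establish the easy inclusion, then for the reverse use Corollary~\ref{cor:complex-span} to bound the distance from the representing complex of $t$ (resp.\ $t'$) to a complex projecting to the chosen $\gamma_n$, project to $\catI_n(a)$, apply the triangle inequality to get a uniform bound of $2$, and invoke Proposition~\ref{propn:equalityinIa}. Your closing remark about why K\"onig's Lemma is the wrong tool here is sound, though not needed; nothing else differs.
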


\begin{proof}
Certainly $\bigcap_{n\in\nat} R_n^a \supseteq \Delta_{I(a)}$.  Conversely, let
$(t, t') \in \bigcap_{n\in\nat} R_n^a$, writing
\begin{eqnarray*}
t	&=	&
\compt{
\cdots \gobymod{m_2} a_1 \gobymod{m_1} a_0 = a
},	\\
t'	&=	&
\compt{
\cdots \gobymod{m'_2} a'_1 \gobymod{m'_1} a'_0 = a
}.
\end{eqnarray*}
For each $n\in\nat$, we may choose $(b^n_n \gobymod{p^n_n} \cdots
\gobymod{p^1_n} b^0_n) \in \catI_n(a)$ such that $t, t' \in V_{p_n^1, \ldots,
p_n^n}$.  By Corollary~\ref{cor:complex-span}, there is for each $n\in\nat$ a
span in $\catI(a)$ of the form
\[
\begin{diagram}
\cdots	&
\rMod^{m_{n+2}}	&a_{n+1}	&
\rMod^{m_{n+1}}	&a_n		&
\rMod^{m_n}	&\ 		&
\cdots	&
\		&\rMod^{m_1}	&a_0 = a	\\
	&
		&\uTo		&
		&\uTo		&
		&		&
	&
		&		&\uTo>{1_a}	\\
\cdots	&
\rMod		&\cdot		&
\rMod		&\cdot		&
\rMod		&\ 		&
\cdots	&
\		&\rMod		&a		\\
	&
		&\dTo		&
		&\dTo		&
		&		&
	&
		&		&\dTo>{1_a}	\\
\cdots	&
\rMod		&\cdot		&
\rMod		&b^n_n		&
\rMod_{p^n_n}	&\ 		&
\cdots	&
\		&\rMod_{p^1_n}	&b^0_n = a.	\\
\end{diagram}
\]
Applying the projection functor $\catI(a) \go \catI_n(a)$, we have 
\[
d_{\catI_n(a)}
\bigl( a_n \gobymod{m_n} \cdots \gobymod{m_1} a_0, \ 
b^n_n \gobymod{p^n_n} \cdots \gobymod{p^1_n} b^0_n \bigr)
\leq 1.
\]
The same is true for $t'$, so by the triangle inequality,
\[
d_{\catI_n(a)}
\bigl( a_n \gobymod{m_n} \cdots \gobymod{m_1} a_0, \ 
a'_n \gobymod{m'_n} \cdots \gobymod{m'_1} a'_0 \bigr)
\leq 2
\]
for each $n \in \nat$.  So by Proposition~\ref{propn:equalityinIa}, $t = t'$.  
\done
\end{proof}

\begin{cor}[Hausdorff]
\label{cor:Iahausdorff}
For each $a \in \scat{A}$, the space $I(a)$ is Hausdorff.
\end{cor}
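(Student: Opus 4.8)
The plan is to derive the Hausdorff property of $I(a)$ directly from Proposition~\ref{propn:relationsdetermineequality}, exploiting the fact that, for each fixed $n$, the sets $V_{p_1, \ldots, p_n}$ occurring in the definition of $R_n^a$ form a \emph{finite} closed cover of $I(a)$. First I would record these two facts. They cover $I(a)$: any point $t$ may be written $t = \bigcompt{\cdots \gobymod{m_2} a_1 \gobymod{m_1} a_0 = a}$, and then $t \in V_{m_1, \ldots, m_n}$, where $a_n \gobymod{m_n} \cdots \gobymod{m_1} a_0$ is the length-$n$ truncation of the chosen representing complex. They are finite in number, since $\catI_n(a)$ is finite ($M$ being finite); and each $V_{p_1, \ldots, p_n}$ is closed in $I(a)$ by the very definition of the topology in~\S\ref{sec:construction}.

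Now take distinct points $t, t' \in I(a)$. By Proposition~\ref{propn:relationsdetermineequality} we have $(t, t') \notin \bigcap_{n} R_n^a$, so there is some $n$ with $(t, t') \notin R_n^a$; by the definition of $R_n^a$ this means that no single set $V_{p_1, \ldots, p_n}$, as $(b_n \gobymod{p_n} \cdots \gobymod{p_1} b_0)$ ranges over $\catI_n(a)$, contains both $t$ and $t'$. Let $\mathcal{V}$ be the finite collection of all these sets, and put $\mathcal{V}_t = \{ V \in \mathcal{V} \such t \in V \}$ and $\mathcal{V}_{t'} = \{ V \in \mathcal{V} \such t' \in V \}$; then $\mathcal{V}_t \cap \mathcal{V}_{t'} = \emptyset$. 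I would then set
\[
U = I(a) \setminus \bigcup (\mathcal{V} \setminus \mathcal{V}_t),
\qquad
U' = I(a) \setminus \bigcup (\mathcal{V} \setminus \mathcal{V}_{t'}),
\]
which are open, $\mathcal{V}$ being finite with closed members. One checks that $t \in U$ and $t' \in U'$ (each lies in none of the removed sets), and that $U \cap U' = I(a) \setminus \bigcup \mathcal{V} = \emptyset$, because $(\mathcal{V} \setminus \mathcal{V}_t) \cup (\mathcal{V} \setminus \mathcal{V}_{t'}) = \mathcal{V} \setminus (\mathcal{V}_t \cap \mathcal{V}_{t'}) = \mathcal{V}$ and $\mathcal{V}$ covers $I(a)$. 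Thus $t$ and $t'$ are separated by disjoint open neighbourhoods, so $I(a)$ is Hausdorff.

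There is no serious obstacle at this point: essentially all the substance sits inside Proposition~\ref{propn:relationsdetermineequality} (hence ultimately in K\"onig's Lemma~(\ref{lemma:Koenig}) and in condition~\So, via Propositions~\ref{propn:Iacomponentwisecofiltered} and~\ref{propn:equalityinIa}). The only points needing a little care in the write-up are that $\mathcal{V}$ is genuinely finite --- which is exactly where finiteness of $M$, through finiteness of $\catI_n(a)$, enters --- and that it really is a cover of $I(a)$; note also that, somewhat curiously, the closedness of $R_n^a$ itself is not used here, only the closedness of the individual building blocks $V_{p_1, \ldots, p_n}$.
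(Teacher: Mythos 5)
Your proof is correct, and it takes a genuinely different route from the paper's. The paper observes that $\Delta_{I(a)} = \bigcap_n R_n^a$ is an intersection of closed subsets of $I(a)\times I(a)$, hence closed, and then invokes the standard lemma that a space is Hausdorff if and only if its diagonal is closed in the square. You instead unwind the separation argument by hand: from Proposition~\ref{propn:relationsdetermineequality} you extract an $n$ for which $(t,t')\notin R_n^a$, use the fact that for fixed $n$ the finitely many sets $V_{p_1,\ldots,p_n}$ form a finite closed cover of $I(a)$ none of whose members contains both $t$ and $t'$, and then build disjoint open neighbourhoods by deleting the ``wrong'' members of the cover. The two arguments have the same essential input (Proposition~\ref{propn:relationsdetermineequality} together with closedness of the basic sets), but yours is more elementary --- it bypasses the diagonal-in-the-product characterization, needs nothing about the product topology, and exhibits the separating open sets explicitly --- at the cost of being a little longer than the paper's one-line appeal to the standard lemma. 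Your closing remark is fair: the paper's proof packages closedness of the building blocks $V_{p_1,\ldots,p_n}$ into closedness of $R_n^a$ in the square, whereas you use the building blocks directly; the two facts are immediately interchangeable here, so neither route saves real work.
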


\begin{proof}
$\Delta_{I(a)}$ is closed in $I(a) \times I(a)$, being the intersection of the
closed subsets $R_n^a$.  
\done
\end{proof}

\begin{cor}[Singletons]
\label{cor:singletons}
Let $a \in \scat{A}$ and $(a_\blb, m_\blb) \in \catI(a)$.  Then $\bigcap_{r \in
\nat} V_{m_1, \ldots, m_r} = \{ \compt{a_\blb, m_\blb} \}$. 
\end{cor}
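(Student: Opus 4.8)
The plan is to deduce the statement directly from Proposition~\ref{propn:relationsdetermineequality}, which already carries the substantive (König's-Lemma-based) content. First I would dispose of the easy inclusion $\{\compt{a_\blb, m_\blb}\} \sub \bigcap_{r\in\nat} V_{m_1, \ldots, m_r}$: the complex $(a_\blb, m_\blb)$ visibly extends each of its own truncations $a_r \gobymod{m_r} \cdots \gobymod{m_1} a_0 = a$, so by the definition of the sets $V_{m_1, \ldots, m_r}$ in~\S\ref{sec:construction} we have $\compt{a_\blb, m_\blb} \in V_{m_1, \ldots, m_r}$ for every $r$.

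For the reverse inclusion, take $t \in \bigcap_{r\in\nat} V_{m_1, \ldots, m_r}$ and fix $n \in \nat$. The truncated complex $a_n \gobymod{m_n} \cdots \gobymod{m_1} a_0 = a$ is an object of $\catI_n(a)$, and both $t$ and $\compt{a_\blb, m_\blb}$ lie in $V_{m_1, \ldots, m_n}$ (the former by hypothesis, the latter by the previous paragraph). Hence, immediately from the definition of the relation $R_n^a$ as a union of squares $V_{p_1, \ldots, p_n} \times V_{p_1, \ldots, p_n}$ indexed by $\catI_n(a)$, the pair $(t, \compt{a_\blb, m_\blb})$ lies in $R_n^a$. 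As $n$ was arbitrary, $(t, \compt{a_\blb, m_\blb}) \in \bigcap_{n\in\nat} R_n^a = \Delta_{I(a)}$ by Proposition~\ref{propn:relationsdetermineequality}, so $t = \compt{a_\blb, m_\blb}$, as required.

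I do not expect a genuine obstacle here: all the real work — that two distinct elements of $I(a)$ are eventually separated by some $R_n^a$, which runs through Propositions~\ref{propn:equalityinIa} and~\ref{propn:relationsdetermineequality} and ultimately König's Lemma — has already been carried out. The only point demanding a moment's attention is matching the two definitions cleanly: $V_{m_1, \ldots, m_n}$ as the set of classes of complexes factoring through a fixed truncation, versus $R_n^a$ as the union over all length-$n$ truncations ending at $a$ of the corresponding diagonal squares. Once that is noted, membership of $(t, \compt{a_\blb, m_\blb})$ in each $R_n^a$ is automatic and the corollary drops out.
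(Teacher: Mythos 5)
Your proof is correct and takes essentially the same route as the paper's: both reduce the corollary to Proposition~\ref{propn:relationsdetermineequality} by observing that any two elements of $\bigcap_r V_{m_1,\ldots,m_r}$ are related under every $R_n^a$ and hence equal, together with the trivial observation that $\compt{a_\blb,m_\blb}$ lies in the intersection. The paper merely phrases the uniqueness step as "the left-hand side has at most one element" rather than fixing one of the two elements to be $\compt{a_\blb,m_\blb}$, which is a cosmetic difference.
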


\begin{proof}
By Proposition~\ref{propn:relationsdetermineequality}, the left-hand side has
at most one element; but clearly $\compt{a_\blb, m_\blb}$ is an element.
\done
\end{proof}

\minihead{$(I, \iota)$ is a topological coalgebra}

By definition, an element of the \emph{set} $I(a)$ is an equivalence class of
elements of $\oba\catI(a)$, and the coalgebra structure on $I$ is induced by
the coalgebra structure on $\oba\catI$ via the quotient map $\pi: \oba\catI
\go I$.  The next phase of the proof is to show that, in a similar sense, $(I,
\iota)$ is a quotient of $(\oba\catI, \iota)$ as a coalgebra in $\Top$.

For this to make sense, we need to put a topology on $\oba\catI$.  For each $a
\in \scat{A}$, the set $\oba\catI(a)$ is the limit of the diagram of finite
sets
\[
\cdots \go \oba\catI_2(a) \go \oba\catI_1(a).
\]
Equipping each set $\oba\catI_n(a)$ with the discrete topology and taking the
limit in $\Top$ gives a topology on $\oba\catI(a)$ (the profinite topology).
We used the same construction in~\S\ref{sec:dsss}: writing $\oba\scat{A}$ for
the discrete category with the same objects as $\scat{A}$, there is an evident
discrete equational system $(\oba\scat{A}, M)$, and its universal solution is
$(\oba\catI, \iota)$.

In this way, $\oba\catI$ becomes a functor $\scat{A} \go \Top$.  Each space
$\oba\catI(a)$ is compact Hausdorff.  Hence, recalling
from~(\ref{eq:obcatI-nondegen}) that the $\Set$-valued functor $\oba\catI$ is
nondegenerate, the $\Top$-valued functor $\oba\catI$ is nondegenerate.
The maps $\iota_a$ are continuous, since in~(\ref{eq:iota-as-comp}) the first
map is a homeomorphism and the second is a topological quotient map.  So
$(\oba\catI, \iota)$ is a coalgebra in $\Top$.

We will show that for each $a$, the map $\pi_a: \oba\catI(a) \go I(a)$ exhibits
$I(a)$ as a topological quotient of $\oba\catI(a)$.  From that we will deduce
that $(I, \iota)$ too is a coalgebra in $\Top$.

\begin{lemma}[Membership of basic closed sets]
\label{lemma:basicclosedsets}
Let $a \in \scat{A}$, $n \in \nat$, $(a_\blb, m_\blb) \in \catI(a)$, and
$
(b_n \gobymod{p_n} \cdots \gobymod{p_1} b_0)
\in
\catI_n(a). 
$
Then 
\[
\compt{a_\blb, m_\blb} \in V_{p_1, \ldots, p_n}
\iff
\textrm{for all } r\in \nat,\ 
V_{m_1, \ldots, m_r} \cap V_{p_1, \ldots, p_n} \neq \emptyset.
\]
\end{lemma}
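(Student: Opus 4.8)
The statement is an iff, and the "$\Leftarrow$" direction is the interesting one; "$\Rightarrow$" is immediate, since if $\compt{a_\blb, m_\blb} \in V_{p_1,\dots,p_n}$ then $\compt{a_\blb, m_\blb} \in V_{m_1,\dots,m_r} \cap V_{p_1,\dots,p_n}$ for every $r$ (recall $\compt{a_\blb,m_\blb} \in V_{m_1,\dots,m_r}$ always, and $V_{m_1,\dots,m_r}$ is just the set of $t \in I(a)$ whose representing complexes can be truncated to end in $a_r \gobymod{m_r}\cdots\gobymod{m_1}a$). So I would concentrate on "$\Leftarrow$".

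Assume that $V_{m_1,\dots,m_r} \cap V_{p_1,\dots,p_n} \neq \emptyset$ for every $r \in \nat$. I want to conclude $\compt{a_\blb, m_\blb} \in V_{p_1,\dots,p_n}$. The idea is to produce, for the single fixed truncated complex $(b_n \gobymod{p_n}\cdots\gobymod{p_1}b_0) \in \catI_n(a)$, a single element $t \in V_{p_1,\dots,p_n}$ that also lies in $\bigcap_{r\in\nat} V_{m_1,\dots,m_r}$; by Corollary~\ref{cor:singletons} that intersection is exactly $\{\compt{a_\blb,m_\blb}\}$, so $t = \compt{a_\blb,m_\blb}$ and we are done. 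To build such a $t$ I would run a K\"onig's Lemma argument (Lemma~\ref{lemma:Koenig}), exactly as in the proofs of Proposition~\ref{propn:equalityinIa} and Proposition~\ref{propn:relationsdetermineequality}. For each $r \geq n$, pick (using the hypothesis) an element $t_r \in V_{m_1,\dots,m_r}\cap V_{p_1,\dots,p_n}$ and choose a complex $\gamma_r \in \catI(a)$ representing $t_r$ that extends $a_r\gobymod{m_r}\cdots\gobymod{m_1}a$ and whose image in $\catI_n(a)$ is connected to $(b_n\gobymod{p_n}\cdots\gobymod{p_1}b_0)$ within bounded distance. Applying the projection functors $\pr_s \colon \catI(a) \to \catI_s(a)$ together with K\"onig's Lemma to the sequence of (finite) distance-witnessing sets, and using Lemma~\ref{lemma:distanceinalimit} / Proposition~\ref{propn:equalityinIa}, I extract a limit complex $\gamma \in \catI(a)$ that (i) agrees with $a_r\gobymod{m_r}\cdots\gobymod{m_1}a$ on its first $r$ terms for infinitely many $r$ — hence for all $r$, so $\compt{\gamma} = \compt{a_\blb,m_\blb}$ — and (ii) projects into $\catI_n(a)$ at bounded distance from $(b_n\gobymod{p_n}\cdots\gobymod{p_1}b_0)$, hence into the same connected-component, so $\compt{\gamma} \in V_{p_1,\dots,p_n}$. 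Combining, $\compt{a_\blb, m_\blb} \in V_{p_1,\dots,p_n}$.

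The main obstacle is the bookkeeping in the K\"onig's Lemma step: I need to package "$t_r$ lies in $V_{m_1,\dots,m_r}$" and "$t_r$ lies in $V_{p_1,\dots,p_n}$" simultaneously as a statement about truncated complexes and their distances in each $\catI_s(a)$, so that the inverse-limit element produced by K\"onig's Lemma inherits \emph{both} properties. Concretely, for each $s$ the relevant finite set should consist of pairs: a truncated complex of length $s$ that refines $(m_1,\dots,m_{\min(s,?)})$ appropriately, together with a bounded-length zigzag in $\catI_s(a)$ connecting it to $\pr_s(b_n\gobymod{p_n}\cdots\gobymod{p_1}b_0)$. Once the two properties are coherently encoded, the argument is a routine variant of the ones already carried out in this section; I would lean on Proposition~\ref{propn:equalityinIa} to convert "bounded distance in every $\catI_n(a)$" into "same component of $\catI(a)$" and on Corollary~\ref{cor:singletons} for the final identification. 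No condition beyond finiteness of $M$ (already in force) and \So\ (standing assumption) is needed.
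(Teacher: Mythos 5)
Your "$\Rightarrow$" direction and your overall instinct (K\"onig's Lemma plus tracking truncations, with Corollary~\ref{cor:singletons} as the end-game) are close to the paper's proof, and you correctly identify the bookkeeping in the K\"onig step as the crux. But as written the argument has a real gap, in two places that compound.

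First, if your limit complex $\gamma$ satisfies property~(i) --- it agrees with $a_r\gobymod{m_r}\cdots\gobymod{m_1}a$ on its first $r$ terms for \emph{all} $r$ --- then $\gamma$ is literally the complex $(a_\blb,m_\blb)$ itself. Nothing has been extracted on the $m$-side that you didn't already have, so K\"onig has been aimed at the wrong sequence: the useful output must live on the $p$-side. Second, and more seriously, your property~(ii) does not imply what you need. Saying that $\pr_n(\gamma)$ lies at bounded (hence finite) distance from $(b_n\gobymod{p_n}\cdots\gobymod{p_1}b_0)$ in $\catI_n(a)$ says only that their images agree in $I_n(a)$. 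But $V_{p_1,\ldots,p_n}$ is a subset of $I(a)$, not of $I_n(a)$: to have $\compt{\gamma}\in V_{p_1,\ldots,p_n}$ you need some \emph{full} complex $\delta$ extending $p_1,\ldots,p_n$ with $\compt{\delta}=\compt{\gamma}$, and agreement of connected-components at level $n$ is necessary but nowhere near sufficient --- this is exactly the content of Warning~\ref{warning:fin} (the canonical map $I(a)\go\Lt{n}I_n(a)$ need not be injective). So the inference ``hence into the same connected-component, so $\compt{\gamma}\in V_{p_1,\ldots,p_n}$'' is the wrong way round.

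The fix, which is what the paper does, is to apply K\"onig's Lemma (in its strong, ``more precisely'' form) to the sequence of truncations $\pr_r(\beta_r)$ of the $p$-side complexes $\beta_r$ (for each $r$, choose $\alpha_r$ extending $m_1,\ldots,m_r$ and $\beta_r$ extending $p_1,\ldots,p_n$ with $\compt{\alpha_r}=\compt{\beta_r}$, and a span between them via Corollary~\ref{cor:complex-span}). K\"onig then yields a single complex $\beta$ which, by the strong matching property at $r=n$, genuinely extends $p_1,\ldots,p_n$; this gives $\compt{\beta}\in V_{p_1,\ldots,p_n}$ with no further argument. What remains is to show $\compt{a_\blb,m_\blb}=\compt{\beta}$, and this is done by exhibiting a distance bound $d_{\catI_r(a)}(\pr_r(a_\blb,m_\blb),\pr_r(\beta))\le 1$ valid for \emph{all} $r$ (using $\pr_r(\alpha_k)=\pr_r(a_\blb,m_\blb)$ for $k\ge r$ and the spans) and invoking Proposition~\ref{propn:equalityinIa}. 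Your proposal to route the equality step through Corollary~\ref{cor:singletons} instead is a harmless repackaging, but it does not let you avoid this all-levels distance argument, which is the essential content; and the level-$n$ statement in your~(ii) is not a substitute for it.
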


\begin{proof}
`$\Rightarrow$' is trivial.  For `$\Leftarrow$', we may choose for each
$r\in\nat$ complexes
\begin{eqnarray*}
\alpha_r	&=	&
(
\cdots
\gobymod{m_{r+2}^r} a_{r+1}^r \gobymod{m_{r+1}^r} a_r \gobymod{m_r}
\cdots
\gobymod{m_1} a_0 = a
),	\\
\beta_r	&=	&
(
\cdots
\gobymod{p_{n+2}^r} b_{n+1}^r \gobymod{p_{n+1}^r} b_n \gobymod{p_n}
\cdots
\gobymod{p_1} b_0 = a
) 
\end{eqnarray*}
such that $\compt{\alpha_r} = \compt{\beta_r}$.  By
Corollary~\ref{cor:complex-span}, there is for each $r\in\nat$ a span
\[
\alpha_r 
\og \cdot \go 
\beta_r
\]
in $\catI(a)$.  Applying K\"onig's Lemma~(\ref{lemma:Koenig}) to the limit
$\oba\catI(a) = \Lt{r} \oba\catI_r(a)$ and the elements $\pr_r(\beta_r) \in
\oba\catI_r(a)$ gives a complex $\beta \in \catI(a)$ with the following
property: 
\begin{condition}
for all $r \in \nat$, there exists $k \geq r$ such that $\pr_r(\beta) =
\pr_r(\beta_k)$.
\end{condition}
Taking $r = n$ gives $\pr_n(\beta) = (b_n \gobymod{p_n} \cdots \gobymod{p_1}
b_0)$.  Hence $\compt{\beta} \in V_{p_1, \ldots, p_n}$.  

I claim that $\compt{a_\blb, m_\blb} = \compt{\beta}$; the result will follow.
Indeed, let $r \in \nat$.  Choose $k \geq r$ such that $\pr_r(\beta) =
\pr_r(\beta_k)$.  We have $d_{\catI(a)} (\alpha_k, \beta_k) \leq 1$, so,
applying $\pr_r: \catI(a) \go \catI_r(a)$,
\[
d_{\catI_r(a)} (\pr_r(a_\blb, m_\blb), \pr_r(\beta))
=
d_{\catI_r(a)} (\pr_r(\alpha_k), \pr_r(\beta_k)) 
\leq 
1.
\]
So by Proposition~\ref{propn:equalityinIa}, $\compt{a_\blb, m_\blb} =
\compt{\beta}$, as required.  
\done
\end{proof}

\begin{propn}[Topological quotient]
\label{propn:Iaasaquotient}
For each $a\in\scat{A}$, the canonical surjection $\pi_a: \oba\catI(a)
\go I(a)$ is a topological quotient map.
\end{propn}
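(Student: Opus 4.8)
The plan is to show that $\pi_a: \oba\catI(a) \go I(a)$ is continuous and closed; since it is surjective and a continuous closed surjection is automatically a topological quotient map, this suffices. Closedness will come for free once continuity is known: $\oba\catI(a)$ is compact (being the limit of the finite discrete sets $\oba\catI_n(a)$, hence a closed subspace of a product of finite discrete spaces) and $I(a)$ is Hausdorff by Corollary~\ref{cor:Iahausdorff}, so a continuous $\pi_a$ sends each closed subset of $\oba\catI(a)$ --- which is then compact --- to a compact, hence closed, subset of $I(a)$. So the only real work is continuity.

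To prove $\pi_a$ continuous, recall that the topology on $I(a)$ is \emph{defined} by declaring the sets $V_{p_1, \ldots, p_n}$ (one per truncated complex $b_n \gobymod{p_n} \cdots \gobymod{p_1} b_0 = a$ in $\catI_n(a)$) to be closed; hence it is enough to check that each $\pi_a^{-1}(V_{p_1, \ldots, p_n})$ is closed in $\oba\catI(a)$. By definition this preimage is the set of complexes $(a_\blb, m_\blb) \in \catI(a)$ with $\bigcompt{a_\blb, m_\blb} \in V_{p_1, \ldots, p_n}$, and Lemma~\ref{lemma:basicclosedsets} rewrites that membership condition as: $V_{m_1, \ldots, m_r} \cap V_{p_1, \ldots, p_n} \neq \emptyset$ for every $r \in \nat$. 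The key point is that the $r$-th clause depends only on the truncation $\pr_r(a_\blb, m_\blb) \in \catI_r(a)$, since $V_{m_1, \ldots, m_r}$ is determined by that truncated complex. Writing $S_r \sub \oba\catI_r(a)$ for the set of those truncated complexes $\gamma = (b_r \gobymod{q_r} \cdots \gobymod{q_1} b_0 = a)$ with $V_{q_1, \ldots, q_r} \cap V_{p_1, \ldots, p_n} \neq \emptyset$ --- a clopen set, since $\oba\catI_r(a)$ carries the discrete topology --- we therefore obtain
\[
\pi_a^{-1}(V_{p_1, \ldots, p_n})
=
\bigcap_{r \in \nat}
\pr_r^{-1}(S_r),
\]
an intersection of clopen sets (each $\pr_r: \oba\catI(a) \go \oba\catI_r(a)$ is continuous), hence closed. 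This gives continuity, and the proposition follows by the compactness/Hausdorff argument above.

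The step I expect to be the main obstacle is exactly this continuity argument --- more precisely, pinning down the correct description of $\pi_a^{-1}(V_{p_1, \ldots, p_n})$. The naive guess that this preimage is a single cylinder set of $\oba\catI(a)$ is wrong, because two complexes lying in different connected-components of $\catI(a)$ can nonetheless have the same image in every $\catI_n(a)$ (Warning~\ref{warning:fin}); what rescues the argument is Lemma~\ref{lemma:basicclosedsets}, itself resting on K\"onig's Lemma and the connectedness analysis of~\S\ref{sec:set}, which exhibits the preimage not as one cylinder but as a countable intersection of them. Everything after that is routine point-set topology.
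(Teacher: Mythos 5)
Your proof is correct and follows essentially the same route as the paper: both establish continuity by using Lemma~\ref{lemma:basicclosedsets} to exhibit $\pi_a^{-1}(V_{p_1,\ldots,p_n})$ as a countable intersection $\bigcap_r \pr_r^{-1}(S_r)$ of cylinder preimages (your $S_r$ is the paper's $W_r$), and then derive closedness from the compactness of $\oba\catI(a)$ together with the Hausdorff property of $I(a)$. Your closing remark about why a single cylinder set would not suffice is accurate commentary but does not change the argument.
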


\begin{proof}
First, $\pi_a$ is continuous.  Let $n\in\nat$ and $(b_n \gobymod{p_n}
\cdots \gobymod{p_1} b_0) \in \catI_n(a)$; we must show that $\pi_a^{-1}
V_{p_1, \ldots, p_n}$ is a closed subset of $\oba\catI(a)$.  By
Lemma~\ref{lemma:basicclosedsets},  
\[
\pi_a^{-1} V_{p_1, \ldots, p_n} 
=
\bigcap_{r \in \nat} \pr_r^{-1} W_r
\]
where 
\[
W_r 
=
\{ 
(a_r \gobymod{m_r} \cdots \gobymod{m_1} a_0) \in \oba\catI_r(a)
\such
V_{m_1, \ldots, m_r} \cap V_{p_1, \ldots, p_n} \neq \emptyset
\}.
\]
But each space $\oba\catI_r(a)$ is discrete and each map $\pr_r$ is
continuous, so $\bigcap_{r\in\nat} \pr_r^{-1} W_r$ is closed, as required.

Since $\oba\catI(a)$ is compact and $I(a)$ is Hausdorff, $\pi_a$ is closed.
So $\pi_a$ is a continuous closed surjection, and therefore a quotient map.
\done
\end{proof}

\begin{cor}[Compactness]
\label{cor:Iacompact}
For each $a \in \scat{A}$, the space $I(a)$ is compact.  
\done
\end{cor}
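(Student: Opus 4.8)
The plan is to deduce this immediately from Proposition~\ref{propn:Iaasaquotient}. By that result, for each $a \in \scat{A}$ the canonical surjection $\pi_a: \oba\catI(a) \go I(a)$ is a topological quotient map, and in particular a continuous surjection. On the other hand, the space $\oba\catI(a) = \Lt{n} \oba\catI_n(a)$ is profinite --- a limit in $\Top$ of finite discrete spaces --- and hence compact; this was recorded earlier in this section when we topologized $\oba\catI$ as the universal solution of the discrete equational system $(\oba\scat{A}, M)$. Since the continuous image of a compact space is compact, $I(a) = \pi_a(\oba\catI(a))$ is compact.

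There is essentially no obstacle to overcome here: all the substance is already contained in Proposition~\ref{propn:Iaasaquotient} (which in turn rests on Lemma~\ref{lemma:basicclosedsets} and K\"onig's Lemma) and in the observation that $\oba\catI(a)$ is compact Hausdorff. Combined with Corollary~\ref{cor:Iahausdorff}, this shows that each $I(a)$ is in fact compact Hausdorff, which is the form in which the result will be used in the remainder of~\S\ref{sec:Top-proofs}.
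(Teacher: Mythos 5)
Your proof is correct and is exactly the argument the paper intends: the paper records $\oba\catI(a)$ as compact Hausdorff (a profinite space), Proposition~\ref{propn:Iaasaquotient} gives the continuous surjection $\pi_a\colon \oba\catI(a) \to I(a)$, and the corollary follows at once since continuous images of compact spaces are compact. The paper leaves the corollary with a bare \done precisely because this is the one-line deduction you spelled out.
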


\begin{cor}[Topological coalgebra]
\label{cor:topologicalcoalgebra}
$(I, \iota)$ is an $M$-coalgebra in $\Top$.  
\end{cor}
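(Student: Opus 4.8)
The plan is to deduce everything from three facts already in hand: the $\Set$-valued functor $I$ is nondegenerate (Proposition~\ref{propn:setInondegenerate}); each space $I(a)$ is compact (Corollary~\ref{cor:Iacompact}) and Hausdorff (Corollary~\ref{cor:Iahausdorff}); and, writing $\pi\colon \oba\catI \go I$ for the canonical surjection, each component $\pi_a\colon \oba\catI(a) \go I(a)$ is a topological quotient map (Proposition~\ref{propn:Iaasaquotient}). Recall too that $\oba\catI$ is a $\Top$-valued functor all of whose structure maps are continuous, and that $(\oba\catI, \iota)$ is a coalgebra in $\Top$.

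First we check $I \in \ndTop{\scat{A}}$. Since $U \of I = I\colon \scat{A} \go \Set$ is nondegenerate, it remains only to see that each structure map $If$ is closed. For $f\colon a \go a'$ in $\scat{A}$ we have $(If) \of \pi_a = \pi_{a'} \of (\oba\catI f)$, whose right-hand side is continuous; as $\pi_a$ is a quotient map, $If$ is continuous. Because $I(a)$ is compact and $I(a')$ is Hausdorff, $If$ is then automatically closed. So $I \in \ndTop{\scat{A}}$, and by Proposition~\ref{propn:topologicalmoduleaction}, $M \otimes I \in \ndTop{\scat{A}}$ as well.

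Next we show each $\iota_a\colon I(a) \go (M \otimes I)(a)$ is continuous. Since $\pi$ is a map of coalgebras, $\iota_a \of \pi_a = (M \otimes \pi)_a \of \iota_a$, where the $\iota$ on the right is the structure map of $\oba\catI$. Here $(M \otimes \pi)_a$ is continuous, being induced on the colimit presentation of $(M \otimes \dashbk)(a)$ by the continuous map $\pi$; and the structure map of $\oba\catI$ is continuous because $(\oba\catI, \iota)$ is a coalgebra in $\Top$. Hence $\iota_a \of \pi_a$ is continuous, and since $\pi_a$ is a quotient map, so is $\iota_a$. Naturality of $\iota$ is inherited from the set-theoretic level, so $\iota\colon I \go M \otimes I$ is a morphism in $\ndTop{\scat{A}}$, i.e.\ $(I, \iota)$ is an object of $\Coalg{M}{\Top}$.

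There is no real obstacle: the one substantive input, that each $\pi_a$ is a topological quotient map, has already been established, and the remainder is the routine interplay of quotient maps with compactness and the Hausdorff property.
\done
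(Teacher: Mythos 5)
Your proof is correct and follows essentially the same route as the paper's: both deduce continuity of $If$ and $\iota_a$ from the commutative squares $\pi_{a'}\circ(\oba\catI f)=(If)\circ\pi_a$ and $(M\otimes\pi)_a\circ\iota_a=\iota_a\circ\pi_a$ using that each $\pi_a$ is a topological quotient map, and both obtain closedness of $If$ from compactness of $I(a)$ and Hausdorffness of $I(a')$. The only cosmetic difference is that you explicitly phrase the first step as verifying $I\in\ndTop{\scat{A}}$ before turning to $\iota$, whereas the paper just checks the two required properties directly.
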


\begin{proof}
First we have to show that for each map $f: a \go a'$ in $\scat{A}$, the map
$If: I(a) \go I(a')$ is continuous and closed.  There is a commutative square
\[
\begin{diagram}
\oba\catI(a)	&\rQt^{\pi_a}	&I(a)		\\
\dTo<{\oba\catI f}&		&\dTo>{If}	\\
\oba\catI(a')	&\rQt_{\pi_{a'}}&I(a')		\\
\end{diagram}
\]
in which $\pi_a$ is a topological quotient map and $\oba\catI(f)$ and
$\pi_{a'}$ are continuous, so $If$ is also continuous.  But $I(a)$ is compact
and $I(a')$ Hausdorff, so $If$ is closed.  

We also have to show that for each $a \in \scat{A}$, the map $\iota_a: I(a)
\go (M \otimes I)(a)$ is continuous.  This is proved by a similar argument,
using the square
\[
\begin{diagram}
\oba\catI(a)		&\rQt^{\pi_a}			&I(a)		\\
\dTo<{\iota_a}		&				&\dTo>{\iota_a}	\\
(M\otimes \oba\catI)(a)	&\rTo_{(M \otimes \pi)_a}	&(M \otimes I)(a).\\
\end{diagram}
\]
\done
\end{proof}

\minihead{$I$ is the terminal $\Top$-coalgebra}

Our final task is to prove that for any $M$-coalgebra $(X, \xi)$ in $\Top$,
the unique map $\ovln{\xi}: (X, \xi) \go (I, \iota)$ of coalgebras in $\Set$
is continuous.  To do this we show that the inverse image of each basic closed
set $V_{p_1, \ldots, p_n}$ is closed, where $n \in \nat$ and $b_n
\gobymod{p_n} \cdots \gobymod{p_1} b_0 = a$.

Some care is needed in describing this inverse image.  Given an element $x \in
X(a)$, the complexes along which $x$ can be resolved all lie in the same
connected-component of $\catI(a)$, namely $\ovln{\xi}_a(x)$.  However, there
may be complexes in this component along which $x$ cannot be resolved.
So if we write 
\[
V^X_{p_1, \ldots, p_n} \sub X(a)
\]
for the set of elements of $X(a)$ that can be resolved along some complex
of the form
\[
\cdots \gomod \cdot \gomod b_n \gobymod{p_n} \cdots \gobymod{p_1} b_0 = a,
\]
then
\begin{equation}
\label{eq:inv-inc}
V^X_{p_1, \ldots, p_n}
\sub
\ovln{\xi}_a^{-1} V_{p_1, \ldots, p_n}
\end{equation}
but the inclusion may be strict.  The following example illustrates this.

\begin{example} \label{eg:Freyd-inverse-larger}
Let $(\scat{A}, M)$ be the Freyd system.  Choose an endpoint-preserving
continuous map $\xi_1: [0, 1] \go [0, 2]$ such that $\xi_1(2/3) = 2/3$; this
defines an $M$-coalgebra structure $\xi$ on $X = ( \{ \star \} \parpair{0}{1}
[0, 1] )$.  The element $2/3 \in X(1)$ has a unique resolution, which is along
the complex
\[
\cdots \gobymod{[0, \half]} 1 \gobymod{[0, \half]} 1.
\]
Hence
\[
\ovln{\xi}_1(2/3)
=
0
=
\bigcompt{\cdots \gobymod{\id} 0 \gobymod{\id} 0 \gobymod{0} 1}
\in
V_{p_1},
\]
where $p_1 = 0: 0 \gomod 1$.  So $2/3 \in \ovln{\xi}_1^{-1} V_{p_1}$, even
though $2/3$ cannot be resolved along any complex ending in $p_1$.
\end{example}

(The notation $V^X_{p_1, \ldots, p_n}$ is explained by the fact that
$V^I_{p_1, \ldots, p_n} = V_{p_1, \ldots, p_n}$.  This follows
from~(\ref{eq:inv-inc}) and the existence of canonical
resolutions~(\S\ref{sec:set}).)

\begin{lemma}[Inverse image of basic closed sets]
\label{lemma:inv-image}
Let $a \in \scat{A}$, $n \in \nat$, and $(b_n \gobymod{p_n} \cdots
\gobymod{p_1} b_0) \in \catI_n(a)$.  Let $(X, \xi)$ be an $M$-coalgebra in
$\Set$.  Then 
\begin{equation}
\label{eq:inverseimage}
\ovln{\xi}_a^{-1} V_{p_1, \ldots, p_n}
=
\bigcap_{r \in \nat} \bigcup
V^X_{m_1, \ldots, m_r}
\end{equation}
where the union is over all $(a_r \gobymod{m_r} \cdots \gobymod{m_1} a_0) \in
\catI_r(a)$ such that 
\begin{equation}
\label{eq:int-nonempty}
V_{m_1, \ldots, m_r} 
\cap
V_{p_1, \ldots, p_n}
\neq
\emptyset.
\end{equation}
\end{lemma}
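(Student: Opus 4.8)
The plan is to prove the two inclusions of~\bref{eq:inverseimage} separately. For each, the key is to relate resolvability of an element $x \in X(a)$ along a complex to the location of its image $\ovln{\xi}_a(x)$ in the basic closed sets $V_{\blb}$, using the characterization of $\ovln{\xi}_a$ via resolutions (Theorem~\ref{thm:universalsolutioninSet}) together with the span criterion for equality in $I(a)$ (Corollary~\ref{cor:complex-span}) and Proposition~\ref{propn:equalityinIa}.

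For the inclusion `$\sub$' (left side contained in right side): suppose $x \in \ovln{\xi}_a^{-1} V_{p_1, \ldots, p_n}$, so $\ovln{\xi}_a(x) \in V_{p_1, \ldots, p_n}$. Choose any resolution of $x$, giving a complex $\cdots \gobymod{m_2} a_1 \gobymod{m_1} a$ with $\ovln{\xi}_a(x) = \compt{a_\blb, m_\blb}$. For each $r \in \nat$, the truncation $a_r \gobymod{m_r} \cdots \gobymod{m_1} a_0$ lies in $\catI_r(a)$, and since $x$ resolves along it we have $x \in V^X_{m_1, \ldots, m_r}$. It remains to check~\bref{eq:int-nonempty}: since $\compt{a_\blb, m_\blb} \in V_{m_1, \ldots, m_r} \cap V_{p_1, \ldots, p_n}$ (the first membership is automatic; the second is the hypothesis $\ovln{\xi}_a(x) \in V_{p_1,\ldots,p_n}$ rewritten using $\ovln{\xi}_a(x) = \compt{a_\blb, m_\blb}$), the intersection is nonempty. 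So $x$ lies in the union for that $r$, hence in the intersection over all $r$.

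For the inclusion `$\supseteq$': suppose $x$ lies in the right-hand side. Then for each $r$ there is a complex $\gamma_r = (a_r^r \gobymod{m_r^r} \cdots \gobymod{m_1^r} a)$ in $\catI_r(a)$ along which $x$ resolves, with $V_{m_1^r, \ldots, m_r^r} \cap V_{p_1, \ldots, p_n} \neq \emptyset$. Pick a full complex $\alpha_r \in \catI(a)$ extending $\gamma_r$ along which $x$ resolves, so $\ovln{\xi}_a(x) = \compt{\alpha_r}$ for every $r$; hence all the $\alpha_r$ lie in one connected-component of $\catI(a)$, and by Corollary~\ref{cor:complex-span} there is a span $\alpha_r \og \cdot \go \alpha_{r'}$ for any $r, r'$. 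From the nonemptiness of the intersections, each $V_{m_1^r,\ldots,m_r^r}$ meets $V_{p_1,\ldots,p_n}$, so there is a complex agreeing with $\gamma_r$ in its first $r$ steps and with $(b_n \gobymod{p_n}\cdots\gobymod{p_1}b_0)$ in its first $n$ steps, which (via Corollary~\ref{cor:complex-span} again) gives distance $\leq 1$ between the $r$-truncations $\pr_r(\alpha_r)$ and any complex through $p_1, \ldots, p_n$, for $r \geq n$. Now I would apply K\"onig's Lemma~(\ref{lemma:Koenig}) to the inverse system $\oba\catI(a) = \Lt{r} \oba\catI_r(a)$ with the sequence $\pr_r(\alpha_r)$ to extract a genuine complex $\beta \in \catI(a)$ that, at each level $r$, agrees with $\pr_r(\alpha_k)$ for some $k \geq r$; taking $r = n$ one gets $\pr_n(\beta)$ is (a complex extending) the given $p$-complex up to distance controlled uniformly, and by Proposition~\ref{propn:equalityinIa} one concludes $\ovln{\xi}_a(x) = \compt{\beta} \in V_{p_1,\ldots,p_n}$, i.e. $x \in \ovln{\xi}_a^{-1} V_{p_1,\ldots,p_n}$. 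This mirrors the K\"onig's-Lemma argument already used in the proof of Lemma~\ref{lemma:basicclosedsets}.

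The main obstacle I expect is the `$\supseteq$' direction: the element $x$ need not resolve along any single complex passing through all of $p_1, \ldots, p_n$ (Example~\ref{eg:Freyd-inverse-larger} shows exactly this failure), so one cannot simply produce the required complex directly. The distance-and-K\"onig machinery of \S\ref{sec:set} is precisely what is needed to bridge this gap --- converting ``the $r$-truncations are uniformly close to something through $p_1,\ldots,p_n$, for all $r$'' into ``$\ovln{\xi}_a(x)$ actually lies in $V_{p_1,\ldots,p_n}$'' via Proposition~\ref{propn:equalityinIa}. Keeping careful track of which truncations agree (and reconciling the two ``competing'' families of complexes, those resolving $x$ and those hitting $p_1,\ldots,p_n$) through a single span in $\catI(a)$ is the delicate bookkeeping step.
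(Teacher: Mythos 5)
Your forward inclusion (`$\sub$') is correct and matches the paper's argument exactly: choose a complex resolving $x$, and its truncations witness both conditions at every $r$. For the converse, your high-level plan — extract a single full complex via K\"onig's Lemma from the per-level data — is also the right idea. But the execution has a genuine flaw. You claim that from $V_{m_1^r,\ldots,m_r^r} \cap V_{p_1,\ldots,p_n} \neq \emptyset$ "there is a complex agreeing with $\gamma_r$ in its first $r$ steps and with $(b_n\gomod\cdots\gomod b_0)$ in its first $n$ steps". That does not follow: nonemptiness gives a class $t \in I(a)$ with \emph{one} representative complex starting $m^r_1,\ldots,m^r_r$ and \emph{another, generally different}, representative starting $p_1,\ldots,p_n$. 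For $r\geq n$ there is typically no single complex with both prefixes — this is precisely the phenomenon exhibited by $1/2 \in [0,1]$ in the Freyd example (and by the Example~\ref{eg:Freyd-inverse-larger} you cite), where the representatives through $[0,\half]$ and those through $[\half,1]$ are disjoint. The rest of the distance estimate is built on this false step.

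There is a second unclosed loop: you assert $\ovln{\xi}_a(x)=\compt{\beta}$, but $\beta$ is only known to agree with the $\alpha_k$ projection-wise (this is all K\"onig gives), and Warning~\ref{warning:fin} is there precisely to say that projection-wise agreement does not yield equality in $I(a)$ for free. You have the ingredients to repair it — the spans $\alpha_r\og\cdot\go\alpha_{r'}$ give a uniform bound $d_{\catI_r(a)}(\pr_r\beta,\pr_r\alpha_0)\leq 1$, so Proposition~\ref{propn:equalityinIa} applies — but the proof never says so. The paper dispatches both issues cleanly by a different final step: apply K\"onig directly to the system of "good'' truncations (those satisfying~\bref{eq:int-nonempty} \emph{and} along which $x$ resolves), getting a single $(a_\blb,m_\blb)$ all of whose truncations are good; then Lemma~\ref{lemma:basicclosedsets} (which you name but never actually invoke) gives $\compt{a_\blb,m_\blb}\in V_{p_1,\ldots,p_n}$ outright, and the inclusion $V^X_\blb\sub\ovln{\xi}_a^{-1}V_\blb$ together with Corollary~\ref{cor:singletons} forces $\ovln{\xi}_a(x)=\compt{a_\blb,m_\blb}$, avoiding any need to compare complexes through the $p$-prefix at all. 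I recommend adopting that route; your version can be patched, but only by inserting exactly these lemmas in place of the false "common complex'' claim.
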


\begin{proof}
Let $x \in \ovln{\xi}_a^{-1} V_{p_1, \ldots, p_n}$, and choose a complex
$(a_\blb, m_\blb)$ along which $x$ can be resolved.  Then for all $r$,
\[
\compt{a_\blb, m_\blb}
=
\ovln{\xi}_a (x)
\in
V_{m_1, \ldots, m_r} \cap V_{p_1, \ldots, p_n},
\]
and in particular~(\ref{eq:int-nonempty}) holds.  Also $x \in V^X_{m_1,
\ldots, m_r}$ by definition, so $x$ is in the right-hand side
of~(\ref{eq:inverseimage}).

Conversely, let $x$ be an element of the right-hand side
of~(\ref{eq:inverseimage}).  By K\"onig's
Lemma~(\ref{lemma:Koenig}), we may choose a complex $(a_\blb, m_\blb) \in
\catI(a)$ such that for all $r$, (\ref{eq:int-nonempty}) holds and $x \in
V^X_{m_1, \ldots, m_r}$.  By Lemma~\ref{lemma:basicclosedsets},
$\compt{a_\blb, m_\blb} \in V_{p_1, \ldots, p_n}$.  Now
using~(\ref{eq:inv-inc}) and Corollary~\ref{cor:singletons}, 
\[
x 
\in 
\bigcap_{r \in \nat} V^X_{m_1, \ldots, m_r}
\sub
\ovln{\xi}_a^{-1} \bigcap_{r \in \nat} V_{m_1, \ldots, m_r}
=
\ovln{\xi}_a^{-1} \{ \compt{a_\blb, m_\blb} \}
\sub
\ovln{\xi}_a^{-1} V_{p_1, \ldots, p_n},
\]
as required.
\done
\end{proof}

This describes the inverse images of the basic closed sets.  We now
prepare to show that they are closed.

\begin{lemma}
\label{lemma:resolutionsetsclosed}
Let $(X, \xi)$ be an $M$-coalgebra in $\Top$, $r\in\nat$, and $(a_r
\gobymod{m_r} \cdots \gobymod{m_1} a_0) \in \catI_r(a)$.  Then $V^X_{m_1,
\ldots, m_r}$ is a closed subset of $X(a)$.
\end{lemma}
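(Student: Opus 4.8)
The plan is to induct on $r$, peeling off the outermost sector $m_1$ and invoking two facts established earlier: the coprojection $m_1 \otimes \dashbk$ is a closed map (Corollary~\ref{cor:coprojections-closed}), and the coalgebra structure map $\xi_a$ is continuous (part of $(X,\xi)$ being a coalgebra in $\Top$).

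First I would dispose of the base case $r = 0$. Here the only truncated complex is the trivial one $(a_0 = a) \in \catI_0(a)$, and since every element of the coalgebra $X$ has a resolution, and every resolution necessarily runs along a complex ending at $a$, we get $V^X_{} = X(a)$, which is closed.

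For the inductive step, fix $(a_r \gobymod{m_r} \cdots \gobymod{m_1} a_0) \in \catI_r(a)$ with $r \geq 1$, so $a_0 = a$ and $m_1 : a_1 \gomod a_0$; consider the shorter truncated complex $(a_r \gobymod{m_r} \cdots \gobymod{m_2} a_1) \in \catI_{r-1}(a_1)$. The crux is the identity
\[
V^X_{m_1, \ldots, m_r}
=
\xi_a^{-1}\Bigl( (m_1 \otimes \dashbk)\bigl( V^X_{m_2, \ldots, m_r} \bigr) \Bigr),
\]
where $V^X_{m_2, \ldots, m_r} \sub X(a_1)$. This is purely a matter of unwinding the definition of ``resolvable along'': an element $x \in X(a)$ admits a resolution whose first $r$ sectors are $m_1, \ldots, m_r$ if and only if $\xi_a(x) = m_1 \otimes x_1$ for some $x_1 \in X(a_1)$ admitting a resolution whose first $r-1$ sectors are $m_2, \ldots, m_r$ --- one direction truncates the first step of a resolution of $x$, the other prepends the sector $m_1$ to a resolution of $x_1$, using that any element of a coalgebra has at least one (infinite) resolution, so a compatible finite segment $x_1, \ldots, x_r$ always extends to a full one.

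Granting the identity, the lemma follows formally. By the inductive hypothesis applied inside $X(a_1)$, the set $V^X_{m_2, \ldots, m_r}$ is closed. Since the underlying functor of an $M$-coalgebra in $\Top$ is nondegenerate and $M$ is finite (part of the standing hypothesis that $(\scat{A}, M)$ is an equational system), Corollary~\ref{cor:coprojections-closed} gives that $m_1 \otimes \dashbk: X(a_1) \go (M \otimes X)(a)$ is closed, so it carries the closed set $V^X_{m_2, \ldots, m_r}$ to a closed subset of $(M \otimes X)(a)$; and $\xi_a: X(a) \go (M \otimes X)(a)$ is continuous, so its preimage, which is exactly $V^X_{m_1, \ldots, m_r}$, is closed. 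The only point requiring care --- the ``main obstacle'', though it amounts to bookkeeping with resolutions --- is the verification of the recursive identity for $V^X$; the topological conclusion is then a one-line formal consequence of Corollary~\ref{cor:coprojections-closed} and continuity of $\xi_a$.
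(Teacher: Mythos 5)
Your proof is correct and is essentially the paper's own argument: the paper's proof performs exactly the same induction, using the recursive identity $V^X_{m_1,\ldots,m_{r+1}} = \xi_a^{-1}\bigl(m_1 \otimes V^X_{m_2,\ldots,m_{r+1}}\bigr)$ together with Corollary~\ref{cor:coprojections-closed} and continuity of $\xi_a$. Your additional justification of the identity (truncating and prepending resolutions) is just making explicit what the paper leaves to the reader.
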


\begin{proof}
When $r = 0$ this is trivial.  Suppose inductively that the result holds
for $r \in \nat$, and let $(a_{r+1} \gobymod{m_{r+1}} \cdots \gobymod{m_1}
a_0) \in \catI_{r+1}(a)$.  We have
\[
V^X_{m_1, \ldots, m_{r+1}}
= 
\xi_a^{-1}
\left(
m_1 \otimes V^X_{m_2, \ldots, m_{r+1}}
\right)
\]
where $m_1 \otimes S$ means the image of a subset $S \sub X(a_1)$ under the
map 
\[
m_1 \otimes \dashbk: 
X(a_1) 
\go 
(M \otimes X)(a).  
\]
But $V^X_{m_2, \ldots, m_{r+1}}$ is closed by inductive hypothesis, $m_1
\otimes \dashbk$ is closed by Corollary~\ref{cor:coprojections-closed}, and
$\xi_a$ is continuous, so $V^X_{m_1, \ldots, m_{r+1}}$ is closed in $X(a)$.
\done
\end{proof}

\begin{thm}[Universal solution in $\Top$]
\label{thm:universalsolutioninTop}
$(I, \iota)$ is the universal solution of $(\scat{A}, M)$ in $\Top$.  
\end{thm}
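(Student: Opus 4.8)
The plan is to reduce the theorem to the set-theoretic case (Theorem~\ref{thm:universalsolutioninSet}) together with the explicit descriptions of preimages built up in Lemmas~\ref{lemma:inv-image} and~\ref{lemma:resolutionsetsclosed}. We already know from Corollary~\ref{cor:topologicalcoalgebra} that $(I, \iota)$ is an $M$-coalgebra in $\Top$, so what remains is to show it is terminal among such. Fix an $M$-coalgebra $(X, \xi)$ in $\Top$. The functor $U_*: \Coalg{M}{\Top} \go \Coalg{M}{\Set}$ sends it to an $M$-coalgebra $U_*(X, \xi)$ in $\Set$, and since $U_*(I, \iota)$ is, by construction, precisely the universal solution in $\Set$, there is a unique $\Set$-coalgebra map $\ovln{\xi}: U_*(X, \xi) \go U_*(I, \iota)$. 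Because $U_*$ is faithful, any two maps of $\Top$-coalgebras $(X, \xi) \go (I, \iota)$ have equal images under $U_*$ and so coincide; uniqueness is therefore automatic, and the whole theorem reduces to showing that each component $\ovln{\xi}_a: X(a) \go I(a)$ is continuous.

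For that, recall that the topology on $I(a)$ is generated by the basic closed sets $V_{p_1, \ldots, p_n}$, indexed by $n \in \nat$ and truncated complexes $b_n \gobymod{p_n} \cdots \gobymod{p_1} b_0 = a$. It therefore suffices to prove each preimage $\ovln{\xi}_a^{-1} V_{p_1, \ldots, p_n}$ closed in $X(a)$. Here I would quote Lemma~\ref{lemma:inv-image}, which gives
\[
\ovln{\xi}_a^{-1} V_{p_1, \ldots, p_n}
=
\bigcap_{r \in \nat} \bigcup V^X_{m_1, \ldots, m_r},
\]
the inner union ranging over those $(a_r \gobymod{m_r} \cdots \gobymod{m_1} a_0) \in \catI_r(a)$ with $V_{m_1, \ldots, m_r} \cap V_{p_1, \ldots, p_n} \neq \emptyset$.

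The conclusion is then pure point-set topology. Each $V^X_{m_1, \ldots, m_r}$ is closed in $X(a)$ by Lemma~\ref{lemma:resolutionsetsclosed}; finiteness of the module $M$ makes each $\catI_r(a)$ a finite category, so for each fixed $r$ the inner union is a finite union of closed sets, hence closed; and the outer intersection of closed sets is closed. Thus $\ovln{\xi}_a^{-1} V_{p_1, \ldots, p_n}$ is closed, $\ovln{\xi}_a$ is continuous for every $a$, and $\ovln{\xi}$ is the unique map of $\Top$-coalgebras $(X, \xi) \go (I, \iota)$, as required.

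The genuinely hard work has all been done in advance: it lies in the construction of the topological coalgebra structure (Corollary~\ref{cor:topologicalcoalgebra}) and, above all, in the two lemmas just cited. In particular, the delicate form of Lemma~\ref{lemma:inv-image} is forced by the phenomenon of Example~\ref{eg:Freyd-inverse-larger} --- that $\ovln{\xi}_a^{-1} V_{p_1, \ldots, p_n}$ may properly contain the naive candidate $V^X_{p_1, \ldots, p_n}$ --- and its proof relies on K\"onig's Lemma~(\ref{lemma:Koenig}). Against that background, the proof of the theorem itself is a short assembly of already-established facts.
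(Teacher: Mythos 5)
Your proof is correct and follows the same path as the paper's: reduce terminality in $\Top$ to continuity of the unique set-theoretic map $\ovln{\xi}$, then deduce that each $\ovln{\xi}_a^{-1}V_{p_1,\ldots,p_n}$ is closed from Lemma~\ref{lemma:inv-image} (the $\bigcap_r\bigcup V^X_{m_1,\ldots,m_r}$ description) together with Lemma~\ref{lemma:resolutionsetsclosed} and finiteness of each $\catI_r(a)$. The paper's own proof states this in a single sentence, leaving the finite-union/intersection bookkeeping and the faithfulness-of-$U_*$ argument for uniqueness implicit, both of which you have correctly spelled out.
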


\begin{proof}
Let $(X, \xi)$ be an $M$-coalgebra in $\Top$.  It remains to show that for
each $a \in \scat{A}$, the map $\ovln{\xi}_a: X(a) \go I(a)$ is continuous,
and this follows from Lemmas~\ref{lemma:inv-image}
and~\ref{lemma:resolutionsetsclosed}.  
\done
\end{proof}


\section{Recognizing the universal solution}
\label{sec:recognition}

We have seen that an equational system possesses a universal solution if
and only if an explicit condition \So\ holds; if so, the universal
solution is unique and can be constructed explicitly.

Few examples have been given so far.  In principle one can take any equational
system $(\scat{A}, M)$ satisfying \So\ and find the universal solution $(I,
\iota)$ by going through the explicit construction.  In practice this is
cumbersome and it is much quicker to apply one of the Recognition Theorems
proved below, as follows.

We might sometimes observe that some familiar space has a recursive
decomposition, and we might ask whether it can be characterized as the
universal solution of some equational system (or rather, one of the spaces
$I(a)$ of which the universal solution is made up).  The Recognition Theorems
provide a way to confirm such guesses.  

For example, we might note that the standard topological simplices $\Delta^n$
admit barycentric subdivision, which exhibits each simplex as a
gluing-together of smaller simplices.  This barycentric subdivision can be
expressed as an isomorphism $\Delta^\blb \iso M \otimes \Delta^\blb$, where
$M$ is a certain module and $\Delta^\blb$ is the functor $n \goesto
\Delta^n$.  Using one of the Recognition Theorems, we can confirm that
$\Delta^\blb$ is in fact the universal solution of $M$
(Example~\ref{eg:rec-bary}), thus giving a new characterization of the spaces
$\Delta^n$.  

We prove two results.  The Precise Recognition Theorem gives necessary and
sufficient conditions for a fixed point of an equational system to be a
universal solution.  The Crude Recognition Theorem gives merely sufficient
conditions, but they are very quick to check and satisfied in many examples of
interest.  These two theorems will be applied in~\S\ref{sec:examples} to yield
examples of universal solutions, and in Appendix~\ref{app:admitting} to
determine exactly which spaces are recursively realizable.

We begin by listing some of the properties enjoyed by $(I, \iota)$, the
universal solution constructed in~\S\ref{sec:construction}.  These will form
the basis of the Precise Recognition Theorem.

From now up to and including Lemma~\ref{lemma:metriconIa}, fix an equational
system $(\scat{A}, M)$ satisfying the solvability condition \So.  

The first property of $(I, \iota)$ is that it is a fixed point of $M$, that
is, an $M$-coalgebra whose structure map is an isomorphism.  (Recall that
$M$-coalgebras, and in particular fixed points, are nondegenerate by
definition.)  A fixed point $(J, \gamma)$ is a coalgebra, but can also be
regarded as an algebra $(J, \psi)$ where $\psi = \gamma^{-1}$.  By definition,
an \demph{$M$-algebra} (in $\Top$) is a nondegenerate functor $J: \scat{A} \go
\Top$ together with a map $\psi: M \otimes J \go J$.  By the universal
property of $M \otimes J$ (Appendix~\ref{app:modules}), $\psi$ amounts to a
family 
\[
\Bigl(
J(b) \goby{\psi_m} J(a)
\Bigr)_{b \gobymod{m} a}
\]
of continuous maps $\psi_m$, indexed over all sectors $m: b \gomod a$,
satisfying a naturality axiom: $\psi_{fmg} = (Jf) \of \psi_m \of (Jg)$
whenever $m$ is a sector and $f$ and $g$ are arrows in $\scat{A}$ for which
this makes sense.  

For example, the fixed point $(I, \iota)$ has algebra structure $\phi =
\iota^{-1}$, where the components $\phi_m$ are as defined
in~\S\ref{sec:construction}. 

\begin{lemma}
\label{lemma:fixedpointcomponents}
Let $(J, \gamma = \psi^{-1})$ be a fixed point of $M$ in $\Top$.  Then for
each sector $b \gobymod{m} a$, the map $J(b) \goby{\psi_m} J(a)$ is
closed.
\end{lemma}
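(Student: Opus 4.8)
The plan is to reduce the statement for a general fixed point $(J,\gamma)$ to a property we already know: namely, that every fixed point is a coalgebra, and the structure map $\gamma\colon J\to M\otimes J$ of a fixed point is an isomorphism whose inverse is $\psi$. The key observation is that $\psi_m$ factors through $(M\otimes J)(a)$: by the universal property of $M\otimes J$, the component $\psi_m\colon J(b)\to J(a)$ equals the composite
\[
J(b)\goby{m\otimes\dashbk}(M\otimes J)(a)\goby{\psi_a}J(a),
\]
where $m\otimes\dashbk$ denotes the coprojection. So it suffices to show that each of these two maps is closed.

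First I would handle the coprojection $m\otimes\dashbk\colon J(b)\to(M\otimes J)(a)$. Since $(\scat{A},M)$ is an equational system, $M$ is finite and $J$ is a nondegenerate $\Top$-valued functor, so Corollary~\ref{cor:coprojections-closed} applies directly and tells us this map is closed. That disposes of the first factor with no extra work.

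The second factor is $\psi_a\colon(M\otimes J)(a)\to J(a)$, which is the component at $a$ of $\psi=\gamma^{-1}$. Here the point is that $\gamma\colon J\to M\otimes J$ is a natural isomorphism, so $\gamma_a$ is a homeomorphism, hence so is its inverse $\psi_a$; in particular $\psi_a$ is closed. (One should note that $\gamma_a$ being an isomorphism in $\Top$ — not merely a bijection — is part of what it means for $(J,\gamma)$ to be a fixed point in $\Top$: the structure map is a map of $\ndTop{\scat{A}}$, and $\gamma^{-1}$ exists there too.) Composing a closed map with a homeomorphism gives a closed map, so $\psi_m=\psi_a\of(m\otimes\dashbk)$ is closed, as required.

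I do not anticipate a serious obstacle here; the only thing to be careful about is the bookkeeping around the universal property of the tensor product, i.e.\ making sure that the identification of $\psi_m$ with $\psi_a\of(m\otimes\dashbk)$ is exactly the one furnished by Appendix~\ref{app:modules} and recalled just before the lemma, where $\psi$ is described as the family $(\psi_m)_{b\gobymod{m}a}$ arising from $\psi\colon M\otimes J\to J$. Once that identification is in place, the proof is simply: coprojection closed (Corollary~\ref{cor:coprojections-closed}) $+$ $\psi_a$ a homeomorphism (Lambek-type invertibility of the fixed-point structure map) $\Rightarrow$ composite closed.
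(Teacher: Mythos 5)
Your proof is correct and is essentially the paper's proof: factor $\psi_m$ as $\psi_a\circ(m\otimes\dashbk)$, note the coprojection is closed by Corollary~\ref{cor:coprojections-closed}, and that $\psi_a$ is a homeomorphism.
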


\begin{proof}
$\psi_m$ is the composite
\[
J(b)
\goby{m \otimes \dashbk}
(M \otimes J)(a)
\rTo^{\psi_a}_\diso
J(a)
\]
and $m \otimes \dashbk$ is closed by nondegeneracy of $J$ and
Corollary~\ref{cor:coprojections-closed}. 
\done
\end{proof}

Being a fixed point alone is not enough to imply being the universal solution:
for example, the constant functor $\emptyset$ is always a fixed point and not
usually the universal solution.  A functor $J: \scat{A} \go \Set$ is
\demph{occupied} if for all $a \in \scat{A}$, 
\[
\catI(a) \neq \emptyset \implies J(a) \neq \emptyset.  
\]
When $J$ has an $M$-coalgebra structure, being occupied means that the sets
$J(a)$ are `not empty unless they have to be': for if $\catI(a)$ is empty then
$J(a)$ must be empty, since any element of $J(a)$ would have a resolution
$(a_\blb, m_\blb, x_\blb)$ with $(a_\blb, m_\blb) \in \catI(a)$.  The second
property enjoyed by $I$ is that, trivially, it is occupied.

The third property of $I$ is that the spaces $I(a)$ are metrizable:
\begin{lemma}	\label{lemma:compactmetrizable}
A compact space is metrizable if and only if it is Hausdorff and
has a countable basis of open sets.
\end{lemma}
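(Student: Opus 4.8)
The plan is to prove this standard fact in two directions, the forward one being elementary and the reverse one resting on the Urysohn metrization theorem (one could equally just cite a reference such as Kelley or Munkres, since the lemma is not original).

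First I would treat the \emph{only if} direction. Suppose the compact space $X$ is metrizable, with compatible metric $d$. Every metric space is Hausdorff, so the first condition is immediate. For the countable basis I would exploit total boundedness: for each $n \geq 1$, compactness gives a finite set $F_n \sub X$ with $X = \bigcup_{x \in F_n} B(x, 1/n)$, where $B(x, r)$ denotes the open $d$-ball. The countable family $\mathcal{B} = \{ B(x, 1/n) : n \geq 1,\ x \in F_n \}$ is then a basis: given an open $U$ and a point $y \in U$, choose $\epsilon > 0$ with $B(y, \epsilon) \sub U$ and $n$ with $2/n < \epsilon$; some $x \in F_n$ satisfies $y \in B(x, 1/n)$, and the triangle inequality gives $B(x, 1/n) \sub B(y, 2/n) \sub B(y, \epsilon) \sub U$.

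For the \emph{if} direction, suppose $X$ is compact, Hausdorff, and has a countable basis $(B_n)_{n \in \nat}$. A compact Hausdorff space is normal, hence regular. I would then embed $X$ into the Hilbert cube. Put $P = \{ (m, n) : \overline{B_m} \sub B_n \}$, which is a countable set, and for each $(m, n) \in P$ use Urysohn's lemma to choose a continuous map $g_{m,n} : X \go [0, 1]$ that vanishes on $\overline{B_m}$ and equals $1$ on $X \without B_n$. Regularity together with the basis property ensures that the family $(g_{m,n})_{(m,n) \in P}$ separates points from closed sets, and since $X$ is $T_1$ it also separates points; hence the induced map $e : X \go [0, 1]^P$ is a continuous injection, and therefore an embedding, being a continuous injection from a compact space into a Hausdorff one. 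As $P$ is countable, $[0, 1]^P$ is metrizable (a countable product of metric spaces), and so is its subspace $e(X) \iso X$.

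The only step requiring any care is the passage from regularity to the separation property that makes $e$ an embedding; the rest is routine point-set topology. No obstacle here is serious, which is why citing the Urysohn metrization theorem directly would also suffice. \done
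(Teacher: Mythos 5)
Your proof is correct. The paper itself gives no proof at all: it simply cites Bourbaki, \cite[IX.2.9]{Bou2}, as you yourself anticipate when you remark that citing a standard reference would suffice. Your self-contained argument is the classical one: for the forward direction, total boundedness of a compact metric space yields finite $1/n$-nets whose balls form a countable basis, and the triangle-inequality check that this is indeed a basis is right; for the converse, you reproduce the Urysohn metrization theorem (compact Hausdorff implies normal hence regular, the countable family $g_{m,n}$ indexed by $P = \{(m,n) : \overline{B_m} \subseteq B_n\}$ separates points from closed sets, and the resulting map into $[0,1]^P$ is an embedding because a continuous injection from a compact space to a Hausdorff space is a homeomorphism onto its image). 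The only step deserving a word of expansion is the claim that the $g_{m,n}$ separate points from closed sets, which you flag: given $y \notin C$ with $C$ closed, second countability gives $B_n$ with $y \in B_n \subseteq X \setminus C$, and regularity plus second countability gives $B_m$ with $y \in B_m$ and $\overline{B_m} \subseteq B_n$; then $g_{m,n}(y) = 0$ while $g_{m,n}|_C = 1$. So the two treatments differ only in that the paper outsources the lemma to a textbook while you prove it; neither approach is wrong, and for a paper in category theory the citation is the economical choice.
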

\begin{proof}
See~\cite[IX.2.9]{Bou2} (where `compact' means compact Hausdorff).
\done
\end{proof}

One naturally asks how a metric can be defined.  There are many possible
metrics and apparently no canonical choice among them, but the following
result tells us all we need to know.  Recall (from the beginning
of~\S\ref{sec:Top-proofs}) that for each $a \in \scat{A}$ and $n\in\nat$ we
have a closed binary relation $R_n^a$ on $I(a)$, with $R_0^a \supseteq R_1^a
\supseteq \cdots$.

\begin{lemma}[Metric on $I(a)$]
\label{lemma:metriconIa}
Let $a \in \scat{A}$ and let $d$ be a metric on $I(a)$ compatible with its
topology.  Then for all $\epsln > 0$, there exists $n\in\nat$ such that
\[
(t, t') \in R_n^a
\ 
\implies 
\ 
d(t, t') < \epsln.
\]
\end{lemma}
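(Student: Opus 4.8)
The plan is a short compactness argument; essentially all the ingredients are already available. Recall that $I(a)$ is compact Hausdorff (Corollaries~\ref{cor:Iahausdorff} and~\ref{cor:Iacompact}), that each $R_n^a$ is a closed subset of $I(a) \times I(a)$, that the chain $R_0^a \supseteq R_1^a \supseteq \cdots$ is decreasing, and that $\bigcap_{n \in \nat} R_n^a = \Delta_{I(a)}$ by Proposition~\ref{propn:relationsdetermineequality}. These four facts are exactly what is needed.

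Fix $\epsln > 0$. First I would form the set $C = \{ (t, t') \in I(a) \times I(a) \such d(t, t') \geq \epsln \}$. Since $d$ induces the topology of $I(a)$, this set is closed in $I(a) \times I(a)$, hence compact. Next I would look at the decreasing chain of closed subsets $C \cap R_0^a \supseteq C \cap R_1^a \supseteq \cdots$ of the compact space $C$. Their intersection is $C \cap \bigcap_{n} R_n^a = C \cap \Delta_{I(a)}$, which is empty because $d(t, t) = 0 < \epsln$ for every $t \in I(a)$. Then by the finite intersection property for the compact space $C$, together with the fact that the chain is nested, some single $C \cap R_n^a$ must already be empty. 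For that value of $n$ we get $R_n^a \subseteq \{ (t, t') \such d(t, t') < \epsln \}$, which is precisely the assertion of the lemma.

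I do not expect any genuine obstacle: the proof reduces to combining compactness of $I(a)$, closedness of the relations $R_n^a$, and the identity $\bigcap_n R_n^a = \Delta_{I(a)}$, all of which are established earlier in this section. The only point worth keeping in mind while writing it out is that the argument really does rely on compactness — a metric on a non-compact space could fail the conclusion — and that it uses compactness in the form ``a decreasing chain of nonempty closed sets has nonempty intersection'', so no explicit quantitative estimate relating $R_n^a$ to the metric is required.
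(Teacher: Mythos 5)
Your proof is correct and is essentially the same as the paper's: the set $C$ you form is exactly $d^{-1}[\epsln,\infty)$, and you use the same three ingredients (compactness of $I(a)\times I(a)$, closedness of the nested $R_n^a$, and $\bigcap_n R_n^a = \Delta_{I(a)}$) to conclude that some $R_n^a$ is disjoint from it. No gaps.
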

\begin{proof}
Let $\epsln > 0$.  Since $I(a)$ is compact, so too is $d^{-1} [\epsln,
\infty)$, the inverse image of $[\epsln, \infty)$ under the continuous map $d:
I(a) \times I(a) \go [0, \infty)$.

By Proposition~\ref{propn:relationsdetermineequality} we have
$\bigcap_{n\in\nat} R_n^a = \Delta_{I(a)}$, so $\bigcap_{n\in\nat} R_n^a \cap
d^{-1} [\epsln, \infty) = \emptyset$.  But each subset $R_n^a$ is closed, so
by compactness, there is some $n\in\nat$ for which $R_n^a \cap d^{-1} [\epsln,
\infty) = \emptyset$.  \done
\end{proof}

To state the main theorem, we need a little more notation.  

Given an equational system $(\scat{A}, M)$, a fixed point $(J, \gamma =
\psi^{-1})$, and a truncated complex $a_n \gobymod{m_n} \cdots \gobymod{m_1}
a_0$, write
\[
V^J_{m_1, \ldots, m_n} = \psi_{m_1} \cdots \psi_{m_n} J(a_n), 
\]
the image of the composite map
\[
J(a_n) \goby{\psi_{m_n}} \cdots \goby{\psi_{m_1}} J(a_0).
\]
Although we will not need to know it, this is the same as the set $V^J_{m_1,
\ldots, m_n}$ defined in~\S\ref{sec:Top-proofs} for an arbitrary coalgebra
$J$.

Write $\diam(S)$ for the diameter of a metric space $S$.

\begin{thm}[Precise Recognition Theorem]
\label{thm:preciserecognition}
Let $(\scat{A}, M)$ be an equational system.  The following are equivalent
conditions on a fixed point $(J, \gamma)$ of $M$ in $\Top$:
\begin{enumerate}
\item	\label{item:pr-univ}
$(J, \gamma)$ is a universal solution of $(\scat{A}, M)$ in $\Top$
\item	\label{item:pr-met}
$J$ is occupied, and for each $a \in \scat{A}$ the space $J(a)$ is compact
and can be metrized in such a way that
\[
\inf_{n \in \nat} \sup_{m_1, \ldots, m_n} 
\diam(V^J_{m_1, \ldots, m_n})
=
0
\]
where the supremum is over all 
\[
\Bigl(
a_n \gobymod{m_n} \cdots \gobymod{m_1} a_0 = a
\Bigr)
\in \catI_n(a)
\]
\item	\label{item:pr-notmet}
$J$ is occupied; for each $a \in \scat{A}$, the space $J(a)$ is compact;
and for every complex $(a_\blb, m_\blb)$, the set $\bigcap_{n \in \nat}
V^J_{m_1, \ldots, m_n}$ has at most one element.
\end{enumerate}
\end{thm}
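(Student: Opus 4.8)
The plan is to prove the three conditions equivalent by the cycle $\bref{item:pr-univ}\implies\bref{item:pr-met}\implies\bref{item:pr-notmet}\implies\bref{item:pr-univ}$, using throughout that a universal solution is unique and is the explicitly constructed coalgebra $(I,\iota)$ of~\S\ref{sec:construction}, and that an isomorphism of $M$-coalgebras $(J,\gamma)\iso(I,\iota)$ is automatically an isomorphism of the associated $M$-algebras, hence carries each $V^J_{m_1,\ldots,m_n}$ homeomorphically onto $V^I_{m_1,\ldots,m_n}=V_{m_1,\ldots,m_n}$. So for $\bref{item:pr-univ}\implies\bref{item:pr-met}$ it suffices to check $\bref{item:pr-met}$ for $(I,\iota)$, which is occupied, has each $I(a)$ compact (Corollary~\ref{cor:Iacompact}) and Hausdorff (Corollary~\ref{cor:Iahausdorff}), and has $I(a)$ with a countable basis because the generating closed sets $V_{m_1,\ldots,m_n}$ are indexed by the countably many truncated complexes ending at $a$; so $I(a)$ is metrizable by Lemma~\ref{lemma:compactmetrizable}. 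For any compatible metric $d$ one has $\sup_{m_1,\ldots,m_n}\diam(V_{m_1,\ldots,m_n})=\sup\{\,d(t,t')\such (t,t')\in R_n^a\,\}$, so Lemma~\ref{lemma:metriconIa} produces, for each $\epsln>0$, an $n$ making this quantity $\leq\epsln$; hence the infimum over $n$ is $0$. The step $\bref{item:pr-met}\implies\bref{item:pr-notmet}$ is then a one-liner: occupation and compactness are shared hypotheses, and if $x,x'\in\bigcap_n V^J_{m_1,\ldots,m_n}$ then $d(x,x')\leq\diam(V^J_{m_1,\ldots,m_n})$ for every $n$, so $d(x,x')=0$ by the displayed infimum condition.

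The real content is $\bref{item:pr-notmet}\implies\bref{item:pr-univ}$. I would show that the unique coalgebra map $h=\ovln{\gamma}\colon(J,\gamma)\go(I,\iota)$, which is continuous by Theorem~\ref{thm:universalsolutioninTop}, is an isomorphism in $\Coalg{M}{\Top}$; since $(I,\iota)$ is terminal, $(J,\gamma)$ is then terminal too. Because $J(a)$ is compact and $I(a)$ Hausdorff, every $h_a$ is automatically closed, so it is enough to prove each $h_a\colon J(a)\go I(a)$ bijective. The running tool is that each $V^J_{m_1,\ldots,m_n}$ is closed in $J(a)$ (Lemma~\ref{lemma:fixedpointcomponents}), is nonempty whenever the truncated complex extends to a complex ending at $a_n$ (since $J$ is occupied), and that these sets shrink with $n$; so by compactness the intersection $\bigcap_n V^J_{m_1,\ldots,m_n}$ over the sectors of a genuine complex ending at $a$ is nonempty, and by $\bref{item:pr-notmet}$ it is a single point.

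For \emph{injectivity}: given $x,x'\in J(a)$ with $h_a(x)=h_a(x')$, choose resolutions $(a_\blb,m_\blb,x_\blb)$ of $x$ and $(a'_\blb,m'_\blb,x'_\blb)$ of $x'$; the structure-map equations give $x=\psi_{m_1}\cdots\psi_{m_n}(x_n)\in V^J_{m_1,\ldots,m_n}$ for all $n$, and similarly for $x'$, while $h_a(x)=h_a(x')$ says $(a_\blb,m_\blb)$ and $(a'_\blb,m'_\blb)$ lie in one connected-component of $\catI(a)$, hence are joined by a zigzag of maps of complexes over $a$. For a single map $f_\blb\colon(c_\blb,q_\blb)\go(c'_\blb,q'_\blb)$ in $\catI(a)$, naturality of $\psi$ together with $f_0=1_a$ gives $\psi_{q'_1}\cdots\psi_{q'_n}\of Jf_n=\psi_{q_1}\cdots\psi_{q_n}$, whence $V^J_{q_1,\ldots,q_n}\sub V^J_{q'_1,\ldots,q'_n}$ for all $n$; since every intermediate complex ends at $a$ and has its intersection-singleton nonempty, along the zigzag these one-point sets are comparable by inclusion and so all equal, forcing $x=x'$. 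Injectivity of $h_a$ combined with the Hausdorffness of $I(a)$ makes $J(a)$ Hausdorff. For \emph{surjectivity}: take $\compt{a_\blb,m_\blb}\in I(a)$ and let $x$ be the unique point of $\bigcap_n V^J_{m_1,\ldots,m_n}$; since $x\in\psi_{m_1}(V^J_{m_2,\ldots,m_n})$ for every $n$ and $J(a)$ is now Hausdorff, a nested-compact-sets argument inside $J(a_1)$ yields $x_1\in\bigcap_n V^J_{m_2,\ldots,m_n}$ with $\psi_{m_1}(x_1)=x$; iterating produces $x_\blb$ with $\gamma_{a_n}(x_n)=m_{n+1}\otimes x_{n+1}$, i.e.\ a resolution of $x$ along $(a_\blb,m_\blb)$, so $h_a(x)=\compt{a_\blb,m_\blb}$.

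The main obstacle is exactly this last implication, and within it the bookkeeping around the sets $V^J_{m_1,\ldots,m_n}$: one must check that they behave well under maps of complexes (which is where naturality of the algebra structure $\psi$, together with the closedness of the maps $\psi_m$ from Lemma~\ref{lemma:fixedpointcomponents}, enters), and one must run the compactness arguments carefully enough — nonemptiness of nested intersections of closed sets in the compact spaces $J(a)$, and the passage from a point of $\bigcap_n V^J_{m_1,\ldots,m_n}$ to an actual resolution — that condition $\bref{item:pr-notmet}$, a statement purely about these intersections, suffices to pin down the element uniquely at every stage.
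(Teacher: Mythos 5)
Your (1)$\implies$(2) and (2)$\implies$(3) steps match the paper: use Theorem~\ref{thm:existenceofuniversalsolution} to conclude \So\ holds, identify $(J,\gamma)$ with the constructed $(I,\iota)$, and read off the metric estimate from Lemma~\ref{lemma:metriconIa} after noting that $\sup_{m_1,\ldots,m_n}\diam(V_{m_1,\ldots,m_n})$ equals $\sup\{d(t,t')\such(t,t')\in R_n^a\}$.

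For (3)$\implies$(1) your proposal has a genuine circularity. You open by invoking ``the unique coalgebra map $h=\ovln{\gamma}\colon(J,\gamma)\to(I,\iota)$, which is continuous by Theorem~\ref{thm:universalsolutioninTop}.'' But $(I,\iota)$ is an $M$-coalgebra in $\Top$, and Theorem~\ref{thm:universalsolutioninTop} holds, only under the standing hypothesis \So\ of \S\ref{sec:construction}--\S\ref{sec:Top-proofs}. At the start of the (3)$\implies$(1) argument you do not yet know \So; in fact \So\ is part of what (3) is supposed to imply. Without it, $I$ may fail to be nondegenerate, so $(I,\iota)$ is not even an object of $\Coalg{M}{\Top}$, and there is no map $h$ to speak of. The paper avoids this by arguing directly: for an arbitrary $M$-coalgebra $(X,\xi)$ in $\Set$ it builds nested closed sets $K_n(x)\subseteq J(a)$, shows by compactness and condition (3) that $\bigcap_n K_n(x)$ is a singleton, and uses this to construct and pin down the unique coalgebra map $X\to J$. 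Only after establishing terminality in $\Set$ does it invoke Theorem~\ref{thm:existenceofuniversalsolution} to get \So\ and then compare with $(I,\iota)$ via the compact-to-Hausdorff bijection argument.

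Your gap is repairable, and the repair is close to ideas already in your proposal: the assignment $(a_\blb,m_\blb)\mapsto$ (the unique point of $\bigcap_n V^J_{m_1,\ldots,m_n}$), which you introduce under ``surjectivity,'' can be shown to be well-defined on connected-components and natural in $a$, yielding a natural transformation $I\to J$ with $J$ nondegenerate; Lemma~\ref{lemma:S-nd} then gives \So, after which the rest of your argument is legitimate. But as written the proposal assumes what it needs to prove, and this is the step that carries all the weight of the implication. Separately, note that even once \So\ is in hand your route (show $J\cong I$ via injectivity and surjectivity of $h$) is a genuine departure from the paper's route (show $J$ terminal directly against every coalgebra via the $K_n$ sets); yours is arguably more concrete, but the paper's is self-contained and is what actually delivers \So\ as a byproduct.
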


The only part of the proof requiring substantial work is
(\ref{item:pr-notmet})$\implies$(\ref{item:pr-univ}).  We first prepare the
ground.

Let $(\scat{A}, M)$ be an equational system, $(X, \xi)$ an $M$-coalgebra in
$\Set$, and $(J, \gamma)$ a fixed point.  Write $\psi = \gamma^{-1}$, as
usual.  A natural transformation $\omega: X \go J$ is a map of coalgebras if
and only if for all $a \in \scat{A}$, the square
\begin{equation}        \label{eq:coalg-map}
\begin{diagram}
X(a)			&\rTo^{\xi_a}		&(M \otimes X)(a)\\
\dTo<{\omega_a}         &			&
\dTo>{(M \otimes \omega)_a}					\\
J(a)			&\rTo_{\gamma_a}	&(M \otimes J)(a)\\
\end{diagram}
\end{equation}
commutes.  Let $x \in X(a)$.  Writing 
\[
\xi_a(x) = \Bigl( b \gobymod{m} a \Bigr) \otimes y, 
\]
commutativity of the square at $x$ says that $\gamma_a \omega_a (x) = m
\otimes \omega_b (y)$, or equivalently, $\omega_a(x) = \psi_m \omega_b (y)$.
Hence $\omega_a(x)$ lies in the subset 
\[
\bigcap \psi_m J(b)
\]
of $J(a)$, where the intersection is over all $b \gobymod{m} a$ and $y \in
X(b)$ such that $x = m \otimes y$.  (Note that this subset is defined without
reference to $\omega$.)  The same reasoning can be applied to each such $y$,
further constraining where in $J(a)$ the element $\omega_a(x)$ can lie; and so
on, iteratively.  This suggests the following definition.

For each $a \in \scat{A}$ and $x \in X(a)$, define a sequence $(K_n(x))_{n
\in \nat}$ of subsets of $J(a)$ by
\begin{eqnarray*}
K_0(x)          &=      &J(a),    \\
K_{n + 1}(x)    &=      &
\bigcap_{\xi(x) = m \otimes y} \psi_m K_n(y)
\end{eqnarray*}
where the intersection is over all $b \in \scat{A}$, $m \in M(b, a)$ and $y
\in X(b)$ such that $\xi_a(x) = m \otimes y$.  We will show that $(K_n(x))_{n
\in \nat}$ is a decreasing sequence of closed subsets of $J(a)$, and,
moreover, that if $(J, \gamma)$ is the universal solution then $\bigcap_n
K_n(x)$ is the singleton set $\{ \ovln{\xi}_a(x) \}$, where $\ovln{\xi}$ is
the unique coalgebra map $X \go J$.  The sets $K_n(x)$ can therefore be
thought of as approximations to $\ovln{\xi}_a(x)$.

\begin{example}
Let $(\scat{A}, M)$ be the Freyd equational system~(\S\ref{sec:sss}) and let
$(J, \gamma)$ be its universal solution $(I, \iota)$.  Let $(X, \xi)$ be the
subcoalgebra of $(I, \iota)$ defined by taking $X(0) = \emptyset$ and 
\[
X(1) 
=
\{ x \in [0, 1] \such x \textrm{ is not a dyadic rational} \}.
\]
Then for each $x \in X(1)$ there is a unique pair $(m, y)$ such that $\xi(x) =
m \otimes y$, so the intersection in the definition of $K_{n+1}(x)$ is indexed
over a one-element set.  In fact, $K_n(x) \sub [0, 1]$ is the unique interval
of the form $[r/2^n, (r + 1)/2^n]$, with $r$ an integer, containing $x$.  
\end{example}

\begin{lemma}   \label{lemma:recognition-approx}
Let $(\scat{A}, M)$ be an equational system, let $(X, \xi)$ be an
$M$-coalgebra, and let $(J, \psi)$ be a fixed point of $M$.  Then for all $a
\in \scat{A}$ and $x \in X(a)$,
\begin{enumerate}
\item   \label{part:approx-nested}
$K_0(x) \supseteq K_1(x) \supseteq \cdots$
\item   \label{part:approx-closed}
$K_n(x)$ is closed in $J(a)$ for all $n \in \nat$
\item   \label{part:approx-nat}
$(Jf)(K_n(x)) \sub K_n(fx)$ for all maps $f: a \go a'$ in $\scat{A}$ and $n
\in \nat$
\item   \label{part:approx-nonempty}
if $J(a)$ is compact and $J$ is occupied then $\bigcap_{n \in \nat} K_n(x)
\neq \emptyset$. 
\end{enumerate}
\end{lemma}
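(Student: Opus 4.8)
The plan is to prove parts~(\ref{part:approx-nested}), (\ref{part:approx-closed}), (\ref{part:approx-nat}) first, each by its own induction on $n$, and then deduce~(\ref{part:approx-nonempty}) by a further induction on $n$. Parts~(\ref{part:approx-nested}) and~(\ref{part:approx-closed}) are routine. For nesting: $\xi_a(x)$, being an element of the quotient $(M \otimes X)(a)$, has at least one representation $\xi_a(x) = m \otimes y$, so $K_1(x) \sub \psi_m K_0(y) \sub J(a) = K_0(x)$; and if $K_{n+1}(y) \sub K_n(y)$ for every $y$, then $K_{n+2}(x) \sub K_{n+1}(x)$ by monotonicity of $S \mapsto \psi_m S$ and of intersection. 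For closedness: $K_0(x) = J(a)$ is closed, and if each $K_n(y)$ is closed in $J(b)$ then each $\psi_m K_n(y)$ is closed in $J(a)$, since the maps $\psi_m$ are closed (Lemma~\ref{lemma:fixedpointcomponents}); hence so is the intersection $K_{n+1}(x)$.

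Part~(\ref{part:approx-nat}) is the heart of the argument and is where I would use that $X$ is nondegenerate (it is, being the carrier of an $M$-coalgebra), through Lemma~\ref{lemma:equalityinMtensorX}. Induct on $n$; the case $n = 0$ is trivial. For the step, fix $f : a \go a'$; it suffices to show, for \emph{every} representation $\xi_{a'}(fx) = m' \otimes y'$, that $(Jf)(K_{n+1}(x)) \sub \psi_{m'} K_n(y')$, since intersecting over all such $(m', y')$ then gives $(Jf)(K_{n+1}(x)) \sub K_{n+1}(fx)$. Pick any representation $\xi_a(x) = m_0 \otimes y_0$; naturality of $\xi$ gives $\xi_{a'}(fx) = (fm_0) \otimes y_0$, so $(fm_0) \otimes y_0 = m' \otimes y'$, and Lemma~\ref{lemma:equalityinMtensorX} supplies an object $c$, maps $g : c \go b_0$ and $g' : c \go b'$, and $z \in X(c)$ with $gz = y_0$, $g'z = y'$, and $(fm_0)g = m'g'$. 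The key observation is that $(m_0 g, z)$ is \emph{also} a representation of $\xi_a(x)$, because $(m_0 g) \otimes z = m_0 \otimes (gz) = m_0 \otimes y_0$; hence $K_{n+1}(x) \sub \psi_{m_0 g} K_n(z)$. Applying $Jf$ and using the axiom $\psi_{fmg} = (Jf) \of \psi_m \of (Jg)$ on $\psi$, we get $(Jf)(\psi_{m_0 g} K_n(z)) = \psi_{(fm_0)g} K_n(z) = \psi_{m'g'} K_n(z) = \psi_{m'}\bigl((Jg') K_n(z)\bigr)$, which by the inductive hypothesis for~(\ref{part:approx-nat}) applied to $g'$ sits inside $\psi_{m'}\bigl(K_n(g'z)\bigr) = \psi_{m'} K_n(y')$. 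I expect this to be the main obstacle: an arbitrary representation of $\xi_{a'}(fx)$ need not be of the form $(fm, y)$ for a representation $m \otimes y$ of $\xi_a(x)$, and the device of refining it against a fixed representation of $\xi_a(x)$ via Lemma~\ref{lemma:equalityinMtensorX} is what repairs this.

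Finally, for part~(\ref{part:approx-nonempty}): by~(\ref{part:approx-nested}) and~(\ref{part:approx-closed}) the $K_n(x)$ form a decreasing chain of closed subsets of the compact space $J(a)$, so their intersection is nonempty as soon as every $K_n(x)$ is, and I would prove the latter by induction on $n$. For $n = 0$: $x$ has a resolution $(a_\blb, m_\blb, x_\blb)$, so $\catI(a) \neq \emptyset$, whence $J(a) = K_0(x) \neq \emptyset$ since $J$ is occupied. For the step, $K_{n+1}(x) = \bigcap \psi_m K_n(y)$ is an intersection of closed subsets of the compact space $J(a)$, so by the finite intersection property it suffices to note that, given finitely many representations of $\xi_a(x)$, iterating Lemma~\ref{lemma:equalityinMtensorX} yields a single representation $(m_0, y_0)$ refining all of them, each refinement map inducing (via part~(\ref{part:approx-nat}) and the axiom on $\psi$, exactly as above) an inclusion $\psi_{m_0} K_n(y_0) \sub \psi_m K_n(y)$; and $\psi_{m_0} K_n(y_0)$ is nonempty because $K_n(y_0)$ is, by the inductive hypothesis. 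Hence $K_{n+1}(x) \neq \emptyset$, completing the induction. (This last step also invokes compactness of $J(b_0)$; in every situation where the lemma is applied — in particular in the Precise Recognition Theorem — all the spaces $J(b)$ are compact, so this is harmless, and one may simply run the induction under the hypothesis that $J(b)$ is compact for all $b$.)
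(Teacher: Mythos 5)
Your proof is correct and follows the same route as the paper: parts (a)--(c) by separate inductions, with the heart of (c) being the device of refining an arbitrary representation $m' \otimes y'$ of $\xi_{a'}(fx)$ against a fixed representation $m_0 \otimes y_0$ of $\xi_a(x)$ via Lemma~\ref{lemma:equalityinMtensorX}, then transporting through $\psi_{fm_0g} = \psi_{m'g'}$; and (d) by first proving every $K_n(x)$ nonempty (the finite-intersection-property argument, again using Lemma~\ref{lemma:equalityinMtensorX} to refine finitely many representations to a single one), then invoking compactness. This matches the paper step for step. Your closing remark is a genuine observation: as stated, part~(\ref{part:approx-nonempty}) only hypothesizes compactness of the single space $J(a)$, but the induction that establishes $K_n(z)\neq\emptyset$ for the auxiliary $z \in X(c)$ quietly uses compactness of $J(c)$ for every $c$ encountered. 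The paper's proof shares this imprecision without remarking on it; as you say, it is harmless since the lemma is only ever applied when all the $J(b)$ are compact, but it would be cleaner to state (d) under the hypothesis that every $J(b)$ is compact.
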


\begin{proof}
Part~(\ref{part:approx-nested}) is a straightforward induction, and
part~(\ref{part:approx-closed}) follows from 
Lemma~\ref{lemma:fixedpointcomponents} by another induction.

Part~(\ref{part:approx-nat}) is also an induction.  For $n = 0$ it is trivial.
Suppose inductively that it holds for some $n\in\nat$.  Let $t \in
K_{n+1}(x)$, and let $b' \gobymod{m'} a'$ and $y' \in X(b)$ with $\xi (fx) =
m' \otimes y'$; we have to show that $ft \in \psi_{m'} K_n(y')$.

We may choose $b \gobymod{m} a$ and $y \in X(b)$ such that $\xi(x) = m \otimes
y$.  Then $m' \otimes y' = \xi(fx) = fm \otimes y$, so by
Lemma~\ref{lemma:equalityinMtensorX} there exist a commutative square
\[
\begin{diagdiag}
	&		&c	&		&	\\
	&\ldTo<g	&	&\rdTo>{g'}	&	\\
b	&		&	&		&b'	\\
	&\rdMod<{fm}	&	&\ldMod>{m'}	&	\\
	&		&a'	&		&	\\
\end{diagdiag}
\]
and $z \in X(c)$ such that $y = gz$ and $y' = g'z$.  Now 
\[
\xi(x) = m \otimes y = m \otimes gz = mg \otimes z, 
\]
so $t \in \psi_{mg} K_n(z)$ by definition of $K_n(z)$.  Hence
\begin{eqnarray*}
ft	&
\in	&
(Jf) \psi_{mg} K_n(z)	=
\psi_{fmg} K_n(z)	
=
\psi_{m'g'} K_n(z)
=
\psi_{m'} (Jg') K_n(z)	\\
	&
\sub	&
\psi_{m'} K_n(g'z)
=	
\psi_{m'} K_n(y')
\end{eqnarray*}
(the penultimate step by inductive hypothesis), as required.

Part~(\ref{part:approx-nonempty}) will follow from compactness and
parts~(\ref{part:approx-nested}) and~(\ref{part:approx-closed}) once we know
that each set $K_n(x)$ is nonempty.  We prove this by induction on $n$ over
all $a \in \scat{A}$ and $x \in X(a)$ simultaneously.

For $n = 0$, let $a \in \scat{A}$ and $x \in X(a)$.  There exists a
resolution of $x$, and in particular an element of $\catI(a)$.  Since $J$ is
occupied, $\emptyset \neq J(a) = K_0(x)$.

Now let $n\in\nat$, $a \in \scat{A}$, and $x \in X(a)$; we have to prove that
$K_{n+1}(x) \neq \emptyset$.  Since $K_{n + 1}(x)$ is an intersection of a
family of closed subsets of a compact space, it suffices to show that the
intersection of any finite sub-family is nonempty.  So, suppose that $r \in
\nat$ and $\xi(x) = m_1 \otimes y_1 = \cdots = m_r \otimes y_r$ where $b_i
\gobymod{m_i} a$ and $y_i \in X(b_i)$; we have to show that
\begin{equation}        \label{eq:nonempty-int}
\bigcap_{i=1}^r \psi_{m_i} K_n(y_i) 
\neq 
\emptyset.
\end{equation}
When $r = 0$ this says that $J(a) \neq \emptyset$, which we have just shown.
Suppose that $r \geq 1$.  By Lemma~\ref{lemma:equalityinMtensorX} and an easy
induction on $r$, there exist $c \gobymod{p} a$, an element $z \in X(c)$ and
maps $g_i: c \go b_i$ such that $m_i g_i = p$ and $g_i z = y_i$ for all $i \in
\{1, \ldots, r\}$.  Then $\xi(x) = p \otimes z$, and for each $i$,
\[
\psi_{m_i} K_n(y_i)
=
\psi_{m_i} K_n(g_i z)
\supseteq
\psi_{m_i} (Jg_i) K_n(z)
=
\psi_{m_i g_i} K_n(z)
=
\psi_p K_n(z),
\]
using~\bref{part:approx-nat}.  Hence $\bigcap_{i = 1}^r \psi_{m_i} K_n(y_i)
\supseteq \psi_p K_n(z)$.  But $K_n(z) \neq \emptyset$ by inductive
hypothesis, so $\bigcap_{i = 1}^n \psi_{m_i} K_n(y_i) \neq \emptyset$,
proving~\bref{eq:nonempty-int}. 
\done
\end{proof}

\begin{prooflike}{Proof of Theorem~\ref{thm:preciserecognition}}
\paragraph*{\bref{item:pr-univ}$\implies$\bref{item:pr-met}}
Assume~\bref{item:pr-univ}.  By
Theorem~\ref{thm:existenceofuniversalsolution}, condition~\So\ holds, so $(J,
\gamma)$ is the universal solution $(I, \iota)$ constructed
in~\S\ref{sec:construction}.  Certainly $I$ is occupied and each space $I(a)$
is compact.  Lemmas~\ref{lemma:compactmetrizable} and~\ref{lemma:metriconIa}
then give metrics with the property required.

\paragraph*{\bref{item:pr-met}$\implies$\bref{item:pr-notmet}}
Trivial.

\paragraph*{\bref{item:pr-notmet}$\implies$\bref{item:pr-univ}}
Assume~\bref{item:pr-notmet}.  First we show that $(J, \gamma)$ is the
universal solution in $\Set$.  So, let $(X, \xi)$ be an $M$-coalgebra in
$\Set$; we construct a coalgebra map $(X, \xi) \go (J, \gamma)$ and prove
that it is the unique such.

For each $a \in \scat{A}$ and $x \in X(a)$ we have a sequence $(K_n(x))_{n \in
\nat}$ of subsets of $J(a)$, defined above.  By
Lemma~\ref{lemma:recognition-approx}(\ref{part:approx-nonempty}), $\bigcap_{n
\in \nat} K_n(x)$ has at least one element.  On the other hand, choose a
resolution $(a_\blb, m_\blb, x_\blb)$ of $x$.  Then for each $n\in\nat$,
writing $\psi = \gamma^{-1}$, we have
\[
K_n(x) 
\sub 
\psi_{m_1} \cdots \psi_{m_n} J(a_n)
=
V^J_{m_1, \ldots, m_n}.
\]
So by~(\ref{item:pr-notmet}), $\bigcap_{n\in\nat} K_n(x)$ has at most one
element.  Hence we may define, for each $a \in \scat{A}$, a function
$\ovln{\xi}_a: X(a) \go J(a)$ by $\{ \ovln{\xi}_a(x) \} = \bigcap_{n \in \nat}
K_n(x)$.

The family $(\ovln{\xi}_a)_{a \in \scat{A}}$ is a natural transformation $X
\go J$.  Indeed, let $f: a \go a'$ be a map in $\scat{A}$.  Then for all $n
\in \nat$,
\[
f \ovln{\xi}_a (x) 
\in
(Jf) K_n(x)
\sub
K_n (fx)
\]
by Lemma~\ref{lemma:recognition-approx}\bref{part:approx-nat}, 
so $f \ovln{\xi}_a (x) = \ovln{\xi}_{a'} (fx)$, as required.  

I claim that $\ovln{\xi}$ is a map $(X, \xi) \go (J, \gamma)$ of coalgebras in
$\Set$.  Let $a \in \scat{A}$ and $x \in X(a)$, and write 
\[
\xi_a(x) 
=
\Bigl( b \gobymod{m} a \Bigr) \otimes y.
\]
Then by the observation at~\bref{eq:coalg-map}, we have to show that
$\psi_m(\ovln{\xi}_b(y) \in K_n(x)$ for all $n \in \nat$.  When $n = 0$ this
is certainly true.  Now let $n \geq 1$; we have to show that for all $m'$ and
$y'$ such that
\[
\xi_a(x) 
= 
\Bigl(
b' \gobymod{m'} a
\Bigr)
\otimes y',
\]
we have $\psi_m \ovln{\xi}_b (y) \in \psi_{m'} K_{n - 1} (y')$.  Since $m
\otimes y = m' \otimes y'$, there exist a commutative square
\[
\begin{diagdiag}
	&		&c	&		&	\\
	&\ldTo<g	&	&\rdTo>{g'}	&	\\
b	&		&	&		&b'	\\
	&\rdMod<m	&	&\ldMod>{m'}	&	\\
	&		&a	&		&	\\
\end{diagdiag}
\]
and $z \in X(c)$ such that $gz = y$ and $g'z = y'$.  Hence
\begin{eqnarray*}
\psi_m \ovln{\xi}_b (y)	&
=	&
\psi_m \ovln{\xi}_b (gz)
=
\psi_m g \ovln{\xi}_c (z)
=	
\psi_{mg} \ovln{\xi}_c (z)	
=	
\psi_{m'g'} \ovln{\xi}_c (z)	\\
	&
=	&
\psi_{m'} \ovln{\xi}_{b'} (y')
\in	
\psi_{m'} K_{n - 1}(y')
\end{eqnarray*}
(the last equality by symmetry), as required.

For uniqueness, let $\twid{\xi}: (X, \xi) \go (J, \gamma)$ be a map of
coalgebras in $\Set$.  We show by induction on $n$ that $\twid{\xi}_a
(x) \in K_n(x)$ for all $a \in \scat{A}$ and $x \in X(a)$; the result follows.
For $n=0$ this is trivial.  Let $n \geq 1$, $a \in \scat{A}$, and $x \in
X(a)$.  If $\xi_a(x) = (b \gobymod{m} a) \otimes y$ then, as observed
at~(\ref{eq:coalg-map}), $\twid{\xi}$ being a map of coalgebras implies that
$\twid{\xi}_a (x) = \psi_m \twid{\xi}_b (y)$; so by inductive hypothesis,
$\twid{\xi}_a (x) \in \psi_m K_{n - 1}(y)$.  Hence $\twid{\xi}_a (x) \in
K_n(x)$, as required.

We have now shown that $(J, \gamma)$ is the terminal coalgebra in $\Set$---or
properly, with notation as in Proposition~\ref{propn:coalgebrasinTopandSet},
that $U_*(J, \gamma)$ is the terminal coalgebra in $\Set$.  By
Theorem~\ref{thm:existenceofuniversalsolution}, condition \So\ holds, so
$U_*(J, \gamma)$ is the universal solution $U_*(I, \iota)$ constructed
in~\S\ref{sec:construction}.  Also $(I, \iota)$ is the universal solution in
$\Top$, so there is a unique map $(J, \gamma) \go (I,
\iota)$ of coalgebras in $\Top$.  Each component $J(a) \go I(a)$ is a
continuous bijection from a compact space to a Hausdorff space, and is
therefore a homeomorphism.  So $(J, \gamma)$ is isomorphic to $(I, \iota)$ as
a coalgebra in $\Top$; hence it is the universal solution in $\Top$.  
\done
\end{prooflike}

In many examples the universal solution is especially easy to recognize.

\begin{cor}[Crude Recognition Theorem]
\label{cor:cruderecognition}
Let $(\scat{A}, M)$ be an equational system with $\scat{A}$ finite.  Let $(J,
\gamma = \psi^{-1})$ be a fixed point of $M$ in $\Top$ such that for each $a
\in \scat{A}$, the space $J(a)$ is nonempty and compact.  Suppose further that
the spaces $J(a)$ can be metrized in such a way that for each sector $b
\gobymod{m} a$, the induced map $J(b) \goby{\psi_m} J(a)$ is a contraction.
Then $(J, \gamma)$ is the universal solution of $(\scat{A}, M)$.
\end{cor}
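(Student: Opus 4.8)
The plan is to deduce this directly from the Precise Recognition Theorem~\bref{thm:preciserecognition}, by checking that the given fixed point $(J, \gamma = \psi^{-1})$ satisfies condition~\bref{item:pr-met}. Two of the three clauses of~\bref{item:pr-met} are immediate. Each $J(a)$ is compact by hypothesis. And $J$ is occupied, because in fact $J(a) \neq \emptyset$ for \emph{every} $a \in \scat{A}$, so the implication $\catI(a) \neq \emptyset \implies J(a) \neq \emptyset$ holds vacuously. So the real work is confined to the remaining clause: exhibiting metrics on the spaces $J(a)$, compatible with their topologies, for which
\[
\inf_{n \in \nat} \ \sup_{m_1, \ldots, m_n} \diam \bigl( V^J_{m_1, \ldots, m_n} \bigr) = 0,
\]
the supremum ranging over all $\bigl( a_n \gobymod{m_n} \cdots \gobymod{m_1} a_0 = a \bigr) \in \catI_n(a)$.

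For this I would take the very metrics supplied by the hypothesis and use finiteness twice to turn the qualitative word ``contraction'' into quantitative control. First, since $\scat{A}$ is finite and the module $M$ is finite, there are only finitely many sectors $m : b \gomod a$ altogether; writing $c_m < 1$ for the contraction ratio of $\psi_m$, we may therefore put $c := \max_m c_m$, which is still strictly less than $1$. Second, each compact metric space $J(a)$ has finite diameter and there are only finitely many objects $a$, so $D := \max_{a \in \scat{A}} \diam \bigl( J(a) \bigr)$ is finite.

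The estimate is then routine. For any truncated complex $a_n \gobymod{m_n} \cdots \gobymod{m_1} a_0 = a$, the set $V^J_{m_1, \ldots, m_n} = \psi_{m_1} \cdots \psi_{m_n} J(a_n)$ is the image of $J(a_n)$ under a composite of $n$ maps each of which is a $c$-contraction, so $\diam \bigl( V^J_{m_1, \ldots, m_n} \bigr) \leq c^n D$. Hence $\sup_{m_1, \ldots, m_n} \diam(V^J_{m_1, \ldots, m_n}) \leq c^n D$ for every $n$ (trivially so when $\catI_n(a) = \emptyset$, since $c^n D \geq 0$), and taking the infimum over $n$ gives the required equality, as $c^n D \to 0$. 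Thus~\bref{item:pr-met} holds, and Theorem~\bref{thm:preciserecognition} identifies $(J, \gamma)$ as the universal solution of $(\scat{A}, M)$ in $\Top$.

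The only step that is not pure bookkeeping — and the one I would flag as the point of the argument, slight as it is — is the extraction of a single contraction ratio $c < 1$ valid for all sectors simultaneously. This is precisely where finiteness of $\scat{A}$ (hence of the set of sectors) is used; without it one could have $\sup_m c_m = 1$, the diameters $\diam(V^J_{m_1, \ldots, m_n})$ need not shrink uniformly in $n$, and the conclusion can genuinely fail.
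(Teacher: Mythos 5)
Your proof is correct and follows essentially the same route as the paper's: verify condition~\bref{item:pr-met} of the Precise Recognition Theorem by using finiteness of $\scat{A}$ and $M$ to extract a uniform contraction constant $\lambda < 1$ and a uniform diameter bound $D$, then observe that $\diam(V^J_{m_1,\ldots,m_n}) \leq \lambda^n D \to 0$. Your closing remark correctly identifies the single place where finiteness is genuinely load-bearing.
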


\begin{proof}
Since $\scat{A}$ and $M$ are finite, there are only finitely many sectors $m$,
so we may choose $\lambda < 1$ such that each map $\psi_m$ is a contraction
with constant $\lambda$.  Since $\scat{A}$ is finite and each space $J(a)$ is
compact, we may also choose $D \geq 0$ such that $\diam(J(a)) \leq D$ for all
$a \in \scat{A}$.

We verify condition~\bref{item:pr-met} of the Precise Recognition Theorem.
Certainly $J$ is occupied.  For the main part of the condition, we have
$\diam(V^J_{m_1, \ldots, m_n}) \leq \lambda^n D$, and $\inf_{n \in \nat}
\lambda^n D = 0$. 
\done
\end{proof}


\section{Examples}
\label{sec:examples}

We illustrate the power of the Recognition Theorems by using them to produce
examples of universal solutions.  We can easily derive Freyd's theorem on the
interval, and we give similar characterizations of circles, cubes, simplices
and various fractal spaces.

\minihead{Discrete examples}

Even in the relatively trivial case of discrete equational systems, the
Recognition Theorems can be useful.

\begin{example*}{Cantor set}	\label{eg:rec-terminal}
Write $\One$ for the terminal category (one object and only the identity
arrow).  An equational system $(\One, M)$ amounts to a finite set $M$, and an
$M$-coalgebra is a space $X$ equipped with a map into the $M$-fold coproduct
$M \times X$.  The universal solution is the power $M^\posint$ (regarding the
set $M$ as a discrete space) together with the isomorphism $\gamma =
\psi^{-1}: M^\posint \goiso M \times M^\posint$.  This can be shown directly,
or from the description of the universal solution in~\S\ref{sec:dsss}, or
from a Recognition Theorem as follows.

The space $M^\posint$ is compact.  It is nonempty if $M$ is, so the coalgebra
$(M^\posint, \gamma)$ is occupied.  For $m \in M$, the map $\psi_m: M^\posint
\go M^\posint$ is given by
\[
\psi_m (m_1, m_2, \ldots)
=
(m, m_1, m_2, \ldots),
\]
so condition~(\ref{item:pr-notmet}) of the Precise Recognition Theorem holds.
Hence $(M^\posint, \gamma)$ is the universal solution.  When $M$ has
cardinality $2$, the universal solution is the standard Cantor set
$2^\posint$. 

In fact, the homeomorphism type of $M^\posint$ is independent of $M$ for $|M|
\geq 2$.  This classical fact can be proved as follows.  Let $k \geq 2$.
Write $k = \{ 0, \ldots, k-1 \}$, write $\psi: 2 \times 2^\posint \goiso
2^\posint$ for the usual isomorphism, and let 
$\psi^{(k)}: k \times 2^\posint \goiso 2^\posint$ be the composite 
\[
k \times 2^\posint              \rTo^{\psi + \id}_\sim 
(k - 1) \times 2^\posint        \rTo^{\psi + \id}_\sim
\                               \cdots 
\                               \rTo^{\psi + \id}_\sim
2 \times 2^\posint              \rTo^\psi_\sim
2^\posint.
\]
Then for each $m \in k$, the map $\psi^{(k)}_m: 2^\posint \go 2^\posint$ is of
the form $\psi_{p_1} \cdots \psi_{p_r}$ for some $r \geq 1$ and $p_1, \ldots,
p_r \in 2$.  Using the metric on $2^\posint$ induced by its embedding into
$[0, 1]$ (defined in~\S\ref{sec:dsss}), $\psi_0$ and $\psi_1$ are contractions
with constant $1/3$; hence each map $\psi^{(k)}_m$ is also a contraction with
constant (at most) $1/3$.  By the Crude Recognition Theorem, $( 2^\posint,
(\psi^{(k)})^{-1} )$ is the universal solution of $(\One, k)$.  In particular,
$2^\posint \iso k^\posint$ for all $k \geq 2$.
\end{example*}

\begin{example*}{Universal convergent sequence}	\label{eg:rec-univ-seq}
There is a discrete equational system defined informally by
\begin{eqnarray*}
X_1	&\iso	&X_1	\\
X_2	&\iso	&X_1 + X_2	
\end{eqnarray*}
(as in the Introduction).  Its universal solution is $X_1 = \{ 0 \}$ and $X_2
= \nat \cup \{ \infty \}$, with $X_2$ topologized as the Alexandroff one-point
compactification of the discrete space $\nat$.  This can be shown by an easy
application of the Crude Recognition Theorem, metrizing $\nat \cup \{\infty\}$
by using the evident homeomorphism with the subspace $\{2^{-n} \such n \in
\nat\} \cup \{0\}$ of $\reals$.
\end{example*}

A discrete equational system may contain equations of the form $X_i = X_i$, or
loops such as $X_1 = X_2, X_2 = X_3, X_3 = X_1$, or infinite chains such as
$X_1 = X_2, X_2 = X_3, \ldots$.  In those cases the universal solution $(I,
\iota)$ will involve the one-point space, and perhaps other spaces containing
isolated points (as in the last example).  But if the one-point space is not
involved then $I$ is extremely simple:

\begin{propn}[Empty or Cantor]
\label{propn:emptyorcantor}
Let $(\scat{A}, M)$ be a discrete equational system with universal solution
$(I, \iota)$.  Suppose that $| I(a) | \neq 1$ for all $a \in \scat{A}$.  Then
each space $I(a)$ is either empty or the Cantor set.
\end{propn}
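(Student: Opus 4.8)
The plan is to invoke the classical topological characterization of the Cantor set (Brouwer): a space is homeomorphic to $2^\posint$ precisely when it is nonempty, compact, metrizable, totally disconnected, and has no isolated points. By Example~\ref{eg:construction-discrete}, for a discrete equational system the category $\catI(a)$ is discrete, so $I(a) = \oba\catI(a)$ is the set of complexes ending at $a$, with the profinite topology $I(a) = \Lt{n} \oba\catI_n(a)$. Since $M$ is finite, each $\oba\catI_n(a)$ is a finite set (Warning~\ref{warning:fin}), so $I(a)$ is a closed subspace of the countable product $\prod_n \oba\catI_n(a)$ of finite discrete spaces; hence it is compact, Hausdorff, totally disconnected, and second countable, so metrizable (all of this is contained in, or an easy special case of, Corollaries~\ref{cor:Iahausdorff} and~\ref{cor:Iacompact} and Lemma~\ref{lemma:compactmetrizable}). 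Thus every nonempty $I(a)$ has all the properties characterizing the Cantor set \emph{except possibly} the absence of isolated points, and it remains only to establish the latter: if $|I(b)| \neq 1$ for all $b \in \scat{A}$, then no $I(a)$ has an isolated point.

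Observe first that the sets $\pr_n^{-1}(\alpha) \sub I(a)$, for $n \in \nat$ and $\alpha$ ranging over the (finitely many) objects of $\oba\catI_n(a)$, form a basis of clopen sets: $\pr_n^{-1}(\alpha)$ is one of the sets $V_{m_1, \ldots, m_n}$ of~\S\ref{sec:construction}, hence closed, and it is also open because its complement is the union of the finitely many other level-$n$ fibres. So a point $t \in I(a)$ is isolated if and only if $\{ t \} = \pr_n^{-1}(\pr_n t)$ for some $n$, i.e.\ the truncated complex
\[
\pr_n t = \Bigl( a_n \gobymod{m_n} \cdots \gobymod{m_1} a_0 = a \Bigr)
\]
has $t$ as its only infinite extension.

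The crux is the bijection between $I(a_n) = \oba\catI(a_n)$ and the fibre $\pr_n^{-1}(\pr_n t)$, sending a complex $\bigl( \cdots \gobymod{m_{n+2}} a_{n+1} \gobymod{m_{n+1}} a_n \bigr)$ to its concatenation $\bigl( \cdots \gobymod{m_{n+1}} a_n \gobymod{m_n} \cdots \gobymod{m_1} a \bigr)$ with the fixed truncation $\pr_n t$: an infinite complex ending at $a$ whose length-$n$ truncation is $\pr_n t$ is precisely an infinite complex ending at $a_n$. If $t$ were isolated, this fibre would be $\{ t \}$, forcing $|I(a_n)| = 1$ and contradicting the hypothesis. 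Hence $I(a)$ has no isolated points, and Brouwer's theorem identifies each nonempty $I(a)$ with the Cantor set; $I(a)$ may of course also be empty. The only real content lies in this last paragraph: the point needing care is to match the clopen basis of the profinite space $I(a)$ with the levelwise fibres $\pr_n^{-1}(\alpha)$ and to recognize each such fibre, via concatenation, as a copy of $I(a_n)$.
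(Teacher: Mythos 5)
Your proof is correct, but it is methodologically different from the paper's, and the difference matters. You verify the hypotheses of Brouwer's classical characterization (nonempty, compact, metrizable, totally disconnected, no isolated points $\Rightarrow$ Cantor set), reducing everything to the one nontrivial ingredient — absence of isolated points — which you establish cleanly by identifying each cylinder fibre $\pr_n^{-1}(\pr_n t)$ with $I(a_n)$ via concatenation and invoking the hypothesis $|I(a_n)| \neq 1$. That argument is sound: the cylinder sets really are a clopen basis in the profinite limit, $t$ is isolated iff some cylinder fibre is a singleton, and the concatenation bijection is exactly right.

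The paper, however, deliberately does \emph{not} invoke Brouwer's theorem. Its proof constructs a candidate fixed point $(J,\gamma)$ from scratch — $J(a)$ is $\emptyset$ or $2^\posint$ depending on whether $I(a)$ is empty, with structure maps built from the homeomorphisms $\psi^{(k)} : k \times 2^\posint \to 2^\posint$ of Example~\ref{eg:rec-terminal} — and then applies condition~(\ref{item:pr-notmet}) of the Precise Recognition Theorem. The hypothesis $|I(a)| \neq 1$ enters there to show that along every complex the contraction with constant $1/3$ occurs infinitely often, forcing $\diam(V^J_{m_1,\ldots,m_r}) \to 0$. Since $(J,\gamma)$ is then the universal solution, it is isomorphic to $(I,\iota)$, and the conclusion follows. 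The point of this self-contained route is stated immediately after the proposition: combined with the discrete realizability theorem~(\ref{thm:discretelyselfsimilarspaces}), the proposition yields a \emph{new proof} of Brouwer's characterization. Using Brouwer's theorem as a black box, as you do, produces a shorter proof of the proposition itself but forfeits that downstream application — the paper would then be assuming what it later wants to derive. So your argument is a legitimate alternative, and arguably more elementary in isolation, but it is not a drop-in replacement for the paper's proof within the larger architecture.

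Two minor remarks. First, your claim that the cylinder sets form a \emph{basis} (not merely a subbasis) of clopens is correct but deserves one sentence of justification: the cylinders at level $n$ partition $I(a)$ and refine those at level $m$ for $m \le n$, so any finite intersection of cylinders is again a cylinder. Second, the bijection $I(a_n) \leftrightarrow \pr_n^{-1}(\pr_n t)$ is a set bijection as you state it; you don't actually need it to be a homeomorphism for your argument, since you only extract a cardinality statement, but it is worth noting that it is in fact a homeomorphism (both carry the profinite topology and concatenation commutes with truncation), which is what makes the recursive self-similarity of $I$ visible in your picture.
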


This is closely related to the classical fact that, up to homeomorphism, the
empty set and the Cantor set are the only totally disconnected compact
metrizable spaces with no isolated points~\cite{HY}.  In fact, our proposition
together with the discrete realizability
theorem~(\ref{thm:discretelyselfsimilarspaces}) leads to a new proof of this
fact~\cite{SS2}; see also Appendix~\ref{app:admitting}. 

\begin{proof}
Define $J: \scat{A} \go \Top$ by 
\[
J(a) =
\left\{
\begin{array}{ll}
\emptyset	&\textrm{if } I(a) = \emptyset	\\
2^\posint	&\textrm{if } I(a) \neq \emptyset.
\end{array}
\right.
\]
For each $a \in \scat{A}$, let $k(a) = \left| \sum_{b: I(b) \neq \emptyset}
M(b, a) \right| \in \nat$ and choose an isomorphism between the sets $\sum_{b:
I(b) \neq \emptyset} M(b, a)$ and $k(a)$.  (We continue to write $n$ for the
$n$-element set $\{0, \ldots, n - 1\}$.)  Then for all $a \in \scat{A}$,
\[
(M \otimes J)(a)
=
\sum_{b \in \scat{A}} M(b, a) \times J(b)
\iso
\sum_{b: I(b) \neq \emptyset} M(b, a) \times 2^\posint
\iso
k(a) \times 2^\posint.
\]
Define an isomorphism $\gamma_a: J(a) \goiso (M \otimes J)(a)$ for each $a \in
\scat{A}$ as follows.  If $I(a) = \emptyset$ then $J(a) = \emptyset = (M
\otimes J)(a)$, and we put $\gamma_a = 1_\emptyset$.  If $I(a) \neq \emptyset$
then $J(a) = 2^\posint$ and we put
\[
\gamma_a
=
\left(
2^\posint
\rTo^{ ( \psi^{ (k(a)) } )^{-1} }_\diso
k(a) \times 2^\posint
\iso
(M \otimes J)(a)
\right)
\]
where for $k \geq 2$, the homeomorphism 
\[
\psi^{(k)}: k \times 2^\posint \goiso 2^\posint
\]
is defined as in Example~\ref{eg:rec-terminal}, and $\psi^{(1)} = \id$.  We
show that this fixed point $(J, \gamma)$ satisfies
condition~(\ref{item:pr-notmet}) of the Precise Recognition Theorem.

Certainly $J$ is occupied, and each space $J(a)$ is compact and can be
equipped with the usual metric.  Now take a complex $(a_\blb, m_\blb)$.  Write
$\psi = \gamma^{-1}$.  For each $r \in \nat$, either $k(a_r) = 1$, in which
case $\psi_{m_{r + 1}}$ is an isometry, or $k(a_r) \geq 2$, in which case
$\psi_{m_{r + 1}}$ is a contraction with constant $1/3$.  The latter
case arises infinitely often: for if not, there is some $s \in \nat$ for which
$1 = k(a_s) = k(a_{s + 1}) = \cdots$, and then $|I(a_s)| = 1$, contrary to
hypothesis.  Moreover, $\diam(J(a_r)) \leq 1$ for each $r$.  So
\[
\diam(V^J_{m_1, \ldots, m_r})
=
\diam(\psi_{m_1} \ldots \psi_{m_r} J(a_r))
\to 0
\textrm{ as } r \to \infty
\]
and therefore condition~(\ref{item:pr-notmet}) of the Precise Recognition Theorem
is satisfied, as required. 
\done
\end{proof}

\begin{example*}{Walks} \label{eg:walks-newrule}
Consider again the example from~\S\ref{sec:dsss} of spaces of walks, but
suppose now that we change the rule at $0$ to read `if at position $0$, step
right'.  Thus, the first equation of the system changes from `$W_0 = W_0$' to
`$W_0 = W_1$'.  Each of the spaces $W_n$ making up the universal solution is
now infinite, and in particular $|W_n| > 1$ for all $n \in \nat$.  So by
Proposition~\ref{propn:emptyorcantor}, $W_n$ is homeomorphic to the Cantor set
for all $n \in \nat$.

Contrast the universal solution $(W_n)$ of the original set of rules; there,
$|W_0| = 1$, and since it is possible to walk to $0$ from any position $n$,
each of the spaces $W_n$ has at least one isolated point.
\end{example*}

\minihead{Non-discrete examples}

\begin{example*}{Interval}	\label{eg:rec-Freyd}
We finally prove the topological Freyd theorem~(\ref{thm:topologicalFreyd}).
So far we have verified that the $(\scat{A}, M)$ concerned is an equational
system, and exhibited an $M$-coalgebra $(J, \gamma)$ (previously written $(I,
\iota)$) with $J(0) = \{\star\}$ and $J(1) = [0, 1]$.  We apply the Crude
Recognition Theorem~(\ref{cor:cruderecognition}).  Both spaces $J(a)$ are
nonempty, compact, and can be metrized in the usual way.  Evidently $\gamma$
is invertible, so we have a fixed point $(J, \gamma = \psi^{-1})$.  For a
sector $m: b \gomod a$ in $(\scat{A}, M)$, the induced map $\psi_m: J(b) \go
J(a)$ is either constant or, in the case that $m$ is one of two sectors $1
\gomod 1$, it is one of the two maps
\[
\begin{array}{ccc}
[0, 1]	&\go		&[0, 1]		\\
t	&\goesto	&t/2		\\
t	&\goesto	&(t + 1)/2.
\end{array}
\]
All of the maps $\psi_m$ are therefore contractions.  Hence $(J, \gamma)$ is
the universal solution. 

Freyd's Theorem expresses $[0, 1]$ as two copies of itself glued end to end.
Two can be replaced by any larger number.  Thus, for each $k \geq 2$ there is
a corresponding equational system $(\scat{A}, M^{(k)})$, with $\scat{A}$ as
above and, for instance, $|M^{(k)} (1, 1)| = k$.  The multiplication map $k
\cdot \dashbk: [0, 1] \go [0, k]$ puts an $M^{(k)}$-coalgebra structure
$\gamma^{(k)}$ on the functor $J$, and the same argument shows that $(J,
\gamma^{(k)})$ is the universal solution.  So the interval, like the Cantor
set (Example~\ref{eg:rec-terminal}), is recursively realizable in infinitely
many ways.
\end{example*}

\begin{example*}{Circle}
The recursive description of the interval can easily be extended to give a
recursive description of the circle $S^1$.  The circle is the coequalizer of
the diagram 
\[
\{\star\} \parpair{0}{1} [0, 1],
\]
and the crucial observation is that all of these spaces and maps appear in the
universal solution of the Freyd system. 

Let $\scat{A}$ be the following category with $3$ objects and $2$ non-identity
arrows:
\[
\begin{diagram}[size=4em,tight]
0       &\pile{\rTo^\sigma \\ \rTo_\tau}        &1      &
\       &2.     \\
\end{diagram}
\]
The idea is to extend the Freyd system to an equational system on $\scat{A}$,
in such a way that for any $X \in \ndTop{\scat{A}}$, the space $(M \otimes
X)(2)$ is the coequalizer of $X\sigma, X\tau: X(0) \parpairu X(1)$.  So,
define a module $M: \scat{A} \gomod \scat{A}$ as follows.  The restriction of
$M$ to the full subcategory $\{ 0, 1 \}$ of $\scat{A}$ is the module of the
Freyd system.  For all $a \in \{0, 1, 2\}$, $M(2, a) =
\emptyset$.  Finally, $|M(0, 2)| = |M(1, 2)| = 1$.  

There is a nondegenerate functor $J: \scat{A} \go \Top$ given by
\[
\begin{diagram}[size=4em,tight]
\{\star\} &\pile{\rTo^0\\ \rTo_1}       &[0, 1]         &
\       &S^1.   \\
\end{diagram}
\]
The functor $M \otimes J$ can naturally be identified with
\[
\begin{diagram}[size=4em,tight]
\{\star\} &\pile{\rTo^0\\ \rTo_2}       &[0, 2]         &
\       &[0, 1]/(0 = 1)   \\
\end{diagram}
\]
(the rightmost object being $[0, 1]$ with its endpoints identified), so there
is an evident isomorphism $\gamma: J \goiso M \otimes J$.  

We show that $(J, \gamma)$ is the universal solution of $(\scat{A}, M)$ using
the Crude Recognition Theorem.  Each of the spaces $J(a)$ is compact and
nonempty.  Write $\psi = \gamma^{-1}$.  We have to check that the spaces
$J(a)$ can be metrized in such a way that for each sector $m: b \gomod a$, the
map $\psi_m: J(b) \go J(a)$ is a contraction.  For the sectors $m$ in the
Freyd system, we have already shown this in Example~\ref{eg:rec-Freyd}.  For
the sector $0 \gomod 2$, it is trivial.  The only remaining sector is $1
\gomod 2$, whose induced map is the quotient map $[0, 1] \go S^1$, and this is
a contraction if a suitably scaled-down metric on $S^1$ is chosen.  So the
Crude Recognition Theorem applies, as claimed.
\end{example*}

\minihead{Products}

Given recursive realizations of spaces $S$ and $S'$, there arises, in a
canonical way, a recursive realization of the product space $S \times S'$.
This follows from Proposition~\ref{propn:univ-soln-product} below.  We use the
fact that the category of equational systems has finite
products~(\S\ref{sec:sss}). 

\begin{lemma}
\label{lemma:productandtensor}
Let $\scat{B}$ and $\scat{B}'$ be small categories, let $Y: \scat{B}^\op \go
\Set$ and $Y': \scat{B}'^\op \go \Set$ be functors, and let $X: \scat{B} \go
\Top$ and $X': \scat{B}' \go \Top$ be functors taking values in compact
Hausdorff spaces.  Then 
\[
(Y \times Y') \otimes (X \times X')
\iso
(Y \otimes X) \times (Y' \otimes X')
\]
where on the left-hand side, `$\times$' is used in the sense of
Lemma~\ref{lemma:functors-on-products}. 
\end{lemma}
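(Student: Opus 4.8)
The plan is to exhibit the evident comparison map and prove it is a homeomorphism. On underlying sets this reduces to a fact about colimits in $\Set$, so the real content is topological. Since both sides are coends, the universal property of the left-hand side produces a natural continuous map
\[
\theta :
(Y \times Y') \otimes (X \times X')
\go
(Y \otimes X) \times (Y' \otimes X'),
\]
sending the class of $(y, y') \otimes (x, x')$ (for $b \in \scat{B}$, $b' \in \scat{B}'$, $y \in Y(b)$, $y' \in Y'(b')$, $x \in X(b)$, $x' \in X'(b')$) to $(y \otimes x,\ y' \otimes x')$; that $\theta$ is well defined is a routine check against the coend relations, using the $\scat{B}$-action on $(Y, X)$ and the $\scat{B}'$-action on $(Y', X')$ separately.

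First I would check that $\theta$ is bijective on underlying sets. The forgetful functor $U : \Top \go \Set$ is both a left and a right adjoint, hence preserves all limits and colimits; so it carries every product and every tensor product above to the corresponding construction in $\Set$, and carries $\theta$ to the analogous comparison map there. In $\Set$ finite products commute with all colimits, so by the Fubini theorem for coends together with $\elt{Y \times Y'} \iso \elt{Y} \times \elt{Y'}$ (Lemma~\ref{lemma:functors-on-products}) this $\Set$-level comparison map is a bijection. Hence $\theta$ is a continuous bijection.

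It then remains to upgrade $\theta$ to a homeomorphism, and here the hypotheses are used. Put $P = \sum_b Y(b) \times X(b)$ and $P' = \sum_{b'} Y'(b') \times X'(b')$, with canonical quotient maps $q : P \go Y \otimes X$ and $q' : P' \go Y' \otimes X'$. Distributing products over coproducts in $\Top$, and using $\elt{Y \times Y'} \iso \elt{Y} \times \elt{Y'}$ once more, the left-hand tensor product is the quotient of $P \times P'$ by the evident relation, and $\theta$ precomposed with that quotient map is exactly $q \times q'$; so it suffices to show $q \times q'$ is a topological quotient map, for then $\theta$ is a continuous bijection between two spaces both carrying the final topology on $P \times P'$ determined by that relation, hence a homeomorphism. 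Since $P$ and $P'$ are coproducts of compact Hausdorff spaces, they are locally compact Hausdorff, hence exponentiable; invoking the standard fact that the product of a quotient map with a locally compact Hausdorff space is a quotient map (and, equivalently, that forming a product with $P$, $P'$, or with any compact Hausdorff $X(b)$ or $X'(b')$ commutes with the colimits computing the tensor products), one pulls the compact Hausdorff factors through the coends and deduces that $q \times q'$ is a quotient map. That completes the argument.

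The one genuinely delicate point — and the place I expect all the work to be — is this last step: in $\Top$, the category of \emph{all} spaces (far from cartesian closed), products and quotient maps cooperate only under hypotheses like local compactness, so the compact Hausdorffness of the values of $X$ and $X'$, which is precisely what makes $P$ and $P'$ locally compact Hausdorff (and, in the intended applications where $Y$ and $Y'$ are finite, makes $Y \otimes X$ and $Y' \otimes X'$ themselves compact), must be used in an essential way; everything preceding it is formal coend bookkeeping.
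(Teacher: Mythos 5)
Your proof is essentially the paper's, just phrased in terms of an explicit quotient map rather than a chain of colimit isomorphisms. The paper writes both tensor products via the colimit formula $Y \otimes X = \Colt{(b,y) \in \elt{Y}} X(b)$, uses $\elt{Y \times Y'} \iso \elt{Y} \times \elt{Y'}$ and Fubini to split the double colimit, and then commutes a compact Hausdorff factor $X(b)$ across the inner colimit, explicitly citing that ``$K$ compact Hausdorff implies $K \times \dashbk$ preserves colimits'' — which is exactly the Whitehead/exponentiability fact you invoke. You additionally make the comparison map $\theta$ explicit, descend bijectivity to $\Set$ via the two-sided adjoint $U$, and isolate the topological content as the claim that $q \times q'$ is a quotient; the paper elides those steps, but they are routine. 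So the mathematical content matches; the difference is presentational (bare-handed quotient maps vs.\ coend bookkeeping).

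You correctly flag the delicate point, but it is worth sharpening, because the way you phrase it doesn't quite close the gap. To show $q \times q'$ is a quotient via a factorization $(\id_{Y\otimes X} \times q')\circ(q \times \id_{P'})$, Whitehead handles the right factor cleanly (since $P'$, a coproduct of compact Hausdorff spaces, is locally compact Hausdorff), but the left factor $\id_{Y\otimes X}\times q'$ asks for exponentiability of $Y\otimes X$, not of $P$ — and that is not among the stated hypotheses; the opposite factorization runs into the same wall with $Y'\otimes X'$. The paper's last isomorphism has the identical shape: after commuting $X(b)$ through the inner colimit one is left with $\Colt{(b,y)}\bigl(X(b)\times(Y'\otimes X')\bigr)$, and to pull the colimit outside one again wants $\dashbk \times (Y'\otimes X')$ to preserve it. In the lemma's actual use (Proposition~\ref{propn:univ-soln-product}) the presheaves $Y, Y'$ come from finite modules, so $Y\otimes X$ and $Y'\otimes X'$ are finite colimits of compact Hausdorff spaces; that is what rescues the last step, and both your write-up and the paper's are implicitly leaning on something of this sort. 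It is worth making that hypothesis explicit rather than ``pulling factors through the coends'' in a way that silently applies exponentiability to a space not known to be exponentiable.
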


\begin{proof}
We use the fact that if $K$ is a compact Hausdorff space then $K \times
\dashbk: \Top \go \Top$ preserves colimits.  We also use a formula from
Appendix~\ref{app:modules}: 
\[
Y \otimes X 
=
\Colt{(b, y) \in \elt{Y}} X(b).
\]
Now
\begin{eqnarray*}
(Y \times Y') \otimes (X \times X')     &\iso   &
\Colt{((b, b'), (y, y')) \in \elt{Y \times Y'}} (X \times X')(b, b')    \\
        &\iso   &
\Colt{(b, y) \in \elt{Y}} \Colt{(b', y') \in \elt{Y'}} X(b) \times X'(b')\\
        &\iso   &
\Colt{(b, y) \in \elt{Y}} X(b) \times \Colt{(b', y') \in \elt{Y'}} X'(b')\\
        &\iso   &
(Y \otimes X) \times (Y' \otimes X').
\end{eqnarray*}
\done
\end{proof}

\begin{propn}[Universal solution of product]
\label{propn:univ-soln-product}
Let $(\scat{A}, M)$ and $(\scat{A}', M')$ be equational systems with universal
solutions $(I, \iota)$ and $(I', \iota')$, respectively, in $\Top$.  Then the
product $(\scat{A}, M) \times (\scat{A}', M') = (\scat{A} \times \scat{A}', M
\times M')$ in the category of equational systems has universal solution $(I
\times I', \iota \times \iota')$ in $\Top$.
\end{propn}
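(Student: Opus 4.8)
The plan is to recognize $(I \times I', \iota \times \iota')$ as the universal solution by verifying condition~\bref{item:pr-notmet} of the Precise Recognition Theorem~(Theorem~\ref{thm:preciserecognition}) for the equational system $(\scat{A} \times \scat{A}', M \times M')$; this simultaneously confirms, via Theorem~\ref{thm:existenceofuniversalsolution}, that the product system satisfies~\So, so there is no circularity.

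First I would check that $(I \times I', \iota \times \iota')$ is a fixed point of $M \times M'$. Since $I$ and $I'$ are universal solutions, each takes values in compact Hausdorff spaces (Corollaries~\ref{cor:Iacompact} and~\ref{cor:Iahausdorff}), so $I \times I'$ does too; its underlying $\Set$-valued functor $U \of (I \times I')$ is isomorphic to $(U \of I) \times (U \of I')$, which is nondegenerate by Lemma~\ref{lemma:functors-on-products}\bref{item:prod-nd}, and every map $(If) \times (I'f')$ is a continuous map of compact Hausdorff spaces, hence closed, so $I \times I'$ is nondegenerate. By Lemma~\ref{lemma:productandtensor} there is a canonical isomorphism $(M \times M') \otimes (I \times I') \iso (M \otimes I) \times (M' \otimes I')$; composing $\iota \times \iota'$ with its inverse gives a coalgebra structure on $I \times I'$, which is an isomorphism because $\iota$ and $\iota'$ are (Corollary~\ref{cor:Iisafixedpoint}).

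Now for the three parts of condition~\bref{item:pr-notmet}. A sector of $M \times M'$ is, by definition, a pair $(m, m')$ of a sector of $M$ and a sector of $M'$, so a complex in the product system ending at $(a, a')$ consists of a complex of $(\scat{A}, M)$ ending at $a$ together with a complex of $(\scat{A}', M')$ ending at $a'$; projecting to the two factors shows $\catI(a, a') \neq \emptyset$ implies $\catI(a) \neq \emptyset$ and $\catI(a') \neq \emptyset$. Since $I$ and $I'$ are occupied this forces $I(a)$ and $I'(a')$, hence $(I \times I')(a, a') = I(a) \times I'(a')$, to be nonempty, so $I \times I'$ is occupied; and each such space is compact, being a product of compact spaces. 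For the remaining part I would first check that, under the isomorphism of Lemma~\ref{lemma:productandtensor}, the coprojection $(m, m') \otimes \dashbk$ corresponds to $(m \otimes \dashbk) \times (m' \otimes \dashbk)$, so that the algebra structure $\psi$ of the fixed point has components $\psi_{(m, m')} = \psi_m \times \psi'_{m'}$; taking images along iterated composites then gives, for any complex $((a_\blb, a'_\blb), (m_\blb, m'_\blb))$ of the product system,
\[
V^{I \times I'}_{(m_1, m'_1), \ldots, (m_n, m'_n)}
=
V^I_{m_1, \ldots, m_n} \times V^{I'}_{m'_1, \ldots, m'_n},
\]
and hence
\[
\bigcap_{n \in \nat} V^{I \times I'}_{(m_1, m'_1), \ldots, (m_n, m'_n)}
=
\Bigl( \bigcap_{n \in \nat} V^I_{m_1, \ldots, m_n} \Bigr)
\times
\Bigl( \bigcap_{n \in \nat} V^{I'}_{m'_1, \ldots, m'_n} \Bigr).
\]
Each factor on the right has at most one element by condition~\bref{item:pr-notmet} applied to the universal solutions $I$ and $I'$, so the product has at most one element. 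Theorem~\ref{thm:preciserecognition} then gives that $(I \times I', \iota \times \iota')$ is the universal solution of $(\scat{A} \times \scat{A}', M \times M')$ in $\Top$.

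The step I expect to be the main obstacle is the single genuine verification above: that the canonical isomorphism of Lemma~\ref{lemma:productandtensor} is compatible with coprojections, so that $V^{I \times I'}$ splits as a product of $V^I$ and $V^{I'}$. This means tracing the explicit colimit description of $\otimes$ from Appendix~\ref{app:modules} through the chain of isomorphisms in the proof of that lemma and confirming that the $(m, m')$-coprojection is carried to the product of the $m$- and $m'$-coprojections. It is routine, but the whole argument hinges on it; everything else is formal.
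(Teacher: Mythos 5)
Your proof is correct and follows the same approach as the paper: you check nondegeneracy via Lemma~\ref{lemma:functors-on-products} and compact Hausdorffness, obtain the coalgebra isomorphism from Lemma~\ref{lemma:productandtensor}, and verify condition~\bref{item:pr-notmet} of the Precise Recognition Theorem by observing occupancy, compactness, and the product splitting of the sets $V^{I\times I'}$. The paper states the last step ("this follows from the fact that it is satisfied by $(I,\iota)$ and $(I',\iota')$") without detail; your tracing of $\psi_{(m,m')}=\psi_m\times\psi'_{m'}$ through the coprojection is a correct and welcome expansion of that remark.
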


\begin{proof}
The functor $I \times I': \scat{A} \times \scat{A}' \go \Top$ is nondegenerate:
for
\[
U \of (I \times I')
\iso
(U \of I) \times (U \of I'):
\scat{A} \times \scat{A}'
\go 
\Set
\]
is nondegenerate by Lemma~\ref{lemma:functors-on-products}\bref{item:prod-nd},
and $I(a) \times I'(a')$ is compact Hausdorff for all $a \in \scat{A}$, $a'
\in \scat{A}'$.  By Lemma~\ref{lemma:productandtensor}, we have a natural
isomorphism 
\[
\iota \times \iota':
I \times I'
\goiso
(M \otimes I) \times (M' \otimes I')
\iso
(M \times M') \otimes (I \times I').
\]
Also, $I \times I'$ is occupied since $I$ and $I'$ are.  To finish the proof
it remains only to verify that $(I \times I', \iota \times \iota')$ satisfies
the main condition in~\bref{item:pr-notmet} of the Precise Recognition
Theorem, and this follows from the fact that it is satisfied by $(I, \iota)$
and $(I', \iota')$.
\done
\end{proof}

\begin{example*}{Cubes}	\label{eg:rec-cubes}
Let $(\scat{A}, M)$ be the Freyd system.  Then by
Proposition~\ref{propn:univ-soln-product}, $(\scat{A}^2, M^2)$ has a universal
solution $(I, \iota)$ satisfying $I(1, 1) = [0, 1]^2$.  Informally, the
self-similarity equations are
\[
\begin{array}{rclcrcl}
\zmark		&=	&\zmark	&
\diagspace	&
\begin{array}{c}
\setlength{\unitlength}{1em}
\begin{picture}(3,0.5)(-1.5,-0.25)
\put(-1.3,0){\line(1,0){2.6}}
\cell{-1.3}{0}{c}{\zmark}
\cell{1.3}{0}{c}{\zmark}
\end{picture}
\end{array}
&
=	&
\begin{array}{c}
\setlength{\unitlength}{1em}
\begin{picture}(5.6,0.5)(-2.8,-0.25)
\put(0,0){\line(1,0){2.6}}
\put(0,0){\line(-1,0){2.6}}
\cell{-2.6}{0}{c}{\zmark}
\cell{0}{0}{c}{\zmark}
\cell{2.6}{0}{c}{\zmark}
\end{picture}
\end{array}
\\
&&&&&&\\
\begin{array}{c}
\setlength{\unitlength}{1em}
\begin{picture}(0.5,3)(-0.25,-1.5)
\put(0,-1.3){\line(0,1){2.6}}
\cell{0}{-1.3}{c}{\zmark}
\cell{0}{1.3}{c}{\zmark}
\end{picture}
\end{array}
&
=	&
\begin{array}{c}
\setlength{\unitlength}{1em}
\begin{picture}(0.5,5.6)(-0.25,-2.8)
\put(0,0){\line(0,1){2.6}}
\put(0,0){\line(0,-1){2.6}}
\cell{0}{-2.6}{c}{\zmark}
\cell{0}{0}{c}{\zmark}
\cell{0}{2.6}{c}{\zmark}
\end{picture}
\end{array}
&
\diagspace	&
\begin{array}{c}
\setlength{\unitlength}{1em}
\begin{picture}(3,3)(-1.5,-1.5)
\put(-1.3,-1.3){\textcolor{grey}{\rule{2.6\unitlength}{2.6\unitlength}}}
\put(-1.3,-1.3){\line(1,0){2.6}}
\put(-1.3,1.3){\line(1,0){2.6}}
\put(-1.3,-1.3){\line(0,1){2.6}}
\put(1.3,-1.3){\line(0,1){2.6}}
\cell{-1.3}{-1.3}{c}{\zmark}
\cell{1.3}{-1.3}{c}{\zmark}
\cell{-1.3}{1.3}{c}{\zmark}
\cell{1.3}{1.3}{c}{\zmark}
\end{picture}
\end{array}
&
=	&
\begin{array}{c}
\setlength{\unitlength}{1em}
\begin{picture}(5.6,5.6)(-2.8,-2.8)
\put(-2.6,-2.6){\textcolor{grey}{\rule{5.2\unitlength}{5.2\unitlength}}}
\put(-2.6,-2.6){\line(0,1){5.2}}
\put(0,-2.6){\line(0,1){5.2}}
\put(2.6,-2.6){\line(0,1){5.2}}
\put(-2.6,-2.6){\line(1,0){5.2}}
\put(-2.6,0){\line(1,0){5.2}}
\put(-2.6,2.6){\line(1,0){5.2}}
\cell{-2.6}{-2.6}{c}{\zmark}
\cell{0}{-2.6}{c}{\zmark}
\cell{2.6}{-2.6}{c}{\zmark}
\cell{-2.6}{0}{c}{\zmark}
\cell{0}{0}{c}{\zmark}
\cell{2.6}{0}{c}{\zmark}
\cell{-2.6}{2.6}{c}{\zmark}
\cell{0}{2.6}{c}{\zmark}
\cell{2.6}{2.6}{c}{\zmark}
\end{picture}
\end{array}
.
\end{array}
\]
A similar statement holds in higher dimensions.
\end{example*}

\minihead{Further non-discrete examples}

An \demph{iterated function system} on $\reals^d$ is a family $\psi_0, \ldots,
\psi_n$ ($n\geq 0$) of contractions $\reals^d \go \reals^d$.  By a theorem of
Hutchinson~\cite{Hut}, there is a unique nonempty compact subset $S$ of
$\reals^d$ satisfying $S = \bigcup_{i = 0}^n \psi_i S$, the \demph{attractor}
of the system.  Various familiar self-similar spaces arise in this way.

\begin{example*}{Sierpi\'nski simplices}	\label{eg:rec-Sierpinski}
Let $n\in\nat$ and let $s_0, \ldots, s_n$ be affinely independent points of
$\reals^n$.  For each $i \in \{ 0, \ldots, n \}$, write $\psi_i: \reals^n \go
\reals^n$ for the scaling with scale factor $1/2$ and fixed point $s_i$.
The \demph{Sierpi\'nski simplex} with vertices $s_0, \ldots, s_n$ is the
attractor of the iterated function system $(\psi_0, \ldots, \psi_n)$.  When $n
= 1$, it is the closed interval with endpoints $s_0$ and $s_1$.  When $n = 2$,
it is the usual Sierpi\'nski triangle or gasket $S$, which satisfies an
isomorphism expressed informally as
\[
\begin{array}{c}
\includegraphics[width=13.6em]{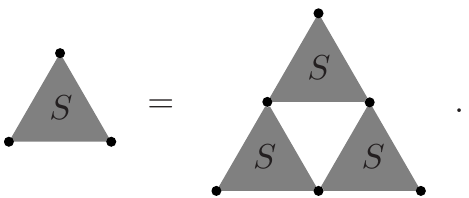}
\end{array}
\]

Now take any $n \in \nat$ and $s_0, \ldots, s_n$ as above, and write $S$ for
the resulting Sierpi\'nski simplex.  We construct an equational system
whose universal solution is $S$ (equipped with some extra structure).  

Let $\scat{A}$ be the category with objects $0$ and $1$ and
non-identity arrows $\sigma_0, \ldots, \sigma_n: 0 \go 1$.  Define an
equivalence relation $\sim$ on $\{ 0, \ldots, n \}^2$ by $(i, j) \sim (i',
j')$ if and only if $\{ i, j \} = \{ i', j' \}$, and write $[i, j]$ for the
equivalence class of $(i, j)$.  Define $M: \scat{A} \gomod \scat{A}$ by
\begin{center}
\begin{tabular}[c]{c|l}
        &
\begin{diagram}[width=4em,height=3em,tight]
M(\dashbk, 0)							&
\pile{\rTo^{\sigma_0 \cdot \dashbk}\\ 
	\avdots\\ 
	\rTo_{\sigma_n \cdot \dashbk}}				&
M(\dashbk, 1)							\\
\end{diagram}
        \\[4ex]
\hline
\raisebox{-4ex}{%
\begin{diagram}[width=3em,height=3em,tight]
M(0, \dashbk)   \\
\uTo<{\dashbk \cdot \sigma_0} 
\cdots 
\uTo>{\dashbk \cdot \sigma_n}   \\
M(1, \dashbk)   \\
\end{diagram}}
        &
\hspace*{1em}\raisebox{-4ex}{%
\begin{diagram}[width=4em,height=3em,tight]
\{ \id \}							&
\pile{\rTo^{[0, 0]}\\ \avdots\\ \rTo_{[n, n]}}			&
\{ 0, \ldots, n \}^2 / \sim					\\
\uTo \cdots \uTo						&
								&
\uTo<{[\dashbk, 0]} \cdots \uTo>{[\dashbk, n]}			\\
\emptyset							&
\pile{\rTo\\ \avdots\\ \rTo}					&
\{ 0, \ldots, n \}.						\\
\end{diagram}}
\end{tabular}
\end{center}
Then $(\scat{A}, M)$ is an equational system.  

Any space $X_1$ equipped with distinct basepoints $x_0, \ldots, x_n$
determines a nondegenerate functor $X: \scat{A} \go \Top$, with $X(0) =
\{\star\}$ and $X(1) = X_1$.  Then $M \otimes X$ is the functor determined by
the quotient space
\[
\frac{ 
\{ 0, \ldots, n \} \times X_1
}{
(i, x_j) = (j, x_i) \textrm{ for all } i, j
}
\]
with basepoints $(0, x_0), \ldots, (n, x_n)$.  

In particular, $(S, s_0, \ldots, s_n)$ determines a nondegenerate functor $J:
\scat{A} \go \Top$.  The function
\begin{equation}	\label{eq:Sierpinski-map}
\begin{array}{ccc}
\{ 0, \ldots, n \} \times S	&\go		&S		\\
(i, s)				&\goesto	&\psi_i(s)
\end{array}
\end{equation}
induces a map $(M \otimes J)(1) \go S = J(1)$, since $\psi_i(s_j) = \half(s_i
+ s_j) = \psi_j(s_i)$ for all $i, j$.  This map is surjective by definition of
$S$.  It is injective since each map $\psi_i$ is injective and 
\[
\psi_i S \cap \psi_j S
=
\{ \psi_i(s_j) \} 
=
\{ \psi_j(s_i) \}
\]
whenever $i \neq j$.  It also preserves basepoints.  So we have an isomorphism
$\psi: M \otimes J \go J$.  The spaces $J(0) = \{\star\}$ and $J(1) = S$ are
nonempty and compact, and the structure maps $\psi_i: J(1) \go J(1)$ are
contractions, so by the Crude Recognition Theorem, $(J, \psi^{-1})$ is the
universal solution of $(\scat{A}, M)$.

We have therefore realized the $n$-dimensional Sierpi\'nski simplex as the
solution of an equational system, in a way that formalizes the idea that it is
homeomorphic to a gluing of $(n + 1)$ half-sized copies of itself.
\end{example*}

\begin{example*}{Iterated function systems}	\label{eg:rec-ifs}
More generally, let $(\psi_0, \ldots, \psi_n)$ be an iterated function system
on $\reals^d$ (some $d\in\nat$).  Write $S$ for its attractor, and $s_i$ for
the fixed point of $\psi_i$.  Suppose that $\psi_0, \ldots, \psi_n$ are
injective, that $s_0, \ldots, s_n$ are distinct, and that if $\psi_i(s) =
\psi_j(t)$ with $i \neq j$ and $s, t \in S$ then $s, t \in \{ s_0, \ldots, s_n
\}$.  Then the space $S$ can be realized by a finite equational system, as
follows.

Define an equivalence relation $\sim$ on $\{ 0, \ldots, n \}^2$ by $(i, j)
\sim (i', j') \iff \psi_i(s_j) = \psi_{i'}(s_{j'})$, and write $[i, j]$ for
the equivalence class of $(i, j)$.  Proceeding exactly as in the previous
example, this equivalence relation gives rise to an equational system
$(\scat{A}, M)$; the space $S$ with basepoints $s_0, \ldots, s_n$ determines a
nondegenerate functor $J: \scat{A} \go \Top$; the maps $\psi_i$ determine an
isomorphism $M \otimes J \goiso J$; and by the Crude Recognition Theorem, this
is the universal solution of $(\scat{A}, M)$.

Even for iterated function systems not within the scope of this example, the
attractor may still have a straightforward description as a universal
solution: $[0, 1]^n$ in Example~\ref{eg:rec-cubes}, for instance.
\end{example*}

\begin{example*}{Barycentric subdivision}	\label{eg:rec-bary}
Barycentric subdivision expresses the $n$-simplex $\Delta^n$ as $(n + 1)!$
smaller copies of itself glued together along simplices of lower
dimension.  This self-similarity can be formalized as follows.

Let $\Dface$ be the category whose objects are the nonempty finite totally
ordered sets $\upr{n} = \{ 0, \ldots, n \}$ ($n \in \nat$) and whose maps are
the order-preserving injections.  For each $n, m \in \nat$, put
\[
M( \upr{n}, \upr{m} )
=
\{
\textrm{chains }
\emptyset \propersub Q(0) \propersub \cdots \propersub Q(n) \sub \upr{m}
\}
\]
where $\propersub$ means proper subset.  (This can be regarded as the set of
$n$-simplices occurring in the barycentric subdivision of $\Delta^m$.  It is
empty unless $n \leq m$.)  The idea can be seen in Figure~\ref{fig:bary}:
\begin{figure}
\centering
\includegraphics[width=9.3em]{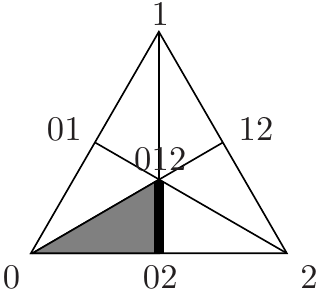}
\caption{Barycentric subdivision of $\Delta^2$}
\label{fig:bary}
\end{figure}
the $1$-simplex in bold and the shaded 2-simplex correspond respectively to
\begin{eqnarray*}
\left(
\emptyset \propersub \{ 0, 2 \} \propersub \{ 0, 1, 2 \}
\right)	&
\in	&
M(\upr{1}, \upr{2}),	\\
\left(
\emptyset \propersub \{ 0 \} \propersub \{ 0, 2 \} \propersub \{ 0, 1, 2 \}
\right)	&
\in	&
M(\upr{2}, \upr{2}).
\end{eqnarray*}
An element of $M(\upr{n}, \upr{m})$ can be regarded as an order-preserving
injection $\upr{n} \go \nepower \upr{m}$, where $\nepower$ denotes the set of
nonempty subsets ordered by inclusion.  By using direct images, $\nepower
\upr{m}$ is functorial in $\upr{m}$, so $M$ defines a module $\Dface \gomod
\Dface$.  It can be checked that $M$ is nondegenerate using the explicit
conditions~\cstyle{ND1} and~\cstyle{ND2} (\S\ref{sec:nondegen}).  And clearly
$M$ is finite, so $(\Dface, M)$ is an equational system.

We will show that the universal solution is given by the standard topological
simplex functor $\Delta^\blob: \Dface \go \Top$.  For each $n\in\nat$, fix an
affinely independent sequence $\vtr{e}^n_0, \ldots, \vtr{e}^n_n$ of points in
$\reals^n$, and let $\Delta^n$ be their convex hull.  Then for each map $f:
\upr{n} \go \upr{m}$ in $\Dface$ there is a unique affine map $\reals^n \go
\reals^m$ sending $\vtr{e}^n_j$ to $\vtr{e}^m_{f(j)}$ for each $j$, which
restricts to a map $\Delta f = f_*: \Delta^n \go \Delta^m$.  It is
straightforward to check that $U \of \Delta^\blob: \Dface \go \Set$ is
nondegenerate, again using conditions~\cstyle{ND1} and~\cstyle{ND2}.  (Roughly
speaking, this expresses the fact that the intersection of two faces of a
simplex, if not empty, is again a face.)  Moreover, each space $\Delta^n$ is
compact Hausdorff, so $\Delta^\blob$ is nondegenerate.

We construct an isomorphism $M \otimes \Delta^\blb \iso \Delta^\blb$.
(This expresses the fact that we really do have a subdivision.)  By the
universal property of tensor product (Appendix~\ref{app:modules}), a natural
transformation $\psi: M \otimes \Delta^\blb \go \Delta^\blb$ amounts to a
choice, for each sector $Q: \upr{n} \gomod \upr{m}$, of a map $\psi_Q:
\Delta^n \go \Delta^m$, satisfying the naturality condition $\psi_{fQg} = f_*
\of \psi_Q \of g_*$ for all $f$, $Q$ and $g$.  Indeed, given such a $Q$, there
is a unique affine map $\reals^n \go \reals^m$ such that 
\[
\vtr{e}^n_j
\goesto
\frac{1}{|Q(j)|}
\sum_{i \in Q(j)} \vtr{e}_i^m
\]
for all $j \in \upr{n}$, and this restricts to a map $\psi_Q: \Delta^n \go
\Delta^m$.  The naturality condition is easily verified.

This natural tranformation $\psi: M \otimes \Delta^\blb \go \Delta^\blb$ is
indeed an isomorphism.  To prove this, it suffices to show that for each $m
\in \nat$, the continuous map
\begin{equation}
\label{eq:bary-psi}
\psi:
M(\dashbk, \upr{m}) \otimes \Delta^\blb
\go 
\Delta^m
\end{equation}
is a homeomorphism.  Its domain is compact and its codomain Hausdorff, so in
fact it suffices to show that it is a bijection.  The inverse is constructed
as follows.  Let $\vtr{s} \in \Delta^m$; then $\vtr{s} = \sum_{i = 0}^m s_i
\vtr{e}^m_i$ with $s_i \geq 0$ and $\sum s_i = 1$.  There are unique $n \in
\nat$ and $s'_0 > \cdots > s'_n > s'_{n+1} = 0$ such that
\[
\{ s'_0, \ldots, s'_n, s'_{n+1} \}
=
\{ s_0, \ldots, s_m, 0 \},
\]
and we may define $q: \upr{m} \go \upr{n+1}$ by $s_i = s'_{q(i)}$.  For $j \in
\upr{n}$, put
\[
Q(j)		=	q^{-1} \{ 0, \ldots, j \},		
\diagspace
t_j		=	(s'_j - s'_{j + 1}) |Q(j)|,	
\diagspace
\vtr{t}		=	\sum_{j = 0}^n t_j \vtr{e}^n_j.
\]
A series of straightforward checks shows that $Q \in M(\upr{n}, \upr{m})$,
$\vtr{t} \in \Delta^n$, and the inverse to~(\ref{eq:bary-psi}) is given by $s
\goesto Q \otimes t$.

We now verify condition~(\ref{item:pr-met}) of the Precise Recognition
Theorem.  A standard calculation~\cite[2.21]{Hat} shows that for any
$Q: \upr{n} \gomod \upr{m}$,
\[
\diam (\psi_Q \Delta^n)
\leq
\frac{m}{m+1} \diam (\Delta^m)
\]
in the Euclidean metric.  More generally, if
\[
\upr{n_r} \gobymod{Q_r} \cdots \gobymod{Q_1} \upr{n_0}
\]
then the same method shows that
\begin{eqnarray*}
\diam (V^J_{Q_1, \ldots, Q_r})  &
=       &
\diam (\psi_{Q_1} \cdots \psi_{Q_r} \Delta^{n_r})	\\
        &
\leq	&
\left( \frac{n_{r-1}}{n_{r-1} + 1} \right)
\cdots
\left( \frac{n_0}{n_0 + 1} \right)
\diam (\Delta^{n_0})	\\
	&
\leq	&
\left( \frac{n_0}{n_0 + 1} \right)^r
\diam (\Delta^{n_0}).
\end{eqnarray*}
Condition~(\ref{item:pr-met}) follows.  

Hence the topological simplex functor $\Delta^\blb$ is the universal solution
to the equational system embodying the combinatorial process of barycentric
subdivision.
\end{example*}

Examples~\ref{eg:rec-terminal} (Cantor set) and~\ref{eg:rec-Freyd} (interval)
showed that the same space may have multiple different recursive realizations.
It may be thought that the different realizations in those examples were not
dramatically different.  But the previous example and the next illustrate a
greater contrast.

\begin{example*}{Edgewise subdivision}	\label{eg:rec-edgewise}
The topological simplex functor $\Delta^\blb: \Dface \go \Top$ can also be
characterized by edgewise subdivision.  This subdivision
(Figure~\ref{fig:edgewise} and~\cite{Freu})
\begin{figure}
\centering
\includegraphics[width=25.2em]{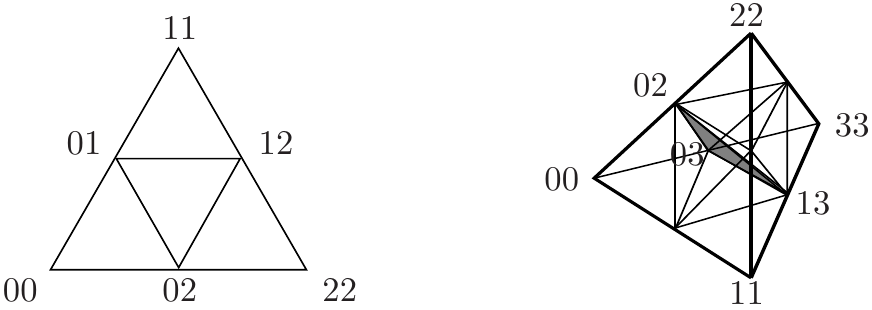}
\caption{Edgewise subdivisions of $\Delta^2$ and $\Delta^3$}
\label{fig:edgewise}
\end{figure}
expresses $\Delta^n$ as $2^n$ smaller copies of itself glued together.  It can
be viewed as an equational system $(\Dface, M)$ where 
\begin{eqnarray*}
M(\upr{n}, \upr{m})	&
=	&
\{
\textrm{order-preserving injections }
(p, q): \upr{n} \go \upr{m} \times \upr{m}	\\
	&	&
\ \,\textrm{such that }
p(n) \leq q(0)
\}
\end{eqnarray*}
and $\upr{m} \times \upr{m}$ is the product in the category of posets.
(Again, this set indexes the $n$-simplices occurring in the subdivision of
$\Delta^m$, and again, it is empty unless $n \leq m$.)  For instance, the
shaded 2-simplex inside the 3-simplex in Figure~\ref{fig:edgewise} is the
sector $\upr{2} \gomod \upr{3}$ given by the order-preserving
injection $\upr{2} \go \upr{3} \times \upr{3}$ with image $\{ (0, 2), (0, 3),
(1, 3) \}$.  Again it can be shown that $\Delta^\blob: \Dface \go \Top$, with
a canonical $M$-coalgebra structure, is the universal solution.
\end{example*}


\appendix

\section{Appendix: Modules}
\label{app:modules}

Here we state some basic features of the theory of modules over categories,
continuing the remarks at the end of the Introduction.

Much of this theory can be understood by analogy with the theory of modules
and bimodules in the ordinary sense of algebra.  It was already noted in the
Introduction that when $\scat{A}$ and $\scat{B}$ are monoids, seen as
one-object categories, a module $\scat{B} \gomod \scat{A}$ is a set with
compatible left $\scat{A}$- and right $\scat{B}$-actions.  If we work with
categories enriched in abelian groups, then a one-object category is exactly a
ring and a module $\scat{B} \gomod \scat{A}$ between rings $\scat{A}$ and
$\scat{B}$ is exactly an $(\scat{A}, \scat{B})$-bimodule.  In fact, the theory
of categorical modules can be developed in the generality of enriched
categories, and this general theory contains many parts of the theory of
algebraic (bi)modules.  For example, there are notions of tensor product and
flatness of categorical modules, generalizing the notions from algebra.

Indeed, given rings $A$, $B$ and $C$, an $(A, B)$-bimodule $M$, and a $(B,
C)$-bimodule $N$, there arises an $(A, C)$-bimodule $M \otimes_B N$.  There
is a similar tensor product of categorical modules:
\[
\scat{C} \gobymod{N} \scat{B} \gobymod{M} \scat{A}
\diagspace
\textrm{gives rise to}
\diagspace
\scat{C} \gobymod{M \otimes N} \scat{A}.
\]
Here $M \otimes N$ is defined by the coend formula
\[
(M \otimes N) (c, a)
=
\int^b M(b, a) \times N(c, b).
\]
Coends are explained in~\cite[Ch.~IX]{CWM}; concretely,
\[
(M \otimes N) (c, a)
=
\left(
\sum_{b \in \scat{B}} M(b, a) \times N(c, b)
\right)
/\sim
\]
where $\sim$ is the equivalence relation generated by $(mg, n) \sim (m, gn)$
for all $m \in M(b, a)$, $g \in \scat{B}(b', b)$ and $n \in N(c, b')$.  The
element of $(M \otimes N) (c, a)$ represented by $(m, n) \in M(b, a) \times
N(c, b)$ is written $m \otimes n$.  The tensor product of modules is
associative and unital up to coherent isomorphism.  (More precisely,
categories, modules, and their maps form a bicategory: \cite[7.8.2]{Bor1}.)

In the special case where $\scat{C}$ is the terminal category $\One$, the
tensor product construction gives for each module $M: \scat{B} \gomod
\scat{A}$ and functor $X: \scat{B} \go \Set$ a new functor $M \otimes X:
\scat{A} \go \Set$.  Concretely,
\[
(M \otimes X)(a)
=
\int^b M(b, a) \times X(b)
=
\left( 
\sum_{b \in \scat{B}} M(b, a) \times X(b) 
\right) 
/\sim
\]
where $a \in \scat{A}$ and $\sim$ is as above.  An equivalent formulation uses
the notion of category of elements (defined after Example~\ref{eg:Julia}):
\[
(M \otimes X)(a)
=
\Colt{(b, m) \in \elt{M(\dashbk, a)}} X(b),
\]
where the right-hand side is a colimit over objects $(b, m)$ of
$\elt{M(\dashbk, a)}$, the category of elements of $M(\dashbk, a):
\scat{B}^\op \go \Set$.  Both the coend and colimit formulations continue to
make sense when $X$ takes values not in $\Set$ but in some other category
$\cat{E}$ with small colimits; thus, 
\[
X: \scat{B} \go \cat{E}
\diagspace 
\textrm{and}
\diagspace
M: \scat{B}^\op \times \scat{A} \go \Set
\]
give rise to $M \otimes X: \scat{A} \go \cat{E}$.  This product $M \otimes X$
can be characterized by a universal property: for any functor $Z: \scat{A} \go
\cat{E}$, the natural transformations $\psi: M \otimes X \go Z$ are in natural
bijection with the families 
\[
\Bigl(
X(b) \goby{\psi_m} Z(a)
\Bigr)_{b \gobymod{m} a}
\]
of maps in $\cat{E}$ (indexed over all $b \in \scat{B}$, $a \in \scat{A}$ and
$m \in M(b, a)$) such that $\psi_{fmg} = (Zf) \of \psi_m \of (Xg)$ for all
\[
b' \goby{g} b \gobymod{m} a \goby{f} a'.
\]

In the even more special case $\scat{C} = \scat{A} = \One$, the tensor product
construction gives for each pair of functors $X: \scat{B} \go \Set$, $Y:
\scat{B}^\op \go \Set$ a set
\[
Y \otimes X 
=
\int^b Y(b) \times X(b)
=
\left( 
\sum_{b \in \scat{B}} Y(b) \times X(b) 
\right) 
/\!\sim\ \:
=
\Colt{(b, y) \in \elt{Y}} X(b).
\]
(Again, the construction also makes sense for $X: \scat{B} \go \cat{E}$ and
$Y: \scat{B}^\op \go \Set$, for suitable categories $\cat{E}$; then $Y \otimes
X \in \cat{E}$.)  In fact, the general construction can be written in terms of
this very special case: for modules $M$ and $N$ as above, and $a \in
\scat{A}$, $c \in \scat{C}$, we have functors 
\[
N(c, \dashbk): \scat{B} \go \Set,
\diagspace
M(\dashbk, a): \scat{B}^\op \go \Set,
\]
and then
\[
(M \otimes N)(c, a) = M(\dashbk, a) \otimes N(c, \dashbk).
\]

The notion of commutative diagram in a category $\scat{A}$ can be extended to
include elements of a module $M: \scat{A} \gomod \scat{A}$.  For instance, the
diagram
\[
\begin{diagram}
a_2	&\rMod^{m_2}	&a_1	&\rMod^{m_1}	&a_0	\\
\dTo<{f_2}&		&\dTo>{f_1}&		&\dTo>{f_0}\\
a'_2	&\rMod_{m'_2}	&a'_1	&\rMod_{m'_1}	&a'_0	\\
\end{diagram}
\]
is said to \demph{commute} if $m'_2 f_2 = f_1 m_2$ and $m'_1 f_1 = f_0
m_1$.  We never attempt to compose paths containing more than one crossed
arrow $\gomod$.


\section{Appendix: Solvability}
\label{app:solv}

Here we finish the proof of:
\begin{thm}[Existence of universal solution]
\label{thm:existenceofuniversalsolution}
Let $(\scat{A}, M)$ be an equational system.  The following are equivalent:
\begin{enumerate}
\item	\label{item:ex-S}
$(\scat{A}, M)$ satisfies the solvability condition \So\
of~\S\ref{sec:construction} 
\item	\label{item:ex-Top}
$(\scat{A}, M)$ has a universal solution in $\Top$
\item	\label{item:ex-Set}
$(\scat{A}, M)$ has a universal solution in $\Set$.
\end{enumerate}
In that case, the universal solution in $\Set$ is the underlying
coalgebra in $\Set$ of the universal solution in $\Top$.
\end{thm}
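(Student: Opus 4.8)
The plan is to prove the cycle \bref{item:ex-S}$\implies$\bref{item:ex-Top}$\implies$\bref{item:ex-Set}$\implies$\bref{item:ex-S}, which together with \bref{item:ex-S}$\implies$\bref{item:ex-Set} gives all three equivalences. The implications \bref{item:ex-S}$\implies$\bref{item:ex-Top} and \bref{item:ex-S}$\implies$\bref{item:ex-Set} are Theorems~\ref{thm:universalsolutioninTop} and~\ref{thm:universalsolutioninSet}: when \So\ holds, the explicitly constructed coalgebra $(I,\iota)$ of~\S\ref{sec:construction} is the universal solution over $\Top$, respectively $\Set$. The implication \bref{item:ex-Top}$\implies$\bref{item:ex-Set} is the second half of Proposition~\ref{propn:coalgebrasinTopandSet}. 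That proposition also yields the final assertion at once: assuming the equivalent conditions hold, the universal solution in $\Top$ is $(I,\iota)$ by Theorem~\ref{thm:universalsolutioninTop}, hence $U_*(I,\iota)$ is a universal solution in $\Set$ by Proposition~\ref{propn:coalgebrasinTopandSet}, and so \emph{the} one by uniqueness; but $U_*(I,\iota)$ is nothing other than $(I,\iota)$ with its topologies forgotten. Everything therefore reduces to proving \bref{item:ex-Set}$\implies$\bref{item:ex-S}: if $(\scat{A},M)$ has a universal solution $(J,\gamma)$ in $\Set$, then \cstyle{S1} and \cstyle{S2} hold.

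For this, write $\psi=\gamma^{-1}$, so that $(J,\psi)$ is an $M$-algebra; recall that $J$ is nondegenerate by definition, hence satisfies \cstyle{ND1} and \cstyle{ND2}. The strategy is to funnel the data defining \cstyle{S1} and \cstyle{S2} through the tautological coalgebra and then apply \cstyle{ND1}, respectively \cstyle{ND2}, to $J$. For any equational system, $\oba\catI$ is nondegenerate and $(\oba\catI,\iota)$ is an $M$-coalgebra in $\Set$ (\S\ref{sec:construction}), so terminality of $(J,\gamma)$ provides a unique coalgebra map $u\colon(\oba\catI,\iota)\to(J,\gamma)$. Given the \cstyle{S1} ladder, with rows $(a_\blb,m_\blb)$, $(b_\blb,p_\blb)$, $(a'_\blb,m'_\blb)$ and comparison maps $f_\blb,f'_\blb$, the outer rows are objects $\delta\in\catI(a_0)$ and $\delta'\in\catI(a'_0)$, and the comparison maps exhibit morphisms of complexes $(\oba\catI f_0)(\delta)\to(b_\blb,p_\blb)$ and $(\oba\catI f'_0)(\delta')\to(b_\blb,p_\blb)$ in $\catI(b_0)$. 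Applying $u$ and naturality, $f_0 u_{a_0}(\delta)$ and $f'_0 u_{a'_0}(\delta')$ are the $u$-images of two complexes lying in a common connected-component of $\catI(b_0)$. Granting the key lemma below, these images coincide, so $f_0 u_{a_0}(\delta)=f'_0 u_{a'_0}(\delta')$ in $J(b_0)$; then \cstyle{ND1} for $J$ supplies, over the cospan $a_0\xrightarrow{f_0}b_0\xleftarrow{f'_0}a'_0$, precisely the commutative square demanded by \cstyle{S1}. The verification of \cstyle{S2} is identical: a serially commutative diagram produces two complexes mapping to a common one, hence $f_0 u(\delta)=f'_0 u(\delta')$, and \cstyle{ND2} for $J$ delivers the required fork.

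The crux is therefore the \emph{key lemma}: the coalgebra map $u\colon\oba\catI\to J$ factors through the quotient $\pi\colon\oba\catI\to I=\Pi_0\of\catI$; equivalently, $u_a(\delta)=u_a(\delta')$ whenever $\delta,\delta'$ lie in the same connected-component of $\catI(a)$. This cannot simply be quoted, since all the results of~\S\S\ref{sec:construction}--\ref{sec:set} relating $\oba\catI$, $I$ and coalgebra maps presuppose \So. I would prove it by a bisimulation argument. Let $R$ be the subfunctor of $J\times J$ whose value at $a$ consists of the pairs $(u_a(\delta),u_a(\delta'))$ with $\delta,\delta'\in\catI(a)$ connected; this is a genuine subfunctor, by functoriality of $\catI(\dashbk)$ and naturality of $u$. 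Using finiteness of $M$ together with the essential-uniqueness-of-resolutions machinery --- the inductive span construction of Proposition~\ref{propn:uniqueresos}, together with \cstyle{ND1} for $J$ --- one shows that for every $(t,t')\in R(a)$ there exist a sector $q\colon b\gomod a$ and $(s,s')\in R(b)$ with $\gamma_a(t)=q\otimes s$ and $\gamma_a(t')=q\otimes s'$; this equips $R$ with an $M$-coalgebra structure for which both projections $R\to J$ are coalgebra maps. By terminality of $(J,\gamma)$ the two projections coincide, so $R(a)\sub\Delta_{J(a)}$ for every $a$, which is the lemma. (Alternatively one can argue concretely: each of the two $u_{b_0}$-images resolves along a common complex, with connected resolution-data at every level, and a K\"onig's-Lemma argument on the finite truncations --- in the spirit of Propositions~\ref{propn:equalityinIa} and~\ref{propn:relationsdetermineequality} --- forces them to agree.) The hard part, I expect, lies exactly here: checking that $R$ is nondegenerate and that the coalgebra structure just sketched is well defined and natural will account for most of the work, whereas the passage to \cstyle{ND1}/\cstyle{ND2} is routine.
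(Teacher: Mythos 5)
Your architecture is right, and it matches the paper's: the cycle
\bref{item:ex-S}$\implies$\bref{item:ex-Top}$\implies$\bref{item:ex-Set}$\implies$\bref{item:ex-S}, with the first two arrows and the final sentence read off from Theorems~\ref{thm:universalsolutioninTop}, \ref{thm:universalsolutioninSet} and Proposition~\ref{propn:coalgebrasinTopandSet}, leaving \bref{item:ex-Set}$\implies$\bref{item:ex-S} as the only real work. You also correctly isolate the key lemma: the unique coalgebra map $u\colon(\oba\catI,\iota)\to(J,\gamma)$ is constant on connected-components of $\catI(a)$, i.e.\ factors through $\pi\colon\oba\catI\to I$. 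Your deduction of \cstyle{S1} and \cstyle{S2} from this, by transporting the data into $J$ and applying \cstyle{ND1}/\cstyle{ND2} for $J$, is a sound concrete alternative to what the paper does (the paper instead proves Lemma~\ref{lemma:S-nd}, that \So\ is equivalent to the existence of \emph{any} natural transformation from $I$ to a nondegenerate functor, by transporting cones along the induced functor $\elt{\catI}\to\elt{J}$; the two arguments encode the same idea).

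The gap is in your proposed proof of the key lemma, and it is a genuine one. To invoke terminality of $(J,\gamma)$ you must produce an \emph{object of $\Coalg{M}{\Set}$}, and objects of that category are required to be nondegenerate. Your bisimulation $R\sub J\times J$ is a subfunctor of a binary product, and neither operation preserves nondegeneracy: $J\times J$ (as a functor $a\mapsto J(a)\times J(a)$) need not satisfy \cstyle{ND1}/\cstyle{ND2} even when $J$ does, since a product of two flat functors is typically not componentwise flat, and subfunctors of nondegenerate functors need not be nondegenerate. You flag ``checking that $R$ is nondegenerate'' as work to be done, but it is not merely work --- it is an obstruction with no evident fix in this setting, and indeed seems to require something close to what you are trying to prove. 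Your fallback (``a K\"onig's-Lemma argument in the spirit of Propositions~\ref{propn:equalityinIa} and~\ref{propn:relationsdetermineequality}'') has the same problem at one remove: Corollary~\ref{cor:complex-span} and Proposition~\ref{propn:relationsdetermineequality} are proved \emph{assuming} \So, so quoting their spirit here is circular. There is also a secondary difficulty you half-acknowledge: equipping $R$ with a coalgebra structure requires, for each $(t,t')\in R(a)$, a \emph{single} sector $q$ through which both $\gamma(t)$ and $\gamma(t')$ resolve with $R$-related continuations, and for a zigzag of length greater than two this needs a cospan-completion in $\scat{A}$ that, absent \So, you do not yet have.

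The paper sidesteps all of this by exhibiting, for each complex $(a_\blb,m_\blb)$, a coalgebra $(H^{(a_\sblb,m_\sblb)},\theta^{(a_\sblb,m_\sblb)})$ whose underlying functor is the coproduct $\sum_n \scat{A}(a_n,\dashbk)$ of representables --- automatically nondegenerate by Theorem~\ref{thm:componentwiseflatness}, so membership in $\Coalg{M}{\Set}$ is free. A Yoneda-type lemma identifies coalgebra maps out of $H^{(a_\sblb,m_\sblb)}$ with resolutions along $(a_\blb,m_\blb)$; there is a tautological such map $\kappa^{(a_\sblb,m_\sblb)}$ into $(\oba\catI,\iota)$ sending $1_{a_0}$ to the complex itself; and, crucially, any arrow $f_\blb\colon(a_\blb,m_\blb)\to(b_\blb,p_\blb)$ in $\catI(a)$ induces a coalgebra map $f_\blb^*\colon H^{(b_\sblb,p_\sblb)}\to H^{(a_\sblb,m_\sblb)}$. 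Terminality of $(J,\gamma)$ applied to the two coalgebra maps $H^{(b_\sblb,p_\sblb)}\to J$, namely $u\circ\kappa^{(a_\sblb,m_\sblb)}\circ f_\blb^*$ and $u\circ\kappa^{(b_\sblb,p_\sblb)}$, forces them to coincide; evaluating at $1_{a_0}$ (using $f_0=1_{a_0}$) gives $u_{a_0}(a_\blb,m_\blb)=u_{a_0}(b_\blb,p_\blb)$, which is exactly the factorization you want. If you replace your bisimulation with this ``representable coalgebra'' argument, the rest of your proof goes through.
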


We proved \bref{item:ex-S}$\implies$\bref{item:ex-Top}
in~\S\ref{sec:Top-proofs}.  We proved
\bref{item:ex-Top}$\implies$\bref{item:ex-Set}, and the final sentence, as
Proposition~\ref{propn:coalgebrasinTopandSet}.  It remains to prove
\bref{item:ex-Set}$\implies$\bref{item:ex-S}.

Fix an equational system $(\scat{A}, M)$.  In this appendix, `$M$-coalgebra'
means `$M$-coalgebra in $\Set$'.  We constructed a functor $I: \scat{A} \go
\Set$ in~\S\ref{sec:construction}; it is defined regardless of whether \So\ 
holds.

\begin{lemma}
\label{lemma:S-nd}
The following conditions on $(\scat{A}, M)$ are equivalent:
\begin{enumerate}
\item \label{part:solv-S}
$(\scat{A}, M)$ satisfies \So
\item \label{part:solv-nd}
the functor $I: \scat{A} \go \Set$ is nondegenerate
\item \label{part:solv-transf}
there exist a nondegenerate functor $J: \scat{A} \go \Set$ and a natural
transformation $I \go J$.
\end{enumerate}
\end{lemma}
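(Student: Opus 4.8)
The plan is to prove the cycle \bref{part:solv-S}$\implies$\bref{part:solv-nd}$\implies$\bref{part:solv-transf}$\implies$\bref{part:solv-S}. The first implication is exactly Proposition~\ref{propn:setInondegenerate}: the functor $I$ is built with no reference to \So, and it is precisely its \emph{nondegeneracy} that needs \So. The second is immediate, taking $J = I$ and $\theta = \id$. So all the content is in \bref{part:solv-transf}$\implies$\bref{part:solv-S}: I fix a nondegenerate functor $J : \scat{A} \go \Set$ and a natural transformation $\theta : I \go J$, and verify conditions \cstyle{S1} and \cstyle{S2}.

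The pivotal remark about $I$ is this: for \emph{any} map of complexes $f_\blb : (a_\blb, m_\blb) \go (b_\blb, p_\blb)$ in $(\scat{A}, M)$ --- where $a_0$ and $b_0$ may differ and $f_0 : a_0 \go b_0$ --- one has $(If_0)\compt{a_\blb, m_\blb} = \compt{b_\blb, p_\blb}$ in $I(b_0)$. Indeed, $(\catI f_0)(a_\blb, m_\blb)$ is the complex $\cdots \gobymod{m_2} a_1 \gobymod{f_0 m_1} b_0$, and $(1_{b_0}, f_1, f_2, \ldots)$ is a morphism from it to $(b_\blb, p_\blb)$ in $\catI(b_0)$: the commutativity required is exactly the system of defining equations $p_n f_n = f_{n-1} m_n$ of a map of complexes (using $1_{b_0} \of (f_0 m_1) = f_0 m_1$ at the bottom level). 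Hence the two complexes lie in the same connected-component of $\catI(b_0)$. This, together with functoriality of $I$, is all I shall need about $I$.

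Granting the remark, both conditions follow by the same device. An \cstyle{S1} diagram amounts to a pair of maps of complexes $f_\blb : (a_\blb, m_\blb) \go (b_\blb, p_\blb)$ and $f'_\blb : (a'_\blb, m'_\blb) \go (b_\blb, p_\blb)$. Set $x = \compt{a_\blb, m_\blb} \in I(a_0)$ and $x' = \compt{a'_\blb, m'_\blb} \in I(a'_0)$; the remark gives $(If_0)(x) = \compt{b_\blb, p_\blb} = (If'_0)(x')$, so by naturality $(Jf_0)(\theta_{a_0} x) = (Jf'_0)(\theta_{a'_0} x')$ in $J(b_0)$. Since $J$ is nondegenerate, condition \cstyle{ND1} (Theorem~\ref{thm:componentwiseflatness}), applied to $f_0 : a_0 \go b_0$, $f'_0 : a'_0 \go b_0$ and the elements $\theta_{a_0} x$, $\theta_{a'_0} x'$, produces a commutative square $a_0 \og \cdot \go a'_0$ over $b_0$ --- which is precisely the square \cstyle{S1} asks for (the accompanying element of $J$ is simply discarded). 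For \cstyle{S2}, a serially commutative diagram is a parallel pair of maps of complexes $f_\blb, f'_\blb : (a_\blb, m_\blb) \go (b_\blb, p_\blb)$; with $x = \compt{a_\blb, m_\blb}$ the remark yields $(If_0)(x) = \compt{b_\blb, p_\blb} = (If'_0)(x)$, hence $(Jf_0)(\theta_{a_0} x) = (Jf'_0)(\theta_{a_0} x)$, and \cstyle{ND2} for $J$ delivers a fork $\cdot \go a_0 \parpair{f_0}{f'_0} b_0$, as required.

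I do not expect any real obstacle; the only delicate point is bookkeeping with the crossed-arrow convention of Appendix~\ref{app:modules}, in order to confirm (i) that the ladder equations of a map of complexes coincide verbatim with the equations making $(1_{b_0}, f_1, f_2, \ldots)$ a morphism of $\catI(b_0)$, and (ii) that the squares and forks output by \cstyle{ND1} and \cstyle{ND2} for $J$ match on the nose the data demanded by \cstyle{S1} and \cstyle{S2}.
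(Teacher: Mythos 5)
Your proof is correct and is essentially the paper's argument: for \bref{part:solv-transf}$\implies$\bref{part:solv-S} you transport the data through $\theta$ to $J$ and then invoke nondegeneracy of $J$ (conditions \cstyle{ND1} and \cstyle{ND2}) to produce the required square or fork. The paper packages the same idea more abstractly --- it turns $\theta$ into a functor $\elt{\catI} \go \elt{J}$ over $\scat{A}$ and uses the cone-existence formulation of nondegeneracy on $\elt{J}$ --- and your ``pivotal remark'' $(If_0)\compt{a_\blb, m_\blb} = \compt{b_\blb, p_\blb}$ is exactly the elementwise content of that functor commuting with the projections.
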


\begin{proof}
We proved (\ref{part:solv-S})$\implies$(\ref{part:solv-nd}) as
Proposition~\ref{propn:setInondegenerate}, and
(\ref{part:solv-nd})$\implies$(\ref{part:solv-transf}) is trivial.  For
(\ref{part:solv-transf})$\implies$(\ref{part:solv-S}), let $\gamma$ be a
natural transformation from $I$ to a nondegenerate functor $J: \scat{A} \go
\Set$.  By definition, $I = \Pi_0 \of \catI$, so $\gamma$ corresponds under
the adjunction
\[
\Cat \oppair{\Pi_0}{D} \Set
\]
to a natural transformation $\ovln{\gamma}: \catI \go D \of J$.  This in turn
corresponds to a functor $F: \elt{\catI} \go \elt{D \of J} \iso \elt{J}$
making the following triangle commute:
\[
\begin{diagram}
\elt{\catI}	&		&\rTo^F	&		&\elt{J}	\\
		&\rdTo<\pr	&	&\ldTo>\pr	&		\\
		&		&\scat{A}&		&		\\
\end{diagram}
\]
where $\pr$ denotes a projection.  Now, condition \So\ says that if $\scat{K}$
is either of the categories $\littlepullback$ or $\littleequalizer$, then for
any functor $G: \scat{K} \go \elt{\catI}$, the composite $\pr \of G$ admits a
cone.  But given such a $G$, nondegeneracy of $J$ implies that $F\of G$ admits
a cone, so $\pr \of F \of G = \pr \of G$ admits a cone, as required.  \done
\end{proof}

To prove (\ref{item:ex-Set})$\implies$(\ref{item:ex-Top}) of
Theorem~\ref{thm:existenceofuniversalsolution}, we will have to exploit the
existence of a terminal object in the category of $M$-coalgebras; hence we
will need a good supply of objects of that category.  

For each complex $(a_\blb, m_\blb)$, we construct a representable-type
coalgebra.  Its underlying functor is 
\[
H^{(a_\sblb, m_\sblb)}
=
\sum_{n\in\nat}
\scat{A} (a_n, \dashbk):
\scat{A} \go \Set.
\]
Any representable functor is flat, so $H^{(a_\sblb, m_\sblb)}$ is
nondegenerate by Theorem~\ref{thm:componentwiseflatness}.  Also
\[
(M \otimes H^{(a_\sblb, m_\sblb)})(b)
\iso
\sum_{n\in\nat} (M \otimes \scat{A}(a_n, \dashbk))(b)
\iso	
\sum_{n\in\nat} M(a_n, b),
\]
so an $M$-coalgebra structure on $H^{(a_\sblb, m_\sblb)}$ amounts to a
natural transformation
\[
\sum_{n\in\nat} \scat{A}(a_n, \dashbk)
\go
\sum_{n\in\nat} M(a_n, \dashbk).
\]
There is a unique such transformation sending $1_{a_n}$ to $m_{n+1} \in
M(a_{n+1}, a_n)$ for each $n\in\nat$; let $\theta^{(a_\sblb, m_\sblb)}$
be the corresponding coalgebra structure on $H^{(a_\sblb, m_\sblb)}$.

This defines an $M$-coalgebra $(H^{(a_\sblb, m_\sblb)}, \theta^{(a_\sblb,
m_\sblb)})$ for each object $(a_\blb, m_\blb)$ of $\elt{\catI}$.  Moreover, any map $f_\blb: (a_\blb, m_\blb) \go (a'_\blb, m'_\blb)$ in
$\elt{\catI}$ induces a map
\[
H^{(a'_\sblb, m'_\sblb)}
=
\sum_{n\in\nat} \scat{A}(a'_n, \dashbk)
\goby{\sum f_n^*}
\sum_{n\in\nat} \scat{A}(a_n, \dashbk)
=
H^{(a_\sblb, m_\sblb)}
\]
respecting the coalgebra structures.  So we have a functor
\[
(H^\blb, \theta^\blb):
\elt{\catI}^\op
\go
\Coalg{M}{\Set}.
\]

Having defined the representable-type coalgebras, we prove a Yoneda-type
lemma.  

Let $(X, \xi)$ be an $M$-coalgebra.  For each complex $(a_\blb, m_\blb)$,
write $(X, \xi)(a_\blb, m_\blb)$ for the set of resolutions along $(a_\blb,
m_\blb)$ in $(X, \xi)$, that is, sequences $(x_n \in X(a_n))_{n \in \nat}$
such that $\xi_{a_n}(x_n) = m_{n+1} \otimes x_{n+1}$ for all $n$.  This
defines a functor $(X, \xi): \elt{\catI} \go \Set$.

\begin{lemma}[`Yoneda']
\label{lemma:Yoneda}
There is a bijection
\[
\Coalg{M}{\Set}
\left(
(H^{(a_\sblb, m_\sblb)}, \theta^{(a_\sblb, m_\sblb)}),
(X, \xi)
\right)
\iso
(X, \xi)(a_\blb, m_\blb)
\]
natural in $(a_\blb, m_\blb) \in \elt{\catI}$ and $(X, \xi) \in
\Coalg{M}{\Set}$.  If $x \in X(a_0)$ then the maps $(H^{(a_\sblb, m_\sblb)},
\theta^{(a_\sblb, m_\sblb)}) \go (X, \xi)$ mapping $1_{a_0}$ to $x$ correspond
to the resolutions of $x$ along $(a_\blb, m_\blb)$.
\end{lemma}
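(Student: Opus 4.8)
The plan is to prove the ``Yoneda''-type lemma by unwinding the universal property of the tensor product module $M \otimes \dashbk$, exactly as in the standard Yoneda lemma for $\Set$-valued functors, but carried out level-by-level in the coalgebra structure. First I would establish the bijection at the level of underlying functors: a coalgebra map $(H^{(a_\sblb, m_\sblb)}, \theta^{(a_\sblb, m_\sblb)}) \go (X, \xi)$ is in particular a natural transformation $\sum_{n\in\nat} \scat{A}(a_n, \dashbk) \go X$, and since colimits (here, coproducts) of representables are computed pointwise and Yoneda applies to each summand, such a natural transformation is exactly a sequence $(x_n)_{n\in\nat}$ with $x_n \in X(a_n)$, where $x_n$ is the image of $1_{a_n}$. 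So as a first step the set of natural transformations $H^{(a_\sblb, m_\sblb)} \go X$ is naturally identified with $\prod_{n\in\nat} X(a_n)$.

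Next I would impose the coalgebra condition and show it cuts this product down to precisely the set of resolutions along $(a_\blb, m_\blb)$. A natural transformation $\omega : H^{(a_\sblb, m_\sblb)} \go X$ corresponding to $(x_n)_n$ is a coalgebra map iff the square relating $\theta^{(a_\sblb, m_\sblb)}$, $\xi$, $\omega$ and $M \otimes \omega$ commutes. Using the explicit description of $\theta^{(a_\sblb, m_\sblb)}$ as the transformation sending $1_{a_n} \in \scat{A}(a_n, a_n)$ to $m_{n+1} \in M(a_{n+1}, a_n)$, and the identification $(M \otimes H^{(a_\sblb, m_\sblb)})(b) \iso \sum_n M(a_n, b)$, I would evaluate both composites at the generator $1_{a_n} \in H^{(a_\sblb, m_\sblb)}(a_n)$. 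One composite sends $1_{a_n}$ to $\xi_{a_n}(x_n)$; the other sends it via $\theta$ to $m_{n+1} \in M(a_{n+1}, a_n)$ and then via $M \otimes \omega$ to $m_{n+1} \otimes x_{n+1} \in (M \otimes X)(a_n)$. Commutativity at all the generators $1_{a_n}$ (which suffices, by naturality, since they generate $H^{(a_\sblb, m_\sblb)}$ as a functor) is therefore exactly the system of equations $\xi_{a_n}(x_n) = m_{n+1} \otimes x_{n+1}$ for all $n$, i.e. the defining condition for $(x_n)_n$ to be a resolution along $(a_\blb, m_\blb)$. This gives the asserted bijection, and the final clause of the lemma is immediate: if we fix that $\omega$ sends $1_{a_0}$ to $x \in X(a_0)$, the resolutions it can correspond to are exactly the resolutions of $x$ along $(a_\blb, m_\blb)$.

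Finally I would check naturality in both variables. Naturality in $(X, \xi)$ is automatic from the naturality of the ordinary Yoneda bijection and the fact that postcomposing a coalgebra map with a coalgebra map preserves resolutions. Naturality in $(a_\blb, m_\blb) \in \elt{\catI}$ requires checking that for a map $f_\blb : (a_\blb, m_\blb) \go (a'_\blb, m'_\blb)$ of complexes, the induced map $H^{(a'_\sblb, m'_\sblb)} \go H^{(a_\sblb, m_\sblb)}$ (given by $\sum f_n^*$) corresponds on resolutions to precomposition $(x_n)_n \mapsto (f_n x_n)_n$; this is the content already noted just before the lemma, that $\sum f_n^*$ respects coalgebra structures, combined with the observation that precomposing a natural transformation with $\sum f_n^*$ replaces the value at $1_{a'_n}$ by the value at $f_n \in \scat{A}(a'_n, a_n)$, which is $f_n x_n$.

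The only mildly delicate point is making sure the coalgebra condition is genuinely equivalent to commutativity at the generators $1_{a_n}$ and not merely implied by it; this holds because these generators generate the functor $H^{(a_\sblb, m_\sblb)} = \sum_n \scat{A}(a_n, \dashbk)$ in the sense that any element of any $H^{(a_\sblb, m_\sblb)}(b)$ is $g \cdot 1_{a_n}$ for some $n$ and some $g : a_n \go b$, so both composites of the square are determined by their values on the $1_{a_n}$. Granting this, the proof is a routine diagram chase with no real obstacle.
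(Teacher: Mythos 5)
Your proof is correct and follows essentially the same route as the paper's: apply the standard Yoneda Lemma summand-by-summand to identify natural transformations $H^{(a_\sblb, m_\sblb)} \go X$ with sequences $(x_n)_n$, then observe that the coalgebra square commutes iff it commutes at the generators $1_{a_n}$, which is exactly the resolution condition $\xi_{a_n}(x_n) = m_{n+1} \otimes x_{n+1}$, with naturality inherited from the standard Yoneda Lemma. The extra care you take over why checking at the generators suffices and over naturality in $(a_\blb, m_\blb)$ is sound but not a departure from the paper's argument.
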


\begin{proof}
By the standard Yoneda Lemma, a natural transformation $\alpha: H^{(a_\sblb,
m_\sblb)} \go X$ amounts to a sequence $(x_n)_{n\in\nat}$ with $x_n \in
X(a_n)$.  It is a map of coalgebras if and only if
\[
\begin{diagram}[height=6ex]
\sum_{n\in\nat} \scat{A}(a_n, \dashbk)	&
\rTo^\alpha		&
X	\\
\dTo<{\theta^{(a_\sblb, m_\sblb)}}	&
		&
\dTo>\xi\\
\sum_{n\in\nat} M(a_n, \dashbk)		&
\rTo_{M \otimes \alpha}	&
M \otimes X\\
\end{diagram}
\]
commutes, if and only if this diagram commutes when we take $1_{a_n}$ at the
top-left corner for every $n\in\nat$, if and only if $\xi(x_n) =
m_{n+1} \otimes x_{n+1}$ for all $n\in\nat$.  A coalgebra map $(H^{(a_\sblb,
m_\sblb)}, \theta^{(a_\sblb, m_\sblb)}) \go (X, \xi)$ therefore amounts to a
sequence $(x_n)_{n\in\nat}$ satisfying $\xi(x_n) = m_{n+1} \otimes x_{n+1}$
for all $n$, that is, a resolution along $(a_\blb, m_\blb)$ in $(X, \xi)$.
This establishes the bijection; naturality follows from the naturality in the
standard Yoneda Lemma.  \done
\end{proof}

We have met just one other canonical $M$-coalgebra: $(\oba\catI, \iota)$,
constructed in~\S\ref{sec:construction}.  (Recall that $M$-coalgebras are
nondegenerate by definition; $\oba\catI$ is nondegenerate whether or not \So\
holds.) 

\begin{propn}[Tautological map]
\label{propn:tautologicalmap}
For each complex $(a_\blb, m_\blb)$ there is a canonical map of
$M$-coalgebras  
\[
\kappa^{(a_\sblb, m_\sblb)}:
(H^{(a_\sblb, m_\sblb)}, \theta^{(a_\sblb, m_\sblb)})
\go
(\oba \catI, \iota),
\]
satisfying $\kappa^{(a_\sblb, m_\sblb)}(1_{a_0}) = (a_\blb, m_\blb)$.
\end{propn}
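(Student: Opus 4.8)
The plan is to obtain $\kappa^{(a_\sblb, m_\sblb)}$ as a direct application of the Yoneda-type Lemma~\ref{lemma:Yoneda}, taking the target coalgebra to be $(X, \xi) = (\oba\catI, \iota)$. By the last sentence of that lemma, a map of $M$-coalgebras $(H^{(a_\sblb, m_\sblb)}, \theta^{(a_\sblb, m_\sblb)}) \go (\oba\catI, \iota)$ carrying $1_{a_0}$ to a prescribed element $x_0 \in \oba\catI(a_0)$ is precisely a resolution of $x_0$ along the complex $(a_\blb, m_\blb)$ in the coalgebra $(\oba\catI, \iota)$. So it is enough to exhibit a resolution of the element $x_0 = (a_\blb, m_\blb) \in \oba\catI(a_0)$ along the complex $(a_\blb, m_\blb)$ itself.

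First I would write down the tautological candidate: for each $n \in \nat$ put
\[
x_n = ( \cdots \gobymod{m_{n+2}} a_{n+1} \gobymod{m_{n+1}} a_n ) \in \oba\catI(a_n),
\]
the complex obtained from $(a_\blb, m_\blb)$ by deleting its first $n$ arrows; note $x_0 = (a_\blb, m_\blb)$. The only thing to verify is that $\iota_{a_n}(x_n) = m_{n+1} \otimes x_{n+1}$ for every $n$, and this is immediate from the defining formula for $\iota$ on $\oba\catI$ given in~\S\ref{sec:construction}:
\[
\iota_{a_n}( \cdots \gobymod{m_{n+2}} a_{n+1} \gobymod{m_{n+1}} a_n )
=
( a_{n+1} \gobymod{m_{n+1}} a_n ) \otimes ( \cdots \gobymod{m_{n+2}} a_{n+1} )
=
m_{n+1} \otimes x_{n+1}.
\]
Hence $(x_n)_{n \in \nat}$ is a resolution of $x_0 = (a_\blb, m_\blb)$ along $(a_\blb, m_\blb)$, and Lemma~\ref{lemma:Yoneda} delivers a coalgebra map $\kappa^{(a_\sblb, m_\sblb)}$ with $\kappa^{(a_\sblb, m_\sblb)}(1_{a_0}) = x_0 = (a_\blb, m_\blb)$. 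Unwinding the Yoneda correspondence, $\kappa^{(a_\sblb, m_\sblb)}$ is the natural transformation whose restriction to the $n$-th summand $\scat{A}(a_n, \dashbk)$ of $H^{(a_\sblb, m_\sblb)}$ classifies $x_n$, i.e.\ sends $f: a_n \go b$ to $(\oba\catI f)(x_n)$.

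The term ``canonical'' is justified on two counts: the resolution $(x_n)$ used above involves no choices whatsoever (each $x_n$ is uniquely determined by $(a_\blb, m_\blb)$), and, by the naturality clause of Lemma~\ref{lemma:Yoneda}, the maps $\kappa^{(a_\sblb, m_\sblb)}$ are natural in $(a_\blb, m_\blb) \in \elt{\catI}$, assembling into a natural transformation from the functor $(H^\blb, \theta^\blb): \elt{\catI}^\op \go \Coalg{M}{\Set}$ to the constant functor at $(\oba\catI, \iota)$. I do not expect any real obstacle in this argument: the one point needing care is bookkeeping the index shift so that the identity $\iota_{a_n}(x_n) = m_{n+1} \otimes x_{n+1}$ reads off correctly from the formula for $\iota$.
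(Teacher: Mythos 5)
Your proof is correct and follows the same route as the paper: exhibit the tautological resolution of $(a_\blb, m_\blb) \in \oba\catI(a_0)$ along $(a_\blb, m_\blb)$ itself, then invoke the last clause of Lemma~\ref{lemma:Yoneda}. The paper's proof is a two-line version of exactly this argument, leaving the verification $\iota_{a_n}(x_n) = m_{n+1} \otimes x_{n+1}$ implicit, which you have spelled out.
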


\begin{proof}
Every complex $(a_\blb, m_\blb)$, regarded as an element of $\oba\catI(a_0)$,
has a canonical resolution in $\oba\catI$.  By Lemma~\ref{lemma:Yoneda}, the
corresponding map $\kappa^{(a_\sblb, m_\sblb)}$ of coalgebras sends $1_{a_0}$
to $(a_\blb, m_\blb)$.  \done
\end{proof}

\begin{prooflike}{Proof of Theorem~\ref{thm:existenceofuniversalsolution}}
It remains to prove \bref{item:ex-Set}$\implies$\bref{item:ex-S}.

Suppose that $(\scat{A}, M)$ has a universal solution $(J, \gamma)$ in $\Set$.
Then there is a unique map $\beta: (\oba\catI, \iota) \go (J, \gamma)$ of
$M$-coalgebras.  I claim that the natural transformation $\beta$ can be
factorized as
\[
\begin{diagram}
\oba \catI 		&		&
\rTo^\beta		&
			&J,		\\
			&\rdQt<\pi	&
			&
\ruGet>{\ovln{\beta}}	&		\\
			&		&
I			&
			&		\\
\end{diagram}
\]
where $\pi$ is the usual projection~(\S\ref{sec:construction}).  Equivalently,
for each $a \in \scat{A}$ the function $\beta_a: \oba\catI(a) \go J(a)$ is
constant on connected-components of $\catI(a)$; equivalently, if
$f_\blb: (a_\blb, m_\blb) \go (b_\blb, p_\blb)$ in $\catI(a)$ then
$\beta_a(a_\blb, m_\blb) = \beta_a(b_\blb, p_\blb)$.  Indeed, given such an
$f_\blb$, there are coalgebra maps
\[
\begin{diagram}[width=6em]
(H^{(a_\sblb, m_\sblb)}, \theta^{(a_\sblb, m_\sblb)})	&
	&	&	&	\\
\uTo<{f_\sblb^*}	&
\rdTo(2,1)^{\kappa^{(a_\sblb, m_\sblb)}}		&
(\oba\catI, \iota)	&
\rTo^\beta	&
(J, \gamma),    \\
(H^{(b_\sblb, p_\sblb)}, \theta^{(b_\sblb, p_\sblb)})	&
\ruTo(2,1)_{\kappa^{(b_\sblb, p_\sblb)}}		&
	&	&	\\
\end{diagram}
\]
and $\beta \of \kappa^{(a_\sblb, m_\sblb)} \of f_\blb^* = \beta \of
\kappa^{(\beta_\sblb, p_\sblb)}$ by terminality of $(J, \gamma)$.  (The
triangle is not asserted to commute.)  But
\begin{eqnarray*}
\kappa^{(a_\sblb, m_\sblb)} f_\blb^* (1_a)	&
=	&
\kappa^{(a_\sblb, m_\sblb)} (1_a \of f_0)
=
\kappa^{(a_\sblb, m_\sblb)} (1_a)
=
(a_\blb, m_\blb),
\\
\kappa^{(b_\sblb, p_\sblb)} (1_a)	&
=	&
(b_\blb, p_\blb),
\end{eqnarray*}
so $\beta_a (a_\blb, m_\blb) = \beta_a (b_\blb, p_\blb)$, as required.  This
proves the claim.  It then follows from Lemma~\ref{lemma:S-nd} that
$(\scat{A}, M)$ satisfies \So.
\done
\end{prooflike}


\section{Appendix: Realizability}
\label{app:admitting}

Here we describe the class of topological spaces that can be characterized by
some equational system---those that are realizable, in the sense of
Definition~\ref{defn:realizable}.  We showed in~\S\ref{sec:Top-proofs} that
all such spaces are compact and metrizable, so the question is: which compact
metrizable spaces are realizable?  Perhaps surprisingly, the answer turns out
to be: all of them (Theorem~\ref{thm:selfsimilarspaces}).  

This theorem is less important than it might appear.  It characterizes those
spaces that admit at least one recursive decomposition, but the same space may
admit several such decompositions (Examples~\ref{eg:rec-terminal},
\ref{eg:rec-Freyd}, \ref{eg:rec-bary},~\ref{eg:rec-edgewise}).  Compare the
result that every nonempty set admits at least one group structure, which does
not play an important role in group theory.

It is crucial to this theorem that in the definition of equational system
$(\scat{A}, M)$, there may be infinitely many `equations' (objects of
$\scat{A}$)---even though each individual equation involves only finitely many
spaces.  In the proof, there is infinite regress: the given space $S$ is
decomposed into subspaces $S_i$; each $S_i$ is decomposed into subspaces
$S_{ij}$, and so on.  Our theorem is one of many stating that 
the topology of metric spaces can be probed effectively by countable
methods: compare, for instance, the fact that a metric space is compact if and
only if it is sequentially compact.

There is a similar theorem for discretely realizable spaces.  We already know
that every such space is compact, metrizable and totally disconnected
(Example~\ref{eg:construction-discrete});
Theorem~\ref{thm:discretelyselfsimilarspaces} states the converse.

The analogous questions for \emph{finite} equational systems are unanswered.
Since, up to homeomorphism, there are uncountably many compact metrizable
spaces but only countably many finitely realizable spaces, not every compact
metrizable space is finitely realizable.  It can also be shown that any 
space realizable by a finite discrete equational system has finite
Cantor--Bendixson rank.  

Here is our main theorem.

\begin{thm}[Realizability]
\label{thm:selfsimilarspaces}
A topological space is realizable if and only if it is compact and
metrizable.
\end{thm}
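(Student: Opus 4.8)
The "only if" direction is already established: we showed in~\S\ref{sec:Top-proofs} that every space $I(a)$ occurring in a universal solution is compact (Corollary~\ref{cor:Iacompact}) and metrizable (Corollaries~\ref{cor:Iahausdorff} and~\ref{cor:Iacompact} together with the countable-basis observation underlying Lemma~\ref{lemma:compactmetrizable}). So the whole content is the "if" direction: given a nonempty compact metrizable space $S$, construct an equational system $(\scat{A}, M)$ satisfying \So\ and an object $a \in \scat{A}$ with $S \iso I(a)$. (The empty space is trivially realizable, e.g.\ as the universal solution of $(\One, M)$ with $M = \emptyset$, or by adjoining an object with no sectors; I would dispose of it in one line at the start.)

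The plan is to build $(\scat{A}, M)$ from an iterated finite closed cover of $S$. Fix a metric $d$ on $S$ with $\diam(S) \leq 1$. Recursively choose, for each finite word $w = (i_1, \ldots, i_n)$ over a suitable finite alphabet, a nonempty compact subspace $S_w \sub S$, starting with $S_{()} = S$, so that: (i) each $S_w$ is covered by finitely many of its immediate children $S_{w1}, \ldots, S_{w k_w}$, i.e.\ $S_w = \bigcup_j S_{wj}$; (ii) $\diam(S_{wj}) \leq \half \diam(S_w)$, so that $\diam(S_w) \leq 2^{-n}$ for $|w| = n$ — such covers exist by compactness, choosing a finite subcover of the cover of $S_w$ by its intersections with balls of radius $\tfrac14\diam(S_w)$. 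The objects of $\scat{A}$ will be (homeomorphism-types, or rather a set of representatives, of) the spaces $S_w$ together with all the spaces arising as finite intersections $S_{w} \cap S_{w'}$ of members of the cover — we need these "gluing loci" as objects so that the colimit formula can express $S_w$ as its children glued along the overlaps, exactly as the one-point space appeared in the interval and Julia examples of~\S\ref{sec:sss}. The morphisms of $\scat{A}$ are the inclusions among these subspaces; $\scat{A}$ is small (in fact we may take it countable, since only countably many words and countably many finite-intersection patterns occur). The module $M: \scat{A} \gomod \scat{A}$ is defined by letting $M(b, a)$ be the set of copies of the $b$-labelled space that appear in the canonical expression of the $a$-labelled space $S_w$ as the colimit (pushout-like gluing) of its children $S_{wj}$ along their pairwise overlaps; finiteness of $M$ is immediate because each $S_w$ has finitely many children and finitely many overlap-patterns, and nondegeneracy is checked via conditions~\cstyle{ND1},~\cstyle{ND2} for modules from~\S\ref{sec:nondegen} — here one uses that the cover is by closed subspaces and that an intersection of two members of the cover is again (or is contained in) a member of the cover, which is the analogue of the "intersection of two faces is a face" remark in Example~\ref{eg:rec-bary}.

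Having defined $(\scat{A}, M)$, the functor $J: \scat{A} \go \Top$ sending each object to the corresponding subspace of $S$ (with inclusions) is nondegenerate by construction, and the gluings supply an isomorphism $\psi: M \otimes J \goiso J$ — this is precisely the assertion that each $S_w$ is the colimit of its children along their overlaps, which holds because a finite closed cover of a compact space realizes the space as the corresponding pushout/colimit in $\Top$. Thus $(J, \psi^{-1})$ is a fixed point. To conclude $(J, \psi^{-1})$ is the universal solution — and in particular that \So\ holds, via Theorem~\ref{thm:existenceofuniversalsolution} — I would invoke the Precise Recognition Theorem~\ref{thm:preciserecognition}, condition~\bref{item:pr-met}: $J$ is occupied (every $J(a)$ is a nonempty compact space), each $J(a)$ is compact, and with the metric $d$ restricted to each subspace we have $\diam(V^J_{m_1, \ldots, m_n}) \leq \diam(S_w)$ for the appropriate word $w$ of length $n$ determined by the sectors $m_1, \ldots, m_n$, hence $\leq 2^{-n}$, so the infimum over $n$ of the supremum of the diameters is $0$. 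Then $S = J(S_{()})= I(a)$ for $a$ the object representing $S$ itself, proving realizability.

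The main obstacle I anticipate is the bookkeeping around the overlap objects and showing cleanly that $\psi$ is a genuine isomorphism $M \otimes J \goiso J$, i.e.\ that the colimit in $\ndTop{\scat{A}}$ really does compute $S_w$. One must verify both that the map from the glued space onto $S_w$ is a continuous bijection (surjective because the children cover $S_w$; injective because points are identified in the gluing exactly when they lie in an overlap, which is itself an object of $\scat{A}$ being glued along) and that it is a homeomorphism (automatic since the domain is compact and $S_w$ Hausdorff, the same compact-to-Hausdorff trick used throughout~\S\ref{sec:Top-proofs}). A secondary subtlety is organizing the recursive choice of covers so that the collection of objects of $\scat{A}$ is a genuine \emph{set} and the nondegeneracy conditions~\cstyle{ND1},~\cstyle{ND2} for the module hold; this is where one must be slightly careful that the family of chosen subspaces is closed under the finite intersections that occur, perhaps by enlarging the object set to include all such intersections at each stage before recursing.
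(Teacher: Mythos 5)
Your approach is in essence the paper's own — build a poset-type equational system from nested finite closed covers of $S$ and verify it is the universal solution via the Precise Recognition Theorem — but the two differ in organization, and the paper's version is cleaner at exactly the point you flag as a ``secondary subtlety.'' The paper (Definition~\ref{defn:sep-seq}) works with a \emph{separating sequence} $(\cov{V}_n)_{n\in\nat}$ of finite closed covers, with $\cov{V}_0 = \{S\}$, each $\cov{V}_{n+1}$ refining $\cov{V}_n$, each $\cov{V}_n$ closed under binary intersection, and $V\cap W\in\cov{V}_{n+1}$ whenever $V\in\cov{V}_n$, $W\in\cov{V}_{n+1}$; it then takes $\scat{A} = \sum_n \scat{A}_n$ where $\scat{A}_n$ is the poset of nonempty members of $\cov{V}_n$ under inclusion, and sets $M((p,W),(n,V))$ to be a singleton precisely when $p=n+1$ and $W\subseteq V$. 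With that formulation \emph{every} object $(n,V)$ — overlap objects included — is decomposed uniformly as the colimit of $\{(n{+}1,W): W\subseteq V\}$, which dissolves the bookkeeping your word-indexed construction leaves open: in your scheme the module $M(\dashbk,a)$ is only described for $a$ a ``pure'' word object $S_w$, and you never say how an intersection object $S_w\cap S_{w'}$ is itself to be decomposed at the next stage, so the fixed-point isomorphism $\psi: M\otimes J\goiso J$ is not actually defined on those components. You recognize that the family must be closed under intersections, but closure alone is not enough — each object needs a \emph{canonical} next-level decomposition, and the paper's level-by-level intersection-closure condition supplies it for free. The other difference is in the recognition step: you verify condition~(\ref{item:pr-met}) of Theorem~\ref{thm:preciserecognition} via a diameter-halving cover, whereas the paper avoids choosing a metric, builds the separating sequence from an arbitrary countable open basis, and uses condition~(\ref{item:pr-notmet}). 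Both work, but the paper's choice pays off: taking a basis of \emph{clopen} sets (available when $S$ is totally disconnected) makes each $\cov{V}_n$ a partition, hence $\scat{A}$ discrete, and Theorem~\ref{thm:discretelyselfsimilarspaces} drops out by the same argument — a corollary your metric construction does not reach as directly.
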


The idea of the proof is as follows.  Let $S$ be a compact metrizable space.
Cover $S$ by two closed subsets $V_1$ and $V'_1$.  Then $S = V_1 \cup V'_1$;
hence, $S$ is the pushout
\[
S = V_1 +_{V''_1} V'_1
\]
where $V''_1 = V_1 \cap V'_1$.  Next, cover $S$ by a different pair $V_2$,
$V'_2$ of closed subsets and write $V''_2 = V_2 \cap V'_2$: then
\begin{eqnarray*}
V_1	&=	&
(V_1 \cap V_2) +_{(V_1 \cap V''_2)} (V_1 \cap V'_2),	\\
V'_1	&=	&
(V'_1 \cap V_2) +_{(V'_1 \cap V''_2)} (V'_1 \cap V'_2),	\\
V''_1	&=	&
(V''_1 \cap V_2) +_{(V''_1 \cap V''_2)} (V''_1 \cap V'_2).	
\end{eqnarray*}
Continue in this way to obtain a countable equational system.  Compact
metrizability of $S$ means that the covers can be chosen to penetrate all
of its structure, and the universal solution $I$ is then made up of the
space $S$, the various covering subsets and their intersections, and the
inclusions between them. 

\bigskip

Given covers $\cov{W}$ and $\cov{V}$ of a space, $\cov{W}$ is said to 
\demph{refine} $\cov{V}$ if for all $W \in \cov{W}$ there exists $V \in
\cov{V}$ such that $W \sub V$.  

\begin{defn}
\label{defn:sep-seq}
Let $S$ be a topological space.  A \demph{separating sequence} for $S$ is a
sequence $(\cov{V}_n)_{n\in\nat}$ of finite closed covers of $S$ such that
\begin{enumerate}
\item \label{part:sep-seq-ref}
$\cov{V}_0 = \{S\}$, and for all $n\in\nat$, $\cov{V}_{n+1}$ refines
$\cov{V}_n$
\item \label{part:sep-seq-sep}
for all $s, t \in S$ with $s \neq t$, there exists $n \in \nat$ such that for
all $V \in \cov{V}_n$, $\{s, t\} \not\sub V$
\item \label{part:sep-seq-int-same}
for all $n \in \nat$, for all $V, V' \in \cov{V}_n$, we have $V \cap V' \in
\cov{V}_n$ 
\item \label{part:sep-seq-int-diff}
for all $n \in \nat$, for all $V \in \cov{V}_n$ and $W \in \cov{V}_{n+1}$, we
have $V \cap W \in \cov{V}_{n + 1}$.
\end{enumerate}
\end{defn}

The importance of condition~(\ref{part:sep-seq-int-diff}) is that any element
$V \in \cov{V}_n$ is covered exactly by elements of $\cov{V}_{n + 1}$: indeed,
\begin{equation}
\label{eq:covers}
V 
= 
V \cap \bigcup_{W \in \cov{V}_{n+1}} W 
=
\bigcup_{W \in \cov{V}_{n+1}} V \cap W 
=
\bigcup_{X \in \cov{V}_{n+1}: X \sub V} X.
\end{equation}

\begin{lemma}
\label{lemma:separatingsequence}
Every compact metrizable space admits a separating sequence.
\end{lemma}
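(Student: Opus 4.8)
The plan is to construct the separating sequence for a compact metrizable space $S$ directly, choosing at each stage a finite closed cover whose elements are small enough to eventually separate points, and then closing under the two intersection conditions.

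First I would fix a metric $d$ on $S$ inducing its topology. The key construction is inductive. Set $\cov{V}_0 = \{S\}$. Given $\cov{V}_n$, I want to build $\cov{V}_{n+1}$ refining it, with elements of diameter at most $2^{-n}$ (or any sequence tending to $0$), and satisfying conditions (\ref{part:sep-seq-int-same}) and (\ref{part:sep-seq-int-diff}). To do this, I would first use compactness: cover $S$ by finitely many closed balls $B_1, \ldots, B_k$ of radius $2^{-(n+1)}$ (so diameter at most $2^{-n}$), which is possible since the open balls of that radius cover $S$ and $S$ is compact. Then I would take the collection of all sets of the form
\[
B_i \cap V
\]
where $V$ ranges over $\cov{V}_n$, together with all finite intersections of such sets. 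Since $\cov{V}_n$ is finite and there are finitely many $B_i$, this collection is finite; each of its elements is a closed set contained in some $B_i$ hence of diameter at most $2^{-n}$; each is contained in some $V \in \cov{V}_n$ (since $B_i \cap V \sub V$, and an intersection of several such is contained in each of them), so $\cov{V}_{n+1}$ refines $\cov{V}_n$; it is closed under pairwise (hence finite) intersection by construction, giving (\ref{part:sep-seq-int-same}); and for $V \in \cov{V}_n$ and $W = \bigcap_j (B_{i_j} \cap V_j) \in \cov{V}_{n+1}$ we have $V \cap W = \bigcap_j (B_{i_j} \cap (V \cap V_j))$, and each $V \cap V_j \in \cov{V}_n$ by (\ref{part:sep-seq-int-same}) for $\cov{V}_n$, so $V \cap W \in \cov{V}_{n+1}$, giving (\ref{part:sep-seq-int-diff}). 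Finally, $\bigcup \cov{V}_{n+1} = \bigcup_i \bigcup_{V \in \cov{V}_n} (B_i \cap V) = \bigl(\bigcup_i B_i\bigr) \cap \bigl(\bigcup_{V} V\bigr) = S \cap S = S$, so it is a cover.

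It then remains to check the separation condition (\ref{part:sep-seq-sep}). Given $s \neq t$ in $S$, put $\epsln = d(s,t) > 0$ and choose $n$ with $2^{-n} < \epsln$. Every $V \in \cov{V}_n$ has $\diam(V) \leq 2^{-(n-1)}$; picking $n$ slightly larger if needed so that $2^{-(n-1)} < \epsln$, no element of $\cov{V}_n$ can contain both $s$ and $t$. (A cosmetic point: I should index the diameter bounds so that this works cleanly — e.g. demand $\diam(V) \leq 2^{-n}$ for $V \in \cov{V}_{n}$ with $n \geq 1$ — which is easily arranged by relabelling the radii used above.) I do not expect any real obstacle here; the only mild subtlety is bookkeeping the diameter bounds and the nesting of the two intersection conditions simultaneously, which is why I would state the inductive hypothesis to include ``$\cov{V}_n$ is finite, is closed under pairwise intersection, refines $\cov{V}_{n-1}$, and every element has diameter $\leq 2^{-n}$'' so that all four clauses of Definition~\ref{defn:sep-seq} fall out at the end.
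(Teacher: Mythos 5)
Your proof is correct, but it follows a different route from the paper's. You fix a metric and, at stage $n+1$, cover $S$ by finitely many closed balls of radius about $2^{-n}$, then take all sets $B_i \cap V$ (with $V \in \cov{V}_n$) together with their finite intersections; separation then comes from the diameter bound. The paper instead fixes a countable basis $(U_n)_{n \geq 1}$ of open sets, builds a sequence of \emph{three-element} closed covers $\cov{W}_n = \{\ovln{U_n},\ S\setminus U_n,\ \ovln{U_n}\cap(S\setminus U_n)\}$, and sets $\cov{V}_n = \{W_1 \cap \cdots \cap W_n : W_i \in \cov{W}_i\}$; separation comes from regularity (for $s \neq t$ there is $n$ with $s \in U_n$ and $t \notin \ovln{U_n}$). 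Both approaches rest on the same fact that compact metrizable is equivalent to compact Hausdorff plus second countable, but yours uses the metric directly (via diameters) while the paper's is purely topological and front-loads the choice of basis so that the $\cov{V}_n$ are defined non-inductively and their required properties fall out at once. One practical advantage of the paper's version is that it adapts verbatim to the totally disconnected case in Theorem~\ref{thm:discretelyselfsimilarspaces}: one simply chooses the $U_n$ clopen, making each $\cov{W}_n$ (hence each $\cov{V}_n$) a partition and hence $\scat{A}$ discrete; to get the same conclusion with your construction you would need to replace the closed balls by small \emph{clopen} sets, a slightly different modification. Your proof as written has the small indexing slip you already flag (elements of $\cov{V}_n$ have diameter $\leq 2^{-(n-1)}$ rather than $\leq 2^{-n}$), and you should say ``intersections of one or more'' of the sets $B_i \cap V$ to avoid accidentally including the empty intersection $S$ and breaking the diameter bound; both are cosmetic.
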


\begin{proof}
Let $S$ be a compact metrizable space.  Then $S$ has a countable basis
$(U_n)_{n \geq 1}$ of open sets.  For each $n\geq 1$, let
\[
\cov{W}_n 
=
\{ \ovln{U_n}, \ \ 
S\without U_n, \ \ 
\ovln{U_n} \cap (S \without U_n) \}
\]
where $\ovln{U_n}$ is the closure of $U_n$.  Then $(\cov{W}_n)_{n \geq 1}$ is
a sequence of finite closed covers.  It satisfies
conditions~(\ref{part:sep-seq-sep}) and~(\ref{part:sep-seq-int-same}) of
Definition~\ref{defn:sep-seq}, with $\nat$ changed to $\posint$: 
condition~(\ref{part:sep-seq-int-same}) is obvious, and
for~(\ref{part:sep-seq-sep}), if $s \neq t$ then we may find $n \geq 1$ such
that $s \in U_n$ but $t \not\in \ovln{U_n}$, and then there is no $W \in
\cov{W}_n$ for which $s, t \in W$.  It does not necessarily
satisfy~(\ref{part:sep-seq-ref}) or~(\ref{part:sep-seq-int-diff}); but now
define, for each $n \in \nat$,
\[
\cov{V}_n
=
\{ 
W_1 \cap \cdots \cap W_n 
\such
W_1 \in \cov{W}_1, \ldots, W_n \in \cov{W}_n 
\}
\]
(understood as $\cov{V}_0 = \{ S \}$ when $n = 0$).  From the properties of
$(\cov{W}_n)_{n \geq 1}$ stated, it is easily shown that $(\cov{V}_n)_{n \in
\nat}$ is a separating sequence for $S$.
\done
\end{proof}

Fix a compact metrizable space $S$ with a separating sequence
$(\cov{V}_n)_{n\in\nat}$.

We define an equational system $(\scat{A}, M)$.  Recall that a poset can be
regarded as a category in which each hom-set has at most one element: there is
a map $a' \go a$ just when $a' \leq a$.  For each $n \geq 0$, let $\scat{A}_n$
be the set of nonempty elements of $\cov{V}_n$, ordered by inclusion.  Let
$\scat{A}$ be the coproduct $\sum_{n\in\nat} \scat{A}_n$, so that an object of
$\scat{A}$ is a pair $(n, V)$ with $n \in \nat$ and $\emptyset \neq V \in
\cov{V}_n$.  Define a module $M: \scat{A} \gomod \scat{A}$ by
\[
M((p, W), (n, V))
=
\left\{
\begin{array}{ll}
1               &\textrm{if } p = n + 1 \textrm{ and } W \sub V \\
\emptyset       &\textrm{otherwise}.
\end{array}
\right.
\]
Thus, $M$ is also `posetal': there is at most one sector from any object of
$\scat{A}$ to any other.

\begin{lemma}
\label{lemma:cmssss}
$(\scat{A}, M)$ is an equational system.
\end{lemma}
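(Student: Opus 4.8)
The plan is to verify, one by one, the conditions in the definition of equational system: that $\scat{A}$ is a small category, that $M$ is a well-defined module $\scat{A} \gomod \scat{A}$, that $M$ is finite, and that $M$ is nondegenerate. The only place where the separating-sequence axioms of Definition~\ref{defn:sep-seq} really enter is nondegeneracy, which will come down to the fact that each $\cov{V}_n$ is closed under intersection (condition~(\ref{part:sep-seq-int-same})). Everything else is bookkeeping, and is made painless by the fact that both $\scat{A}$ and $M$ are posetal: since there is at most one arrow, and at most one sector, between any two objects, all composition, naturality and associativity axioms hold automatically once the underlying order-theoretic statements are in place.

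First, $\scat{A}$ is small (in fact countable): each $\scat{A}_n$ is a finite poset, being the nonempty members of the finite cover $\cov{V}_n$ ordered by inclusion, and $\scat{A}$ is their coproduct $\sum_{n\in\nat}\scat{A}_n$. Next, $M$ is a well-defined module: because $M$ has at most one sector between any two objects, defining the action of arrows on sectors amounts to checking that the defining condition is monotone, i.e.\ that $M((p,W),(n,V)) \neq \emptyset$, $W' \sub W$ and $V \sub V'$ together imply $M((p,W'),(n,V')) \neq \emptyset$; this is immediate from $W' \sub W \sub V \sub V'$, and the module axioms then hold vacuously. For finiteness (Definition~\ref{defn:finite}), fix $(n,V) \in \scat{A}$: a sector into $(n,V)$ has domain some $(n+1, W)$ with $\emptyset \neq W \in \cov{V}_{n+1}$ and $W \sub V$, and an arrow into such an object has domain some $(n+1,W')$ with $W' \sub W$; since $\cov{V}_{n+1}$ is finite and there is at most one arrow and sector in each case, there are only finitely many diagrams $b' \go b \gomod (n,V)$, so $M$ is finite.

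The substantive step is nondegeneracy. By Definition~\ref{defn:mod-nd} and Theorem~\ref{thm:componentwiseflatness}, it is enough to verify conditions \cstyle{ND1} and \cstyle{ND2} for the functor $X = M((p,W),\dashbk): \scat{A} \go \Set$, for each object $(p,W)$ of $\scat{A}$. Here $X$ takes only the values $\emptyset$ and the one-point set, and $X(n,V) \neq \emptyset$ forces $n = p - 1$ and $W \sub V$; in particular $X$ is the empty functor when $p = 0$, so \cstyle{ND1} and \cstyle{ND2} hold trivially in that case. Condition \cstyle{ND2} holds in every case because $\scat{A}$ is a poset, so the two arrows in a parallel pair coincide. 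For \cstyle{ND1}, suppose given a cospan $a \go b \og a'$ in $\scat{A}$ and elements of $X$ over $a$ and over $a'$. Arrows in $\scat{A}$ exist only within a single summand $\scat{A}_m$, and $X$ is supported on $\scat{A}_{p-1}$, so $a = (p-1,V)$, $a' = (p-1,V')$ and $b = (p-1,V'')$ with $V, V' \sub V''$, and from the hypothesis $W \sub V$ and $W \sub V'$. Then $V \cap V' \in \cov{V}_{p-1}$ by Definition~\ref{defn:sep-seq}(\ref{part:sep-seq-int-same}) and $W \sub V \cap V'$, so in particular $V \cap V' \neq \emptyset$ and $c = (p-1, V \cap V')$ is an object of $\scat{A}$ with arrows $c \go a$ and $c \go a'$; the resulting square commutes automatically, and $X(c) \neq \emptyset$. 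This is exactly \cstyle{ND1}.

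Assembling these four points, $\scat{A}$ is a small category and $M: \scat{A} \gomod \scat{A}$ is a finite nondegenerate module, so $(\scat{A}, M)$ is an equational system. I expect no genuine obstacle: the only care needed is keeping the variances straight and noticing that the posetal structure trivialises everything except the use of closure under intersection in \cstyle{ND1}.
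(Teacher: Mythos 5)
Your proof is correct and matches the paper's argument in substance: both reduce nondegeneracy to checking \cstyle{ND1} and \cstyle{ND2}, observe that \cstyle{ND2} is vacuous because $\scat{A}$ is a poset, and settle \cstyle{ND1} by invoking closure of $\cov{V}_n$ under pairwise intersection (with $W\neq\emptyset$ guaranteeing $V\cap V'\neq\emptyset$). The only cosmetic difference is that you verify \cstyle{ND1}/\cstyle{ND2} for each functor $M((p,W),\dashbk)$ separately, whereas the paper uses the equivalent module-level diagrammatic form; you also spell out the smallness, well-definedness and finiteness checks that the paper leaves implicit or dispatches in a sentence.
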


\begin{proof}
Finiteness of $M$ follows from finiteness of each cover $\cov{V}_n$.  For
nondegeneracy, we verify conditions~\cstyle{ND1} and~\cstyle{ND2}.
Condition~\cstyle{ND2} is trivial since $\scat{A}$ is a poset.
For~\cstyle{ND1}, take a square of solid arrows
\[
\begin{diagram}[width=3em,height=2em]
	&		&(n+1,W)&		&	\\
	&\ldMod(2,4)	&\dModget&\rdMod(2,4)   &	\\
	&		&(n, V\cap V')&		&	\\
	&\ldGet         &	&\rdGet         &	\\
(n, V)	&		&	&		&(n, V')\\
	&\rdTo          &	&\ldTo  	&	\\
	&		&(n, V'')&		&	\\
\end{diagram}
\]
in $(\scat{A}, M)$, so that $W \in \cov{V}_{n + 1}$, $V, V', V'' \in
\cov{V}_n$, and $W \sub V \cap V'$.  Since $W \neq \emptyset$, we have $V \cap
V' \neq \emptyset$, that is, $V \cap V' \in \scat{A}_n$.  Hence the diagram
can be filled in with the dotted arrows shown.  
\done
\end{proof}

Define a functor $J: \scat{A} \go \Top$ on objects by $J(n, V) = V$
(topologized as a subspace of $S$) and on morphisms by the evident inclusions.
We have, for each object $(n, V)$ of $\scat{A}$,
\begin{equation}
\label{eq:admit-colim}
(M \otimes J)(n, V) 
=
\Colt{W \in \scat{A}_{n + 1}: W \sub V} W,
\end{equation}
where the right-hand side is a colimit over a full subcategory of $\scat{A}_{n
+ 1}$.  Hence there is a (continuous) map
\begin{equation}
\label{eq:admit-psi}
\psi_{(n, V)}: 
(M \otimes J)(n, V) \go J(n, V) = V
\end{equation}
whose $W$-component is the inclusion $W \rIncl V$.  This defines a natural
transformation $\psi: M \otimes J \go J$.  

\begin{lemma}	
\label{lemma:fixedpoint}
$J: \scat{A} \go \Top$ is a nondegenerate functor, and $\psi: M \otimes J
\go J$ is an isomorphism.
\end{lemma}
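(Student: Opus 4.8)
The plan is to prove two things: that the functor $J$ is nondegenerate, and that the structure map $\psi$ is an isomorphism. For nondegeneracy, I would use the characterization of Theorem~\ref{thm:componentwiseflatness}, verifying conditions \cstyle{ND1} and \cstyle{ND2} directly (since $U \of J$ is a $\Set$-valued functor, and then noting that each space $J(n,V) = V$ is compact Hausdorff, being a closed subspace of the compact metrizable $S$). Condition \cstyle{ND2} asks about equalizing a parallel pair $(n,V) \parpair{f}{f'} (m,W)$; but $\scat{A}$ is a poset, so there is at most one map between any two objects, hence $f = f'$ always and \cstyle{ND2} is vacuous. Condition \cstyle{ND1} is the substantive part: given $(n,V) \goby{} (m,W) \ogby{} (n',V')$ in $\scat{A}$ and elements $x \in V$, $x' \in V'$ mapping to the same point in $W$, I must find $(k,U)$, maps into $(n,V)$ and $(n',V')$, and $z \in U$ mapping to $x$ and $x'$. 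A map in $\scat{A}$ forces $n = m = n'$ (maps only exist within a single level $\scat{A}_n$), so $V, V' \sub W$ in $\cov{V}_n$; the elements $x = x'$ as points of $S$, and this common point lies in $V \cap V'$. By condition~(\ref{part:sep-seq-int-same}) of Definition~\ref{defn:sep-seq}, $V \cap V' \in \cov{V}_n$, and it is nonempty since it contains $x$; so $(n, V \cap V')$ is the required object $(k,U)$, with $z = x$. This is essentially the same argument as in the proof of Lemma~\ref{lemma:cmssss}, just at the level of elements rather than of the bare module.

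For the claim that $\psi \colon M \otimes J \go J$ is an isomorphism, I would work componentwise: fix an object $(n,V)$ of $\scat{A}$ and show that
\[
\psi_{(n,V)} \colon (M \otimes J)(n, V) \go V
\]
is a homeomorphism. By~(\ref{eq:admit-colim}), the domain is the colimit $\Colt{W \in \scat{A}_{n+1}, W \sub V} W$ taken in $\Top$, a colimit of closed subspaces of $S$ along inclusions, with the $W$-component of $\psi_{(n,V)}$ being the inclusion $W \rIncl V$. Surjectivity is exactly equation~(\ref{eq:covers}): $V = \bigcup_{X \in \cov{V}_{n+1}, X \sub V} X$, so every point of $V$ lies in some $W$. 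For injectivity, one uses that this is a colimit in $\Top$ over a \emph{poset} (the full subposet of $\scat{A}_{n+1}$ on the $W \sub V$, which is closed under the intersections of~(\ref{part:sep-seq-int-same})): two points $x \in W$ and $x' \in W'$ are identified in the colimit precisely when they can be linked by a zigzag, and since the poset is closed under intersection and $x = x'$ as points forces them to agree in $W \cap W'$, identification in the colimit happens exactly when $x = x'$ in $S$; hence $\psi_{(n,V)}$ is a continuous bijection. Finally, the domain $(M \otimes J)(n,V)$ is compact --- it is a finite union (since $\cov{V}_{n+1}$ is finite) of compact spaces, or alternatively one invokes Corollary~\ref{cor:Iacompact} after the recognition step; but most directly, it is the quotient of the finite coproduct $\sum_{W \sub V} W$ of compact spaces, hence compact --- and its codomain $V$ is Hausdorff, so the continuous bijection $\psi_{(n,V)}$ is a homeomorphism.

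The main obstacle I anticipate is the injectivity/homeomorphism part of the second claim: one must be careful that the colimit topology on $(M \otimes J)(n,V)$ really does agree with the subspace topology on $V$, and this is where closure of the covering family under intersection (condition~(\ref{part:sep-seq-int-same}), inherited by the subfamily $\{W : W \sub V\}$) does the work, ensuring the colimit of inclusions of closed subsets is computed simply as their union with no unexpected gluing. Everything else --- finiteness bookkeeping, the poset triviality of \cstyle{ND2}, compactness --- is routine. Once this lemma is in hand, $(J, \psi^{-1})$ is a fixed point of $M$, and the realizability theorem (Theorem~\ref{thm:selfsimilarspaces}) follows by feeding it into the Precise Recognition Theorem~\ref{thm:preciserecognition}, using condition~(\ref{part:sep-seq-sep}) of the separating sequence together with a metric on $S$ to verify that $\bigcap_n V^J_{m_1,\ldots,m_n}$ has at most one point.
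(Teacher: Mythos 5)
Your proposal is correct and follows essentially the same route as the paper's proof: ND2 is vacuous because $\scat{A}$ is a poset, ND1 is verified via closure of the covers under intersection (condition~(\ref{part:sep-seq-int-same})), compactness of the closed subspaces $V\sub S$ reduces topological nondegeneracy to the $\Set$-level check and also supplies the compactness/Hausdorff trick that turns the continuous bijection $\psi_{(n,V)}$ into a homeomorphism, with surjectivity from~(\ref{eq:covers}) and injectivity again from closure under intersection. The one place you should be slightly more careful is in claiming that the subposet $\{W \in \scat{A}_{n+1} : W \sub V\}$ is closed under intersection --- the intersection could be empty, hence not an object of $\scat{A}_{n+1}$; but in the injectivity argument one only needs $W \cap W'$ when it contains the common image point $w$, so nonemptiness is automatic there, exactly as in the paper.
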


\begin{proof}
For the first part, each space $J(n, V) = V$ is a closed subspace of the
compact Hausdorff space $S$, and therefore compact Hausdorff.  So it is enough
to prove that the underlying $\Set$-valued functor of $J$ is nondegenerate.
As in the proof of Lemma~\ref{lemma:cmssss}, condition~\cstyle{ND2} is
trivial, and condition~\cstyle{ND1} is an easy check.

For the second part, we have to show that for every object $(n, V)$ of
$\scat{A}$, the map $\psi_{(n, V)}$ of~(\ref{eq:admit-psi}) is a
homeomorphism.  Its domain is a finite colimit of compact spaces, hence
compact, and its codomain is Hausdorff, so it suffices to show that it is a
bijection.  

Surjectivity follows immediately from~(\ref{eq:covers}).

For injectivity, first note that an element of the
colimit~(\ref{eq:admit-colim}) is an equivalence class of pairs $(W, w)$ where
$w \in W \in \cov{V}_{n+1}$ and $W \sub V$.  The equivalence relation
$\sim$ is generated as follows: if $X, W \in \cov{V}_{n+1}$ with $x \in X \sub
W \sub V$ then $(X, x) \sim (W, x)$.  Writing $[\ ]$ for equivalence class,
we have $\psi_{(n, V)}([W, w]) = w$.  Now suppose that  
\[
\psi_{(n, V)}([W, w]) = \psi_{(n, V)}([W', w']).
\]
Then $w = w'$, so $w \in W \cap W' \in \cov{V}_{n+1}$ with $W \cap W' \sub
V$.  Hence $(W \cap W', w)$ is a pair of the relevant type, and
\[
(W, w) \sim (W \cap W', w = w') \sim (W', w'),
\]
as required.
\done
\end{proof}

\begin{propn}
\label{propn:topounivsoln}
$(J, \psi^{-1})$ is the universal solution of $(\scat{A}, M)$ in $\Top$.
\end{propn}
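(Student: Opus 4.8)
The plan is to deduce this from the Precise Recognition Theorem (Theorem~\ref{thm:preciserecognition}), verifying its condition~\bref{item:pr-notmet} for the fixed point $(J, \psi^{-1})$.  By Lemma~\ref{lemma:cmssss}, $(\scat{A}, M)$ is an equational system, and by Lemma~\ref{lemma:fixedpoint}, $(J, \psi^{-1})$ is a fixed point of $M$ in $\Top$ in which each space $J(n, V) = V$ is compact Hausdorff (being closed in $S$).  That $J$ is occupied is immediate: by construction $\scat{A}_n$ consists of the \emph{nonempty} elements of $\cov{V}_n$, so $J(n, V) = V \neq \emptyset$ for every object $(n, V)$ of $\scat{A}$.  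So it remains only to show that for every complex in $(\scat{A}, M)$, the set $\bigcap_{r \in \nat} V^J_{m_1, \ldots, m_r}$ has at most one element.

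To that end I would first describe the complexes of $(\scat{A}, M)$ explicitly.  Since $M((p, W), (n, V))$ is nonempty exactly when $p = n + 1$ and $W \sub V$, a complex
\[
\cdots \gobymod{m_3} a_2 \gobymod{m_2} a_1 \gobymod{m_1} a_0
\]
must have $a_r = (n_0 + r, V_r)$ for some $n_0 \in \nat$ and some chain $V_0 \supseteq V_1 \supseteq V_2 \supseteq \cdots$ with each $V_r$ a nonempty member of $\cov{V}_{n_0 + r}$, and each sector is the unique one of its type.  Correspondingly, each structure map $\psi_{m_r}\colon J(a_r) = V_r \go J(a_{r-1}) = V_{r-1}$ is just the subspace inclusion $V_r \hookrightarrow V_{r-1}$ (its $W$-component, as defined in~\S\ref{app:admitting}, being an inclusion).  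Hence the composite $J(a_r) \go J(a_0)$ is the inclusion $V_r \hookrightarrow V_0$, and so $V^J_{m_1, \ldots, m_r} = V_r$, viewed as a subset of $V_0 \sub S$.

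It then follows that $\bigcap_{r \in \nat} V^J_{m_1, \ldots, m_r} = \bigcap_{r \in \nat} V_r$, a nested intersection along successively finer covers.  To see this has at most one point, suppose $s, t$ both lie in it with $s \neq t$.  By the separating property~\bref{part:sep-seq-sep} there is $n$ such that no member of $\cov{V}_n$ contains both $s$ and $t$.  Choosing $r$ with $n_0 + r \geq n$ and using transitivity of refinement along the chain~\bref{part:sep-seq-ref}, $\cov{V}_{n_0 + r}$ refines $\cov{V}_n$, so there is $V \in \cov{V}_n$ with $V_r \sub V$, whence $s, t \in V$ --- a contradiction.  Thus $\bigcap_r V_r$ has at most one element, condition~\bref{item:pr-notmet} of the Precise Recognition Theorem holds, and we conclude that $(J, \psi^{-1})$ is the universal solution of $(\scat{A}, M)$ in $\Top$.

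Since the argument is a direct application of an already-established recognition criterion, I do not expect a serious obstacle.  The only points needing care are the index bookkeeping that identifies a complex with a descending chain $V_0 \supseteq V_1 \supseteq \cdots$ and identifies $V^J_{m_1, \ldots, m_r}$ with $V_r$, and the use of transitivity of refinement to pass from the cover $\cov{V}_{n_0 + r}$ occurring at level $r$ of the complex down to the separating cover $\cov{V}_n$.
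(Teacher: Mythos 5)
Your proposal is correct and follows essentially the same route as the paper's proof: verify condition (c) of the Precise Recognition Theorem, observe that each $J(n,V)=V$ is compact and nonempty, identify a complex with a descending chain of cover elements, compute $V^J_{m_1,\ldots,m_r}$ as the $r$th element of that chain, and then use properties (a) and (b) of a separating sequence to show the nested intersection is at most a singleton. The only cosmetic difference is in that last step: the paper extends the chain backwards to level $0$ and invokes (b) directly, while you go forward using transitivity of refinement to land inside the separating cover $\cov{V}_n$; these are interchangeable uses of the same hypotheses.
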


\begin{proof}
We verify condition~\bref{item:pr-notmet} of the Precise Recognition
Theorem~(\ref{thm:preciserecognition}).  Each space $J(n, V) = V$ is compact,
and nonempty by definition of $\scat{A}$, so it only remains to check the main
part of the condition.  

A complex in $(\scat{A}, M)$ is of the form
\[
\cdots \gobymod{m_2} (n + 1, V_{n + 1}) \gobymod{m_1} (n, V_n)
\]
where $V_r \in \scat{A}_r$ and $V_n \supseteq V_{n + 1} \supseteq \cdots$.  We
have
\[
\psi_{m_1} \of \cdots \of \psi_{m_r}
=
(V_{n + r} \rIncl \cdots \rIncl V_n)
=
(V_{n + r} \rIncl V_n),
\]
so $V^J_{m_1, \ldots, m_r} = V_{n + r}$.  Hence
\[
\bigcap_{r \in \nat} V^J_{m_1, \ldots, m_r} 
=
\bigcap_{i \geq n} V_i.
\]
Suppose that $s, t \in \bigcap_{i \geq n} V_i$.  By
condition~(\ref{part:sep-seq-ref}) of Definition~\ref{defn:sep-seq}, there
exist $V_{n-1} \in \cov{V}_{n-1}, \ldots, V_0 \in \cov{V}_0$ such that 
\[
V_n \sub V_{n-1} \sub \cdots \sub V_0,
\]
and then $s, t \in \bigcap_{i \in \nat} V_i$.  So by
condition~(\ref{part:sep-seq-sep}), $s = t$.  
\done
\end{proof}

\begin{prooflike}{Proof of Theorem~\ref{thm:selfsimilarspaces}}
Let $S$ be a compact metrizable space and construct $(\scat{A}, M)$ and $(J,
\psi)$ as above.  If $S$ is nonempty then $(0, S)$ is an object of $\scat{A}$,
and $J(0, S) = S$.  On the other hand, $\emptyset$ is the universal solution
of the equational system $(\One, \emptyset)$ (Example~\ref{eg:rec-terminal}).
\done
\end{prooflike}

\begin{thm}[Discrete realizability]
\label{thm:discretelyselfsimilarspaces}
The following conditions on a topological space $S$ are equivalent:
\begin{enumerate}
\item	\label{item:dss-dss}
$S$ is discretely realizable
\item	\label{item:dss-seq} 
$S$ is the limit of some sequence $(\cdots \go S_2 \go S_1)$ of finite
discrete spaces
\item	\label{item:dss-ctbl}
$S$ is the limit of some countable diagram of finite discrete spaces
\item	\label{item:dss-topo}
$S$ is compact, metrizable, and totally disconnected.
\end{enumerate}
\end{thm}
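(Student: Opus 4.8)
The plan is to establish the cycle \bref{item:dss-dss}$\implies$\bref{item:dss-seq}$\implies$\bref{item:dss-ctbl}$\implies$\bref{item:dss-topo}$\implies$\bref{item:dss-dss}; only the final implication needs real work, and it runs parallel to the proof of Theorem~\ref{thm:selfsimilarspaces}, with finite clopen \emph{partitions} in place of arbitrary finite closed covers.

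For \bref{item:dss-dss}$\implies$\bref{item:dss-seq}, recall from Example~\ref{eg:construction-discrete} (and~\S\ref{sec:dsss}) that when $(I, \iota)$ is the universal solution of a discrete equational system, each space $I(a)$ is the set of complexes ending at $a$, topologized so that $I(a) = \Lt{n} I_n(a)$, where $I_n(a)$ is the finite discrete set of truncated complexes $a_n \gobymod{m_n} \cdots \gobymod{m_1} a_0 = a$.  So a discretely realizable space is a limit of a sequence of finite discrete spaces.  The implication \bref{item:dss-seq}$\implies$\bref{item:dss-ctbl} is immediate.  For \bref{item:dss-ctbl}$\implies$\bref{item:dss-topo}: the limit in $\Top$ of a countable diagram of Hausdorff spaces is a closed subspace of the product of the diagram's spaces; a countable product of finite discrete spaces is compact (Tychonoff), metrizable, and totally disconnected; and all three properties are inherited by closed subspaces.

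The substance is \bref{item:dss-topo}$\implies$\bref{item:dss-dss}.  Since $S$ is compact metrizable it is second countable, and since it is also totally disconnected---hence, being compact Hausdorff, zero-dimensional---it has a countable basis $(C_n)_{n \geq 1}$ of clopen sets.  I would set $\cov{V}_0 = \{S\}$ and, for $n \geq 1$, let $\cov{V}_n$ consist of $\emptyset$ together with the nonempty atoms of the finite Boolean algebra generated by $C_1, \ldots, C_n$.  Then each $\cov{V}_n$ is a finite clopen cover, $\cov{V}_{n+1}$ refines $\cov{V}_n$, and the sequence separates points (for $s \neq t$, some $C_m$ contains exactly one of $s, t$, so no block of $\cov{V}_m$ contains both); the intersection conditions~\bref{part:sep-seq-int-same} and~\bref{part:sep-seq-int-diff} of Definition~\ref{defn:sep-seq} hold because distinct nonempty atoms of a partition are disjoint and $\emptyset \in \cov{V}_n$ for $n \geq 1$.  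So $(\cov{V}_n)_{n \in \nat}$ is a separating sequence for $S$, and I would feed it into the construction of Appendix~\ref{app:admitting} to obtain an equational system $(\scat{A}, M)$ and a fixed point $(J, \psi^{-1})$ of $M$ in $\Top$ with $J(n, V) = V$.  Lemmas~\ref{lemma:cmssss} and~\ref{lemma:fixedpoint}, together with Proposition~\ref{propn:topounivsoln}, then show directly that $(J, \psi^{-1})$ is the universal solution of $(\scat{A}, M)$ in $\Top$, and $J(0, S) = S$.  The extra observation is that here $\scat{A} = \sum_n \scat{A}_n$ with $\scat{A}_n$ the set of nonempty blocks of $\cov{V}_n$, and that distinct such blocks are disjoint, so the inclusion order on $\scat{A}_n$ is trivial and $\scat{A}$ is a \emph{discrete} category; hence $(\scat{A}, M)$ is a discrete equational system, and $S$ is discretely realizable whenever $S \neq \emptyset$.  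The empty space is the universal solution of $(\One, \emptyset)$, as in Example~\ref{eg:rec-terminal}.

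I expect the only real obstacle to be routine checking: that a clopen-partition separating sequence meets all four clauses of Definition~\ref{defn:sep-seq}, and that the proofs of Lemmas~\ref{lemma:cmssss} and~\ref{lemma:fixedpoint} and of Proposition~\ref{propn:topounivsoln} nowhere exploit nontrivial inclusions within a single cover $\cov{V}_n$---they do not; disjointness of the blocks only simplifies those arguments.  The one genuinely new ingredient beyond the realizability proof is the zero-dimensionality of $S$, which is what makes the blocks clopen and pairwise disjoint and hence forces $\scat{A}$ to be discrete.
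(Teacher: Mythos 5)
Your proof is correct and follows essentially the same route as the paper's: for \bref{item:dss-topo}$\implies$\bref{item:dss-dss} you use zero-dimensionality to get a countable clopen basis, build a separating sequence of finite clopen partitions (your atoms-of-a-Boolean-algebra description is exactly what Lemma~\ref{lemma:separatingsequence} produces when the $U_n$ are already clopen), and observe that disjointness of blocks makes the resulting category $\scat{A}$ discrete, so Proposition~\ref{propn:topounivsoln} applies directly. Your handling of the empty space and your explicit checks of the four clauses of Definition~\ref{defn:sep-seq} are small elaborations the paper leaves implicit, but there is no genuine divergence in method.
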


\begin{proof}
\paragraph*{\bref{item:dss-dss}$\implies$\bref{item:dss-seq}}
Let $(\scat{A}, M)$ be a discrete equational system.  The universal
solution is $\oba\catI$, and each space $\oba\catI(a)$ is the limit of the
sequence of the finite discrete spaces $\oba\catI_n(a)$
(Example~\ref{eg:construction-discrete}). 

\paragraph*{\bref{item:dss-seq}$\implies$\bref{item:dss-ctbl}}
Trivial.

\paragraph*{\bref{item:dss-ctbl}$\implies$\bref{item:dss-topo}}
Compact metrizable spaces are the same as compact Hausdorff spaces that are
second countable (have a countable basis of open sets).  The classes of
compact Hausdorff spaces and totally disconnected spaces are closed under all
limits, and the class of second countable spaces is closed under countable
limits.

\paragraph*{\bref{item:dss-topo}$\implies$\bref{item:dss-dss}}
For this we adapt the proof of Theorem~\ref{thm:selfsimilarspaces}.  We may
choose for $S$ a basis $(U_n)_{n\geq 1}$ of open sets that are also 
closed, by Theorem~II.4.2 of~\cite{Joh}.  The separating sequence
$(\cov{V}_n)_{n\in\nat}$ constructed in
Lemma~\ref{lemma:separatingsequence} then has the property that each cover
$\cov{V}_n$ is a partition of $S$.  The resulting category $\scat{A}$
is therefore discrete, and the result follows.
\done
\end{proof}

For example, the underlying topological space of the absolute Galois group
$\mathrm{Gal}(\ovln{\mathbb{Q}}/\mathbb{Q})$ is a countable limit of finite
discrete spaces, so discretely realizable.

A measure of the power of the realizability theorems is that some classical
results of topology~\cite{Wil,HY} can be deduced.
Proposition~\ref{propn:Iaasaquotient} implies that every realizable space is a
topological quotient of a discretely realizable space; thus, every compact
metrizable space is a quotient of a totally disconnected compact metrizable
space.  On the other hand, it can be shown directly that every nonempty
discretely realizable space is a retract of the Cantor set.  It follows that
every totally disconnected compact metrizable space is a subspace of the
Cantor set, and that every nonempty compact metrizable space is a quotient of
the Cantor set.  Finally, it follows from
Proposition~\ref{propn:emptyorcantor} that every totally disconnected compact
metrizable space without isolated points is either empty or homeomorphic to
the Cantor set.  We have thus deduced the classical results characterizing the
closed subspaces, quotients and homeomorphism type of the Cantor set.
Detailed proofs can be found in~\cite{SS2}.


\small

\end{document}